\newcommand*{\dd}{\mathop{}\!\mathrm{d}}
\newcommand*{\QQ}{\pazocal{Q}}
\newcommand*{\PP}{\pazocal{P}}
\newcommand*{\FF}{\pazocal{F}}
\newcommand*{\DD}{\pazocal{D}}
\newcommand*{\pSS}{\pazocal{S}}
\newcommand*{\ox}{\overline{x}}
\newcommand*{\oy}{\overline{y}}
\newcommand*{\oz}{\overline{z}}
\newcommand*{\ow}{\overline{w}}
\newcommand*{\Aone}{\text{\normalfont{\textbf{A}}}_{\boldsymbol{1}}}
\newcommand*{\Atwo}{\text{\normalfont{\textbf{A}}}_{\boldsymbol{2}}}
\newcommand*{\Athree}{\text{\normalfont{\textbf{A}}}_{\boldsymbol{3}}}
\newcommand*{\Pone}{\text{\normalfont{\textbf{P}}}_{\boldsymbol{1}}}
\newcommand*{\Ptwo}{\text{\normalfont{\textbf{P}}}_{\boldsymbol{2}}}
\newcommand*{\Pthree}{\text{\normalfont{\textbf{P}}}_{\boldsymbol{3}}}
\newcommand*{\Pfour}{\text{\normalfont{\textbf{P}}}_{\boldsymbol{4}}}
\newcommand*{\Pfive}{\text{\normalfont{\textbf{P}}}_{\boldsymbol{5}}}
\newcommand*{\Psy}{P_{\text{\normalfont{sy}}}}
\newcommand*{\rom}[1]{\expandafter\@slowromancap\romannumeral #1@}
\newcommand*\mathinhead[2]{\texorpdfstring{$#1$}{#2}}
\numberwithin{equation}{section}
\theoremstyle{plain}
\newtheorem*{thm*}{Theorem}
\newtheorem{thm}{Theorem}[section]
\newtheorem{lem}[thm]{Lemma}
\newtheorem{cor}[thm]{Corollary}
\theoremstyle{definition}
\newtheorem{defn}{Definition}[section]
\theoremstyle{remark}
\newtheorem{rem}{\textbf{Remark}}[section]
\DeclareMathAlphabet{\pazocal}{OMS}{zplm}{m}{n}
\begin{document}
	\title[On the semi-additivity of the $1/2$-symmetric caloric capacity]{On the semi-additivity of the $1/2$-symmetric caloric capacity}
	\author{Joan Hernández, Joan Mateu, and Laura Prat}\thanks{J. H. has been supported by PID2020-114167GB-I00 (Mineco, Spain). J. M. has been partially supported by 2021SGR-00071 (Catalonia) and PID2020-112881GB-I00 (Mineco, Spain). L.P. has been partially supported by 2021SGR-00071 (Catalonia) and PID2020-114167GB-I00 (Mineco, Spain)}
	\maketitle
	
	\begin{abstract}
		In this paper we study properties of a variant of the $1/2$-caloric capacity, called $1/2$-symmetric caloric capacity. The latter is associated simultaneously with the $1/2$-fractional heat equation and its conjugate. We establish its semi-additivity in $\mathbb{R}^{n+1}$ and, moreover, we compute explicitly the $1/2$-symmetric caloric capacity of rectangles, which illustrates its anisotropic behavior.
		
		\noindent\textbf{AMS 2020 Mathematics Subject Classification:}  42B20 (primary); 28A12 (secondary).
		
		\noindent \textbf{Keywords:} Fractional heat equation, singular integrals, caloric capacity.
	\end{abstract}
	
	\section{Introduction}
	\label{sec1}
	\bigskip
	
	The study of removable subsets for caloric equations done by Mateu, Prat \& Tolsa in \cite{MPrTo} and by Mateu \& Prat in \cite{MPr} introduces, naturally, caloric capacities associated to such PDE's. In the first reference $(1,1/2)$-Lipschitz caloric capacity was introduced, as well as the notion of equivalence between the nullity of this quantity and the removability of compact subsets for the heat equation, i.e. the one associated with the differential operator $\Theta:=(-\Delta_x)+\partial_t$, where $(x,t)\in\mathbb{R}^n\times \mathbb{R}$. In \cite{MPr}, Mateu \& Prat studied the corresponding caloric capacities associated with the fractional heat equation. That is, the equation associated with the pseudo-differential operator $\Theta^s:=(-\Delta_x)^s+\partial_t$, for $0<s<1$. The authors distinguished the cases $s=1/2$, $1/2<s<1$ and $0<s<1/2$, focusing mainly on the first. The study for $s=1/2$ was quite successful, obtaining a removability result for the $\Theta^{1/2}$-equation, as it was done for the heat equation. For instance, if $f$ is a solution of the $1/2$-heat equation in $\mathbb{R}^{n+1}\setminus{E}$ satisfying $\|f\|_{L^\infty(\mathbb{R}^{n+1})}<\infty,$ then $E$ will be removable if and only if the $1/2$-caloric capacity of $E$ is null. One of the main features that characterized the case $s=1/2$ is that the authors could work with an explicit fundamental solution for the operator $\Theta^{1/2}$, namely
	\begin{equation*}
		P(x,t):=\frac{t}{\big( |x|^2+t^2 \big)^{(n+1)/2}}\chi_{\{t>0\}}(x,t),
	\end{equation*}
	a continuous function in $\mathbb{R}^{n+1}\setminus{\{0\}}$, harmonic in $\mathbb{R}^{n+1}\setminus{\{t=0\}}$. This, together with the fact that the Fourier symbols of $(-\Delta_x)^{1/2}$ and $\partial_t$ share the same homogeneity, allowed Mateu \& Prat to characterize the removability of compact sets in a simpler way than the cases $s\neq 1/2$. More precisely, they established that the removability condition for $E$ can be studied by controlling the supremum of expressions of the form $|\langle T, 1 \rangle|$, where $T$ is a distribution supported on $E$ satisfying the normalization condition $\|P\ast T\|_{L^\infty(\mathbb{R}^{n+1})} \leq 1$. The latter supremum is known as the $1/2$-caloric capacity of $E$, written $\gamma_{\Theta^{1/2}}(E)$. If the supremum is only taken among positive Borel measures supported on $E$ (with the same normalization condition), we obtain the smaller capacity $\gamma_{\Theta^{1/2},+}(E)$. On the other hand, for $s\neq 1/2$, although there is a fundamental solution $P_s(x,t)$, its expression is no longer explicit. However, there are precise estimates computed by the probabilists Blumenthal \& Getoor \cite{BlGe} who described the power-like tail behaviour of $P_s$ and allowed the authors to carry out a similar study. Nevertheless, there was still a technical obstacle: the fact that the Fourier symbols of $(-\Delta_x)^s$ and $\partial_t$ do not share homogeneity. This feature, essentially, ended up by limiting the amount of results obtained.
    
	In the present paper we aim at obtaining a more precise description of a smaller variant of $\gamma_{\Theta^{1/2}}$, already introduced in \cite[\textsection 4]{MPr}. Denoted by $\widetilde{\gamma}_{\Theta^{1/2}}$, we call it $1/2$-\textit{symmetric caloric capacity} and it is the supremum of expressions of the form $|\langle T, 1 \rangle|$, where $T$ is a distribution supported on a compact set now satisfying both conditions
	\begin{equation*}
		\|P\ast T\|_{L^\infty(\mathbb{R}^{n+1})} \leq 1 \quad \text{ and } \quad \|P^\ast\ast T\|_{L^\infty(\mathbb{R}^{n+1})} \leq 1,    
	\end{equation*}
	where $P^\ast(x,t):=P(-x,-t)$ is the conjugate kernel of $P$. We will write $\widetilde{\gamma}_{\Theta^{1/2},+}$ its smaller version, which takes into account only positive Borel measures. The main result of this text is found in Section \ref{sec6} and it states that, in fact, the latter capacity is not that small:
	\begin{thm*}
		For $E\subset \mathbb{R}^{n+1}$ compact,
		\begin{equation*}
			\widetilde{\gamma}_{\Theta^{1/2}}(E) \approx \gamma_{\Theta^{1/2},+}(E).
		\end{equation*}
	\end{thm*}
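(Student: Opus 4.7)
I would factor the desired comparison through the auxiliary \emph{symmetric positive} capacity $\widetilde{\gamma}_{\Theta^{1/2},+}$ (defined as $\widetilde{\gamma}_{\Theta^{1/2}}$ but with $T$ restricted to be a positive Borel measure), by establishing the chain
\[
\gamma_{\Theta^{1/2},+}(E)\;\lesssim\;\widetilde{\gamma}_{\Theta^{1/2},+}(E)\;\leq\;\widetilde{\gamma}_{\Theta^{1/2}}(E)\;\lesssim\;\widetilde{\gamma}_{\Theta^{1/2},+}(E)\;\leq\;\gamma_{\Theta^{1/2},+}(E),
\]
where the second and fourth steps are trivial. Semi-additivity of $\widetilde{\gamma}_{\Theta^{1/2}}$ then follows from that of $\gamma_{\Theta^{1/2},+}$, accessible from the $+$-type machinery of \cite{MPr}. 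The structural observation guiding the work is that, since $P$ and $P^\ast$ are supported on the disjoint half-planes $\{t>0\}$ and $\{t<0\}$, the joint bounds $\|P\ast T\|_\infty,\|P^\ast\ast T\|_\infty\leq 1$ are equivalent, up to constants, to the joint boundedness of $(P-P^\ast)\ast T=\tfrac{t}{x^2+t^2}\ast T$ and $(P+P^\ast)\ast T=\tfrac{|t|}{x^2+t^2}\ast T$; the first of these is a $1$-dimensional antisymmetric Calderón--Zygmund kernel in $\mathbb{R}^{2}$ (essentially a component of the Cauchy kernel), and it is precisely this CZ-type control, supplied by the symmetric definition, that opens the door to Tolsa-type techniques.

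\textbf{Easy inequality $\gamma_{\Theta^{1/2},+}\lesssim \widetilde{\gamma}_{\Theta^{1/2},+}$.} Given $\mu\geq 0$ on $E$ with $\|P\ast \mu\|_\infty\leq 1$ and $\mu(E)$ close to $\gamma_{\Theta^{1/2},+}(E)$, the quantity $P^\ast\ast \mu$ need not be bounded. I would apply a Calderón--Zygmund-type stopping argument: a weak-$(1,1)$ estimate for $\mu\mapsto P^\ast\ast \mu$, available from the Poisson-like decay of $P^\ast$, shows that $\{P^\ast\ast \mu>\lambda\}$ has small $\mu$-measure for large $\lambda$, so restricting $\mu$ to its complement produces $\mu'\leq \mu$ with $\mu'(E)\gtrsim \mu(E)$ and both $\|P\ast \mu'\|_\infty,\|P^\ast\ast \mu'\|_\infty$ controlled by a universal constant; after rescaling, $\mu'$ becomes an admissible competitor for $\widetilde{\gamma}_{\Theta^{1/2},+}$.

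\textbf{Hard inequality $\widetilde{\gamma}_{\Theta^{1/2}}\lesssim \widetilde{\gamma}_{\Theta^{1/2},+}$.} Given $T$ on $E$ with both normalizations and $|\langle T,1\rangle|\gtrsim \widetilde{\gamma}_{\Theta^{1/2}}(E)$, I would mimic Tolsa's proof of $\gamma\approx\gamma_+$ for analytic capacity: (i) perform a dyadic decomposition of $\mathbb{R}^{2}$; (ii) use the two $L^\infty$-bounds, together with a good-$\lambda$/stopping-time selection on the averages of $P\ast T$ and $P^\ast\ast T$, to build a para-accretive test function $b$ on a substantial subset of $E$; (iii) apply a non-doubling $T(b)$ theorem (in the Nazarov--Treil--Volberg / Tolsa spirit) for the $1$-dimensional CZ kernel $\tfrac{t}{x^2+t^2}$ to deduce $L^2$-boundedness of the associated singular integral operator against a candidate measure; (iv) extract from the $L^2$ bound, via a corona/good-$\lambda$ dualization, a positive $\mu$ on $E$ with $\mu(E)\gtrsim |\langle T,1\rangle|$ and both $\|P\ast \mu\|_\infty,\|P^\ast\ast \mu\|_\infty\lesssim 1$.

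\textbf{Main obstacle.} The principal difficulty lies in step (ii): one must use \emph{both} $\|P\ast T\|_\infty\leq 1$ and $\|P^\ast\ast T\|_\infty\leq 1$ to control the averages of $b$ on dyadic cells, because the forward bound alone only constrains averages over $\{t>0\}$ and the backward averages would otherwise be uncontrolled. This is precisely where the symmetric capacity is indispensable, and explains why the analogous statement for the non-symmetric $\gamma_{\Theta^{1/2}}$ appears to be out of reach by these methods. A secondary difficulty is the dualization in step (iv), which must carefully convert the operator bound into a genuine positive measure with the prescribed potential control.
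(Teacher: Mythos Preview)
Your overall strategy---building a candidate measure from an almost-extremal distribution $T_0$ and invoking a non-homogeneous $Tb$-theorem in the Nazarov--Treil--Volberg/Tolsa spirit---matches the paper's approach in outline. However, your central structural idea, namely reducing to the antisymmetric kernel $K(\ox)=t/(x^2+t^2)=P-P^\ast$ and running a \emph{standard antisymmetric} $Tb$-theorem, contains a genuine gap.

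The problem is in step (iv). Suppose the $Tb$-argument for $K$ succeeds and yields a measure $\mu$ (with $1$-growth) for which the operator with kernel $K$ is bounded on $L^2(\mu)$. This does \emph{not} imply that $\PP_\mu$ (equivalently $\PP_{\text{sy},\mu}$) is bounded, and hence does not produce both potential bounds $\|P\ast\mu\|_\infty,\|P^\ast\ast\mu\|_\infty\lesssim 1$. Concretely, take $\mu=\pazocal{H}^1|_V$ for the vertical segment $V=\{0\}\times[0,1]$. The operator $K_\mu$ is then the (finite) Hilbert transform, which is bounded on $L^2(\mu)$; but $\PP_\mu$ is the one-sided Hilbert transform $f\mapsto\int_0^t f(s)(t-s)^{-1}\,ds$, for which already $\|\PP_{\mu,\varepsilon}1\|_{L^2(\mu)}\approx|\log\varepsilon|\to\infty$. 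Correspondingly $\gamma_+(V)=0$ (Lemma~\ref{lem6.2}), so no nonzero positive measure on $V$ is admissible for $\widetilde{\gamma}_+$---even though the antisymmetric-kernel $L^2$-capacity of $V$ is positive. The antisymmetric reduction therefore lands you in the wrong capacity.

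The paper avoids this by working directly with the non-antisymmetric kernel $P$ and its suppressed variants $P_\Lambda$. The price is that one must verify an additional \emph{weak boundedness property} $|\langle \PP_{\Lambda,\mu}(\chi_Q b),\chi_Q b\rangle|\lesssim\mu(2Q)$ for the relevant cubes $Q$ (Corollary~\ref{cor5.16}); this is precisely where the simultaneous control of $P\ast T_0$ and $P^\ast\ast T_0$ is exploited, via a localization estimate for $\PP_\Lambda(\varphi\nu)$. With that in hand the $Tb$-machinery produces $G\subset F$ with $\PP_{\mu|_G}$ bounded on $L^2(\mu|_G)$, and the already-established equivalence $\gamma_{\text{op}}\approx\widetilde{\gamma}_+$ closes the loop.

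There is a second, more structural omission. The $Tb$-construction needs the approximating measure to have total mass comparable to $\widetilde{\gamma}(E)$, which in turn requires a packing inequality $\sum_i\widetilde{\gamma}(2\QQ_i\cap E)\lesssim\widetilde{\gamma}(E)$ for a Whitney-type family $\{\QQ_i\}$. The paper isolates this as assumption $\Athree$ and proves comparability only under it (Theorem~\ref{thm5.19}). Removing $\Athree$ is exactly where the restriction to $\mathbb{R}^2$ enters: one iterates the Whitney splitting, and after finitely many steps each piece $2\QQ_{i_1\cdots i_S}\cap E$ is either a rectangle or one of a finite list of rectilinear shapes for which $\widetilde{\gamma}\approx\widetilde{\gamma}_+$ follows directly from the explicit capacity of parallelepipeds (horizontal faces carry full capacity, vertical faces carry none). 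This induction on scales, together with the planar case-analysis that terminates it, is absent from your outline and is not a formality---it is the reason the theorem is stated in $\mathbb{R}^2$ rather than in $\mathbb{R}^{n+1}$.
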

	Such result implies, bearing in mind the results of \cite{He}, the semi-additivity of $\widetilde{\gamma}_{\Theta^{1/2}}$ (Theorem \ref{thm6.7}). Sections \ref{sec2}, \ref{sec3}, \ref{sec4} and \ref{sec5} are devoted to present the notation and preliminary results necessary to prove the above estimates. In Section \ref{sec2} we introduce basic terminology and properties, in Section \ref{sec3} we prove Theorem \ref{thm3.5}, which encapsulates all the different ways to define, equivalently, $\gamma_{\Theta^{1/2},+}$. In Section \ref{sec4} we use a particular characterization of the latter via a variational approach. With it, we construct a Whitney decomposition of a certain family of compact sets, so that we gain control of their $\widetilde{\gamma}_{\Theta^{1/2}}$ capacity. Finally, in Section \ref{sec5}, we prove Theorem \ref{thm5.19}, a general comparability result between $\widetilde{\gamma}_{\Theta^{1/2}}$ and $\gamma_{\Theta^{1/2}}$ if an additional assumption $\Athree$ is satisfied, and in Section \ref{sec6} we remove such assumption and obtain the general result. Our methods are influenced by those of Tolsa \cite[Ch.5]{To3} and Volberg \cite{Vo}, where the same type of comparability results are studied for analytic and Lipschitz harmonic capacity respectively.
    
	In our setting, since $P$ is not an anti-symmetric kernel, such arguments can be applied with some additional assumptions based on those of general $Tb$-theorems (see \cite{NTVo2} and \cite{HyMar}). The fact that $P$ is harmonic outside the hyperplane $\{t=0\}$, an $\pazocal{L}^{n+1}$-null set, and that satisfies being an $n$-dimensional Calderón-Zygmund kernel \cite[Lemma 2.1]{MPr} are the essential features used to carry out similar arguments. Finally, Section \ref{sec7} is devoted to the computation of the $\widetilde{\gamma}_{\Theta^{1/2}}$ capacity of a rectangle $R\subset \mathbb{R}^{2}$, obtaining:
	\begin{equation*}
		\widetilde{\gamma}_{\Theta^{1/2}}(R)\approx \ell_t\, \Bigg[ \frac{1}{2}\ln\bigg( 1+\frac{\ell_t^2}{\ell_x^2}\bigg) +\frac{\ell_t}{\ell_x}\arctan\bigg( \frac{\ell_x}{\ell_t} \bigg) \Bigg]^{-1},
	\end{equation*}
	where $\ell_x$ and $\ell_t$ are the horizontal and vertical side lengths of $R$. The above behavior differs substantially, for example, with that obtained for $\gamma(R)$, the analytic capacity of a rectangle. For the latter one has $\gamma(R)\approx \text{diam}(R)$ (see \cite[Proposition 1.5]{To3}).
    
	Let us also stress one last feature of the above capacities associated to the $1/2$-fractional heat equations: the kernels $P$ and $P^\ast$ are both \textit{nonnegative}. This suggests that, maybe, using classical arguments of potential theory (see \cite[Ch.I \& II]{La}, \cite[Ch.3]{Ra} or \cite{Ki}), the comparability between capacities defined through positive measures and distributions should follow by a simpler argument similar to that of \cite[p.10]{Ve}, provided that one is able to prove the existence of an \textit{equilibrium measure}. The authors have not been able to deduce its existence for $\gamma_{\Theta^{1/2},+}$, the main obstacle being the incapability of obtaining an equivalent formulation of the latter capacity in terms of an (inverted) infimum over energies. The following simple example already illustrates some problems that arise by proceeding this way: pick $E\subset \mathbb{R}^2$ the horizontal line segment $[0,1]\times\{0\}$ and $\mu:=\pazocal{H}^1|_{E}$. In \cite[\textsection 6]{MPr} Mateu \& Prat bound explicitly $P\ast\mu$ by $\pi$, obtaining that $\gamma_{\Theta^{1/2},+}(E)\geq \pi^{-1}$. However, by the definition of $P$ and the choice of $\mu$, it is also clear that $P\ast\mu(x,0)=0$, for any $x\in [0,1]$. Therefore, if we were to compute the energy of $\mu$ we would obtain
	\begin{equation*}
		I[\mu]:=\int P\ast\mu \dd\mu = \int_0^1 P\ast \mu(x,0)\dd\pazocal{H}^1(x)=0,
	\end{equation*}
	which would imply $\gamma_{\Theta^{1/2},+}(E)=+\infty$. The latter example also shows that the potentials $P\ast\mu$ do not obey the so called \textit{maximum principle}, i.e. they do not attain their maximum values at the $\text{supp}(\mu)$. To avoid this problem, the authors have tried to work with the auxiliary potentials $U_\mu(x,t):=\limsup_{(y,s)\to(x,t)} P\ast \mu(y,s)$. For these the maximum principle holds (this follows, essentially, from the fact that $P$ is subharmonic in $\mathbb{R}^{n+1}\setminus{\{0\}}$). However, the potentials $U_\mu$ still lack the \textit{continuity principle}, i.e. if $U_\mu$ restricted to $\text{supp}(\mu)$ is continuous, then $U_\mu$ should be continuous everywhere else (it already fails in the same example of above); which is an essential tool in order to carry out the construction of possible equilibrium measures. In any case, the above classical methods were finally discarded and we chose to follow arguments similar to those of \cite[Ch.5]{To3} and \cite{Vo}. \smallskip
	
	\textit{About the notation used in the sequel}: We use an upper bar to denote generic points in $\mathbb{R}^{n+1}$, as for example $\ox,\oy,\oz\ldots$ For a given $\ox=(x,t)$, $x\in\mathbb{R}^n$ will be usually referred to as the \textit{spatial} variable, and $t\in \mathbb{R}$ the \textit{time} variable.
    
    Constants appearing in this text can depend on the dimension of the ambient space and possibly on the Calderón-Zygmund (CZ) constants of $P$, and their value may change at different occurrences. The notation $A\lesssim B$ means that there exists such a constant $C$, so that $A\leq CB$. Moreover, $A\approx B$ is equivalent to $A\lesssim B \lesssim A$. Also, $A \simeq B$ will mean $A= CB$. 
    
    We will write $B_r(\ox)$ to denote the usual Euclidean ball in $\mathbb{R}^{n+1}$ of radius $r>0$ and center $\ox$. A cube in $\mathbb{R}^{n+1}$ centered at $\ox$ with side length $\ell(Q)$ will be denoted $Q(\ox,\ell(Q))$ and usually named $Q, R, S\ldots$ Cubes will always have sides parallel to the coordinate axes. If we write $\lambda Q$ we mean a dilation of $Q$ of factor $\lambda$. That is, $\lambda Q$ will be the cube concentric with $Q$ of side length $\lambda \ell(Q)$ (we follow analogous conventions with balls).
    
    We also emphasize that the gradient symbol $\nabla$ will refer to $(\nabla_x,\partial_t)$, with $x\in \mathbb{R}^n$ and $t\in \mathbb{R}$. 

    We will also write $\|\cdot \|_{\infty}:=\|\cdot\|_{L^\infty(\mathbb{R}^{n+1})}$.

    The symbol $\chi_A$, where $A\subset \mathbb{R}^{n+1}$ is Borel measurable, means the indicator function that equals $1$ on $A$, and is 0 otherwise.

    For any function $f:\mathbb{R}^{n+1}\to \mathbb{R}$, the symbol $f^\ast$ will denote $f^\ast(\ox):=f(-\ox)$. Hence $f$ being an even function is equivalent to $f=f^\ast$. 
    
    Fourier transforms of smooth functions $f:\mathbb{R}^{n+1}\to\mathbb{R}$ will be denoted by $\pazocal{F}[f]$ or simply as $\widehat{f}$. They will always be taken with respect to spatial variables. For a fixed $t\in\mathbb{R}$, we put $g_t(x):=f(x,t)$ and write $\widehat{f}(\cdot,t):=\widehat{g_t}$.

    Since the Laplacian operator (fractional or not) will frequently appear in our discussion and will always be taken with respect to spatial variables, we will adopt the notation $\Delta:=\Delta_x$.
    
	\section{Notation and basic definitions}
	\label{sec2}
	Our ambient space will be $\mathbb{R}^{n+1}$ and $\Theta^{1/2}$ will denote the \textit{$1/2$-heat operator},
	\begin{equation*}
		\Theta^{1/2} := (-\Delta)^{1/2} + \partial_t,
	\end{equation*}
	where $(-\Delta)^{1/2}$ is a pseudo-differential operator known as the \textit{$1/2$-Laplacian} with respect to the spatial variable. It may be defined through its Fourier transform for every fixed $t$,
	\begin{equation*}
		\pazocal{F}[{(-\Delta)^{1/2}}f](\xi,t)=|\xi|\pazocal{F}[f](\xi,t),
	\end{equation*}
	or by its integral representation
	\begin{align*}
		(-\Delta)^{1/2} f(x,t)&\simeq \text{p.v.}\int_{\mathbb{R}^n}\frac{f(x,t)-f(y,t)}{|x-y|^{n+1}}\text{d}y  \\
		&\simeq \int_{\mathbb{R}^n}\frac{f(x+y,t)-2f(x,t)+f(x-y,t)}{|y|^{n+1}}\text{d}y. 
	\end{align*}
	The reader may find more details about the properties of such operator in \cite[\textsection{3}]{DPV} or \cite{St}. Borrowing the notation of \cite{MPr}, let $P$ be the fundamental solution of the $1/2$-heat equation in $\mathbb{R}^{n+1}$, which is given by \cite[Eq. 2.2]{Va}
	\begin{equation*}
		P(\ox)= \frac{t}{|\ox|^{n+1}}\, \chi_{\{t>0\}}(\ox).
	\end{equation*}
	Notice that the previous kernel is locally integrable in $\mathbb{R}^{n+1}$, continuous in $\mathbb{R}^{n+1}\setminus{\{0\}}$ and not differentiable at any point of the hyperplane $\{t=0\}$. Another function that will frequently appear is $P^{\ast}$,
	\begin{equation*}
		P^{\ast}(\ox):=P(-\ox)= \frac{-t}{|\ox|^{n+1}}\, \chi_{\{t<0\}}(\ox),
	\end{equation*}
	that is the conjugate kernel associated to $P$. For a given distribution $T$, we also define $T^\ast$ to be the distribution acting as
	\begin{equation*}
		\langle T^\ast, \varphi \rangle := \langle T, \varphi^\ast \rangle, \quad \forall \varphi \in \pazocal{C}^\infty_c(\mathbb{R}^{n+1}). 
	\end{equation*}
	It is not hard to check (approximating via test functions and using the associativity of \cite[Theorem 8.15]{Ca}) that for any $T$ distribution and $S$ distribution with compact support,
	\begin{equation}
		\label{eq2.1}
		\langle T\ast S, \varphi \rangle = \langle S, T^\ast \ast \varphi \rangle, \quad \forall \varphi \in \pazocal{C}^\infty_c(\mathbb{R}^{n+1}).
	\end{equation}
	
	With this notions we observe that on the one hand, for any $\ox=(x,t)\in \mathbb{R}^{n+1}$ we have
	\begin{align*}
		(-\Delta)^{1/2}&P^{\ast}(\ox)=C'\int_{\mathbb{R}^n}\frac{P^{\ast}(x+y,t)-2P^{\ast}(x,t)+P^{\ast}(x-y,t)}{|y|^{n+1}}\text{d}y\\
		&\hspace{-0.2cm}=C'\int_{\mathbb{R}^n}\frac{P(-x-y,-t)-2P(-x,-t)+P(-x+y,-t)}{|y|^{n+1}}\text{d}y =(-\Delta)^{1/2}P(-\ox).
	\end{align*}
	That is, we have  
	\begin{equation*}
		(-\Delta)^{1/2}P^{\ast} = \big[ (-\Delta)^{1/2}P \big]^\ast.
	\end{equation*}
	On the other hand, we also have for any $\varphi$ test function in $\mathbb{R}^{n+1}$,
	\begin{equation*}
		\langle (\partial_t P)^\ast, \varphi \rangle := \langle \partial_t P, \varphi^\ast \rangle := - \langle P, \partial_t \varphi^\ast \rangle = \langle P, (\partial_t \varphi)^\ast \rangle = \langle P^\ast, \partial_t \varphi \rangle =: -\langle \partial_tP^\ast, \varphi \rangle.
	\end{equation*}
	Therefore, the following distributional identity holds
	\begin{equation*}
		(\partial_t P)^\ast = -\partial_t P^\ast.
	\end{equation*}
	Hence, if we define the operator
	\begin{equation*}
		\overline{\Theta}^{1/2} := (-\Delta)^{1/2} - \partial_t,
	\end{equation*}
	we have that
	\begin{equation*}
		\overline{\Theta}^{1/2}P^{\ast} = \big[ \Theta^{1/2} P \big]^\ast = \delta_0^\ast = \delta_0,
	\end{equation*}
	implying that $P^{\ast}$ is the fundamental solution of $\overline{\Theta}^{1/2}$. We will refer to $P^\ast$ and $\overline{\Theta}^{1/2}$ as the \textit{conjugates} of $P$ and $\Theta^{1/2}$ respectively.
    
	We recall the definition of the $1/2$\textit{-caloric capacity} and some of its fundamental variants introduced in \cite{MPr}. For $E\subset \mathbb{R}^{n+1}$ compact, define its $1/2$-caloric capacity as
	\begin{equation*}
		\gamma_{\Theta^{1/2}}(E)=\sup |\langle T, 1 \rangle| ,
	\end{equation*}
	where the supremum is taken over all distributions $T$ with $\text{supp}(T)\subset E$ satisfying
	\begin{equation*}
		\|P\ast T\|_{\infty}:=\|P\ast T\|_{L^\infty(\mathbb{R}^{n+1})} \leq 1.
	\end{equation*}
	Such distributions will be called \textit{admissible for $\gamma_{\Theta^{1/2}}(E)$}. In \cite[Lemma 3.4]{MPr} it is proved that for a distribution $T$ in $\mathbb{R}^{n+1}$ with $\|P\ast T\|_\infty\leq 1$, if $\varphi$ is a test function supported on $Q\subset \mathbb{R}^{n+1}$ a cube, with $\|\nabla \varphi\|_\infty \leq \ell(Q)^{-1}$, then
	\begin{equation*}
		|\langle T, \varphi \rangle |\leq C \ell(Q)^n,
	\end{equation*}
	for some constant $C>0$, that we may assume $C\geq 1$ without loss of generality. 
    \begin{defn}
        If the previous property holds for a distribution $T$, we say that $T$ has \textit{growth} of degree $n$ or simply $n$-growth (\textit{with constant $C$}). We write $\pazocal{T}_n(E)$ those distributions with $n$-growth and constant 1 supported on a compact set $E\subset \mathbb{R}^{n+1}$.
    \end{defn}
    It is easy to see that the previous definition of growth for a distribution generalizes the usual notion of growth if $T$ coincides with a positive measure.

    \begin{defn}
        Recall that a positive measure $\mu$ is said to have $n$-growth with constant $C$ if $\mu\big( B(\ox,r) \big) \leq Cr^n$, for all $\ox\in \mathbb{R}^{n+1},\, r>0$. We also write $\Sigma_n(E)$ the collection measures with $n$-growth and constant 1 supported on a compact set $E\subset \mathbb{R}^{n+1}$.    
    \end{defn}

	It is clear that the above properties are invariant if formulated using cubes instead of balls.
    
	The fact that $C$ depends only on the dimension of the ambient space and the CZ constants of $P$, motivates the following redefinition of $\gamma_{\Theta^{1/2}}$, that will be the one used throughout the whole text:
	\begin{defn}
		For a compact subset $E\subset \mathbb{R}^{n+1}$, define its \textit{$1/2$-caloric capacity} as
		\begin{equation*}
			\gamma(E)=\gamma_{\Theta^{1/2}}(E):=\sup |\langle T, 1 \rangle| ,
		\end{equation*}
		the supremum taken over distributions in $\pazocal{T}_n(E)$ and satisfying $\|P\ast T\|_{\infty}\leq 1$.
	\end{defn}
	
	We also define the ($1/2,+$)-\textit{caloric capacity}, denoted by $\gamma_{\Theta^{1/2},+}$, in the same way as $\gamma_{\Theta^{1/2}}$, but with the supremum only taken with respect to positive Borel measures. More precisely,
	\begin{equation*}
		\gamma_{+}(E)=\gamma_{\Theta^{1/2},+}(E):=\sup \big\{ \mu(E)\,:\, \mu \in \Sigma_n(E), \, \|P\ast \mu\|_\infty \leq 1 \big\},
	\end{equation*}
	Analogous definitions are associated with the operator $\overline{\Theta}^{1/2}$, giving rise to the objects
	\begin{equation*}
		\overline{\gamma}=\gamma_{\overline{\Theta}^{1/2}} \quad \text{and} \quad \overline{\gamma}_+=\gamma_{\overline{\Theta}^{1/2},+}.
	\end{equation*}
    Moreover, it is usual to extend all of the above definitions to a greater variety of sets. Namely, if $E\subseteq \mathbb{R}^{n+1}$ is any Borel set,
	\begin{equation*}
		\gamma(E):=\sup_{\substack{K\subseteq E\\ K \text{compact}}} \gamma(K),
	\end{equation*}
	and similarly for the rest of capacities.
    
	In some of the results presented in \cite[\textsection 3]{MPr}, one encounters expressions of the form $\langle T, \varphi\rangle$, where $\varphi$ is a compactly supported $\pazocal{C}^1$ function, and $T$ is a compactly supported distribution satisfying $\|P\ast T\|_\infty\leq 1$. This last estimate and the fact that $(-\Delta)^{1/2}\varphi\in L^1(\mathbb{R}^{n+1})$, proved in \cite[\textsection 3]{MPr}, allows to give meaning to $\langle T, \varphi\rangle$ (that a priori may not have sense since $\varphi$ is not a test function) as follows:
	\begin{equation*}
		\langle T, \varphi\rangle := \big\langle P\ast T, \overline{\Theta}^{1/2}\varphi \big\rangle = \big\langle P\ast T, (-\Delta)^{1/2}\varphi+\partial_t\varphi \big\rangle.
	\end{equation*}

	We need to extend such definition to a slightly wider class of functions. We do not claim that such class will be the largest where $\langle T, \varphi \rangle$ can be defined if $T$ is a compactly supported distribution with $\|P\ast T\|_\infty\leq 1$, but it will suffice for our purposes.
	\begin{defn}
		Let $Q\subset \mathbb{R}^{n+1}$ be a cube, $\pazocal{N}\subset \mathbb{R}^{n+1}$ a $\pazocal{L}^{n+1}$-null set, and $\varphi:\mathbb{R}^{n+1}\to \mathbb{R}$ a function. For $m\in\mathbb{N}\cup\{\infty\}$ we will write
		\begin{equation*}
			\varphi\in \pazocal{C}_c(Q)\cap \Big( \pazocal{C}_{x}^{m}(Q), \pazocal{C}_{t}^{m}(Q\setminus{\pazocal{N}}) \Big) \quad \text{or simply} \quad \varphi\in \pazocal{C}^m_{c,\pazocal{N}}(Q),
		\end{equation*}
		if $\varphi$ is continuous and compactly supported on $Q$, $m$ times continuously differentiable with respect to the spatial variables, and $m$ times continuously differentiable with respect to the temporal variable $t$ except for a null set $\pazocal{N}$ (where it may not even be differentiable). We also write
		\begin{equation*}
			\varphi\in \pazocal{C}(Q)\cap \Big( \pazocal{C}_{x}^{m}(Q), \pazocal{C}_{t}^{m}(Q\setminus{\pazocal{N}}) \Big) = \pazocal{C}^m_{\pazocal{N}}(Q),
		\end{equation*}
		if $\varphi$ satisfies the above regularity properties on $Q$, but we do not require it to be compactly supported there (in fact, it may even have a larger domain of definition). In the sequel we will only be interested in the case $m=\infty$.
	\end{defn}
	For $\varphi\in \pazocal{C}^{m}_{c,\pazocal{N}}(Q)$ one can define $\langle T, \varphi \rangle$ as above, assuming $\|P\ast T \|_\infty \leq 1$ and $T$ compactly supported. Indeed, observe that we only have to give meaning to $\langle P\ast T, (-\Delta)^{1/2}\varphi\rangle$ and $\langle P\ast T, \partial_t\varphi\rangle$. The former term can be defined as in \cite[\textsection 3]{MPr}, where the integrability of $(-\Delta)^{1/2}\varphi$ is proved using only regularity assumptions over the spatial variables. Regarding the term $\langle P\ast T, \partial_t\varphi\rangle$, since $P\ast T \in L^\infty(\mathbb{R}^{n+1})$, it can be simply defined as
	\begin{equation*}
		\langle P\ast T, \partial_t\varphi\rangle := \int_{\text{supp}(\varphi)\setminus{\pazocal{N}}} P\ast T(\ox)\partial_t\varphi(\ox)\dd\ox.
	\end{equation*}
	
	\begin{rem}
		\label{rem2.1}
		We notice that if for $\varphi \in \pazocal{C}_{c,\pazocal{N}}^1(Q)$ it also happens that $\|\nabla \varphi \|_\infty \leq \ell(Q)^{-1}$, then \cite[Corollary 3.3]{MPr} also holds with exactly the same proof for such $\varphi$; and therefore also does \cite[Theorem 3.1]{MPr} (localization of potentials) and \cite[Lemma 3.4]{MPr} ($n$-growth of admissible distributions). Here, condition $\|\nabla \varphi \|_\infty \leq \ell(Q)^{-1}$ must be understood as: for every test function $\psi$,
		\begin{equation*}
			|\langle \nabla \varphi, \psi \rangle | = \bigg\rvert \int_Q \varphi (\nabla \psi) \, \dd\pazocal{L}^{n+1}\bigg\rvert = \bigg\rvert \int_{Q\setminus{\pazocal{{N}}}} (\nabla \varphi) \psi \, \dd\pazocal{L}^{n+1}\bigg\rvert \leq \ell(Q)^{-1}\|\psi\|_{L^1}. 
		\end{equation*}
	\end{rem}
	To end this introductory section, we define more capacities that are already presented in \cite{MPr} and some other auxiliary ones. First, the so called 1/2-\textit{symmetric caloric capacities}, where the normalization conditions for the potentials against $P$ and $P^\ast$ are both required:
	\begin{align*}
		\widetilde{\gamma}(E)=\widetilde{\gamma}_{\Theta^{1/2}}(E)&:= \sup \big\{ |\langle T, 1 \rangle|\,:\, T \in \pazocal{T}_n(E), \, \|P\ast T\|_\infty \leq 1, \|P^\ast \ast T\|_\infty \leq 1 \big\},\\
		\widetilde{\gamma}_+(E)=\widetilde{\gamma}_{\Theta^{1/2},+}(E)&:= \sup \big\{ \mu(E)\,:\, \mu \in \Sigma_n(E), \, \|P\ast \mu\|_\infty \leq 1, \|P^\ast \ast \mu\|_\infty \leq 1 \big\}.
	\end{align*}
	We also have the following auxiliary capacities, that we only consider in the context of positive measures:
	\begin{align*}
		\gamma_{\text{\normalfont{sy}},+}(E)&:= \sup \big\{ \mu(E)\,:\, \mu \in \Sigma_n(E), \, \|P_{\text{\normalfont{sy}}}\ast \mu\|_\infty \leq 1\big\},
	\end{align*}
	where $\Psy$ is the symmetric part of $P$, that is,
	\begin{equation*}
		P_{\text{\normalfont{sy}}}(\ox):=\frac{P(\ox)+P(-\ox)}{2} =  \frac{|t|}{2|\ox|^{n+1}}.
	\end{equation*}
	Finally, we also consider the following capacities defined via a normalization condition that involves an $L^2(\mu)$ operator bound:
	\begin{align*}
		\gamma_{\text{op}}(E) = \gamma_{\Theta^{1/2},\text{op}}(E) &:= \sup\big\{\mu(E)\;:\; \mu\in \Sigma_n(E),\; \|\PP_\mu\|_{L^2(\mu)\to L^2(\mu)}\leq 1  \big\},\\
		\gamma_{\text{sy},\text{op}}(E) &:= \sup\big\{\mu(E)\;:\; \mu\in \Sigma_n(E),\; \|\PP_{\text{sy},\mu}\|_{L^2(\mu)\to L^2(\mu)}\leq 1  \big\},
	\end{align*}
	where $\PP_\mu$ and $\PP_{\text{sy},\mu}$ are the convolution operators associated to $P$ and $\Psy$ with respect to the measure $\mu$. For any $\varepsilon>0$, we will also denote by $\PP_{\mu,\varepsilon}$ and $\PP_{\text{sy},\mu,\varepsilon}$ the convolution operators associated to the truncated kernels $P(\ox)\chi_{\{|\ox|>\varepsilon\}}$ and $\Psy(\ox)\chi_{\{|\ox|>\varepsilon\}}$ respectively. With this definition, we will understand  
	\begin{equation*}
		\|\PP_\mu\|_{L^2(\mu)\to L^2(\mu)}:= \sup_{\varepsilon>0} \|\PP_{\mu,\varepsilon}\|_{L^2(\mu)\to L^2(\mu)},
	\end{equation*}
	and analogously for $\Psy$. We will frequently use the notation:
	\begin{equation*}
		\PP(f\mu):=\PP_\mu f := P\ast (f\mu),
	\end{equation*}
	and normally $f$ will happen to be the constant function $1$, so the reader has to understand $\PP\mu$ as $\PP_\mu1$. The same considerations will be done for $\PP_{\text{sy}}$ and for $\PP^\ast$, the latter being the convolution operator associated to the conjugate kernel $P^\ast$.
    
	From the results of \cite{MPr} and \cite{He} one can deduce an important property of the above different capacities: most of them are comparable.
	\begin{thm}
		\label{thm2.1}
		The previous capacities defined through positive measures are all comparable. That is, for any compact set $E\subset \mathbb{R}^{n+1}$,
		\begin{align*}
			\gamma_{+}(E) \approx \overline{\gamma}_+(E) \approx \widetilde{\gamma}_{+}(E)  &\approx \gamma_{\text{\normalfont{sy}},+}(E)\\
			&\approx \gamma_{\text{\normalfont{op}}}(E) \approx \gamma_{\text{\normalfont{sy,op}}}(E).
		\end{align*}
	\end{thm}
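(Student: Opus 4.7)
The plan is to reduce the ten-way equivalence to a short collection of building blocks and then chain them together. Throughout I use that the $n$-growth hypothesis with constant $1$ is preserved under the rescaling $\mu\mapsto \lambda\mu$ for $\lambda\le 1$, so any pointwise potential bound of the form $\|\cdot\|_\infty\le C$ can be normalized to $1$ at the cost of an absolute constant. The first building block is elementary: from the identity $2\Psy=P+P^{\ast}$ and the nonnegativity $0\le P,P^{\ast}\le 2\Psy$, for every $\mu\in\Sigma(E)$ one has
\begin{equation*}
\tfrac{1}{2}\max\!\bigl(\|P\ast\mu\|_\infty,\|P^{\ast}\ast\mu\|_\infty\bigr)\le \|\Psy\ast\mu\|_\infty\le \tfrac{1}{2}\bigl(\|P\ast\mu\|_\infty+\|P^{\ast}\ast\mu\|_\infty\bigr),
\end{equation*}
and the analogous estimate with $L^\infty(\mu)$ norms. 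This yields at once $\widetilde{\gamma}_+\approx \gamma_{\text{sy},+}$ and $\widetilde{\gamma}'_+\approx \gamma'_{\text{sy},+}$, together with the one-sided bounds $\gamma_{\text{sy},+}\lesssim \min(\gamma_+,\overline{\gamma}_+)$ and their primed analogues; the trivial containments of admissibility classes supply the remaining easy chain $\gamma_+\le \gamma'_+$, $\overline{\gamma}_+\le \overline{\gamma}'_+$, $\widetilde{\gamma}_+\le \widetilde{\gamma}'_+\le \min(\gamma'_+,\overline{\gamma}'_+)$.

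The second building block, of nonhomogeneous $T1$ flavour, is the main content of \cite{MPr}: the equivalences $\gamma'_+\approx \gamma_+\approx \gamma_{\text{op}}$, $\overline{\gamma}'_+\approx \overline{\gamma}_+$ and $\gamma'_{\text{sy},+}\approx \gamma_{\text{sy},+}\approx \gamma_{\text{sy,op}}$. Given a measure $\mu$ with $\|P\ast\mu\|_{L^\infty(\mu)}\le 1$, one extracts via a good-$\lambda$ suppression on the maximal function of the potential a comparable submeasure whose associated convolution operator is $L^2(\mu)$-bounded; conversely, $L^2(\mu)$-boundedness of $\PP_\mu$ combined with $n$-growth yields a globally bounded potential through duality. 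The operator capacities themselves satisfy $\gamma_{\text{op}}\approx \gamma_{\text{sy,op}}$ because $P-\Psy=\tfrac{1}{2}(P-P^{\ast})$ is an odd $n$-dimensional Calder\'on--Zygmund kernel (by \cite[Lemma 2.1]{MPr}), so the associated antisymmetric convolution operator is bounded on $L^2(\mu)$ for every $\mu\in\Sigma(E)$ by classical CZ theory.

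The final step, which is where \cite{He} enters, supplies the still missing inequality $\gamma_+\lesssim \widetilde{\gamma}_+$ (equivalently $\gamma_+\lesssim \gamma_{\text{sy},+}$) by factoring through $\gamma_+\approx \gamma_{\text{op}}\approx \gamma_{\text{sy,op}}\approx \gamma_{\text{sy},+}$; the analogous chain with $\overline{\gamma}$ in place of $\gamma$ then closes $\overline{\gamma}_+\approx \widetilde{\gamma}_+$ as well. The main obstacle of the whole argument is localized precisely here: unilateral admissibility against $P$ does not a priori yield admissibility against $P^{\ast}$, and the bridge must be built through the operator norm of $\PP_{\text{sy},\mu}$, which simultaneously encodes information about both $P$ and its conjugate. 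Assembling (i) the pointwise kernel comparisons of the first paragraph, (ii) $\gamma_{\text{op}}\approx \gamma_{\text{sy,op}}$ from classical Calder\'on--Zygmund theory, and (iii) the $T1$-type transfers borrowed from \cite{MPr} and \cite{He} then completes the cycle of ten equivalences.
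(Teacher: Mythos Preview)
Your overall architecture is right, but there is a genuine gap in the second building block. You claim that $\gamma_{\text{op}}\approx\gamma_{\text{sy,op}}$ because the antisymmetric part $P-\Psy=\tfrac12(P-P^{\ast})$ is an odd $n$-dimensional Calder\'on--Zygmund kernel and hence ``the associated antisymmetric convolution operator is bounded on $L^2(\mu)$ for every $\mu\in\Sigma(E)$ by classical CZ theory.'' This is false. A direct computation gives $P(\ox)-P^{\ast}(\ox)=t/|\ox|^{n+1}$, which is precisely the temporal component of the $n$-dimensional Riesz kernel in $\mathbb{R}^{n+1}$. Boundedness of Riesz transforms on $L^2(\mu)$ for a general $n$-growth measure $\mu$ is \emph{not} automatic; it encodes deep geometric information about $\mu$ (this is the David--Semmes circle of ideas, and already for $n=1$ it is the heart of the analytic capacity theory you are invoking elsewhere). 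So from $\|\PP_{\text{sy},\mu}\|_{L^2(\mu)\to L^2(\mu)}\le 1$ you cannot conclude $\|\PP_\mu\|_{L^2(\mu)\to L^2(\mu)}\lesssim 1$.

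The paper does not attempt this direct operator-norm comparison. The easy direction $\gamma_{\text{op}}\le\gamma_{\text{sy,op}}$ holds because $\PP_\mu^{\ast}$ is the Hilbert-space adjoint of $\PP_\mu$ and hence has the same norm, so $\|\PP_{\text{sy},\mu}\|\le\|\PP_\mu\|$. For the hard direction $\gamma_{\text{sy,op}}\lesssim\gamma_{\text{op}}$ the paper routes through the potential capacities: given $\mu$ admissible for $\gamma_{\text{sy,op}}$, the $L^2(\mu)$-boundedness of the symmetric operator $\PP_{\text{sy},\mu}$ yields the weak-type $(1,1)$ bound $M(\mathbb{R}^{n+1})\to L^{1,\infty}(\mu)$, and then the dualization theorem of Christ (which uses $\PP_{\text{sy}}=\PP_{\text{sy}}^{\ast}$) produces $h:E\to[0,1]$ with $\int h\,d\mu\gtrsim\mu(E)$ and $\|\PP_{\text{sy}}(h\mu)\|_\infty\lesssim 1$. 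This gives $\gamma_{\text{sy,op}}\lesssim\gamma_{\text{sy},+}$, and one closes the loop via $\gamma_{\text{sy},+}\approx\gamma_{\text{op}}$, which is already available from \cite{MPr} and \cite{He}. Your final chain $\gamma_+\approx\gamma_{\text{op}}\le\gamma_{\text{sy,op}}\lesssim\gamma_{\text{sy},+}\approx\widetilde{\gamma}_+$ is therefore salvageable, but the middle two steps require the adjoint observation and the Christ dualization respectively, not the (false) automatic boundedness of the antisymmetric part.
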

	\begin{rem}
		\label{rem2.2}
		To be precise, the capacity $\gamma_{\text{\normalfont{sy,op}}}$ is not introduced in the aforementioned references. To deduce the comparability of $\gamma_{\text{\normalfont{sy,op}}}$ to all the rest it suffices to check $\gamma_{\text{sy,op}}\lesssim \gamma_{\text{op}}$, since the reverse inequality can be verified to hold easily. Fix $\mu$ any admissible measure for $\gamma_{\text{\normalfont{sy,op}}}(E)$ with $\gamma_{\text{\normalfont{sy,op}}}(E)\leq 2\mu(E)$. Notice that since $\PP_\mu$ and $\PP^\ast_\mu$ are $n$-dimensional CZ operators, then $\PP_{\text{sy},\mu}$ also satisfies such property. Therefore, following an analogous proof to \cite[Theorem 2.16]{To3} one deduces, from the $L^2(\mu)$-boundedness of $\PP_{\text{sy},\mu}$, its boundedness as an operator from the space of Borel finite signed measures $M(\mathbb{R}^{n+1})$ to $L^{1,\infty}(\mu)$. That is, there exists a constant $C>0$ such that for any $\nu\in M(\mathbb{R}^{n+1})$,
		\begin{equation*}
			\mu\big( \big\{ |\PP_{\text{sy},\varepsilon}\nu(\ox)|>\lambda \big\} \big)\leq C\frac{\|\nu\|}{\lambda}, \qquad \forall \varepsilon>0, \,\forall\lambda>0.
		\end{equation*}
		With this, and using that $\PP_{\text{sy}}=\PP_{\text{sy}}^\ast$ (the operator is its own adjoint), applying for example \cite[Ch.VII, Theorem 23]{Ch}, we can find a function $h:E\to [0,1]$ such that $\int_E h\text{d}\mu \geq \mu(E)/2$ and $\|\PP_{\text{sy}} (h\mu)\|_\infty < 4C$. Then,
		\begin{equation*}
			\gamma_{\text{sy},+}(E)\geq \frac{1}{4C}\int_E h\text{d}\mu \geq \frac{\mu(E)}{8C}\geq \frac{1}{16C}\,\gamma_{\text{sy,op}}(E),
		\end{equation*}
		that implies $\gamma_{\text{sy,op}}(E)\lesssim \gamma_{\text{sy},+}(E)\approx \gamma_{\text{op}}(E)$, and we are done.
	\end{rem}
	
	\section{Even more capacities}
	\label{sec3}
	
	The main goal of this section is to add three additional ways of understanding $1/2$-caloric capacities: one via an $L^2(\mu)$ normalization condition, another using a uniform bound at \textit{all} points of the compact set, and a last one formulated in a variational way. All the results proved in this section are summarized in Theorem \ref{thm3.5}.
	
	\subsection{\mathinhead{L^2(\mu)}{} normalization. Suppressed kernels.} 
	\label{subsec3.1}
	Given a convolution kernel one may define its \textit{suppressed version} in terms of a general nonnegative 1-Lipschitz function (with constant 1). However, for our purposes we do not need to work with such generality, and we will focus on a particular type of functions. Given a closed set $F\subset \mathbb{R}^{n+1}$, we will typically have
	\begin{equation*}
		\Lambda(\ox):= \text{dist}(\ox,F), \quad \forall \ox\in \mathbb{R}^{n+1}.
	\end{equation*}
	The above $\Lambda$ is indeed a 1-Lipschitz function: take any $\ox,\oy\in \mathbb{R}^{n+1}$ and observe that for any $\oz\in F$ we have $\Lambda(\ox)\leq |\ox-\oz|\leq |\ox-\oy|+|\oy-\oz|$. Since $\oz$ is arbitrary, $\Lambda(\ox)\leq |\ox-\oy|+\Lambda(\oy) \, \leftrightarrow \, \Lambda(\ox)-\Lambda(\oy)\leq |\ox-\oy|$. Repeating the same argument changing the roles of $\ox$ and $\oy$ we deduce the desired Lipschitz property. We define the suppressed kernel associated to $P$, that depends on $\Lambda$ and a previously fixed closed set $F$, as:
	\begin{equation}
		\label{eq3.1}
		P_\Lambda(\ox,\oy):=\frac{P(\ox-\oy)}{1+P(\ox-\oy)^2\Lambda(\ox)^n\Lambda(\oy)^n},
	\end{equation}
	for all the pair of points where such expression makes sense. Notice that it is also a nonnegative kernel and differentiable, with respect to $\ox$ or $\oy$, out of a set of null Lebesgue measure. We may also define $P_\Lambda^\ast$ and $P_{\text{sy},\Lambda}$ in an analogous way. In fact, for the sake of notation, we shall prove three basic properties only for $P_\Lambda$, but they can be checked to hold also for $P_\Lambda^\ast$ and $P_{\text{sy},\Lambda}$ with almost exactly the same proofs. Essentially the convolution kernels $P_\Lambda, P_\Lambda^\ast$ and $P_{\text{sy},\Lambda}$ in $\mathbb{R}^{n+1}$ define $n$-dimensional CZ operators. In fact, such suppressed kernels satisfy \textit{3} as an additional property.
	
	\begin{lem}
		\label{lem3.1}
		Let $\Lambda:\mathbb{R}^{n+1}\to \mathbb[0,\infty)$ be a 1-Lipschitz function. Then, kernels $P_\Lambda, P_\Lambda^\ast$ and $P_{\text{\normalfont{sy}},\Lambda}$ define $n$-dimensional CZ operators which satisfy being ``well suppressed" at the points where $\Lambda>0$. Namely, if we fix any $\ox\neq \oy$ points in $\mathbb{R}^{n+1}$, the precise estimates that $P_\Lambda$ satisfies $($as well as $P_\Lambda^\ast$ and $P_{\text{\normalfont{sy}},\Lambda})$ are
		\begin{enumerate}[leftmargin=*,itemsep=0.1cm]
			\item[\textit{1}.] $|P_\Lambda(\ox,\oy)|\leq |\ox-\oy|^{-n}$.
			\item[\textit{2}.] For any $\ox'\in \mathbb{R}^{n+1}$ satisfying $|\ox-\ox'|\leq |\ox-\oy|/2$,
			\begin{equation*}
				\big\rvert P_\Lambda(\ox,\oy) - P_\Lambda(\ox',\oy) \big\rvert \lesssim \frac{|\ox-\ox'|}{|\ox-\oy|^{n+1}} \quad \text{and} \quad  \big\rvert P_\Lambda(\oy,\ox) - P_\Lambda(\oy,\ox') \big\rvert \lesssim \frac{|\ox-\ox'|}{|\ox-\oy|^{n+1}}.
			\end{equation*}
			In particular, where the differentiation is well-defined,
			\begin{equation*}
				\big\rvert \nabla_{\ox} P_\Lambda(\ox,\oy) \big\rvert + \big\rvert \nabla_{\oy} P_\Lambda(\ox,\oy) \big\rvert \lesssim \frac{1}{|\ox-\oy|^{n+1}}.
			\end{equation*}
			\item[\textit{3}.] $|P_\Lambda(\ox,\oy)|\lesssim \min\big\{ \Lambda(\ox)^{-n}, \Lambda(\oy)^{-n} \big\}$.
		\end{enumerate}
	\end{lem}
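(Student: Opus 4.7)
The proof I would write follows the standard pattern for verifying that suppressed Calderón–Zygmund kernels in the sense of Nazarov–Treil–Volberg inherit the original kernel's C–Z bounds. It relies on only three ingredients: the pointwise bound $P(\ox)\leq |\ox|^{-n}$ (which follows from $t\leq |\ox|$ on the support of $P$), the gradient estimate $|\nabla P(\ox)|\lesssim |\ox|^{-n-1}$ wherever $P$ is differentiable (already contained in \cite[Lemma 2.1]{MPr}), and the $1$-Lipschitz property of $\Lambda$. Since $P^\ast(\ox)=P(-\ox)$ and $\Psy=(P+P^\ast)/2$ enjoy exactly these three features, I would carry out all computations only for $P_\Lambda$ and note that the same proof applies verbatim to $P^\ast_\Lambda$ and $P_{\text{sy},\Lambda}$.

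Property \textit{1} is immediate: the denominator of $P_\Lambda(\ox,\oy)$ is at least $1$, hence $|P_\Lambda(\ox,\oy)|\leq P(\ox-\oy)\leq |\ox-\oy|^{-n}$. Property \textit{3} follows from AM–GM applied to the denominator: $1+P^2\Lambda(\ox)^n\Lambda(\oy)^n \geq 2 P\Lambda(\ox)^{n/2}\Lambda(\oy)^{n/2}$, which gives
\begin{equation*}
|P_\Lambda(\ox,\oy)| \leq \frac{1}{2(\Lambda(\ox)\Lambda(\oy))^{n/2}} \leq \frac{1}{2\min(\Lambda(\ox),\Lambda(\oy))^n},
\end{equation*}
the last step because if, say, $\Lambda(\ox)\leq \Lambda(\oy)$ then $\Lambda(\ox)^{n/2}\Lambda(\oy)^{n/2}\geq \Lambda(\ox)^n$.

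Property \textit{2} is the real work. Writing $u:=P(\ox-\oy)\Lambda(\ox)^{n/2}\Lambda(\oy)^{n/2}$, the chain rule gives, at points where both $P$ and $\Lambda$ are differentiable,
\begin{equation*}
\nabla_{\ox} P_\Lambda(\ox,\oy) \;=\; \frac{(1-u^2)\,\nabla_{\ox}P(\ox-\oy)\;-\;n\,P(\ox-\oy)^3\Lambda(\ox)^{n-1}\Lambda(\oy)^n\,\nabla\Lambda(\ox)}{(1+u^2)^2}.
\end{equation*}
The first summand is bounded in absolute value by $|\nabla P(\ox-\oy)|/(1+u^2)\lesssim |\ox-\oy|^{-n-1}$, since $|1-u^2|\leq 1+u^2$. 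For the second summand, using $|\nabla\Lambda|\leq 1$ a.e., I would split two cases. If $\Lambda(\ox)\geq |\ox-\oy|/2$, the universal bound $u^2/(1+u^2)^2\leq 1/4$ combined with $P\leq |\ox-\oy|^{-n}$ yields the desired estimate immediately. If instead $\Lambda(\ox)<|\ox-\oy|/2$, the $1$-Lipschitz property forces $\Lambda(\oy)<\tfrac{3}{2}|\ox-\oy|$, so $u^2\leq (3/4)^n$ is bounded by a dimensional constant; then replacing $u^2/(1+u^2)^2$ by $u^2$ and unfolding $u^2 = P^2\Lambda(\ox)^n\Lambda(\oy)^n$ collapses all the $|\ox-\oy|$-powers to a clean $|\ox-\oy|^{-n-1}$.

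Finally, the difference bound in property \textit{2} would follow by integrating the pointwise gradient along the segment from $\ox$ to $\ox'$: the hypothesis $|\ox-\ox'|\leq|\ox-\oy|/2$ forces every point on this segment to lie at distance $\approx |\ox-\oy|$ from $\oy$, so the integrand is uniformly $\lesssim |\ox-\oy|^{-n-1}$, and the Lebesgue-null set where $P$ or $\Lambda$ fails to be differentiable is irrelevant for the line integration. The symmetric estimate in the second variable is identical, using $\nabla_{\oy}P_\Lambda$, and the gradient bound in the ``in particular'' clause reads off directly from the chain-rule formula above. The main obstacle, as I see it, is merely keeping the two-case split clean in the second summand; everything else is mechanical.
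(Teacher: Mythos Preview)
Your argument for property \textit{3} has a genuine gap. The AM--GM step gives
\[
|P_\Lambda(\ox,\oy)|\leq \frac{1}{2\bigl(\Lambda(\ox)\Lambda(\oy)\bigr)^{n/2}}\leq \frac{1}{2\min\{\Lambda(\ox),\Lambda(\oy)\}^{n}},
\]
but $\min\{\Lambda(\ox),\Lambda(\oy)\}^{-n}=\max\{\Lambda(\ox)^{-n},\Lambda(\oy)^{-n}\}$, whereas the lemma asks for $\min\{\Lambda(\ox)^{-n},\Lambda(\oy)^{-n}\}=\max\{\Lambda(\ox),\Lambda(\oy)\}^{-n}$. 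These differ dramatically: if $\Lambda(\ox)=0$ and $\Lambda(\oy)$ is large, your bound is vacuous while the lemma's is not. The $1$-Lipschitz hypothesis on $\Lambda$, which you never invoke in this part, is essential here. The paper's route is: suppose $\Lambda(\ox)=\max\{\Lambda(\ox),\Lambda(\oy)\}$ and split according to whether $|\ox-\oy|\leq \Lambda(\ox)/2$. If so, Lipschitzness forces $\Lambda(\oy)\geq\Lambda(\ox)/2$, and then your AM--GM (or a direct estimate) finishes. If not, property \textit{1} already gives $|P_\Lambda|\leq|\ox-\oy|^{-n}\leq 2^n\Lambda(\ox)^{-n}$. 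This case split is the missing idea, and the strong form of \textit{3} is genuinely used later (e.g.\ in Lemma~\ref{lem3.2}, where one bounds $P_{\text{sy},\Lambda}(\oz,\oy)$ by $\Lambda(\oz)^{-n}$ for $\oy$ with possibly $\Lambda(\oy)=0$).

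Your treatment of property \textit{2} is correct in spirit and takes the reverse route from the paper: you bound the gradient first and then integrate along the segment $[\ox,\ox']$, whereas the paper estimates the difference $P_\Lambda(\ox,\oy)-P_\Lambda(\ox',\oy)$ directly via an algebraic decomposition and only afterwards deduces the gradient bound as a limiting case. Your chain-rule computation and the two-case split on the second summand are fine. The line-integration step deserves one more sentence of care: Rademacher gives a.e.\ differentiability of $\Lambda$ in $\mathbb{R}^{n+1}$, not a.e.\ along a fixed segment; but since the restriction of $P_\Lambda(\cdot,\oy)$ to the segment is itself Lipschitz (it is built from a locally Lipschitz $P$ and a $1$-Lipschitz $\Lambda$), the one-dimensional fundamental theorem of calculus applies, and at points where the full gradient exists the directional derivative is controlled by your pointwise bound. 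If the segment happens to lie in the hyperplane $\{t=t_{\oy}\}$, then $P(\cdot-\oy)\equiv 0$ there and the estimate is trivial. So property \textit{2} goes through, just with this clarification.
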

	\begin{proof}
		The proof of \textit{1} is trivial, since $|P_\Lambda(\ox,\oy)|\leq P(\ox-\oy)$. We move on by proving the first inequality in \textit{2}, and we will do it in a way that the arguments will also be valid for $P^\ast$, implying the validity of the second estimate of \textit{2}. Let us first assume $\Lambda(\ox')\leq \Lambda(\ox)$ and define
		\begin{equation*}
			g(\ox,\oy):=1+P(\ox-\oy)^2\Lambda(\ox)^n\Lambda(\oy)^n.
		\end{equation*}
		Using \cite[Lemma 2.1]{MPr} (that is also valid for $P^\ast$ and hence also for $\Psy$) we get
		\begin{align*}
			|P_\Lambda(\ox,&\oy)-P_\Lambda(\ox',\oy)|\\
			&\leq \frac{|P(\ox-\oy)-P(\ox'-\oy)|}{g(\ox,\oy)}+P(\ox'-\oy)\frac{|P(\ox-\oy)^2\Lambda(\ox)^n-P(\ox'-\oy)^2\Lambda(\ox')^n|\Lambda(\oy)^n}{g(\ox,\oy)g(\ox',\oy)}\\
			&\lesssim \frac{|\ox-\ox'|}{|\ox-\oy|^{n+1}}+ P(\ox'-\oy)\frac{|P(\ox-\oy)^2-P(\ox'-\oy)^2|\Lambda(\ox')^n\Lambda(\oy)^n}{g(\ox,\oy)g(\ox',\oy)}\\
			&\hspace{3.5cm}+P(\ox'-\oy)\frac{P(\ox-\oy)^2|\Lambda(\ox)^n-\Lambda(\ox')^n|\Lambda(\oy)^n}{g(\ox,\oy)g(\ox',\oy)}=:\text{\rom{1}}+\text{\rom{2}}+\text{\rom{3}}.
		\end{align*}
		We only have to deal with \rom{2} and \rom{3}. For the first, we rewrite it as
		\begin{equation*}
			\text{\rom{2}} = |P(\ox-\oy)-P(\ox'-\oy)|\cdot \text{\rom{4}},
		\end{equation*}
		where
		\begin{equation*}
			\text{\rom{4}}:=\frac{P(\ox'-\oy)|P(\ox-\oy)+P(\ox'-\oy)|\Lambda(\ox')^n\Lambda(\oy)^n}{g(\ox,\oy)g(\ox',\oy)}.
		\end{equation*}
		It is clear that by the definition of $g$
		\begin{equation*}
			\text{\rom{4}} \leq 1+\frac{P(\ox'-\oy)P(\ox-\oy)\Lambda(\ox')^n\Lambda(\oy)^n}{g(\ox,\oy)g(\ox',\oy)}.
		\end{equation*}
		For the remaining fraction, if $P(\ox-\oy)\leq P(\ox'-\oy)$ we obtain the same bound by 1. If on the other hand $P(\ox-\oy)>P(\ox'-\oy)$, using that $\Lambda(\ox')\leq \Lambda(\ox)$ we also obtain the same estimate and we are done with \rom{2}. Regarding \rom{3}, observe that there exists a point $\overline{\xi}$ in the line segment joining $\ox$ and $\ox'$ such that
		\begin{equation*}
			\text{\rom{3}} \leq |\ox-\ox'|P(\ox'-\oy)\frac{n\Lambda(\overline{\xi})^{n-1}\Lambda(\oy)^nP(\ox-\oy)^2}{g(\ox,\oy)g(\ox',\oy)}.
		\end{equation*}
		We distinguish two cases: if $\Lambda(\overline{\xi})\leq |\ox-\oy|$. By the Lipschitz property of $\Lambda$, $\Lambda(\oy)\leq 2 |\ox-\oy|$. Thus,
		\begin{align*}
			\text{\rom{3}}&\leq  |\ox-\ox'|\frac{n\Lambda(\overline{\xi})^{n-1}\Lambda(\oy)^nP(\ox-\oy)^2}{g(\ox,\oy)g(\ox',\oy)}\Big[ P(\ox-\oy)+\big\rvert P(\ox-\oy)-P(\ox'-\oy) \big\rvert \Big]\\
			&\lesssim |\ox-\ox'|\frac{|\ox-\oy|^{2n-1}}{|\ox-\oy|^{2n}}\Bigg[ \frac{1}{|\ox-\oy|^n}+\frac{|\ox-\ox'|}{|\ox-\oy|^{n+1}} \Bigg] = \frac{|\ox-\ox'|}{|\ox-\oy|^{n+1}}\Bigg[ 1+\frac{|\ox-\ox'|}{|\ox-\oy|}\Bigg]\\
			&\lesssim \frac{|\ox-\ox'|}{|\ox-\oy|^{n+1}},
		\end{align*}
		that is the desired estimate. If on the other hand we had $\Lambda(\overline{\xi})> |\ox-\oy|$, then notice that by the Lipschitz property of $\Lambda$,
		\begin{equation*}
			\Lambda(\overline{\xi})\leq \Lambda(\ox)+|\ox-\overline{\xi}|\leq \Lambda(\ox)+|\ox-\ox'|\leq \Lambda(\ox)+\frac{|\ox-\oy|}{2}< \Lambda(\ox)+\frac{\Lambda(\overline{\xi})}{2},
		\end{equation*}
		that is $\Lambda(\ox)>\Lambda(\overline{\xi})/2$, and in particular $g(\ox,\oy)\gtrsim g(\overline{\xi},\oy)$. Therefore,
		\begin{align*}
			\text{\rom{3}}&\lesssim P(\ox'-\oy)\frac{|\ox-\ox'|}{|\ox-\oy|}\frac{P(\ox-\oy)^2\Lambda(\overline{\xi})^{n}\Lambda(\oy)^n}{g(\overline{\xi},\oy)g(\ox',\oy)}\leq P(\ox'-\oy)\frac{|\ox-\ox'|}{|\ox-\oy|}\\
			&\leq \frac{1}{2}\big\rvert P(\ox'-\oy) - P(\ox-\oy) \big\rvert + \frac{|\ox-\ox'|}{|\ox-\oy|^{n+1}} \lesssim \frac{|\ox-\ox'|}{|\ox-\oy|^{n+1}},
		\end{align*}
		that is what we wanted to prove. With this, we have proved \textit{2} for the case $\Lambda(\ox')\leq \Lambda(\ox)$. The case $\Lambda(\ox)\leq \Lambda(\ox')$ can be treated analogously, using symmetric considerations with respect to $\ox$ and $\ox'$.
        
		To prove the gradient estimates, let us fix $j\in\{1,\ldots,n+1\}$ and consider $\partial_{x_j}P_{\Lambda}(\ox,\oy)$ at the points where this expression is well defined. Observe that since $\Lambda$ is a Lipschitz function and $P$ can be differentiated except for null Lebesgue sets, $\partial_{x_j}P_{\Lambda}$ makes sense out of sets of null Lebesgue measure. In any case, choose $0<|h|\ll1$ so that $|h|\leq |\ox-\oy|/2$, where $|\ox-\oy|$ is strictly positive since $\ox\neq \oy$ are two fixed different points. Hence, applying the already proved estimates of \textit{2} we get
		\begin{equation*}
			\frac{|P_{\Lambda}(\ox+h\widehat{e}_j,\oy)-P_{\Lambda}(\ox,\oy)|}{|h|} \lesssim \frac{1}{|h|}\frac{|h|}{|\ox-\oy|^{n+1}}= \frac{1}{|\ox-\oy|^{n+1}},
		\end{equation*}
		so taking the limit as $h\to 0$ the result follows.
        
		Finally we prove \textit{3}. Assume that $\Lambda(\ox)=\max\big\{ \Lambda(\ox),\Lambda(\oy) \big\}$. In this setting,
		\begin{align}
			\label{eq3.2}
			\big\rvert P_\Lambda(\ox,\oy) \big\rvert = \frac{1}{P(\ox-\oy)}\Bigg[ \frac{1}{P(\ox-\oy)^{-2}+\Lambda(\ox)^n\Lambda(\oy)^n} \Bigg]\leq \frac{1}{P(\ox-\oy)}\frac{1}{\Lambda(\ox)^n\Lambda(\oy)^n}.
		\end{align}
		Assume that
		\begin{equation}
			\label{eq3.3}
			\frac{1}{P(\ox-\oy)}\leq \frac{\Lambda(\ox)^n}{2^n}.
		\end{equation}
		In this case, $|\ox-\oy|^n\leq P(\ox-\oy)^{-1}\leq \Lambda(\ox)^n/2^n$, that is $|\ox-\oy|\leq \Lambda(\ox)/2$. By the Lipschitz property we also have $\Lambda(\oy)\geq \Lambda(\ox)-|\ox-\oy|$, so $\Lambda(\oy)\geq \Lambda(\ox)/2$, and applying this estimate to \eqref{eq3.2} the result would follow. If \eqref{eq3.3} did not hold, we would simply have 
		\begin{equation*}
			|P_\Lambda(\ox-\oy)|\leq |P(\ox-\oy)| \leq \frac{2^n}{\Lambda(\ox)^n},   
		\end{equation*}
		and we would be also done. Moreover, it is clear that if $\Lambda(\oy)=\max\big\{ \Lambda(\ox),\Lambda(\oy) \big\}$, the above proof can be carried out in an analogous way, obtaining the desired result.
	\end{proof}

	For a fixed $E\subset \mathbb{R}^{n+1}$ compact set, we will use the above suppressed kernels to study the capacity
	\begin{equation*}
		\gamma_{\text{sy},2}(E) := \sup \big\{ \mu(E)\,:\, \mu \in \Sigma_n(E), \, \|P_{\text{\normalfont{sy}}}\ast \mu\|_{L^{2}(\mu)} \leq \mu(E)^{1/2}\big\}.
	\end{equation*} 
	It is clear that $\gamma_{\text{\normalfont{sy}},\text{\normalfont{op}}}(E)\leq  \gamma_{\text{sy},2}(E)$. In the following lemma we prove that the reverse inequality also holds:
	\begin{lem} 
		\label{lem3.2}
		For $E\subset \mathbb{R}^{n+1}$ compact, the following inequality holds:
		\begin{equation*}
			\gamma_{\text{\normalfont{sy}},2}(E)\lesssim \gamma_{\text{\normalfont{sy}},\text{\normalfont{op}}}(E).            
		\end{equation*}
	\end{lem}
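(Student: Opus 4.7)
Fix a measure $\mu\in \Sigma(E)$ admissible for $\gamma_{\text{\normalfont{sy}},2}(E)$ with $\mu(E)\geq \gamma_{\text{\normalfont{sy}},2}(E)/2$. The plan is to construct, out of $\mu$, a restriction $\nu$ supported on a large subset on which the operator $\PP_{\text{sy},\nu}$ is bounded on $L^2(\nu)$; then after normalising, $\nu$ will be admissible for $\gamma_{\text{\normalfont{sy,op}}}(E)$ and will retain a comparable mass.

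First, I would isolate a ``good set'' of controlled potential. By the defining inequality $\|P_{\text{sy}}\ast \mu\|_{L^2(\mu)}\leq \mu(E)^{1/2}$ and Chebyshev, the set
\begin{equation*}
    F:=\bigl\{\ox\in E\,:\,|P_{\text{sy}}\ast\mu(\ox)|\leq A\bigr\}
\end{equation*}
satisfies $\mu(E\setminus F)\leq \mu(E)/A^2$, so choosing $A$ an absolute constant large enough yields $\mu(F)\geq \mu(E)/2$. Define the 1-Lipschitz function $\Lambda(\ox):=\text{dist}(\ox,F)$ and consider the suppressed kernel $P_{\text{sy},\Lambda}$ from \eqref{eq3.1}. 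By Lemma \ref{lem3.1}, $P_{\text{sy},\Lambda}$ is an $n$-dimensional C-Z kernel obeying the extra estimate $|P_{\text{sy},\Lambda}(\ox,\oy)|\lesssim \min\{\Lambda(\ox)^{-n},\Lambda(\oy)^{-n}\}$.

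Next, I would establish a uniform $L^\infty(\mu)$ bound on $\PP_{\text{sy},\Lambda,\mu}1$. For $\ox\in F$ we have $\Lambda(\ox)=0$, so $P_{\text{sy},\Lambda}(\ox,\cdot)=P_{\text{sy}}(\ox-\cdot)$ and hence $\PP_{\text{sy},\Lambda,\mu}1(\ox)=P_{\text{sy}}\ast \mu(\ox)\leq A$ by the construction of $F$. For $\ox\in \text{supp}(\mu)$ with $\Lambda(\ox)>0$, split the integral into $B(\ox,2\Lambda(\ox))$ and its complement. The local part is bounded using property \textit{3} of Lemma \ref{lem3.1} together with the $n$-growth of $\mu$:
\begin{equation*}
    \int_{B(\ox,2\Lambda(\ox))}|P_{\text{sy},\Lambda}(\ox,\oy)|\dd\mu(\oy)\lesssim \Lambda(\ox)^{-n}\mu(B(\ox,2\Lambda(\ox)))\lesssim 1.
\end{equation*}
For the far part, pick $\ox^\ast\in F$ with $|\ox-\ox^\ast|\leq 2\Lambda(\ox)$; then for $\oy$ outside $B(\ox,2\Lambda(\ox))$, $|\ox^\ast-\oy|\approx |\ox-\oy|$, so $P_{\text{sy}}(\ox-\oy)\lesssim P_{\text{sy}}(\ox^\ast-\oy)$ (using $P_{\text{sy}}(\bar{z})=|t|/(2|\bar{z}|^{n+1})$ and comparing temporal and spatial sizes), which upon integration yields a bound by $P_{\text{sy}}\ast\mu(\ox^\ast)\leq A$. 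Thus $\|\PP_{\text{sy},\Lambda,\mu}1\|_{L^\infty(\mu)}\lesssim 1$, and the same bound holds for the adjoint since $P_{\text{sy}}=P_{\text{sy}}^\ast$.

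At this point I would invoke the nonhomogeneous $T1$ theorem of Nazarov--Treil--Volberg (cf. \cite{NTVo2}), which applies to the $n$-dimensional Calderón--Zygmund kernel $P_{\text{sy},\Lambda}$ against the $n$-growth measure $\mu$, to conclude that $\PP_{\text{sy},\Lambda,\mu}$ is bounded on $L^2(\mu)$ with an absolute constant. Now, since $\Lambda\equiv 0$ on $F$, one has $P_{\text{sy},\Lambda}(\ox,\oy)=P_{\text{sy}}(\ox-\oy)$ for all $\ox,\oy\in F$; consequently $\PP_{\text{sy},\mu|_F}$ is bounded on $L^2(\mu|_F)$ with norm $\lesssim 1$. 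Normalising $\nu:=c\,\mu|_F$ with $c$ an absolute constant small enough to force $\|\PP_{\text{sy},\nu}\|_{L^2(\nu)\to L^2(\nu)}\leq 1$, we obtain $\nu\in \Sigma(E)$ admissible for $\gamma_{\text{\normalfont{sy,op}}}(E)$, and
\begin{equation*}
    \gamma_{\text{\normalfont{sy,op}}}(E)\geq \nu(E)\gtrsim \mu(F)\geq \mu(E)/2 \gtrsim \gamma_{\text{\normalfont{sy}},2}(E),
\end{equation*}
which is the desired inequality. The main technical point is the verification of the $T1$ hypothesis via the uniform bound on $\PP_{\text{sy},\Lambda,\mu}1$; the rest is bookkeeping built on Lemma \ref{lem3.1}.
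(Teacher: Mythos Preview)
Your overall strategy coincides with the paper's: isolate a large set $F$ via Chebyshev, suppress the kernel with $\Lambda(\cdot)=\text{dist}(\cdot,F)$, verify an $L^\infty(\mu)$ bound on $\PP_{\text{sy},\Lambda,\mu}1$, invoke a nonhomogeneous $T1$ theorem, and then restrict to $F$ where $\Lambda\equiv 0$. The normalisation and conclusion are fine.

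There is, however, a genuine gap in your far-part estimate. You claim that for $\oy\notin B(\ox,2\Lambda(\ox))$ and $\ox^\ast\in F$ with $|\ox-\ox^\ast|\le 2\Lambda(\ox)$ one has the \emph{pointwise} domination $P_{\text{sy}}(\ox-\oy)\lesssim P_{\text{sy}}(\ox^\ast-\oy)$. This is false: while $|\ox-\oy|\approx|\ox^\ast-\oy|$, the numerator of $P_{\text{sy}}(\bar z)=|t|/(2|\bar z|^{n+1})$ depends only on the temporal component, which need not be comparable after the shift. For instance, in $\mathbb{R}^2$ take $\ox=(0,0)$, $\ox^\ast=(0,1)$, $\Lambda(\ox)=1$, $\oy=(3,1)$; then $\oy$ lies in the far region, $P_{\text{sy}}(\ox-\oy)=1/20$, but $P_{\text{sy}}(\ox^\ast-\oy)=0$. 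So the asserted inequality fails outright. The fix (and this is exactly what the paper does) is to replace the pointwise comparison by the C--Z smoothness estimate: write
\[
\int_{|\ox-\oy|>2\Lambda(\ox)} P_{\text{sy}}(\ox-\oy)\,\dd\mu(\oy)\le \int P_{\text{sy}}(\ox^\ast-\oy)\,\dd\mu(\oy)+\int_{|\ox-\oy|>2\Lambda(\ox)}\big|P_{\text{sy}}(\ox-\oy)-P_{\text{sy}}(\ox^\ast-\oy)\big|\,\dd\mu(\oy),
\]
bound the first term by $A$ since $\ox^\ast\in F$, and the second by $C|\ox-\ox^\ast|\int_{|\ox-\oy|>2\Lambda(\ox)}|\ox-\oy|^{-(n+1)}\dd\mu(\oy)\lesssim 1$ via annular decomposition and the $n$-growth of $\mu$. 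With this correction the rest of your argument goes through; the nonnegativity of $P_{\text{sy},\Lambda}$ then immediately gives both the weak boundedness $|\langle \PP_{\text{sy},\Lambda,\mu}\chi_Q,\chi_Q\rangle|\lesssim\mu(Q)$ and the local testing conditions needed for the $T1$ theorem, exactly as the paper observes.
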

	\begin{proof}
		Let $\mu$ be an admissible measure for $\gamma_{\text{\normalfont{sy}},2}(E)$. Then, by definition,
		\begin{equation*}
			\int_{E}|\PP_{\text{sy},\varepsilon}\mu(\ox)|^2\text{d}\mu(\ox)\leq \mu(E), \quad \forall \varepsilon>0.
		\end{equation*}
		Consider the maximal operator
		\begin{equation*}
			\PP_{\text{sy},\ast}\mu(\ox) := \sup_{\varepsilon>0} |\PP_{\text{sy},\varepsilon}\mu(\ox)|,
		\end{equation*}
		that by the nonnegativity of $\Psy$ and $\mu$ is such that
		\begin{equation*}
			\PP_{\text{sy},\ast}\mu(\ox) = \lim_{\varepsilon\to 0} \PP_{\text{sy},\varepsilon}\mu(\ox),
		\end{equation*}
		which is well-defined for every point $\ox\in \mathbb{R}^{n+1}$. In fact, by the monotone convergence theorem, it is clear that the above $L^2(\mu)$ uniform estimate with respect to $\varepsilon>0$, can be rewritten as
		\begin{equation*}
			\int_{E}|\PP_{\text{sy},\ast}\mu(\ox)|^2\text{d}\mu(\ox)\leq \mu(E).
		\end{equation*}
		Now we define
		\begin{equation*}
			F:= \Big\{ \ox\in E \,:\, \PP_{\text{sy},\ast}\mu(\ox)\leq \sqrt{2} \Big\},
		\end{equation*}
		and apply Chebyshev's inequality to obtain
		\begin{equation*}
			\mu(E\setminus{F})\leq \frac{1}{2}\int_E |\PP_{\text{sy},\ast}\mu(\ox)|^2\text{d}\mu(\ox)\leq \frac{\mu(E)}{2} \quad \text{and then} \quad \mu(F) \geq \frac{\mu(E)}{2}.
		\end{equation*}
		If we are able to prove
		\begin{equation*}
			\|\PP_{\text{sy},\mu|_F}\|_{L^2(\mu|_F) \to L^2(\mu|_F)}\lesssim 1    
		\end{equation*}
		we will be done, since the latter would mean $\gamma_{\text{sy},\text{op}}(E)\gtrsim \mu(F) \geq \mu(E)/2$, and by the arbitrariness of $\mu$ the result would follow.
        
		To proceed, let us observe that $\PP_{\text{sy},\ast}\mu$ can be understood as a non-decreasing limit of continuous functions. Indeed, we may rewrite, for each $\varepsilon>0$ and $\ox\in \mathbb{R}^{n+1}$,
		\begin{equation*}
			\PP_{\text{sy},\varepsilon}\mu(\ox)=\int_{\mathbb{R}^{n+1}}\Psy(\ox-\oy)\chi_{\{|\ox-\oy|>\varepsilon\}}\dd\mu(\oy)=:P_{\text{sy},\chi_{\varepsilon}}\ast \mu(\ox),
		\end{equation*}
		where $P_{\text{sy},\chi_{\varepsilon}}(\cdot):=\Psy(\cdot)\chi_{\{|\cdot|>\varepsilon\}}$. Now, let us fix $\psi$ any test function such that $0\leq \psi \leq 1$, $\psi\equiv 0$ in $B_1(0)$ and $\psi\equiv 1$ in $\mathbb{R}^{n+1}\setminus{B_2(0)}$. Assume also that $\|\nabla \psi\|_\infty \leq 1$. Write $\psi_\varepsilon(\cdot):=\psi(\cdot/\varepsilon)$ and define, for each $\ox\in \mathbb{R}^{n+1}$,
		\begin{equation*}
			P_{\text{sy},\psi_{\varepsilon}}\ast \mu(\ox):=\int_{\mathbb{R}^{n+1}}\Psy(\ox-\oy)\psi_\varepsilon(\ox-\oy)\text{d}\mu(\oy).
		\end{equation*}
		Observe that since $\Psy\geq 0$,
		\begin{equation*}
			P_{\text{sy},\chi_{2\varepsilon}}\ast \mu (\ox) \leq P_{\text{sy},\psi_{\varepsilon}}\ast \mu (\ox) \leq P_{\text{sy},\chi_{\varepsilon}}\ast \mu(\ox), \hspace{0.25cm} \forall \ox \in \mathbb{R}^{n+1},
		\end{equation*}
		which implies, by definition,
		\begin{equation*}
			\lim_{\varepsilon\to 0} P_{\text{sy},\psi_{\varepsilon}}\ast \mu (\ox) = \PP_{\text{sy},\varepsilon}\mu(\ox), \hspace{0.25cm} \forall \ox \in \mathbb{R}^{n+1}.
		\end{equation*}
		Now, since $\mu$ is in particular a finite measure, it is not difficult to prove using Fatou's lemma and its reverse version, that for each $\varepsilon>0$ fixed, $P_{\text{sy},\psi_{\varepsilon}}\ast \mu$ is a continuous function. Moreover, since $\mu$ and $\Psy$ are nonnegative, it is clear that for $\varepsilon_1<\varepsilon_2$, then  $P_{\text{sy},\psi_{\varepsilon_2}}\ast \mu \leq P_{\text{sy},\psi_{\varepsilon_1}}\ast \mu$. Hence, indeed, $\PP_{\text{sy},\ast}\mu$ can be defined as a non-decreasing limit of continuous functions. Therefore, $\PP_{\text{sy},\ast}\mu$ is lower semicontinuous \cite[Ch.4]{Ba} and thus $F$ is a closed set. Now consider $B$ a large open ball containing $E$ inside and define $G:= B\setminus{F}$. For any $\oz\in G\cap E=E\setminus{F}$ we define
		\begin{equation*}
			\varepsilon(\oz):=\text{dist}(\oz,F) = \Lambda(\oz),
		\end{equation*}
		recalling the notation used when defining the suppressed kernels. First we estimate for any $\varepsilon\geq \varepsilon(\oz)$:
		\begin{align*}
			\PP_{\text{sy},\varepsilon}\mu(\oz) = \int_{|\oy-\oz|\geq 2\varepsilon} \Psy(\oz-\oy)\dd \mu(\oy)+\int_{\varepsilon<|\oy-\oz|\leq 2\varepsilon} \Psy(\oz-\oy)\dd \mu(\oy) =: \text{\rom{1}}+\text{\rom{2}}.
		\end{align*}
		Notice that by the $n$-growth of $\mu$ we have
		\begin{align*}
			|\text{\rom{2}}| \leq \frac{\mu(B_{2\varepsilon}(\oz))}{\varepsilon^n} \lesssim 1.
		\end{align*}
		To deal with \rom{1}, let $\ox_0$ be a closest point to $\oz$ on $F$, so that $\varepsilon(\oz)=|\ox_0-\oz|$. Then
		\begin{align*}
			|\text{\rom{1}}|\leq \int_{|\oy-\oz|\geq 2\varepsilon} \Psy(\ox_0-\oy)\dd \mu(\oy)+\int_{|\oy-\oz|\geq 2\varepsilon} |\Psy(\ox_0-\oy)-\Psy(\oz-\oy)|\dd \mu(\oy)=: \text{\rom{3}}+\text{\rom{4}}.
		\end{align*}
		It is clear that $\text{\rom{3}}\leq \sqrt{2}$, since $\ox_0\in F$. On the other hand, since $|(\oz-\oy)-(\ox_0-\oy)|\leq \varepsilon \leq |\oz-\oy|/2$, integration over annuli yields
		\begin{align*}
			|\text{\rom{4}}|\lesssim \varepsilon \int_{|\oy-\oz|\geq \varepsilon} \frac{\dd\mu(\oy)}{|\oz-\oy|^{n+1}}\lesssim 1.
		\end{align*}
		So we have $|\PP_{\text{sy},\varepsilon}\mu(\oz)|\lesssim 1$, for any $\varepsilon\geq \varepsilon(\oz)$. Now we shall prove
		\begin{equation*}
			A:=\big\rvert \PP_{\text{sy},\Lambda,\varepsilon}\mu(\oz)-\PP_{\text{sy},\varepsilon}\mu(\oz) \big\rvert\lesssim 1, \quad \forall \varepsilon\geq \varepsilon(\oz),
		\end{equation*}
		where $\PP_{\text{sy},\Lambda,\varepsilon}\mu(\oz) := \int_{|\oz-\oy|>\varepsilon} P_{\text{sy},\Lambda}(\oz,\oy) \dd \mu(\oy)$. Since $\varepsilon(\oz)=\Lambda(\oz)$ by definition,
		\begin{align*}
			A \leq \int_{|\oy-\oz|\geq \Lambda(\oz)} \big\rvert P_{\text{sy},\Lambda}(\oz,\oy)-\Psy(\oz-\oy) \big\rvert \dd\mu(\oy).
		\end{align*}
		Using the definition of $P_{\text{sy},\Lambda}$ we know
		\begin{equation}
			\label{eq3.4}
			\big\rvert P_{\text{sy},\Lambda}(\oz,\oy)-\Psy(\oz-\oy) \big\rvert \leq \Psy(\oz-\oy)\Bigg[ \frac{\Psy(\oz-\oy)^2\Lambda(\oz)^n\Lambda(\oy)^n}{1+\Psy(\oz-\oy)^2\Lambda(\oz)^n\Lambda(\oy)^n} \Bigg].
		\end{equation}
		In the region of integration $\Lambda(\oz)\leq |\oy-\oz|$, then $\Lambda(\oy)\leq 2|\oy-\oz|$, by the Lipschitz property. Therefore, returning to \eqref{eq3.4} we get
		\begin{align*}
			\big\rvert P_{\text{sy},\Lambda}(\oz,\oy)-\Psy(\oz-\oy) \big\rvert \leq \Psy(\oz-\oy)&\Big[ \Psy(\oz-\oy)^2\Lambda(\oz)^n\Lambda(\oy)^n \Big]\\
			&\leq 2^n \Psy(\oz-\oy)^2\Lambda(\oz)^n\lesssim \frac{\Lambda(\oz)^n}{|\oz-\oy|^{2n}},
		\end{align*}
		so by the $n$-growth of $\mu$ we obtain (again, integrating over annuli)
		\begin{equation*}
			A\lesssim \int_{|\oy-\oz|\geq \Lambda(\oz)} \frac{\Lambda(\oz)^n}{|\oz-\oy|^{2n}}\dd\mu(\oy)\lesssim 1,
		\end{equation*}
		that is what we wanted to prove. Hence, combining this last estimate with $|\PP_{\text{sy},\varepsilon}\mu(\oz)|\lesssim 1$, for any $\varepsilon\geq \varepsilon(\oz)$ we deduce
		\begin{equation*}
			|\PP_{\text{sy},\Lambda,\varepsilon}\mu(\oz)|\lesssim 1, \quad \forall \varepsilon \geq \varepsilon (\oz).
		\end{equation*}
		Now, fixing $\eta \in (0,\varepsilon(\oz))$, using property \textit{3} in Lemma \ref{lem3.1} together with $\eta < \Lambda(\oz)$, we get
		\begin{equation*}
			\bigg\rvert \int_{|\oy-\oz|\leq \eta} P_{\text{sy},\Lambda}(\oz,\oy)\dd\mu(\oy) \bigg\rvert \leq \frac{1}{\Lambda(\oz)^n}\mu(B_\eta(\oz))\leq 1.
		\end{equation*}
		All in all, we have proved
		\begin{equation*}
			\big\rvert \PP_{\text{sy},\Lambda,\ast}\mu(\oz) \big\rvert \lesssim 1, \quad \forall \oz\in E\setminus{F}.
		\end{equation*}
		In fact, this last estimate also holds for $\oz\in F$, since in this case $\Lambda(\oz)=0$ and $P_{\text{sy},\Lambda}=P_{\text{sy}}$, implying
		\begin{align*}
			\big\rvert \PP_{\text{sy},\Lambda,\ast}\mu(\oz) \big\rvert &= \sup_{\eta>0} \bigg\rvert \int_{|\oy-\oz|\geq \eta} P_{\text{sy},\Lambda}(\oz,\oy)\dd\mu(\oy) \bigg\rvert\\
			&=\sup_{\eta>0} \bigg\rvert \int_{|\oy-\oz|\geq \eta} \Psy(\oz-\oy)\dd\mu(\oy) \bigg\rvert = \PP_{\text{sy},\ast}\mu(\oz) \leq \sqrt{2},
		\end{align*}
		by definition of $F$. Hence, we get $\|\PP_{\text{sy},\Lambda}\mu\|_{L^{\infty}(\mu)}\lesssim 1$, where this estimate has to be understood as $\|\PP_{\text{sy},\Lambda,\varepsilon}\mu\|_{L^{\infty}(\mu)}\lesssim 1$, uniformly on $\varepsilon>0$. The same estimate also holds for $\PP_{\text{sy},\Lambda}^\ast \mu$, since $P_{\text{sy},\Lambda}(\ox,\oy)=P_{\text{sy},\Lambda}(\oy,\ox)$, by the symmetry of $\Psy$. Moreover, it is also clear that for any cube $Q\subset \mathbb{R}^{n+1}$ and any $\varepsilon>0$,
		\begin{align*}
			\big\rvert \big\langle \PP_{\text{sy},\Lambda,\mu,\varepsilon}\chi_Q, \chi_Q \big\rangle \big\rvert = \int_Q \bigg( \int_{Q\cap \{|\oy-\oz|>\varepsilon\}} &P_{\text{sy},\Lambda}(\oz,\oy)\dd\mu(\oy) \bigg)\dd\mu(\oz)\\
			&\leq \int_Q \PP_{\text{sy},\Lambda,\varepsilon}\mu(\oz)\dd \mu(\oz)\lesssim \mu(Q),
		\end{align*}
		by the nonnegativity of $P_{\text{sy},\Lambda}$. So bearing in mind Remark Lemma \ref{lem3.1}, we may apply a suitable $T1$-theorem, namely \cite[Theorem 1.3]{To1}, to deduce $\|\PP_{\text{sy},\Lambda,\mu}\|_{L^2(\mu)\to L^2(\mu)}\lesssim 1$. But for $f,g\in L^2(\mu)$ supported on $F$ one has $\langle \PP_{\text{sy},\Lambda}f,g \rangle = \langle \PP_{\text{sy}}f,g \rangle$, meaning that
		\begin{equation*}
			\|\PP_{\text{sy},\mu|_F}\|_{L^2(\mu|_F)\to L^2(\mu|_F)}\lesssim 1,
		\end{equation*}
		and we are done.
	\end{proof}
	\begin{rem}
		\label{rem3.1}
		Due to the above Lemma \ref{lem3.2}, we may add $\gamma_{\text{sy},2}$ to the statement of Theorem \ref{thm2.1}. In fact, the arguments above also prove that we can also add the following capacity to Theorem \ref{thm2.1},
		\begin{equation*}
			\widetilde{\gamma}_{2}(E):= \sup \big\{ \mu(E)\,:\, \mu \in \Sigma_n(E), \, \|P\ast \mu\|_{L^{2}(\mu)} \leq \mu(E)^{1/2}, \|P^\ast \ast \mu\|_{L^{2}(\mu)} \leq \mu(E)^{1/2}\big\}.
		\end{equation*} 
	\end{rem}
	\bigskip
	
	\subsection{Normalization by a uniform bound at all points of the support}
	\label{subsec3.2}
	For a fixed $E\subset \mathbb{R}^{n+1}$ compact set, let us consider now the capacity
	\begin{equation*}
		\gamma^\star_+(E):= \sup\big\{ \mu(E)\,:\, \mu\in \Sigma_n(E),\, |P\ast \mu(\ox)|\leq 1,  \,\forall \ox\in E\big\},
	\end{equation*}
	as well as all the variants $\overline{\gamma}^\star_+, \widetilde{\gamma}^\star_+$ and $\gamma^{\star}_{\text{sy},+}$. It is clear that $\gamma^\star_+(E) \leq \gamma_+(E)\approx \gamma_{\text{op}}(E)$. We claim that the following holds:
	\begin{equation*}
		\gamma_{\text{op}}(E)\lesssim \gamma_+^\star(E).
	\end{equation*}
	Indeed, given $\mu\in \Sigma_n(E)$ admissible for $\gamma_{\text{op}}(E)$ with $\gamma_{\text{op}}(E)\leq 2\mu(E)$, proceeding as in Remark \ref{rem2.2} we can find a function $h:E\to [0,1]$ such that $\int_E h\text{d}\mu \geq \mu(E)/2$ and $\|\PP (h\mu)\|_\infty \leq C$, for some $C>0$ constant. We know that the latter estimate implies $\|\PP (h\mu)\|_{L^\infty(\mu)} \leq C'$ for some other constant $C'>0$ (by a Cotlar type inequality analogous to that of \cite[Lemma 5.4]{MaP}). Applying now Cotlar's inequality of \cite[Theorem 2.18]{To3}, for example, we have
	\begin{equation*}
		\sup_{\varepsilon>0}\big\rvert \PP_{\varepsilon}(h\mu)(\ox) \big\rvert \leq C''\big( \widetilde{M}_\mu|\PP (h\mu)|^\delta \big)^{1/\delta}(\ox) + C'''(\widetilde{M}_\mu h)(\ox), \quad \forall \ox\in \mathbb{R}^{n+1},
	\end{equation*}
	where
	\begin{equation*}
		\widetilde{M}_\mu h(\ox):= \sup_{r>0} \frac{1}{\mu(B_{3r}(\ox))}\int_{B_r(\ox)}|h(\oy)|\dd\mu(\oy),
	\end{equation*}
	$\delta\in(0,1)$ is arbitrarily fixed and $C'', C'''$ are positive constants that depend on the dimension, $\delta$ and the $L^2(\mu)$-norm of $\PP_\mu$, that equals 1 by hypothesis. Since $0\leq h \leq 1$, it is clear that $\widetilde{M}_\mu h \leq 1$; and since $\|\PP(h\mu)\|_{L^\infty(\mu)} \leq C'$, we deduce for $\delta = 1/2$
	\begin{equation*}
		\widetilde{M}_\mu|\PP(h\mu)|^{1/2}(\ox) = \sup_{r>0} \frac{1}{\mu(B_{3r}(\ox))}\int_{B_r(\ox)} \big\rvert \PP(h\mu)(\ox) \big\rvert^{1/2} \dd\mu(\oy) \leq (C')^{1/2}.
	\end{equation*}
	Therefore, setting $\widetilde{C}:=\max\{1,C'C''+C'''\}$ we get
	\begin{equation*}
		\big\rvert \PP(\widetilde{C}^{-1}h\mu)(\ox) \big\rvert = \sup_{\varepsilon>0}\big\rvert \PP_{\varepsilon}(\widetilde{C}^{-1}h\mu)(\ox) \big\rvert \leq 1, \quad \forall\ox\in \mathbb{R}^{n+1}.
	\end{equation*}
	So in particular $\widetilde{C}^{-1}h\mu$ is an admissible measure (up to a dimensional factor that makes it an $n$-growth measure with constant 1) for $\gamma^\star_+(E)$. Thus,
	\begin{equation*}
		\gamma^\star_+(E)\gtrsim \frac{1}{\widetilde{C}}\int_E h\dd \mu \geq \frac{1}{2\widetilde{C}}\mu(E),
	\end{equation*}
	and by the arbitrariness of $\mu$ we conclude that $\gamma_{\text{op}}(E)\lesssim \gamma_+^\star(E)$.
	
	\begin{rem}
		\label{rem3.2}
		Therefore, we conclude that the capacity $\gamma_+^\star$ can be added to the statement of Theorem \ref{thm2.1}. In fact, since the operator $L^2(\mu)$-norm of $\PP_{\mu}^\ast$ is the same as that of $\PP_\mu$, following an analogous argument we may also add $\overline{\gamma}^\star_+$ as well as $\widetilde{\gamma}^\star_+$. Further, the same arguments can also be followed to compare $\gamma^\star_{\text{sy},+}$ with $\gamma_{\text{sy},\text{op}}$, allowing us to also consider $\gamma^\star_{\text{sy},+}$ in Theorem \ref{thm2.1}.
	\end{rem}
	
	\subsection{An equivalent variational capacity}
	\label{subsec3.3}
	We shall consider a construction already presented in the proof of Lemma \ref{lem3.2}: let $\psi$ be a radial test function with $0\leq \psi \leq 1$, $\psi\equiv 0$ in $B_{1/2}(0)$, $\psi\equiv 1$ in $\mathbb{R}^{n+1}\setminus{B_1(0)}$ and such that $\|\nabla \psi\|_\infty \leq 1$. Fix $E\subset \mathbb{R}^{n+1}$ compact set and $\mu\in \Sigma_n(E)$. For each $\tau>0$, write $\psi_\tau(\cdot):=\psi(\cdot/\tau)$ and define, for each $\ox\in \mathbb{R}^{n+1}$ and $f\in L^1_{\text{loc}}(\mu)$.
	\begin{equation*}
		\PP_{\text{sy},\psi_{\tau}}(f\mu)(\ox):= \Psy\psi_\tau \ast (f\mu) (\ox) = \int_{\mathbb{R}^{n+1}}\Psy(\ox-\oy)\psi_\tau(\ox-\oy)f(\oy)\text{d}\mu(\oy).
	\end{equation*}
	
	Let us prove, using \cite[Lemma 2.1]{MPr} (also valid for $P^\ast$ and $\Psy$) , that the regularized (continuous) kernel $\Psy\psi_\tau$ defines a CZ convolution operator with constants not depending on $\tau$. It is clear that $|\Psy\psi_\tau(\ox)|\leq |\ox|^{-n}$. Then, by the symmetry of $\Psy\psi_\tau$, it suffices to check that for any $\ox,\ox'\in\mathbb{R}^{n+1}$ with $\ox\neq 0$ and $|\ox-\ox'|\leq |\ox|/2$,
	\begin{equation}
		\label{eq3.5}
		\big\rvert  \Psy\psi_\tau(\ox)-\Psy\psi_\tau(\ox') \big\rvert \leq C \frac{|\ox-\ox'|}{|\ox|^{n+1}},
	\end{equation}
	where $C>0$ is a constant independent of $\tau$. To prove this, we distinguish two cases: if $\tau\geq |\ox|/4$, 
	\begin{align*}
		\big\rvert  \Psy\psi_\tau(\ox)-\Psy\psi_\tau(\ox') \big\rvert &\leq \big\rvert \Psy(\ox)-\Psy(\ox') \big\rvert\psi_\tau(\ox')+\big\rvert \psi_\tau(\ox)-\psi_\tau(\ox') \big\rvert \Psy(\ox)\\
		&\lesssim \frac{|\ox-\ox'|}{|\ox|^{n+1}}+\frac{|\ox-\ox'|}{\tau}\frac{1}{|\ox|^n} \lesssim  \frac{|\ox-\ox'|}{|\ox|^{n+1}},
	\end{align*}
	where we have applied \cite[Lemma 2.1]{MPr} and $\|\nabla \psi\|_\infty \leq 1$. If on the other hand $\tau< |\ox|/4$, by definition of $\psi_\tau$ we have $\psi_\tau(\ox)=1$. In addition, by the triangle inequality,
	\begin{equation}
		\label{eq3.6}
		|\ox'|\geq \big\rvert |\ox-\ox'|-|\ox| \big\rvert
	\end{equation}
	If $||\ox-\ox'|-|\ox|| = |\ox-\ox'|-|\ox|$, then $|\ox-\ox'|\geq |\ox|$, so $|\ox-\ox'|\approx |\ox|$. So in this case,
	\begin{align*}
		\big\rvert  \Psy\psi_\tau(\ox)-\Psy\psi_\tau(\ox') \big\rvert&\leq \big\rvert \Psy(\ox)-\Psy(\ox') \big\rvert\psi_\tau(\ox')+\big\rvert 1-\psi_\tau(\ox') \big\rvert \Psy(\ox)\\
		&\lesssim \frac{|\ox-\ox'|}{|\ox|^{n+1}}+\frac{2}{|\ox|^n} \leq \frac{3|\ox-\ox'|}{|\ox|^{n+1}},
	\end{align*}
	and we are done. If on the other hand $||\ox-\ox'|-|\ox|| = |\ox|-|\ox-\ox'|$, then $|\ox'|\geq |\ox|-|\ox-\ox'| \geq |\ox|/2$ which implies $\psi_\tau(\ox')=1$. Then, in this case,
	\begin{align*}
		\big\rvert  \Psy\psi_\tau(\ox)-\Psy\psi_\tau(\ox') \big\rvert = \big\rvert \Psy(\ox)-\Psy(\ox') \big\rvert \lesssim \frac{|\ox-\ox'|}{|\ox|^{n+1}},
	\end{align*}
	and the proof of \eqref{eq3.5} is complete.
    
	Let us also observe that for any $f\in L^1_{\text{loc}}(\mu)$ any $\tau>0$ and any $\ox\in E$,
	\begin{align*}
		\big\rvert \PP_{\text{sy},\tau}(f\mu)(\ox) -\PP_{\text{sy},\psi_{\tau}}(f\mu)(\ox) \big\rvert &\leq \int_{\tau/2 < |\ox-\oy|<\tau} \Psy(\ox-\oy) \big\rvert 1-\psi_\tau(\ox-\oy) \big\rvert|f(\oy)|\dd\mu(\oy)\\
		&\lesssim \sup_{\tau>0} \frac{1}{\tau^n} \int_{|\ox-\oy|<\tau} |f(\oy)|\dd\mu(\oy).
	\end{align*}
	In particular, choosing $f\equiv 1$, by the $n$-growth of $\mu$ we get
	\begin{equation}
		\label{eq3.7}
		\big\rvert \PP_{\text{sy},\tau}\mu(\ox) -\PP_{\text{sy},\psi_{\tau}}\mu(\ox) \big\rvert \lesssim 1, \quad \forall \ox\in E,\, \tau>0.
	\end{equation}
	Let us define the auxiliary capacity, for each $\tau>0$,
	\begin{equation*}
		\gamma^\star_{\text{sy},\psi_\tau,+}(E):= \sup\big\{ \mu(E)\,:\, \mu\in \Sigma_n(E),\, |\Psy\psi_\tau \ast \mu(\ox)|\leq 1,  \,\forall \ox\in E\big\}.
	\end{equation*}
	\begin{lem}
		\label{lem3.3}
		The following estimates hold:
		\begin{enumerate}[leftmargin=*,itemsep=0.1cm]
			\item[\textit{1}.] $\limsup\limits_{\tau\to 0} \gamma^\star_{\text{\normalfont{sy}},\psi_\tau,+}(E) \lesssim \gamma^\star_{\text{\normalfont{sy}},+}(E) $.
			\item[\textit{2}.] $\liminf\limits_{\tau\to 0} \gamma^\star_{\text{\normalfont{sy}},\psi_\tau,+}(E) \gtrsim \gamma^\star_{\text{\normalfont{sy}},+}(E) $.
		\end{enumerate}
	\end{lem}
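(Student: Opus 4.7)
My plan is to obtain (2) directly from the monotonicity $0\leq\psi_\tau\leq 1$ and the nonnegativity of $\Psy$, and to obtain (1) via a weak-$\ast$ compactness argument applied to a sequence of near-maximizers. For (2), any $\mu\in\Sigma(E)$ admissible for $\gamma^\star_{\text{sy},+}(E)$ satisfies $\Psy\psi_\tau\ast\mu(\ox)\leq\Psy\ast\mu(\ox)\leq 1$ for all $\ox\in E$ and all $\tau>0$; hence $\mu$ is also admissible for $\gamma^\star_{\text{sy},\psi_\tau,+}(E)$, giving $\gamma^\star_{\text{sy},\psi_\tau,+}(E)\geq\gamma^\star_{\text{sy},+}(E)$ uniformly in $\tau$. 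This yields the liminf inequality at once, in fact with constant $1$.

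For (1), I would fix a sequence $\tau_k\to 0$ along which $\gamma^\star_{\text{sy},\psi_{\tau_k},+}(E)$ converges to $\limsup_{\tau\to 0}\gamma^\star_{\text{sy},\psi_\tau,+}(E)$ and pick near-optimal measures $\mu_k\in\Sigma(E)$ admissible for $\gamma^\star_{\text{sy},\psi_{\tau_k},+}(E)$ with $\mu_k(E)\geq\tfrac12\,\gamma^\star_{\text{sy},\psi_{\tau_k},+}(E)$. Since each $\mu_k$ is supported on the compact set $E$ and has $n$-growth with constant $1$, the total masses $\mu_k(E)$ are uniformly bounded, so the Banach--Alaoglu theorem produces a subsequence converging weakly-$\ast$ to a positive Borel measure $\mu$ with $\text{supp}(\mu)\subset E$. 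The lower semicontinuity $\mu(B)\leq\liminf_k\mu_k(B)$ on open balls, combined with outer regularity, preserves the $n$-growth with constant $1$, so $\mu\in\Sigma(E)$; and $\mu_k(E)\to\mu(E)$ by testing against a continuous cutoff equal to $1$ on $E$.

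The crucial step is showing $\Psy\ast\mu(\ox)\leq 1$ for every $\ox\in E$. I would choose $\psi$ radially nondecreasing, so that $\psi_\tau$ is pointwise nonincreasing in $\tau$. For any fixed $\ox\in E$ and $\tau'>0$, whenever $\tau_k<\tau'$ we then have
\begin{equation*}
\Psy\psi_{\tau'}\ast\mu_k(\ox)\leq\Psy\psi_{\tau_k}\ast\mu_k(\ox)\leq 1.
\end{equation*}
Since $\oy\mapsto\Psy(\ox-\oy)\psi_{\tau'}(\ox-\oy)$ is bounded and continuous on $\mathbb{R}^{n+1}$ (its only potential singularity, at $\oy=\ox$, is removed by $\psi_{\tau'}$), weak-$\ast$ convergence passes through to give $\Psy\psi_{\tau'}\ast\mu(\ox)\leq 1$. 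Letting $\tau'\to 0$ and invoking monotone convergence---noting that $n$-growth for $n\geq 1$ forces $\mu(\{\ox\})=0$---yields $\Psy\ast\mu(\ox)\leq 1$, so $\mu$ is admissible for $\gamma^\star_{\text{sy},+}(E)$. Hence $\limsup_{\tau\to 0}\gamma^\star_{\text{sy},\psi_\tau,+}(E)\leq 2\mu(E)\leq 2\gamma^\star_{\text{sy},+}(E)$, which is (1). The only genuinely technical point is the bookkeeping around weak-$\ast$ convergence and the preservation of $n$-growth in the limit; the argument is otherwise driven entirely by nonnegativity of the kernel and monotonicity of $\psi_\tau$ in $\tau$, and I do not anticipate a deeper obstacle.
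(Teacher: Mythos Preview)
Your proof is correct and follows the same overall strategy as the paper: direct admissibility for part \textit{2}, and weak-$\ast$ compactness of near-maximizers for part \textit{1}. The technical execution differs slightly: the paper does not assume $\psi$ is radially nondecreasing, and instead relies on the comparison estimate \eqref{eq3.7}, namely $\big\rvert \PP_{\text{sy},\tau}\mu(\ox) -\PP_{\text{sy},\psi_{\tau}}\mu(\ox) \big\rvert \lesssim 1$, to pass between the $\psi_\tau$-regularized potential and the sharp $\tau$-truncated one, incurring an additive absolute constant at each comparison. Your approach instead exploits the pointwise monotonicity $\psi_{\tau_k}\geq\psi_{\tau'}$ for $\tau_k<\tau'$ (valid under your extra assumption on $\psi$) together with nonnegativity of $\Psy$, which gives exact inequalities and then monotone convergence as $\tau'\to 0$. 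This is cleaner and yields sharper constants (you get constant $1$ in part \textit{2} and $2$ in part \textit{1}, versus $1+C$ and $2(1+2C)$ in the paper), at the harmless cost of imposing that $\psi$ be radially nondecreasing---a choice compatible with all the properties the paper requires of $\psi$.
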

	\begin{proof}
		We begin by proving \textit{1}. Let $(\tau_k)_k$ be a monotonically decreasing sequence to 0, and let $\mu_k\in \Sigma_n(E)$ be admissible for $\gamma^\star_{\text{\normalfont{sy}},\psi_{\tau_k},+}(E)$ and such that $\gamma^\star_{\text{\normalfont{sy}},\psi_{\tau_k},+}(E)\leq 2\mu_k(E)$. Passing to a subsequence if necessary, assume $\mu_k \rightharpoonup \mu_0$ weakly \cite[Theorem 1.23]{Ma}, i.e.
		\begin{equation*}
			\lim_{k\to\infty} \int \varphi\dd\mu_k = \int \varphi \dd\mu_0, \quad \forall\varphi \in \pazocal{C}_c(\mathbb{R}^{n+1}).
		\end{equation*}
		It is not hard to prove that $\mu_0\in \Sigma_n(E)$. Now fix $m>0$ integer and $\tau\in [\tau_{m+1},\tau_m]$, and also assume $k>m$. By \eqref{eq3.7} we get that for $\ox\in E$,
		\begin{align*}
			\big\rvert \PP_{\text{sy},\psi_{\tau}}\mu_k(\ox) \big\rvert \leq \big\rvert \PP_{\text{sy},\psi_{\tau_k}}\mu_k(\ox) \big\rvert + \big\rvert \PP_{\text{sy},\psi_\tau}\mu_k(\ox) -\PP_{\text{sy},\psi_{\tau_k}}\mu_k(\ox) \big\rvert \leq 1+C,
		\end{align*}
		for some finite constant $C>0$. Since $\mu_k\rightharpoonup \mu_0$ we obtain $\big\rvert \PP_{\text{sy},\psi_{\tau}}\mu_0\big\rvert \leq 1+C$ on $E$ and for each $\tau>0$. In addition, using \eqref{eq3.7} again we get
		\begin{equation*}
			\big\rvert \PP_{\text{sy},\tau}\mu_0(\ox)\big\rvert \leq 1+2C,\quad \forall \ox\in E, \, \tau >0,
		\end{equation*}
		that is $ |\PP_{\text{sy}}\mu_0(\ox)\big| \leq 1+2C$ for all $\ox\in E$. Thus, by the semicontinuity properties of weak convergence of \cite[Theorem 1.24]{Ma} we get
		\begin{align*}
			\gamma^{\star}_{\text{sy},+}(E) \geq \frac{\mu_0(E)}{1+2C} \geq \frac{1}{1+2C} \limsup_{k\to\infty} \mu_k(E) \gtrsim \limsup_{\tau\to 0} \gamma^\star_{\text{\normalfont{sy}},\psi_{\tau_k},+}(E),
		\end{align*}
		and \textit{1} follows. To prove \textit{2}, take $\mu$ admissible for $\gamma^\star_{\text{sy},+}(E)$. Using \eqref{eq3.7},
		\begin{equation*}
			\big\rvert \PP_{\text{sy},\psi_\tau}\mu(\ox)\big\rvert \leq 1+C,\quad \forall \ox\in E, \, \tau >0,
		\end{equation*}
		implying that $\gamma^{\star}_{\text{sy},\psi_\tau,+}(E)\geq \frac{\mu(E)}{1+C}$, and by the arbitrariness of $\mu$ we finally deduce
		\begin{equation*}
			\inf_{\tau>0} \gamma^{\star}_{\text{sy},\psi_\tau,+}(E) \gtrsim \gamma^{\star}_{\text{sy},+}(E).
		\end{equation*}
	\end{proof}
	
	\begin{rem}
		\label{rem3.3}
		Let us observe that, for each $\tau>0$, using the regularized (continuous) kernel $\Psy\psi_\tau$ we may also define the rest of corresponding capacities, requiring different normalization conditions over the potentials. In fact, the following chain of estimates holds for any compact set $E\subset \mathbb{R}^{n+1}$:
		\begin{equation*}
			\gamma_{\text{sy},\psi_\tau,+}^\star(E) \underset{(1)}{\leq}  \gamma_{\text{sy},\psi_\tau,2}(E) \underset{(2)}{\lesssim} \gamma_{\text{sy},\psi_\tau,\text{op}}(E) \underset{(3)}{\lesssim} \gamma_{\text{sy},\psi_\tau,+}(E) \underset{(4)}{\leq} \gamma_{\text{sy},\psi_\tau,+}^\star(E).
		\end{equation*}
		Inequality (1) is trivial. To verify (2) proceed as in the proof of $\gamma_{\text{sy},2}(E) \lesssim \gamma_{\text{sy},\text{op}}(E)$ in Lemma \ref{lem3.2}. The proof of (3) goes as in Remark \ref{rem2.2}, and it relies on \cite[Theorem 2.16]{To3} and \cite[Ch.VII, Theorem 23]{Ch}. Notice that the independence of the CZ constants of $\Psy\psi_\tau$ of $\tau$ is crucial to ensure the validity of (2) and (3). Finally, (4) holds by the continuity of $\PP_{\text{sy},\psi_\tau}\mu$ for each $\tau>0$, that has already been argued in the proof of Lemma \ref{lem3.2}. Observe that the previous chain of estimates also holds changing every $\gamma_{\text{sy}}$ for $\widetilde{\gamma}$, where we ask the respective normalization conditions in each case for both kernels $P\psi_{\tau}$ and $P^\ast \psi_{\tau}$.
	\end{rem}
	Therefore, in particular, we shall restate Lemma \ref{lem3.3} as
	\begin{enumerate}[itemsep=0.1cm]
		\item[\textit{1}.] $\limsup\limits_{\tau\to 0} \gamma_{\text{\normalfont{sy}},\psi_\tau,2}(E) \lesssim \gamma_{\text{\normalfont{sy}},2}(E) $,
		\item[\textit{2}.] $\liminf\limits_{\tau\to 0} \gamma_{\text{\normalfont{sy}},\psi_\tau,2}(E) \gtrsim  \gamma_{\text{\normalfont{sy}},2}(E) $.
	\end{enumerate}
	Now, bearing in mind that we have found positive constants $C_1,C_2$ so that
	\begin{itemize}
		\item $\limsup\limits_{\tau\to 0} \gamma_{\text{\normalfont{sy}},\psi_\tau,2}(E) \leq C_1\,\gamma_{\text{\normalfont{sy}},2}(E)$,
		\item $\gamma_{\text{\normalfont{sy}},2}(E) \leq C_2 \, \gamma_{\text{sy},\text{op}}(E)$,
	\end{itemize}
	we define the variational capacity as follows:
	\begin{defn}
		\label{def3.1}		
		Let $E\subset \mathbb{R}^{n+1}$ be a compact set and choose $\tau_0$ small enough so that $\gamma_{\text{\normalfont{sy}},\psi_{\tau_0},2}(E) \leq 2C_1C_2\,  \gamma_{\text{sy},\text{op}}(E).$ Let $\pSS$ be the convolution operator associated to the kernel $\Psy\psi_{\tau_0}$ (that depends on $\tau_0$ and thus on $E$). We define the \textit{variational capacity of} $E$ as
		\begin{equation*}
			\gamma_{\text{var}}(E):=\sup\Bigg\{ \frac{\mu(E)^2}{\mu(E)+\int_E |\pSS\mu|^2\dd\mu}\,:\, \mu\in \Sigma_n(E) \Bigg\}.
		\end{equation*}
		We convey that expressions of the form $\frac{0}{0}$ equal 0.
	\end{defn}
	
	\begin{lem}
		\label{lem3.4}
		The supremum in $\gamma_{\text{\normalfont{var}}}(E)$ is attained. Moreover, $\gamma_{\text{\normalfont{var}}}(E) \approx \gamma_{\text{\normalfont{sy}},\text{\normalfont{op}}}(E)$, where we remark that the implicit constants do not depend on $E$.
	\end{lem}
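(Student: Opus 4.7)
The plan is to proceed in two steps: first show that the supremum defining $\gamma_{\text{var}}(E)$ is attained via a weak compactness argument, and then establish the two-sided comparison with $\gamma_{\text{sy},\text{op}}(E)$ by rescaling an extremal measure, relating $\gamma_{\text{var}}(E)$ to $\gamma_{\text{sy},\psi_{\tau_0},2}(E)$, which is already known to be comparable to $\gamma_{\text{sy},\text{op}}(E)$ by the choice of $\tau_0$ together with Lemma \ref{lem3.3}.

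For attainment, fix a maximizing sequence $(\mu_k)_k\subset \Sigma(E)$. The $n$-growth condition together with compactness of $E$ gives a uniform upper bound on the total masses $\mu_k(E)$, so by weak-$\ast$ compactness one may pass to a subsequence with $\mu_k \rightharpoonup \mu_0$. The limit $\mu_0$ is supported on $E$, and the $n$-growth is preserved: for open balls, the Portmanteau theorem yields $\mu_0(B(\ox,r)) \leq \liminf_k \mu_k(B(\ox,r)) \leq r^n$, and the closed ball estimate follows by letting $r'\to r^{+}$. Continuity of the functional $Q(\mu):=\mu(E)^2/(\mu(E)+\|\pSS\mu\|_{L^2(\mu)}^2)$ under weak convergence then reduces to two observations: testing against a continuous cut-off equal to $1$ on $E$ gives $\mu_k(E)\to \mu_0(E)$, and because $K:=\Psy\psi_{\tau_0}$ is a bounded continuous kernel on $\mathbb{R}^{n+1}$, the expression
\begin{equation*}
\|\pSS\mu\|_{L^2(\mu)}^2 \;=\; \iiint K(\ox,\oy)\,K(\ox,\oz)\,\dd\mu(\oy)\,\dd\mu(\oz)\,\dd\mu(\ox)
\end{equation*}
is a triple integral against a bounded continuous function on $\mathbb{R}^{3(n+1)}$, and weak convergence on the compact set $E$ induces weak convergence of the product measures $\mu_k^{\otimes 3}\rightharpoonup \mu_0^{\otimes 3}$; hence $Q(\mu_k)\to Q(\mu_0)$, which secures attainment.

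For the comparison $\gamma_{\text{var}}(E)\approx \gamma_{\text{sy},\text{op}}(E)$, I first sharpen the choice of $\tau_0$ in Definition \ref{def3.1} (still admissible, by the \emph{liminf} assertion of Lemma \ref{lem3.3} as restated after Remark \ref{rem3.3}) so that $\gamma_{\text{sy},\psi_{\tau_0},2}(E) \approx \gamma_{\text{sy},\text{op}}(E)$ with absolute constants. The lower bound $\gamma_{\text{var}}(E)\gtrsim \gamma_{\text{sy},\psi_{\tau_0},2}(E)$ is immediate: for any $\nu$ admissible for $\gamma_{\text{sy},\psi_{\tau_0},2}(E)$ one has $\|\pSS\nu\|_{L^2(\nu)}^2\leq \nu(E)$, so $Q(\nu)\geq \nu(E)/2$. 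For the reverse bound, take $\mu$ to be the maximizer of $\gamma_{\text{var}}(E)$ and set $c^2:=\gamma_{\text{var}}(E)/\mu(E)=\mu(E)/(\mu(E)+\|\pSS\mu\|_{L^2(\mu)}^2)$. Since $c\leq 1$, the rescaled measure $\nu:=c\mu$ still belongs to $\Sigma(E)$, and a direct computation yields $\|\pSS\nu\|_{L^2(\nu)}^2 = c^3\|\pSS\mu\|_{L^2(\mu)}^2 \leq c\,\mu(E) = \nu(E)$, so $\nu$ is admissible for $\gamma_{\text{sy},\psi_{\tau_0},2}(E)$. Using $\mu(E)\geq \gamma_{\text{var}}(E)$ (which follows from $Q(\mu)\leq \mu(E)$), this gives $\gamma_{\text{sy},\psi_{\tau_0},2}(E)\geq \nu(E) = \sqrt{\mu(E)\,\gamma_{\text{var}}(E)} \geq \gamma_{\text{var}}(E)$, closing the chain.

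The main technical point is the continuity of $Q$ under weak convergence; this is possible precisely because $\Psy\psi_{\tau_0}$ is bounded and continuous rather than only locally integrable, which was the whole purpose of introducing the regularization in Section \ref{subsec3.3}. Once attainment is in hand, the rescaling argument reduces the comparability to the already established estimate $\gamma_{\text{sy},\psi_{\tau_0},2}(E) \approx \gamma_{\text{sy},\text{op}}(E)$, and the lemma follows.
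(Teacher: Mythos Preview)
Your proof is correct and follows the same overall path as the paper: weak-$\ast$ compactness plus continuity of the functional (both you and the paper exploit that the regularized kernel $\Psy\psi_{\tau_0}$ is bounded and continuous; the paper spells out the Stone--Weierstrass argument for the double integral explicitly, while you invoke weak convergence of product measures $\mu_k^{\otimes 3}\rightharpoonup\mu_0^{\otimes 3}$, which amounts to the same thing), and then reduction of the comparability to $\gamma_{\text{sy},\psi_{\tau_0},2}(E)\approx\gamma_{\text{sy,op}}(E)$. The one substantive difference is the upper bound $\gamma_{\text{var}}(E)\leq \gamma_{\text{sy},\psi_{\tau_0},2}(E)$: you rescale the maximizer $\mu$ by the factor $c=\sqrt{Q(\mu)/\mu(E)}\leq 1$ to force admissibility, whereas the paper argues by contradiction that the maximizer $\mu_0$ \emph{already} satisfies $\int_E|\pSS\mu_0|^2\,\dd\mu_0\leq \mu_0(E)$ (their inequality \eqref{eq3.10}) and is therefore admissible without rescaling. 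Your route is slightly more direct (and in fact the rescaling step works for any $\mu\in\Sigma(E)$, so attainment is not even needed for the comparability), but be aware that the paper's version buys something you do not: the estimate \eqref{eq3.10} for the extremal measure is invoked repeatedly in Section~\ref{sec4} (see Remark~\ref{rem3.4} and the proofs of $\Ptwo$, $\Pthree$, $\Pfour$), so if you intend to continue along the paper's construction you will still need to establish it separately.
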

	\begin{proof}
		For each $\mu\in\Sigma_n(E)$ define
		\begin{equation*}
			F(\mu):= \frac{\mu(E)^2}{\mu(E)+\int_E |\pSS\mu |^2\dd\mu}.
		\end{equation*}
		As it has already been mentioned in the proof of Lemma \ref{lem3.3}, it is not difficult to prove that $\Sigma_n(E)$ is closed and sequentially compact with respect to weak convergence of measures. In fact, $\Sigma_n(E)$ is compact (since it is contained in the space of finite signed Radon measures on $E$, which is metrizable, thought as the dual of the separable space $\pazocal{C}_c(\mathbb{R}^{n+1})$). Moreover, it is clear that if $\mu_k\rightharpoonup\mu_0$ on $\Sigma_n(E)$, then $\mu_k(E)\to \mu_0(E)$. In this setting, we also claim
		\begin{equation}
			\label{eq3.8}
			\lim_{k\to\infty} \int_E |\pSS\mu_k|^2\dd\mu_k = \int_E |\pSS\mu_0|^2\dd\mu_0.
		\end{equation}
		To prove the claim \eqref{eq3.8} we argue as follows: given continuous functions $\varphi_i,\psi_i$ for $i=1,\ldots N$ in $\pazocal{C}_c(\mathbb{R}^{n+1})$, by definition of weak convergence of measures we get
		\begin{align}
			\label{eq3.9}
			\lim_{k\to\infty} &\int_E\bigg(\int_E \sum_{i=1}^N\varphi_i(\ox)\psi_i(\oy)\dd\mu_k(\ox)\bigg)^2\dd\mu_k(\oy) \\
			\nonumber
			&= \lim_{k\to\infty} \bigg\{ \sum_{i=1}^N \int_E\bigg(\int_E \varphi_i(\ox)\psi_i(\oy)\dd\mu_k(\ox)\bigg)^2\dd\mu_k(\oy)\\
			\nonumber
			&\hspace{1.75cm}+2\sum_{1\leq i<j\leq N} \int_E\bigg(\int_E \varphi_i(\ox)\psi_i(\oy) \dd\mu_k(\ox) \bigg)\bigg(\int_E \varphi_j(\ox)\psi_j(\oy) \dd\mu_k(\ox) \bigg)\dd\mu_k(\oy)\bigg\}\\
			\nonumber
			&= \lim_{k\to\infty} \bigg\{ \sum_{i=1}^N \bigg( \int_E \varphi_i(\ox)\dd\mu_k(\ox) \bigg)^2\bigg( \int_E \psi_i^2(\oy)\dd\mu_k(\oy) \bigg)\\
			\nonumber
			&\hspace{1.1cm}+2\sum_{1\leq i<j\leq N} \bigg( \int_E \varphi_i(\ox)\dd\mu_k(\ox) \bigg)\bigg( \int_E \varphi_j(\ox)\dd\mu_k(\ox) \bigg)\bigg( \int_E \psi_i(\oy)\psi_j(\oy)\dd\mu_k(\oy) \bigg) \bigg\}\\
			\nonumber
			&= \int_E\bigg(\int_E \sum_{i=1}^N\varphi_i(\ox)\psi_i(\oy)\dd\mu_0(\ox)\bigg)^2\dd\mu_0(\oy).
		\end{align}
		Observe that the collection of continuous functions on the compact set $E\times E$ of the form $\sum_{i=1}^N\varphi_i(\ox)\psi_i(\oy)$ is an algebra that contains the constant functions (on $E$) and separates points. Therefore, by the Stone-Weierstrass theorem, every continuous function $f:E\times E \to \mathbb{R}$ can be uniformly approximated by such sums. Namely, consider
		\begin{equation*}
			\big(g_j(\ox,\oy)\big)_j:=\bigg( \sum_{i=1}^{N_j}\varphi_{i,j}(\ox)\psi_{i,j}(\oy) \bigg)_j,
		\end{equation*}
		so that $\|f-g_j\|_{\infty}:=\|f-g_j\|_{L^\infty(E\times E)}\to 0$ as $j\to \infty$. Observe:
		\begin{align*}
			\bigg\rvert &\int_E\bigg(\int_E f(\ox,\oy)\dd\mu_k(\ox)\bigg)^2\dd\mu_k(\oy)-\int_E\bigg(\int_E f(\ox,\oy)\dd\mu_0(\ox)\bigg)^2\dd\mu_0(\oy) \bigg\rvert\\
			&\leq \bigg\rvert \int_E\bigg(\int_E f(\ox,\oy)\dd\mu_k(\ox)\bigg)^2\dd\mu_k(\oy)-\int_E\bigg(\int_E g_j(\ox,\oy)\dd\mu_k(\ox)\bigg)^2\dd\mu_k(\oy) \bigg\rvert\\
			&\hspace{1cm}+\bigg\rvert \int_E\bigg(\int_E g_j(\ox,\oy)\dd\mu_k(\ox)\bigg)^2\dd\mu_k(\oy)-\int_E\bigg(\int_E g_j(\ox,\oy)\dd\mu_0(\ox)\bigg)^2\dd\mu_0(\oy) \bigg\rvert\\
			&\hspace{1cm}+\bigg\rvert \int_E\bigg(\int_E g_j(\ox,\oy)\dd\mu_0(\ox)\bigg)^2\dd\mu_0(\oy)-\int_E\bigg(\int_E f(\ox,\oy)\dd\mu_0(\ox)\bigg)^2\dd\mu_0(\oy) \bigg\rvert\\
			&\hspace{1cm}=:\text{\rom{1}}+\text{\rom{2}}+\text{\rom{3}}.
		\end{align*}
		Choose $j$ large enough so that $\|g_j\|_\infty\leq 2\|f\|_\infty$. We estimate \rom{1} as follows
		\begin{align*}
			\text{\rom{1}} &= \Bigg\rvert \int_E\bigg(\int_E (f+g_j)(\ox,\oy)\dd\mu_k(\ox)\bigg)\bigg(\int_E (f-g_j)(\ox,\oy)\dd\mu_k(\ox)\bigg)\dd\mu({\oy}) \Bigg\rvert\\
			&\leq 3\|f\|_\infty \cdot \text{diam}(E)^{3n} \|f-g_j\|_\infty \xrightarrow[]{j\to\infty} 0.
		\end{align*}
		Term \rom{3} can be treated the same way. For \rom{2} use that $\mu_k\rightharpoonup \mu_0$ as $k\to\infty$ and apply \eqref{eq3.9} to deduce that \rom{2} also tends to 0 as $j\to\infty$. The latter can be done since the supports of the measures $\mu_k$ are all contained in $E$, so we may understand each factor $\varphi_{i,j}$ and $\psi_{i,j}$ of the summands that define $g_j$ once extended continuously onto $\pazocal{C}_c(\mathbb{R}^{n+1})$. Hence, by the arbitrariness of $\varepsilon$ we deduce
		\begin{equation*}
			\lim_{k\to\infty} \int_E\bigg(\int_E f(\ox,\oy)\dd\mu_k(\ox)\bigg)^2\dd\mu_k(\oy) = \int_E\bigg(\int_E f(\ox,\oy)\dd\mu_0(\ox)\bigg)^2\dd\mu_0(\oy),
		\end{equation*}
		for any $f$ continuous on $E\times E$. Therefore, applying this result to $f:=\Psy\psi_{\tau_0}|_E\geq 0$ we get \eqref{eq3.8}. All in all, we have proved that $F$ defines a continuous functional on a compact space, meaning that it attains its maximum and thus the supremum that defines $\gamma_{\text{var}}$ is indeed attained.
        
		In order to prove $\gamma_{\text{\normalfont{var}}}(E) \approx \gamma_{\text{\normalfont{sy}},\text{\normalfont{op}}}(E)$, we claim that it suffices to prove
		\begin{equation*}
			\gamma_{\text{\normalfont{var}}}(E) \approx \gamma_{\text{\normalfont{sy}},\psi_{\tau_0},2}(E).
		\end{equation*}
		Indeed, since $0\leq \Psy\psi_{\tau_0}\leq \Psy$, we trivially have $\gamma_{\text{\normalfont{sy}},\text{\normalfont{op}}}(E)\leq \gamma_{\text{\normalfont{sy}},\psi_{\tau_0},2}(E)$; and by the choice of $\tau_0$, we also have $\gamma_{\text{\normalfont{sy}},\psi_{\tau_0},2}(E)\leq 2C_1C_2\,\gamma_{\text{\normalfont{sy}},\text{\normalfont{op}}}(E)$, so the claim follows.
        
		Further, observe that we can restrict ourselves to the case that there exists $\mu\in \Sigma_n(E)$ such that $F(\mu)>0$. If this was not the case, since $\int_E |\pSS\mu|\dd \mu \lesssim \text{diam}(E)^{2n}/\tau_0^n < \infty$, we would have, necessarily, $\mu(E)=0$, implying, by definition, $\gamma_{\text{var}}(E)=0=\gamma_{\text{\normalfont{sy}},\psi_{\tau_0},2}(E)$.\medskip\\
		So let us assume that there exists $\mu\in \Sigma_n(E)$ such that $F(\mu)>0$, meaning $\gamma_{\text{var}}(E)>0$. Let us begin by proving that any extremal $\mu_0$ for $F$ in this setting satisfies
		\begin{equation}
			\label{eq3.10}
			\int_E |\pSS\mu_0|^2\dd\mu_0 \leq \mu_0(E).
		\end{equation}
		If this was not the case, then $\int_E |\pSS\mu_0|^2\dd\mu_0 = M\mu_0(E)$ for some $M>1$. Define $\mu_1:=M^{-1/2}\mu_0 \in \Sigma_n(E)$ and notice that
		\begin{align*}
			F(\mu_1)&=\frac{M^{-1}\mu_0(E)^2}{M^{-1/2}\mu_0(E)+\int_E |\pSS\mu_1|^2\dd\mu_1} = \frac{\mu_0(E)}{2M^{1/2}}>\frac{\mu_0(E)}{1+M}=F(\mu_0),
		\end{align*}
		that is a contradiction. Then, \eqref{eq3.10} holds, and it implies, by definition, $\mu_0(E)\leq \gamma_{\text{\normalfont{sy}},\psi_{\tau_0},2}(E)$. So we have
		\begin{equation*}
			\gamma_{\text{var}}(E) = F(\mu_0) < \mu_0(E) \leq \gamma_{\text{\normalfont{sy}},\psi_{\tau_0},2}(E).
		\end{equation*}
		On the other hand, for any $\mu$ admissible for $\gamma_{\text{\normalfont{sy}},\psi_{\tau_0},2}(E)$,
		\begin{equation*}
			\frac{\mu(E)}{2} = \frac{\mu(E)^2}{\mu(E)+\mu(E)} \leq \frac{\mu(E)^2}{\mu(E)+\int_E |\pSS\mu|^2\dd\mu} \leq \gamma_{\text{var}}(E),
		\end{equation*}
		and the proof is complete.
	\end{proof}
	
	\begin{rem}
		\label{rem3.4}
		Let us notice that \eqref{eq3.10} holds in general for any extremal measure $\mu_0$ for $F$. Indeed, in the case that for any $\mu\in \Sigma_n(E)$ one had $F(\mu)=0$, then $\mu(E)=0$. So any measure would be extremal and \eqref{eq3.10} would hold trivially.
	\end{rem}
	
	Finally, we are ready to give an improved version of Theorem \ref{thm2.1}:
	\begin{thm}
		\label{thm3.5}
		For any $E\subset \mathbb{R}^{n+1}$ compact set,
		\begin{equation*}
			\gamma_+(E)\approx \gamma^{\star}_+(E),
		\end{equation*}
		where the respective bounds of the potentials can be taken indifferently and independently for $P, P^\ast, \Psy$ or both $P$ and $P^\ast$ simultaneously. Moreover, all the previous capacities are also comparable to $\gamma_{\text{\normalfont{sy}},2}(E),\; \gamma_{\text{\normalfont{op}}}(E),\;  \gamma_{\text{\normalfont{sy,op}}}(E)$ and $ \gamma_{\text{\normalfont{var}}}(E)$.\medskip\\
		In addition, for each $\tau>0$ the following holds
		\begin{align*}
			\gamma_{\text{\normalfont{sy}},\psi_\tau,+}(E) &\approx 
			\gamma_{\text{\normalfont{sy}},\psi_\tau,+}^\star(E)  \approx \gamma_{\text{\normalfont{sy}},\psi_\tau,2}(E)\approx \gamma_{\text{\normalfont{sy}},\psi_\tau,\text{\normalfont{op}}}(E)
		\end{align*}
		as well as $\limsup_{\tau\to 0} \gamma_{\text{\normalfont{sy}},\psi_\tau,+}(E) \lesssim \gamma_{\text{\normalfont{sy}},+}(E) \lesssim \liminf_{\tau\to 0} \gamma_{\text{\normalfont{sy}},\psi_\tau,+}(E)$. The latter relations for the capacities depending on $\psi_\tau$ also hold changing each $\gamma_{\text{\normalfont{sy}}}$ for $\widetilde{\gamma}$.
	\end{thm}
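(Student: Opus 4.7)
The theorem is essentially a consolidation of the equivalences assembled piecemeal in Sections \ref{sec2} and \ref{sec3}, so the plan is to read them off in the correct order and fill in the few missing links. The main block
\[
\gamma_+(E)\approx \gamma'_+(E)\approx \gamma^\star_+(E)\approx \widetilde{\gamma}_2(E)\approx \gamma_{\text{sy},2}(E)\approx \gamma_{\text{op}}(E)\approx \gamma_{\text{sy,op}}(E)\approx \gamma_{\text{var}}(E)
\]
is obtained by concatenating Theorem \ref{thm2.1}, Remark \ref{rem3.1} (which inserts $\gamma_{\text{sy},2}$ and $\widetilde{\gamma}_2$ via Lemma \ref{lem3.2} together with the trivial $\widetilde{\gamma}'_+(E)\leq \widetilde{\gamma}_2(E)\leq \gamma_{\text{sy},2}(E)$), Remark \ref{rem3.2} (which inserts the starred capacities by the Cotlar-type argument of subsection \ref{subsec3.2}) and Lemma \ref{lem3.4} (which puts $\gamma_{\text{var}}$ into the chain). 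The freedom to specify the bound on the potential using $P$, $P^\ast$, $\Psy$ or both $P$ and $P^\ast$ simultaneously is explicit in each of these results and ultimately relies on $\|\PP_\mu\|_{L^2(\mu)\to L^2(\mu)}=\|\PP^\ast_\mu\|_{L^2(\mu)\to L^2(\mu)}$ together with the fact that $\Psy=(P+P^\ast)/2$ is nonnegative with Calderón--Zygmund constants comparable to those of $P$.

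For the $\tau$-dependent block, I would invoke Remark \ref{rem3.3}, which yields
\[
\gamma^\star_{\text{sy},\psi_\tau,+}(E)\leq \gamma'_{\text{sy},\psi_\tau,+}(E)\leq \gamma_{\text{sy},\psi_\tau,2}(E)\lesssim \gamma_{\text{sy},\psi_\tau,\text{op}}(E)\lesssim \gamma_{\text{sy},\psi_\tau,+}(E)\leq \gamma^\star_{\text{sy},\psi_\tau,+}(E)
\]
with constants independent of $\tau$. The crucial ingredient, verified in subsection \ref{subsec3.3} via \eqref{eq3.5}, is that the Calderón--Zygmund constants of $\Psy\psi_\tau$ do not depend on $\tau$; this is what allows the $T1$ argument of \cite[Theorem 1.3]{To3} and the dualization via \cite[Theorem 2.16]{To3} and \cite[Ch.~VII, Theorem 23]{Ch} to be applied uniformly. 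The corresponding chain for $\widetilde{\gamma}_{\psi_\tau}$ follows from exactly the same proofs, simply imposing the normalization simultaneously on $P\psi_\tau$ and $P^\ast\psi_\tau$; no new input is needed because each of the two regularized kernels is itself an $n$-dimensional Calderón--Zygmund kernel with $\tau$-independent constants and $P^\ast\psi_\tau$ is the conjugate of $P\psi_\tau$.

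The last pair of inequalities is the restatement of Lemma \ref{lem3.3}, inserted into the $\tau$-independent chain of the previous paragraph. For the $\widetilde{\gamma}$-version I would repeat the weak-convergence argument of Lemma \ref{lem3.3}: extract a subsequence $\mu_k\rightharpoonup \mu_0$ from near-maximizers of $\widetilde{\gamma}^\star_{\psi_{\tau_k},+}(E)$ and apply \eqref{eq3.7} separately to $P$ and to $P^\ast$ (both enjoy an error bound with $\tau$-independent constant, by the $n$-growth of $\mu_k$ and the pointwise estimate on $\Psy\psi_{\tau}-\Psy\chi_{\{|\cdot|>\tau\}}$) to obtain simultaneous uniform bounds on $|P\ast\mu_0|$ and $|P^\ast\ast\mu_0|$ on $E$; lower semicontinuity of the mass functional closes the upper estimate, while the lower estimate is immediate from \eqref{eq3.7} applied to any admissible measure for $\widetilde{\gamma}^\star_+(E)$. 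I expect the coordination of the two simultaneous pointwise constraints in the weak limit to be the only delicate point; everything else reduces to citing, in the correct order, the lemmas and remarks already established.
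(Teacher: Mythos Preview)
Your proposal is correct and matches the paper's approach exactly: Theorem \ref{thm3.5} is stated in the paper as a consolidation of the preceding results with no separate proof, and you have correctly identified the pieces (Theorem \ref{thm2.1}, Remarks \ref{rem3.1}--\ref{rem3.3}, Lemmas \ref{lem3.2}--\ref{lem3.4}) and the order in which they assemble into the full statement.
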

	
	\section{The construction of the cubes}
	\label{sec4}
	The goal of this section is to use Theorem \ref{thm3.5} to carry out the construction done in \cite[\textsection 5.2]{Vo} for Riesz kernels. The aim of this section is to prove the following result: 
	\begin{thm}
		\label{thm4.1}
		Let $E\subset B_1(0) \subset \mathbb{R}^{n+1}$ be a compact set consisting of a finite union of closed cubes, with sides parallel to the axes. Then, there exists a finite collection of dyadic cubes $\{\QQ_1,\ldots,\QQ_N\}$ that cover $E$ and such that $\frac{1}{2}\QQ_1,\ldots, \frac{1}{2}\QQ_N$ have disjoint interiors. Moreover, if $\FF:= \cup_{i=1}^N \QQ_i$,
		\begin{enumerate}[itemsep=0.1cm]
			\item[$\Pone$.] $\frac{5}{8}\QQ_i\cap E \neq \varnothing$, for each $i=1,\ldots,N$.
			\item[$\Ptwo$.] $\widetilde{\gamma}_+(\FF) \leq C_0 \widetilde{\gamma}_+(E)$.
			\item[$\Pthree$.] $\sum_{i=1}^N\widetilde{\gamma}_+(2\QQ_i\cap E) \leq C_1 \widetilde{\gamma}_+(E)$.
			\item[$\Pfour$.] If $\widetilde{\gamma}_+(E)\leq C_+\text{\normalfont{diam}}(E)^n$, then $\text{\normalfont{diam}}(\QQ_i)\leq \frac{1}{10}\text{\normalfont{diam}}(E)$, for each $i=1, \ldots, N$.
			\item[$\Pfive$.] The family $\{5\QQ_1,\ldots, 5\QQ_N\}$ has bounded overlap with constant $C_2$.
		\end{enumerate}
		Letters $C_0,C_1,C_2,C_+$ refer to constants.
	\end{thm}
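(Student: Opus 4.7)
The strategy will follow the Whitney-type construction of Volberg \cite[\textsection 5.2]{Vo}, originally carried out for Riesz kernels, adapted to the $1/2$-symmetric caloric setting through the variational capacity $\gamma_{\text{\normalfont{var}}}$ of Definition \ref{def3.1}.

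The starting point is to invoke Theorem \ref{thm3.5} and Lemma \ref{lem3.4} to fix an extremal measure $\mu_0\in\Sigma(E)$ for $\gamma_{\text{\normalfont{var}}}(E)$. By Remark \ref{rem3.4}, such a $\mu_0$ has $n$-growth with constant $1$, satisfies $\mu_0(E)\approx \widetilde{\gamma}_+(E)$, and obeys the energy bound
\begin{equation*}
\int_E |\pSS\mu_0|^2\dd\mu_0 \leq \mu_0(E).
\end{equation*}
This $\mu_0$ will be the reference measure against which both the stopping-time rules and the capacities appearing in $\Ptwo,\Pthree$ are controlled.

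The cubes are then produced by a dyadic stopping argument. Fix a dyadic cube $R_0\supset E$ with $\ell(R_0)\approx\text{\normalfont{diam}}(E)$, choose a large constant $A>1$, and subdivide $R_0$ dyadically. On each dyadic sub-cube $Q$ impose two stopping criteria: a \emph{density criterion} based on the ratio $\mu_0(AQ)/\ell(Q)^n$, and an \emph{energy criterion} based on the local mass of $|\pSS\mu_0|^2\,\mu_0$ on $AQ$. Take $\{\QQ_1,\ldots,\QQ_N\}$ to be the maximal dyadic sub-cubes of $R_0$ with $\frac{5}{8}Q\cap E\neq\varnothing$ at which one of these criteria first becomes critical. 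Property $\Pone$ is then automatic by selection. Dyadic maximality combined with the density bound forces the $\frac{1}{2}\QQ_i$ to have pairwise disjoint interiors, and a routine geometric argument using the $n$-growth of $\mu_0$ shows that the dilates $5\QQ_i$ have bounded overlap, giving $\Pfive$. For $\Pfour$, tuning $C_+$ small enough ensures that a $\QQ_i$ with $\text{\normalfont{diam}}(\QQ_i)\geq \tfrac{1}{10}\text{\normalfont{diam}}(E)$ would force $\mu_0(AQ_i)\gtrsim \text{\normalfont{diam}}(E)^n$, contradicting $\mu_0(E)\approx \widetilde{\gamma}_+(E)\leq C_+\,\text{\normalfont{diam}}(E)^n$.

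The main obstacle is the verification of $\Ptwo$ and $\Pthree$, which amounts to transferring admissibility from the global $\mu_0$ to localised objects. For $\Pthree$, on each $2\QQ_i\cap E$ one constructs a measure $\nu_i$, obtained as a suitable restriction and renormalisation of $\mu_0|_{2\QQ_i}$ dictated by the stopping-time data, and shows $\nu_i$ to be admissible for $\gamma_{\text{\normalfont{op}}}(2\QQ_i\cap E)$ up to an absolute constant; this requires a local $T1$-type argument for the $n$-dimensional Calder\'on--Zygmund operators $\PP,\PP^\ast$ supported by Lemma \ref{lem3.1} and the energy criterion. Theorem \ref{thm3.5} then gives $\widetilde{\gamma}_+(2\QQ_i\cap E)\approx \nu_i(2\QQ_i\cap E)$, and summing over $i$ using $\Pfive$ together with $\sum_i\mu_0(AQ_i)\lesssim \mu_0(E)\approx \widetilde{\gamma}_+(E)$ yields $\Pthree$. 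For $\Ptwo$, one builds an admissible measure for $\widetilde{\gamma}_+(\FF)$ by redistributing the local masses $\mu_0(E\cap\QQ_i)$ (for instance as constant multiples of $\pazocal{L}^{n+1}|_{\frac{1}{2}\QQ_i}$ of the same total mass) and controlling both $\|P\ast\,\cdot\,\|_\infty$ and $\|P^\ast\ast\,\cdot\,\|_\infty$ by the stopping-time bounds; this yields $\widetilde{\gamma}_+(\FF)\gtrsim \widetilde{\gamma}_+(E)$, while the reverse $\widetilde{\gamma}_+(E)\leq \widetilde{\gamma}_+(\FF)$ is immediate from $E\subset\FF$. The persistent technical difficulty throughout is that $P$ is not antisymmetric, so one must control potentials against $P$ and $P^\ast$ simultaneously; this is precisely where the symmetric objects $\pSS$ and $\Psy$ of Section \ref{sec3} play a decisive role.
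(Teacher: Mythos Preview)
Your approach to $\Ptwo$ has the inequality running the wrong way. You propose to build an admissible measure for $\widetilde{\gamma}_+(\FF)$ by redistributing the masses $\mu_0(E\cap\QQ_i)$; any such construction gives a \emph{lower} bound $\widetilde{\gamma}_+(\FF)\gtrsim \mu_0(E)\approx\widetilde{\gamma}_+(E)$, which is already trivial since $E\subset\FF$. What $\Ptwo$ asks is the opposite and nontrivial direction $\widetilde{\gamma}_+(\FF)\leq C_0\,\widetilde{\gamma}_+(E)$: you must show that \emph{every} admissible measure for $\gamma_{\text{op}}(\FF)$ has total mass $\lesssim \mu_0(E)$. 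The same direction problem infects your argument for $\Pthree$: producing admissible $\nu_i$ for $\gamma_{\text{op}}(2\QQ_i\cap E)$ gives lower bounds on each $\widetilde{\gamma}_+(2\QQ_i\cap E)$, not the upper bound needed to control the sum.

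The paper resolves this with a genuinely different mechanism that your proposal omits. From the variational extremality of $\mu_0$ one first derives a perturbation inequality (Lemma \ref{lem4.1}), which in turn forces a pointwise lower bound $\pazocal{U}^{\mu_0}\geq\alpha_0$ on $E$ for the potential $\pazocal{U}^{\mu_0}=M\mu_0+\pSS\mu_0+\pSS_{\mu_0}\pSS\mu_0$ (Lemma \ref{lem4.2}). The cubes $\QQ_i$ are then obtained not by a stopping-time on density and energy, but as (doubles of) a Whitney decomposition of the open superlevel set $\pazocal{G}=\{\widetilde{\pazocal{U}}^{\mu_0}>\beta\alpha_0\}\supset E$. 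The upper bounds in $\Ptwo$ and $\Pthree$ now follow from weak-type estimates: for any $\mu$ admissible for $\gamma_{\text{sy,op}}(\FF)$, one splits $\FF$ according to which term of $\widetilde{\pazocal{U}}^{\mu_0}$ is large and uses Besicovitch and the weak $(1,1)$ bound for $\pSS_\mu$ to get $\mu(\FF)\lesssim\mu_0(E)$. For $\Pthree$ one localises the potential to $\pazocal{W}^{\mu_i}$ (Lemma \ref{lem4.3}), exploiting that each Whitney cube has a point of $\pazocal{G}^c$ nearby, and repeats the weak-type argument. Property $\Pfour$ also relies on the potential picture: since $\widetilde{\pazocal{U}}^{\mu_0}(\ox)\lesssim \mu_0(E)/\text{dist}(\ox,E)^n$, the set $\pazocal{G}$ is thin around $E$ when $\widetilde{\gamma}_+(E)$ is small, and the Whitney cubes are forced to be small. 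Your stopping-time construction, without the potential lower bound of Lemma \ref{lem4.2}, does not supply the ingredient that turns admissibility of an arbitrary competitor into a mass bound.
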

	Before proceeding, let us clarify the from this point on $E\subset \mathbb{R}^{n+1}$ will be a fixed compact set and $\mu_0$ will always denote a maximizer of $\gamma_{\text{var}}(E)$. Let us also write explicitly the following expression, that will appear repeatedly, for the sake of clarity: for $\mu,\nu$ say positive finite Borel measures supported on $E$, we have, in light of Definition \ref{def3.1},
	\begin{equation*}
		\pSS_{\mu}(\pSS \nu)(\ox)=\int_E\int_E \Psy(\ox-\oy)\Psy(\oy-\oz)\psi_{\tau_0}(\ox-\oy)\psi_{\tau_0}(\oy-\oz)\dd\nu(\oz)\dd\mu(\oy).
	\end{equation*}
	
	\begin{lem}
		\label{lem4.1}
		Let $H$ be a positive Borel measure supported on $E$ such that for any $\lambda_0>0$ and any $\lambda\in [0,\lambda_0]$, $\mu_\lambda:=\mu_0+\lambda H\in \Sigma_n(E)$. Then,
		\begin{equation*}
			H(E)\mu_0(E)\bigg( \mu_0(E)+2\int_E|\pSS\mu_0|^2\dd\mu_0 \bigg) \leq \mu_0(E)^2\int_E \Big( |\pSS\mu_0|^2+2\pSS_{\mu_0}(\pSS\mu_0) \Big)\dd H.
		\end{equation*}
	\end{lem}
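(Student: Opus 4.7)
The plan is to exploit the extremality of $\mu_0$ by a standard first-variation argument applied to the functional $F(\mu):=\mu(E)^2/(\mu(E)+\int_E|\pSS\mu|^2\dd\mu)$. By Lemma \ref{lem3.4}, $\mu_0$ is a maximizer, and by the hypothesis on $H$, the curve $\lambda\mapsto \mu_\lambda=\mu_0+\lambda H$ stays inside $\Sigma(E)$ for $\lambda\in[0,\lambda_0]$, so $F(\mu_\lambda)\leq F(\mu_0)$. We may assume $\mu_0(E)>0$, since otherwise both sides of the claimed inequality vanish and nothing has to be proved. The inequality will then fall out of the one-sided derivative bound $\frac{d}{d\lambda}F(\mu_\lambda)\big|_{\lambda=0^+}\leq 0$.

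To compute this derivative, write $m(\lambda):=\mu_\lambda(E)=\mu_0(E)+\lambda H(E)$ and $I(\lambda):=\int_E|\pSS\mu_\lambda|^2\dd\mu_\lambda$. Since the operator $\pSS$ is linear, $\pSS\mu_\lambda=\pSS\mu_0+\lambda\,\pSS H$, and expanding
\begin{equation*}
I(\lambda)=\int_E\big(\pSS\mu_0+\lambda\,\pSS H\big)^2\,\dd(\mu_0+\lambda H)
\end{equation*}
yields a polynomial in $\lambda$, whence $m$ and $I$ are smooth in $\lambda$. Differentiating at $\lambda=0$ one obtains $m'(0)=H(E)$ and
\begin{equation*}
I'(0)=2\int_E \pSS\mu_0\cdot\pSS H\,\dd\mu_0 \;+\; \int_E|\pSS\mu_0|^2\,\dd H.
\end{equation*}
The key observation, which is also the only nontrivial ingredient of the computation, is that the kernel of $\pSS$ is symmetric in its two variables: indeed, $\Psy$ is even and $\psi_{\tau_0}$ is radial, so $\Psy(\ox-\oy)\psi_{\tau_0}(\ox-\oy)=\Psy(\oy-\ox)\psi_{\tau_0}(\oy-\ox)$. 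Applying Fubini to the double integral defining the first summand above then gives
\begin{equation*}
\int_E \pSS\mu_0\cdot\pSS H\,\dd\mu_0 \;=\; \int_E \pSS_{\mu_0}(\pSS\mu_0)\,\dd H,
\end{equation*}
so that $I'(0)=\int_E\bigl(|\pSS\mu_0|^2+2\pSS_{\mu_0}(\pSS\mu_0)\bigr)\dd H$.

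Finally, from $F(\mu_\lambda)=m(\lambda)^2/(m(\lambda)+I(\lambda))$ one has $F'(0)=\bigl(2m(0)m'(0)(m(0)+I(0))-m(0)^2(m'(0)+I'(0))\bigr)/(m(0)+I(0))^2$. The condition $F'(0)\leq 0$, together with $m(0)=\mu_0(E)>0$, rearranges after multiplying through by $m(0)$ to
\begin{equation*}
H(E)\,\mu_0(E)\bigl(\mu_0(E)+2I(0)\bigr)\leq \mu_0(E)^2\,I'(0),
\end{equation*}
which is exactly the statement of the lemma once one substitutes the expressions for $I(0)$ and $I'(0)$. The only subtlety worth noting is the use of self-adjointness of $\pSS$ coming from the symmetry of $\Psy\psi_{\tau_0}$; everything else is a routine differentiation at the boundary of the admissible interval $[0,\lambda_0]$.
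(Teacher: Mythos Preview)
Your proof is correct and follows essentially the same approach as the paper: both exploit the extremality of $\mu_0$ by comparing $F(\mu_0)$ with $F(\mu_\lambda)$ along the curve $\lambda\mapsto\mu_0+\lambda H$, expand $\int_E|\pSS\mu_\lambda|^2\dd\mu_\lambda$ as a polynomial in $\lambda$, use the symmetry of the kernel $\Psy\psi_{\tau_0}$ to rewrite $\int_E\pSS\mu_0\cdot\pSS H\,\dd\mu_0=\int_E\pSS_{\mu_0}(\pSS\mu_0)\,\dd H$, and read off the linear-in-$\lambda$ inequality. The only cosmetic difference is that you phrase the limiting step as ``$F'(0^+)\leq 0$'' while the paper writes out $F(\mu_0)\geq F(\mu_\lambda)$, divides by $\lambda$, and lets $\lambda\to 0$.
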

	\begin{proof}
		Let us begin by noticing that
		\begin{equation*}
			F(\mu_\lambda) = \frac{\big[ \mu_0(E)+\lambda H(E) \big]^2}{\mu_0(E)+\lambda H(E)+\int_E|\pSS \mu_\lambda |^2\dd\mu_\lambda},
		\end{equation*}
		where the integral in the denominator can be expanded as
		\begin{align*}
			\int_E|\pSS \mu_\lambda&|^2\dd\mu_\lambda =\int_{E}(\pSS\mu_0)^2\dd\mu_{0}+\lambda\int_{E}(\pSS\mu_0)^2\dd H +2\lambda \int_{E}(\pSS\mu_0)(\pSS H)\dd\mu_{0}\\
			&\hspace{2.5cm}+2\lambda^2\int_{E}(\pSS\mu_0)(\pSS H)\dd H + \lambda^2\int_{E}(\pSS H)^2\dd\mu_0 + \lambda^3\int_{E}(\pSS H)^2\dd H.
		\end{align*}
		Observe that for $\nu\in\{\mu_0, H\}$, since $\Psy$ and $\psi_{\tau_0}$ are symmetric functions,
		\begin{align*}
			\int_{E}\pSS\mu_0&1(\ox)\pSS H(\ox)\dd \nu(\ox)\\
			&=\int_E\bigg( \int_E \Psy(\ox-\oy) \psi_{\tau_0}(\ox-\oy)\dd\mu_0(\oy) \bigg)\bigg( \int_E \Psy(\ox-\oz) \psi_{\tau_0}(\ox-\oz)\dd H(\oz) \bigg)\dd\nu(\ox)\\
			&=\int_E\bigg(  \int_E \bigg( \int_E \Psy(\ox-\oy) \psi_{\tau_0}(\ox-\oy)\dd\mu_0(\oy) \bigg) \Psy(\oz-\ox) \psi_{\tau_0}(\oz-\ox) \dd\nu(\ox) \bigg) \dd H(\oz)\\
			&=\int_E \pSS_\nu (\pSS\mu_0) (\oz)\dd H (\oz).
		\end{align*}
		Therefore, we may rewrite $\int_E|\pSS \mu_\lambda|^2\dd\mu_\lambda$ as
		\begin{align*}
			\int_E|\pSS \mu_\lambda&|^2\dd\mu_\lambda = \int_{E}(\pSS\mu_0)^2\dd\mu_{0} + \lambda \bigg[ \int_{E}(\pSS\mu_0)^2\dd H +2\int_E \pSS_{\mu_0} (\pSS\mu_0) \dd H  \bigg]\\
			&\hspace{2.5cm}+\lambda^2\bigg[ \int_{E}(\pSS H)^2\dd\mu_0 +2 \int_E \pSS_{H} (\pSS\mu_0) \dd H \bigg] + \lambda^3\int_{E}(\pSS H)^2\dd H.
		\end{align*}
		So condition $F(\mu_0)\geq F(\mu_\lambda)$ can be written as
		\begin{align*}
			\frac{\mu_0(E)^2}{\mu_0(E)+\int_E (\pSS\mu_0)^2\dd \mu_0} \\
			&\hspace{-1cm}\geq \frac{\mu_0(E)^2+2\lambda\mu_0(E)H(E)+\lambda^2H(E)^2}{\mu_0(E)+\lambda H(E)+\int_E(\pSS\mu_0)^2\dd\mu_0+\lambda\big[  \cdots \big]+\lambda^2\big[  \cdots \big]+\lambda^3\big[  \cdots \big]},
		\end{align*}
		or equivalently
		\begin{align*}
			\lambda \Bigg\{ \mu_0(E)^2\bigg( -H(E)&+\int_E(\pSS\mu_0)^2\dd H + 2\int_E\pSS_{\mu_0}(\pSS\mu_0) \dd H \bigg) \\
			&-2\mu_0(E)H(E)\int_E(\pSS\mu_0)^2\dd \mu_0 \Bigg\} + \lambda^2\big[ \cdots \big] + \lambda^3 \int_{E}(\pSS H)^2\dd H \geq 0.
		\end{align*}
		Assume that $\lambda>0$ and divide both sides of the previous inequality by this factor and make $\lambda\to 0$ to obtain the desired estimate. Let us notice that the kernel associated to $\pSS$ is continuous, meaning that the terms in $[\cdots]$ and $\int_E (\pSS H)^2\dd H$ are finite, since $H(E)<\infty$ due to the fact that $\mu_\lambda \in \Sigma_n(E)$ for any $\lambda \in [0,\lambda_0]$ with $\lambda_0>0$.
	\end{proof}
	We remark that in the particular case we will apply this lemma, $E$ will  be a finite union of cubes, so $\mu_0(E)>0$. Moreover, we will choose a specific measure $H$ so that $H(E)>0$. Therefore, in this setting, the inequality in Lemma \ref{lem4.2} can be rewritten as
	\begin{equation}
		\label{eq4.1}
		\frac{\mu_0(E)+2\int_E|\pSS\mu_0|^2\dd\mu_0}{\mu_0(E)} \leq \frac{1}{H(E)} \int_E \Big( |\pSS\mu_0|^2+2\pSS_{\mu_0}(\pSS\mu_0) \Big)\dd H,
	\end{equation}
	which is a similar estimate that resembles that of \cite[Lemma 5.6]{Vo}. 
    
	We shall apply \eqref{eq4.1} to prove an auxiliary lemma, but first let us introduce the following notation for a certain maximal function applied to a measure:
	\begin{equation*}
		M\nu(\ox):= \sup_{r>0}\frac{\nu(B_r(\ox))}{r^n}
	\end{equation*}
	\begin{lem}
		\label{lem4.2}
		Let $E$ be a finite union of cubes. Then, for the potential
		\begin{equation*}
			\pazocal{U}^{\mu_0} := M\mu_0+\pSS\mu_0+\pSS_{\mu_0}\pSS\mu_0,
		\end{equation*}
		there is a constant $\alpha_0>0$ such that
		\begin{equation*}
			\pazocal{U}^{\mu_0}(\ox)\geq \alpha_0, \quad \forall \ox \in E.
		\end{equation*}
	\end{lem}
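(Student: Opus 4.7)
I would argue by contradiction. Suppose there exists $\ox_0\in E$ with $\pazocal{U}^{\mu_0}(\ox_0)<\alpha_0$, where $\alpha_0>0$ is an absolute constant to be fixed small at the end; in particular each of $M\mu_0(\ox_0)$, $\pSS\mu_0(\ox_0)$ and $\pSS_{\mu_0}(\pSS\mu_0)(\ox_0)$ is at most $\alpha_0$. The plan is to feed a positive measure $H$ concentrated near $\ox_0$ into the variational inequality \eqref{eq4.1}, and derive $1\lesssim \alpha_0$, a contradiction for $\alpha_0$ small enough.

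Since $E$ is a finite union of closed cubes and $\ox_0\in E$, one can choose a radius $r_0>0$ small enough that $|B_{r_0}(\ox_0)\cap E|\gtrsim r_0^{n+1}$ and, exploiting the continuity of the potentials $\pSS\mu_0$ and $\pSS_{\mu_0}(\pSS\mu_0)$, such that both differ from their values at $\ox_0$ by at most $\alpha_0$ throughout $B_{r_0}(\ox_0)$. I would take $H$ to be a suitable constant multiple of $\pazocal{L}^{n+1}$ restricted to $B_{r_0}(\ox_0)\cap E$, normalised so that $H(E)\approx r_0^n$ and $H$ itself has $n$-growth constant bounded by an absolute constant. Granted that $\mu_0+\lambda H\in\Sigma(E)$ for all $\lambda\in[0,\lambda_0]$ with some $\lambda_0>0$, Lemma \ref{lem4.1} yields
\begin{equation*}
1\leq\frac{\mu_0(E)+2\int_E|\pSS\mu_0|^2\,\dd\mu_0}{\mu_0(E)}\leq\frac{1}{H(E)}\int_E\bigl(|\pSS\mu_0|^2+2\pSS_{\mu_0}(\pSS\mu_0)\bigr)\,\dd H.
\end{equation*}
On $\text{supp}(H)\subset B_{r_0}(\ox_0)$ the integrand is at most $(2\alpha_0)^2+2\cdot 2\alpha_0\lesssim \alpha_0$, so the right-hand side is $\lesssim \alpha_0$, producing the desired contradiction.

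The main obstacle is verifying $\mu_0+\lambda H\in\Sigma(E)$. For a test ball $B_\rho(\ox)$ intersecting $\text{supp}(H)$ with $\rho\geq r_0$ one has $|\ox-\ox_0|\leq \rho+r_0\leq 2\rho$, hence $B_\rho(\ox)\subset B_{3\rho}(\ox_0)$ and $\mu_0(B_\rho(\ox))\leq \alpha_0(3\rho)^n$; together with $H(B_\rho(\ox))\leq H(E)\approx r_0^n\leq \rho^n$, this leaves ample room for the $\lambda H$-contribution. The delicate case is a small ball $B_\rho(\ox)\subset B_{r_0}(\ox_0)$ with $\rho<r_0$, where only the a priori bound $\mu_0(B_\rho(\ox))\leq \rho^n$ is available and may be saturated. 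I would cope with this either by thinning $H$ on the (relatively sparse) points where $\mu_0$'s $n$-growth is locally tight, or by replacing $\mu_0+\lambda H$ with its rescaling $(1+C\lambda)^{-1}(\mu_0+\lambda H)\in\Sigma(E)$ and re-differentiating $F$ at $\lambda=0$, which introduces a bounded correction to the variational identity that can be absorbed into the contradiction after shrinking $\alpha_0$.
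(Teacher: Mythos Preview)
Your overall strategy coincides with the paper's: use the variational inequality \eqref{eq4.1} with a positive perturbation $H$ concentrated near $\ox_0$, and exploit continuity of the $\pSS$-potentials to evaluate the right-hand side essentially at $\ox_0$. You also correctly isolate the main difficulty, namely forcing $\mu_0+\lambda H\in\Sigma(E)$ at scales $\rho<r_0$.

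However, neither of your two suggested fixes is made to work. Your first option, ``thinning $H$ on the (relatively sparse) points where $\mu_0$'s $n$-growth is locally tight,'' is exactly what the paper does, but the sparseness is the whole point and you do not justify it. The paper sets $G:=\{\oy\in B_{R_0}(\ox_0)\cap E: M\mu_0(\oy)\le 4^{-n}\}$ and shows $\pazocal{L}^{n+1}(G)>0$ via a Vitali covering: for each $\oy$ in the complement there is a ball with $\mu_0(B_r(\oy))>4^{-n}r^n$, necessarily of radius $r\lesssim R_0$ (because $M\mu_0(\ox_0)<\alpha_1$ is \emph{small}), and summing the disjoint Vitali balls against $\mu_0(2B_0)\le 2^nR_0^n\alpha_1$ bounds the Lebesgue measure of the bad set by $cR_0^{n+1}\alpha_1$, which is a small fraction of $\pazocal{L}^{n+1}(B_0\cap E)$. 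Taking $H=\pazocal{L}^{n+1}|_G$ then gives $\mu_0+\lambda H\in\Sigma(E)$ for small $\lambda$ by a direct case check. The smallness of $M\mu_0(\ox_0)$ is used twice here and is not a cosmetic hypothesis.

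Your second option, rescaling to $(1+C\lambda)^{-1}(\mu_0+\lambda H)$ and re-differentiating, actually fails. A short computation shows that the first-order condition becomes
\[
\frac{\mu_0(E)+2\int_E|\pSS\mu_0|^2\,\dd\mu_0}{\mu_0(E)}\ \le\ \frac{1}{H(E)}\int_E\bigl(|\pSS\mu_0|^2+2\pSS_{\mu_0}(\pSS\mu_0)\bigr)\dd H\ +\ \frac{C\bigl(\mu_0(E)-\int_E|\pSS\mu_0|^2\,\dd\mu_0\bigr)}{H(E)}.
\]
The extra term on the right depends only on $\mu_0$ (not on $H$) and is divided by $H(E)$; since $H(E)\to 0$ as $r_0\to 0$, it blows up, and there is no reason to have $\int_E|\pSS\mu_0|^2\,\dd\mu_0=\mu_0(E)$. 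So this correction cannot be ``absorbed by shrinking $\alpha_0$,'' and the contradiction is lost.
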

	\begin{proof}
		Choose $\alpha_1\in (0,80^{-n})$. If $\ox_0\in E$ is such that $M\mu_0(\ox_0) \geq \alpha_1$ then we are done at $\ox_0$. So assume $M\mu_0(\ox_0) <\alpha_1$. Pick $\varepsilon_1\leq \tau_0, R_0:=\varepsilon_1/10$ and $B_0:=B_{R_0}(\ox_0)$. Set $\mu_{00}:=\mu_0|_{2B_0}$ and define
		\begin{equation*}
			G:=\big\{ \oy\in B_0\cap E \, :\, M\mu_0(\oy)\leq 4^{-n} \big\}.
		\end{equation*}
		We shall prove that the complement of $G$ in $B_0\cap E$ is small. If $\oy\in (B_0\cap E)\setminus{G}$, then there exists $r=r(\oy)>0$ so that
		\begin{equation*}
			\frac{\mu_0(B_r(\oy))}{r^n}>4^{-n}.
		\end{equation*}
		If we had $r>\varepsilon_1/20 = R_0/2$, then
		\begin{equation*}
			\frac{1}{80^n}\leq \frac{\mu_0(B_r(\oy))}{20^nr^n}\leq \frac{\mu_0(B_{20r}(\ox_0))}{(20r)^n}<\alpha_1, \quad \text{since}\; M\mu_0(\ox_0)<\alpha_1,
		\end{equation*}
		and this cannot be. Then $r\leq \varepsilon_1/20$, implying $B_r(\oy)\cap (2B_0)=B_r(\oy)$, which in turn implies $\mu_{00}(B_r(\oy))=\mu_0(B_r(\oy))$. So we have found
		\begin{equation}
			\label{eq4.2}
			\forall \oy\in (B_0\cap E)\setminus{G}, \, \exists r(\oy)>0 \; \text{ such that } \; \frac{\mu_{00}(B_r(\oy))}{r^n}>4^{-n}.
		\end{equation}
		We continue by choosing $\varepsilon_1$ (that was already smaller than $\tau_0$) small enough so that
		\begin{equation*}
			\pazocal{L}^{n+1}(B_0\cap E) \geq a(n)R_0^{n+1}=:aR_0^{n+1},
		\end{equation*}
		for some dimensional constant $a>0$. This can be done since $E$ is a finite union of cubes. Notice also that the dependence on $E$ of the previous estimate is in $R_0$, since $\varepsilon_1$ depends on $\tau_0$, that depends on $E$. Using \eqref{eq4.2} and \cite[Theorem 2.1]{Ma} we obtain a countable covering $\{ B_{5r_j}(\oy_j) \}$ of $(B_0\cap E)\setminus{G}$ with $\{ B_{r_j}(\oy_j) \}$ disjoint and also satisfying \eqref{eq4.2}. Now,
		\begin{align*}
			\pazocal{L}^{n+1}\big( (B_0\cap E) \setminus{G} \big) &\leq 5^{n+1} \sum_{j}r_j^{n+1} \leq 2\cdot 5^{n+1} R_0 \sum_j r_j^n\\
			&<10\cdot 4^n \cdot 5^{n+1}R_0 \sum_{j}\mu_{00}\big( B_{r_j}(\oy_j) \big)=10\cdot 20^nR_0\,\mu_{00}\bigg( \bigcup_j  B_{r_j}(\oy_j) \bigg)\\
			& \leq 10\cdot 20^nR_0\,\mu_{0}(2B_0) \leq 10\cdot 40^nR_0^{n+1}\alpha_1=:A(n)R_0^{n+1}\alpha_1\\
			&\leq \frac{A}{a}\alpha_1 \pazocal{L}^{n+1}(B_0\cap E),
		\end{align*}
		where we have applied the hypothesis $M\mu_0(\ox_0)<\alpha_1$. Therefore, by definition of $G$, 
		\begin{equation*}
			\pazocal{L}^{n+1}(G)\geq \bigg( \frac{a}{A\alpha_1}-1 \bigg)\pazocal{L}^{n+1}\big( (B_0\cap E) \setminus{G} \big).
		\end{equation*}
		We pick $\alpha_1\in (0,80^{-n})$ depending only on $n$ so that
		\begin{equation*}
			\pazocal{L}^{n+1}(G) := \pazocal{L}^{n+1}\big(\big\{ \oy\in B_0\cap E \, :\, M\mu_0(\oy)\leq 4^{-n} \big\}\big)>0.
		\end{equation*}
		We will now prove that for all sufficiently small $\lambda>0$, $\mu_0+\lambda \pazocal{L}^{n+1}|_G$ belongs to $\Sigma_n(E)$, i.e.
		\begin{equation*}
			M\big(\mu_0+\lambda \pazocal{L}^{n+1}|_G\big) \leq 1, \quad \text{on }\, E\, \text{ if }\, \lambda\in (0,\lambda_0).
		\end{equation*}
		Let us fix $\oz\in E, r>0$ and write $\mu_\lambda := \mu_0+\lambda \pazocal{L}^{n+1}|_G$. Distinguish three cases: first, if $B_r(\oz)\cap G = \varnothing$ then it is clear that
		\begin{equation*}
			\frac{\mu_{\lambda}(B_r(\oz))}{r^n} = \frac{\mu_{0}(B_r(\oz))}{r^n}\leq 1,
		\end{equation*}
		that is the desired estimate. If $B_r(\oz)\cap G \neq \varnothing$ and $\oz\in G$, then
		\begin{align*}
			\frac{\mu_{\lambda}(B_r(\oz))}{r^n} = \frac{\mu_{0}(B_r(\oz))}{r^n}+\lambda \frac{\pazocal{L}^{n+1}(B_r(\oz)\cap G)}{r^n} \leq \frac{1}{4^n} +\lambda \frac{\min\big\{r^{n+1},R_0^{n+1}\big\}}{r^n}.
		\end{align*}
		If $r\leq R_0$, then
		\begin{equation*}
			\frac{\mu_{\lambda}(B_r(\oz))}{r^n} \leq \frac{1}{4^n}+\lambda r\leq \frac{1}{4^n}+\lambda R_0 < 1, \quad \text{for $\lambda$ small enough}.
		\end{equation*}
		If on the other hand $r>R_0$, we also have
		\begin{equation*}
			\frac{\mu_{\lambda}(B_r(\oz))}{r^n} \leq \frac{1}{4^n}+\lambda \frac{R_0^{n+1}}{r^{n}}\leq \frac{1}{4^n}+\lambda R_0 < 1, \quad \text{for $\lambda$ small enough}.
		\end{equation*}
		Finally, for the third case, that is, if $B_r(\oz)\cap G \neq \varnothing$ and $\oz\not\in G$, take $\oy\in B_r(\oz)\cap G$ so that
		\begin{align*}
			\frac{\mu_{\lambda}(B_r(\oz))}{r^n} &= \frac{\mu_{0}(B_r(\oz))}{r^n}+\lambda \frac{\min\big\{r^{n+1},R_0^{n+1}\big\}}{r^n}\leq 2^n\frac{\mu_{0}(B_{2r}(\oy))}{(2r)^n}+\lambda R_0 \\
			&\leq \frac{1}{2^n}+(2^n+1)\lambda R_0,
		\end{align*}
		where for the last inequality we have applied the reasoning of the second case. Hence, in general, we deduce that for $\lambda>0$ small enough, $\mu_\lambda\in \Sigma_n(E)$.
        
		Now we argue as follows: choose $\varepsilon_1^{(k)}\leq \varepsilon_1$ a sequence converging to 0. For each $k$ set $R_0^{(k)}:=\varepsilon_1^{(k)}/10$ and build a set $G_k$ in $B_{R_0^{(k)}}(\ox_0)$ as above. Let $H_k:=\pazocal{L}^{n+1}|_{G_k}$ and apply Lemma \ref{lem4.1}, or in this particular case \eqref{eq4.1}, to get
        
		\begin{equation*}
			1\leq \frac{\mu_0(E)+2\int_E|\pSS\mu_0|^2\dd\mu_0}{\mu_0(E)} \leq \frac{1}{H_k(E)} \int_E \Big( |\pSS\mu_0|^2+2\pSS_{\mu_0}(\pSS\mu_0) \Big)\dd H_k.
		\end{equation*}
		Since the measures $H_k/H_k(E)$ converge weakly to a point mass probability measure at $\ox_0$, by the continuity of the kernel associated to $\pSS$ we have
		\begin{equation*}
			1\leq |\pSS\mu_0|^2(\ox_0)+2\pSS_{\mu_0}(\pSS\mu_0)(\ox_0) \quad \rightarrow \quad \frac{1}{2}\leq |\pSS\mu_0|^2(\ox_0)+\pSS_{\mu_0}(\pSS\mu_0)(\ox_0).
		\end{equation*}
		Therefore, either $|\pSS\mu_0|^2(\ox_0)\geq 1/4$ or $\pSS_{\mu_0}(\pSS\mu_0)(\ox_0)\geq 1/4$, so in any case we get
		\begin{equation*}
			\pSS\mu_0(\ox_0)+\pSS_{\mu_0}(\pSS\mu_0)(\ox_0)\geq \frac{1}{4},
		\end{equation*}
		by the nonnegativity of each term. Since this last estimate follows if $M\mu_0(\ox_0)$ is smaller than $\alpha_1$, chosen to depend only on $n$, setting $\alpha_0:=\min\{\alpha_1,1/4\}$ the result follows.
	\end{proof}
	
	\subsection{The construction}
	\label{subsec4.1}
	Both of the above lemmas are enough to carry out the construction of the desired cubes in a similar way  to \cite[pp. 38-42]{Vo}. Fix $E\subset B_1(0)\subset 2Q_0$ compact consisting of a finite union of cubes, and define the auxiliary potential
	\begin{equation*}
		\widetilde{\pazocal{U}}^{\mu_0}(\ox) := \pazocal{U}^{\mu_0}(\ox)+M(\pSS\mu_0\dd \mu_0)(\ox).
	\end{equation*}
	Apply Lemma \ref{lem4.2} to $E$ and obtain its corresponding constant $\alpha_0$, and pick $0<\beta \ll 1$ depending only on $n$, to be fixed later on, and set
	\begin{equation*}
		\pazocal{G}:=\big\{ \oy\in \mathbb{R}^{n+1}\,:\, \widetilde{\pazocal{U}}^{\mu_0}(\oy)>\beta \alpha_0 \big\} \supset E.
	\end{equation*}
	Consider $\{Q_j\}_{j}$ a Whitney decomposition of $\pazocal{G}$ (see \cite[pp. 167-169]{St}, for example), that is: a countable family of closed dyadic cubes with disjoint interiors that cover $\pazocal{G}$ and such that for some constant $A>0$,
	\begin{itemize}[nolistsep]
		\item $20 Q_j \subset \pazocal{G}$ for each $j$,
		\item $(AQ_j)\cap \pazocal{G}^c \neq \varnothing$ for each $j$,
		\item The family $\{10Q_j\}_{j}$ has bounded overlap. Moreover, if $10Q_j \cap 10Q_i \neq \varnothing$, then $\ell(Q_j) \approx \ell(Q_i)$.
	\end{itemize}
	From those $Q_j$ satisfying $\frac{5}{4}Q_j\cap E \neq \varnothing$, choose a finite subcovering and enumerate them as $Q_{j_1},\ldots,Q_{j_N}$. We shall check that
	\begin{equation*}
		\QQ_i := 2Q_{j_i}, \quad i=1,\ldots,N
	\end{equation*}
	is the desired family of cubes that satisfies the properties in Theorem \ref{thm4.1}. It is clear that properties $\Pone$ and $\Pfive$ are satisfied by construction, so we are only left to verify $\Ptwo$, $\Pthree$ and $\Pfour$. The first two will be checked for $\gamma_{\text{sy},+}$ instead of $\widetilde{\gamma}_+$, which is enough due to the comparability of both capacities.
	\begin{proof}[\textit{Proof of $\Ptwo$} in Theorem \ref{thm4.1}.]
		We shall prove that property $\widetilde{\pazocal{U}}^{\mu_0}(\ox)\geq \beta\alpha_0$ on $\FF$, implies
		\begin{equation*}
			\gamma_{\text{sy},\text{op}}(\FF) \leq \frac{C}{\beta \alpha_0} \mu_0(E),
		\end{equation*}
		where $\mu_0$ is an extremal measure for $\gamma_{\text{var}}(E)$ and $C>0$ constant. If this is the case, we are done, since we have
		\begin{equation*}
			\gamma_{\text{sy},+}(\FF) \approx \gamma_{\text{sy},\text{op}}(\FF) \leq \frac{C}{\beta\alpha_0} \gamma_{\text{var}}(E) \leq  \frac{C}{\beta\alpha_0} \gamma_{\text{sy},\psi_{\tau_0},2}(E) \leq \frac{C}{\beta\alpha_0}\,2C_1C_2 \gamma_{\text{sy},\text{op}}(E) \approx \gamma_{\text{sy},+}(E),
		\end{equation*}
		where for the third estimate recall Remark \ref{rem3.4} and the fact that relation \eqref{eq3.10} holds in general, and in the fourth inequality we have used the definition of $\tau_0$.
        
		So let us choose $\mu$ admissible for $\gamma_{\text{sy},\text{op}}(\FF)$ with $\gamma_{\text{sy},\text{op}}(\FF)\leq 2\mu(\FF)$ and observe that
		\begin{equation*}
			\|\pSS_\mu\|_{L^{2}(\mu)\to L^2(\mu)} \leq \|\PP_{\text{sy},\mu}\|_{L^{2}(\mu)\to L^2(\mu)}\leq 1,
		\end{equation*}
		since $\Psy$ and the measure $\mu$ are nonnegative. Therefore, $\pSS_\mu$ becomes a CZ convolution operator (with continuous kernel), so that it satisfies the weak estimate
		\begin{equation*}
			\mu \big( \{ \pSS\mu_0 \geq \lambda \} \big) \leq C'\frac{\mu_0(E)}{\lambda},
		\end{equation*}
		where $C'>0$ is a constant. By definition, for each $\ox\in \FF$,
		\begin{align*}
			\widetilde{\pazocal{U}}^{\mu_0}(\ox)&:= M\mu_0(\ox)+\pSS\mu_0(\ox)+\pSS_{\mu_0}(\pSS\mu_0)(\ox)+M(\pSS\mu_0\dd \mu_0)(\ox)\\
			&=: \text{\rom{1}}(\ox)+\text{\rom{2}}(\ox)+\text{\rom{3}}(\ox)+\text{\rom{4}}(\ox),
		\end{align*}
		so at least one of the four terms is larger than $\frac{\beta\alpha_0}{4}=:a$. In fact, if we name
		\begin{align*}
			\FF_1:=\big\{ \ox\in F\,:\, \text{\rom{1}}(\ox)\geq a \big\}, &\qquad \FF_2:=\big\{ \ox\in F\,:\, \text{\rom{2}}(\ox)\geq a \big\},\\
			\FF_3:=\big\{ \ox\in F\,:\, \text{\rom{3}}(\ox)\geq a \big\}, &\qquad \FF_4:=\big\{ \ox\in F\,:\, \text{\rom{4}}(\ox)\geq a \big\},
		\end{align*}
		we have $\FF=\cup_{i=1}^4 \FF_i$. So in particular $\mu(\FF)\leq \sum_{i=1}^4 \mu(\FF_i)$ and then, necessarily, there exists $i\in \{1,2,3,4\}$ such that $\mu(\FF_i)\geq \mu(\FF)/4$. Let us study separately each possibility:
		\begin{enumerate}[leftmargin=*]
			\item[\textit{a})] If $i=1$. In this case $M\mu_0\geq a$ on $\FF_1$ with $\mu(\FF_1)\geq \mu(\FF)/4$. For each $\ox\in \FF_1$ choose a ball $B_{\ox}:=B_{r(\ox)}(\ox)$ so that $\mu_0(B_{\ox})\geq a r(\ox)^n$. Apply Besicovitch's covering theorem \cite[Theorem 2.7]{Ma} to extract a countable collection of balls $\{B_{\ox_j}\}_j$ with bounded multiplicity of overlapping (with constant depending only on the dimension) that covers $\FF_1$. Then,
			\begin{align*}
				\hspace{1.5cm}\frac{\mu(\FF)}{4} \leq \mu(\FF_1) \leq \sum_j \mu(B_{\ox_j}) \leq \sum_j r^n(\ox_j)\leq \frac{1}{a}\sum_j \mu_0(B_{\ox_j}) \lesssim \frac{\mu_0(\FF_1)}{a}\leq \frac{\mu_0(E)}{a},
			\end{align*}
			since $\mu_0$ is supported on $E$.
			\item[\textit{b})] If $i=2$. Now $\pSS\mu_0\geq a$ on $\FF_2$ with $\mu(\FF_2)\geq \mu(\FF)/4$. Using the weak estimate presented above we get the desired estimate
			\begin{equation*}
				\frac{\mu(\FF)}{4}\leq \mu(\FF_2) =  \mu \big( \{ \pSS\mu_0 \geq a \} \big) \leq \frac{C'}{a}\mu_0(E).
			\end{equation*}
			\item[\textit{c})] If $i=3$. Now $\pSS_{\mu_0}(\pSS\mu_0)\geq a$ on $\FF_3$ with $\mu(\FF_3)\geq \mu(\FF)/4$. By \eqref{eq3.10} we obtain
			\begin{align*}
				\frac{\mu(\FF)}{4}\leq \mu(\FF_3) &= \mu \big( \{ \pSS_{\mu_0}(\pSS\mu_0) \geq a \} \big) \leq \frac{C'}{a}\int_E \pSS\mu_0\dd \mu_0 \\
				&\leq \frac{C'}{a}\mu_0(E)^{1/2}\bigg(\int_E (\pSS\mu_0)^2\dd \mu_0\bigg)^2 \leq \frac{C'}{a}\mu_0(E).
			\end{align*}
			\item[\textit{d})] If $i=4$. Now $M(\pSS\mu_0\dd \mu_0)\geq a$ on $\FF_4$ with $\mu(\FF_4)\geq \mu(\FF)/4$. Proceeding analogously as in the first case, we deduce
			\begin{equation*}
				\frac{\mu(\FF)}{4} \lesssim \frac{1}{a}\int_E \pSS\mu_0\dd \mu_0,
			\end{equation*}
			and arguing as in the third case we obtain the desired result.
		\end{enumerate}
	\end{proof}
	To prove $\Pthree$ we will need the following lemma:
	\begin{lem}
		\label{lem4.3}
		For $i=1,\ldots, N$ define $\mu_i:=\mu_0|_{5Q_{j_i}}$ and consider the auxiliary potential
		\begin{equation*}
			\pazocal{W}^{\mu_i}:= M\mu_i+\pSS_{\mu_i}1+\pSS_{\mu_i}(\pSS\mu_0)+M(\pSS\mu_0\dd \mu_i).
		\end{equation*}
		Then, there exists a constant $\alpha_0'>0$ such that for each $i=1,\ldots, N$,
		\begin{equation*}
			\pazocal{W}^{\mu_i}(\ox)\geq \alpha_0'\alpha_0, \qquad \forall \ox\in 4Q_{j_i}\cap E.
		\end{equation*}
	\end{lem}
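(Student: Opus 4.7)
The natural approach is to compare the auxiliary potential $\widetilde{\pazocal{U}}^{\mu_0}$, controlled from below on $E$ by Lemma \ref{lem4.2}, with $\pazocal{W}^{\mu_i}$ pointwise on $4Q_{j_i}\cap E$. Since $E\subset \pazocal{G}$ by construction, for each $\ox\in 4Q_{j_i}\cap E$ one has $\widetilde{\pazocal{U}}^{\mu_0}(\ox)\geq \pazocal{U}^{\mu_0}(\ox)\geq \alpha_0$ by Lemma \ref{lem4.2}. By the Whitney property pick $\oy^*\in AQ_{j_i}\cap \pazocal{G}^c$; since each of the four nonnegative summands defining $\widetilde{\pazocal{U}}^{\mu_0}(\oy^*)$ is bounded by $\beta\alpha_0$, one has simultaneously $M\mu_0(\oy^*)\leq \beta\alpha_0$, $\pSS\mu_0(\oy^*)\leq \beta\alpha_0$, $\pSS_{\mu_0}(\pSS\mu_0)(\oy^*)\leq \beta\alpha_0$ and $M\nu(\oy^*)\leq \beta\alpha_0$, where $\nu:=\pSS\mu_0\,\dd\mu_0$. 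The key geometric fact is that $|\ox-\oy^*|\lesssim \ell:=\ell(Q_{j_i})$ while $|\ox-\oy|\gtrsim \ell$ for every $\oy\in 5Q_{j_i}^c$. The aim is to prove
\begin{equation*}
\widetilde{\pazocal{U}}^{\mu_0}(\ox)\leq \pazocal{W}^{\mu_i}(\ox)+C\beta\alpha_0
\end{equation*}
for an absolute $C>0$; then choosing $\beta\leq 1/(2C)$ in the initial definition of $\pazocal{G}$ (this parameter is still free) forces $\pazocal{W}^{\mu_i}(\ox)\geq \alpha_0/2$, so $\alpha_0':=1/2$ suffices.

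Four pairwise comparisons are required, one per summand. The two maximal-function comparisons, $M\mu_0(\ox)\leq M\mu_i(\ox)+C\beta\alpha_0$ and $M\nu(\ox)\leq M(\nu|_{5Q_{j_i}})(\ox)+C\beta\alpha_0$, are the easiest: for a ball $B_r(\ox)$ with $r\leq \ell/2$ the ball is entirely contained in $5Q_{j_i}$ so the restricted and unrestricted masses coincide, while for $r>\ell/2$ one has $B_r(\ox)\subset B_{Cr}(\oy^*)$ and the mass is controlled by $\beta\alpha_0\,r^n$ thanks to the $n$-growth at $\oy^*$. The comparison $\pSS\mu_0(\ox)\leq \pSS_{\mu_i}1(\ox)+C\beta\alpha_0$ amounts to estimating $\int_{5Q_{j_i}^c}\Psy\psi_{\tau_0}(\ox-\oy)\,\dd\mu_0(\oy)$: the part with $|\ox-\oy|\leq 2A\ell$ is bounded by the $n$-growth of $\mu_0$ on a ball around $\oy^*$, and for the far part one invokes the Calder\'on--Zygmund regularity of $\Psy\psi_{\tau_0}$, with constants independent of $\tau_0$ as verified in Section \ref{subsec3.3}, to bound
\begin{equation*}
\Psy\psi_{\tau_0}(\ox-\oy)\leq \Psy\psi_{\tau_0}(\oy^*-\oy)+C\frac{|\ox-\oy^*|}{|\ox-\oy|^{n+1}}.
\end{equation*}
Integrating, the main piece is at most $\pSS\mu_0(\oy^*)\leq \beta\alpha_0$, and the error piece, after a dyadic decomposition into annuli centered at $\oy^*$, contributes $\lesssim |\ox-\oy^*|\cdot\beta\alpha_0/\ell\lesssim \beta\alpha_0$.

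The last comparison, $\pSS_{\mu_0}(\pSS\mu_0)(\ox)\leq \pSS_{\mu_i}(\pSS\mu_0)(\ox)+C\beta\alpha_0$, follows the same scheme with $\dd\mu_0$ replaced by $\dd\nu$: the main piece is bounded by $\pSS_{\mu_0}(\pSS\mu_0)(\oy^*)\leq \beta\alpha_0$, and the error piece by the $n$-growth of $\nu$ at $\oy^*$. I expect this final step to be the main obstacle: a naive bound of $\int_{5Q_{j_i}^c}|\ox-\oy|^{-n}\,\dd\nu(\oy)$ using only the $n$-growth of $\nu$ would yield a logarithmically divergent dyadic sum, and only the additional $|\ox-\oy|^{-n-1}$ decay furnished by the Calder\'on--Zygmund regularity, combined with $|\ox-\oy^*|\lesssim \ell$, can absorb that logarithm into an absolute constant times $\beta\alpha_0$. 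Once the four tail estimates are combined, the conclusion is immediate.
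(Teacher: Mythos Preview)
Your proposal is correct and follows essentially the same approach as the paper's proof: both pick a reference point $\oy^*\in AQ_{j_i}\cap\pazocal{G}^c$, exploit the four upper bounds on the summands of $\widetilde{\pazocal{U}}^{\mu_0}(\oy^*)$, and transfer the lower bound from $\ox$ to the restricted potential via a near/far splitting together with the Calder\'on--Zygmund regularity of $\Psy\psi_{\tau_0}$ (whose constants are $\tau_0$--independent) and the $n$-growth of $\mu_0$ and of $\nu=\pSS\mu_0\,\dd\mu_0$ at $\oy^*$.

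The only organizational difference is that the paper argues by a trichotomy on which of the three summands of $\pazocal{U}^{\mu_0}(\ox)$ exceeds $\alpha_0/3$, and then shows the \emph{corresponding} summand of $\pazocal{W}^{\mu_i}(\ox)$ is at least $\alpha_0/5$ or $\alpha_0/6$; you instead prove the four term-by-term tail estimates $M\mu_0\leq M\mu_i+C\beta\alpha_0$, $\pSS\mu_0\leq\pSS_{\mu_i}1+C\beta\alpha_0$, $\pSS_{\mu_0}(\pSS\mu_0)\leq\pSS_{\mu_i}(\pSS\mu_0)+C\beta\alpha_0$, $M\nu\leq M(\nu|_{5Q_{j_i}})+C\beta\alpha_0$ and add them. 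The underlying computations are identical (compare your near/far split with the paper's use of the radius $R=2(4+A)\,\mathrm{diam}(Q_{j_i})$ and equations \eqref{eq4.4}--\eqref{eq4.5}); your packaging is arguably cleaner since it avoids the case distinction, while the paper's packaging makes it transparent that the fourth term $M\nu$ is only needed as an auxiliary bound at $\oy^*$ and never as a lower bound at $\ox$ (Lemma~\ref{lem4.2} already gives $\pazocal{U}^{\mu_0}(\ox)\geq\alpha_0$ with only three terms, so your fourth comparison is correct but superfluous).
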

	\begin{proof}
		For the sake of notation, in this proof we rename $Q_{i}:=Q_{j_i}$. Fix $i=1,\ldots,N$, $\ox\in 4Q_{i}\cap E$ and $\oz\in (AQ_i)\cap \pazocal{G}^c$, which is non-empty by construction. So in particular
		\begin{equation}
			\label{eq4.3}
			M\mu_0(\oz)\leq \beta\alpha_0, \quad \pSS\mu_0(\oz)\leq \beta\alpha_0.
		\end{equation}
		Now choosing, for example, $R:=2(4+A)\text{diam}(Q_i)$ it is clear that we have
		\begin{equation*}
			|\ox-\oz|\leq (4+A)\text{diam}(Q_i)\leq \frac{R}{2}.
		\end{equation*}
		Therefore, using the CZ estimates for $\Psy\psi_{\tau_0}$, the nonnegativity of the latter kernel and relations \eqref{eq4.3} we have
		\begin{align}
			\nonumber
			\pSS_{R}\mu_0(\ox)& := \int_{|\ox-\oy|>R}\Psy(\ox-\oy)\psi_{\tau_0}(\ox-\oy)\dd\mu_0(\oy)\\
			\nonumber
			&\leq \beta\alpha_0+\int_{|\ox-\oy|>R}\Big\rvert \Psy(\ox-\oy)\psi_{\tau_0}(\ox-\oy)-\Psy(\oz-\oy)\psi_{\tau_0}(\oz-\oy)\Big\rvert\dd\mu_0(\oy)\\
			\nonumber
			&\lesssim \beta\alpha_0+|\ox-\oz|\int_{|\ox-\oy|>R}\frac{\dd\mu_0(\oy)}{|\ox-\oy|^{n+1}} \lesssim \beta\alpha_0+|\ox-\oz|\int_{|\oz-\oy|>R/4}\frac{\dd\mu_0(\oy)}{|\oz-\oy|^{n+1}} \\
			&\leq \beta\alpha_0+\frac{R}{2} \sum_{j=0}^{\infty}\frac{\mu_0(B_{2^{j+1}R}(\oz))}{(2^jR)^{n+1}} \leq \beta\alpha_0+\frac{R}{2} \sum_{j=0}^{\infty}\frac{\beta\alpha_0(2^{j+1}R)^n}{(2^jR)^{n+1}}\lesssim \beta\alpha_0. \label{eq4.4}
		\end{align}
		We also notice that there exists $\widetilde{A}=\widetilde{A}(n)>0$ such that
		\begin{equation}
			\label{eq4.5}
			M\mu_0(\ox)\leq \max\big\{\widetilde{A}\,\beta\alpha_0, M\mu_i(\ox)\big\}.    
		\end{equation}
		Indeed, if $r\geq \ell(Q_i)/20$ then
		\begin{equation*}
			\frac{\mu_0(B_r(\overline{x}))}{r^n}\leq \frac{\mu_0(B_{40\sqrt{n}Ar}(\oz))}{r^n}\leq \widetilde{A}(n)\beta\alpha_0.
		\end{equation*}
		If $r< \ell(Q_i)/20$ use that $\ox\in 4Q_i$ and $\mu_i:=\mu_0|_{5Q_i}$ to simply have
		\begin{equation*}
			\frac{\mu_0(B_r(\overline{x}))}{r^n} =  \frac{\mu_j(B_r(\overline{x}))}{r^n},
		\end{equation*}
		so indeed $ M\mu_0(\ox)\leq \max\big\{\widetilde{A}\,\beta\alpha_0, M\mu_i(\ox)\big\}$. Having made this observations, we recall that by Lemma \ref{lem4.2} we always have
		\begin{equation*}
			M\mu_0+\pSS\mu_0+\pSS_{\mu_0}(\pSS\mu_0)\geq \alpha_0, \quad \text{on }\, E.
		\end{equation*}
		Now we distinguish three cases:
		\begin{enumerate}[leftmargin=*]
			\item[\textit{a})] If $M\mu_0(\ox)\geq \alpha_0/3$. Then \eqref{eq4.5} implies
			\begin{equation*}
				\frac{\alpha_0}{3}\leq \max\big\{\widetilde{A}\,\beta\alpha_0, M\mu_i(\ox)\big\},
			\end{equation*}
			so choosing $\beta \leq (6\widetilde{A})^{-1}$ we deduce $M\mu_i(\ox)\geq \alpha_0/6$ and we would be done.
			\item[\textit{b})] If $\pSS\mu_0(\ox)\geq \alpha_0/3$, by \eqref{eq4.4} there exists $C>0$ constant such that
			\begin{equation*}
				\pSS\big(\mu|_{B_R(\ox)}\big)(\ox) \geq \frac{\alpha_0}{3}-C\beta\alpha_0 \geq \frac{\alpha_0}{4},
			\end{equation*}
			for $\beta$ small enough. In addition, by definition of $R$,
			\begin{equation*}
				\pSS\big(\mu|_{B_R(\ox)\setminus{5Q_i}}\big)(\ox) \leq C' \frac{\mu_0(B_{2R}(\oz))}{\ell(Q_i)^n}\leq 2^nC'\beta\alpha_0 \frac{R^n}{\ell(Q_i)^n} = C''\beta\alpha_0.
			\end{equation*}
			Hence
			\begin{equation*}
				\pSS\mu_i(\ox) = \pSS\big(\mu_0|_{5Q_i}\big)(\ox)\geq \frac{\alpha_0}{4}-C''\beta\alpha_0 \geq \frac{\alpha_0}{5},
			\end{equation*}
			for $\beta$ small enough, and we are done.
			\item[\textit{c})] If $\pSS_{\mu_0}(\pSS\mu_0)(\ox)\geq \alpha_0/3$. Let us observe that for $\oz\in (AQ_i)\cap \pazocal{G}^c$, by definition of $\pazocal{G}$ we also have
			\begin{equation*}
				M(\pSS\mu_0\dd\mu_0)(\oz)\leq \beta\alpha_0,\qquad \pSS_{\mu_0}(\pSS\mu_0)(\oz)\leq \beta\alpha_0.
			\end{equation*}
			So choosing again $R:=2(4+A)\text{diam}(Q_i)$ we would be able to prove analogously as in \eqref{eq4.4} that
			\begin{equation*}
				\pSS_{\mu_0,R}(\pSS\mu_0)(\ox)\leq C\beta\alpha_0,
			\end{equation*}
			for some $C>0$ constant. Moreover,
			\begin{align*}
				\pSS_{\mu_0|_{B_R(\ox)\setminus{5Q_i}}}(\pSS\mu_0)(\ox)\leq C' \frac{\int_{B_{2R}(\oz)}\pSS\mu_0\dd\mu_0}{\ell(Q_i)^n} \leq C''\beta\alpha_0,
			\end{align*}
			where we have used, again, the definition of $R$ and that $M(\pSS\mu_0\dd\mu_0)(\oz)\leq \beta\alpha_0$. Therefore,
			\begin{align*}
				\pSS_{\mu_i}(\pSS\mu_0)(\ox)&=\pSS_{\mu_0}(\pSS\mu_0)(\ox)-\pSS_{\mu_0|_{B_R(\ox)\setminus{5Q_i}}}(\pSS\mu_0)(\ox)-\pSS_{\mu_0,R}(\pSS\mu_0)(\ox)\\
				&\geq \frac{\alpha_0}{3}-C''\beta\alpha_0-C\beta\alpha_0\geq \frac{\alpha_0}{5},
			\end{align*}
			for $\beta$ small enough, and the proof is completed.
		\end{enumerate}
	\end{proof}
	\begin{proof}[\textit{Proof of $\Pthree$ in Theorem \ref{thm4.1}}.]
		Repeating the same arguments presented for the proof of $\Ptwo$, one deduces that property $\pazocal{W}^{\mu_i}(\ox)\geq \alpha_0'\alpha$, for every $\ox \in 4Q_{j_i}\cap E$ and every $i=1,\ldots, N$, implies
		\begin{equation*}
			\gamma_{\text{sy},\text{op}}(4Q_{j_i}\cap E) \leq \frac{C}{\alpha_0'\alpha_0} \bigg( \mu_0(5Q_{j_i})+\int_{5Q_{j_i}}\pSS\mu_0\dd\mu_0 \bigg).
		\end{equation*}
		But notice that the bounded multiplicity of overlapping of the family $\{10Q_j\}_j$ and the fact that $\mu_0$ is supported on $E$ yield
		\begin{equation*}
			\sum_{i=1}^N\bigg(\mu_0(Q_{j_i})+\int_{5Q_{j_i}}\pSS\mu_0\dd\mu_0\bigg) \lesssim \mu_0(E) + \int_{E}\pSS\mu_0\dd\mu_0 \leq 2\mu_0(E),
		\end{equation*}
		where for the last inequality we have used \eqref{eq3.10}. Therefore, by the latter relation and using the definition of $\tau_0$ we get
		\begin{align*}
			\sum_{i=1}^N \gamma_{\text{sy},+}(4Q_{j_i}\cap E) &\lesssim \sum_{i=1}^N \gamma_{\text{sy},\text{op}}(4Q_{j_i}\cap E) \lesssim \mu_0(E) \\
			&= \gamma_{\text{var}}(E) \leq \gamma_{\text{sy},\psi_{\tau_0},2}(E)\leq 2C_1C_2 \gamma_{\text{sy},\text{op}}(E) \lesssim \gamma_{\text{sy},+}(E),
		\end{align*}
		that is the desired estimate.
	\end{proof}
	
	Finally, we prove $\Pfour$. Let us first observe that the assumption
	\begin{equation*}
		\widetilde{\gamma}_+(E)\leq C_+\text{diam}(E)^n, \quad \text{ for $C_+>0$ constant to be fixed below},
	\end{equation*}
	is not at all restrictive. Indeed, if it failed, \cite[Lemma 4.1]{MPr} would imply
	\begin{equation*}
		\widetilde{\gamma}(E)\leq \gamma(E) \lesssim \pazocal{H}_\infty^n(E)\leq \text{diam}(E)^n < \frac{1}{C_+}\widetilde{\gamma}_+(E),
	\end{equation*}
	and we would be done.
	\begin{proof}[\textit{Proof of $\Pfour$ in Theorem \ref{thm4.1}}.]
		Fix $\ox\not\in E$ and observe that the following estimates hold
		\begin{align*}
			\pSS\mu_0(\ox) &\leq \frac{\mu_0(E)}{\text{dist}(\ox,E)^n},\\
			\pSS_{\mu_0}(\pSS\mu_0)(\ox) &\leq  \frac{1}{\text{dist}(\ox,E)^n}\int_E\int_E\Psy(\oy-\oz)\psi_{\tau_0}(\oy-\oz)\dd\mu_0(\oz)\dd\mu_0(\oy)\\
			&=\frac{1}{\text{dist}(\ox,E)^n}\int_E \pSS\mu_0(\oy) \dd\mu_0(\oy)\leq \frac{\mu_0(E)}{\text{dist}(\ox,E)^n},
		\end{align*}
		where for the last inequality we have applied \eqref{eq3.10}. Since $\text{supp}(\mu_0)\subset E$, we have
		\begin{equation*}
			\sup_{r>0}\frac{\mu_0(B_r(\ox))}{r^n} = \sup_{r>\text{dist}(\ox,E)}\frac{\mu_0(B_r(\ox))}{r^n} \leq \frac{\mu_0(E)}{\text{dist}(\ox,E)^n},
		\end{equation*}
		and moreover, due to \eqref{eq3.10} again,
		\begin{align*}
			\sup_{r>0}\frac{1}{r^n}\int_{B_r(\ox)}\pSS\mu_0(\oy)\dd \mu_0(\oy)&= \sup_{r>\text{dist}(\ox,E)}\frac{1}{r^n}\int_{B_r(\ox)}\pSS\mu_0(\oy)\dd \mu_0(\oy)\\
			&\leq \frac{1}{\text{dist}(\ox,E)^n}\int_{E}\pSS\mu_0(\oy)\dd \mu_0(\oy) \leq \frac{\mu_0(E)}{\text{dist}(\ox,E)^n}.
		\end{align*}
		All in all, we get
		\begin{equation*}
			\widetilde{\pazocal{U}}^{\mu_0}(\ox)\leq \frac{4\mu_0(E)}{\text{dist}(\ox,E)}.
		\end{equation*}
		Let us pick $\ox\in \pazocal{G}\cap E^c$ (which is non empty, since $E\subset \pazocal{G}$ with $\pazocal{G}$ open, by the lower semicontinuity of the potential $\widetilde{\pazocal{U}}^{\mu_0}$) so that $\widetilde{\pazocal{U}}^{\mu_0}(\ox)>\beta \alpha_0$ and observe that using the definition of $\mu_0$ and $\tau_0$ as in the previous proofs, we get
		\begin{align*}
			\text{dist}(\ox,E)^n &\leq \frac{8C_1C_2}{\alpha_0\beta}\gamma_{\text{sy},\text{op}}(E)\leq \frac{8CC_1C_2}{\alpha_0\beta}\gamma_{\text{sy},+}(E)\\
			&\leq \frac{16CC_1C_2}{\alpha_0\beta}\widetilde{\gamma}_{+}(E) \leq \frac{16CC_1C_2}{\alpha_0\beta}C_+\text{diam}(E)^n.
		\end{align*}
		Choosing $C_+$ appropriately to neglect the effect of all the above constants that have already been fixed, the result follows. Indeed, this is because the previous estimate is valid for all $\ox \in \pazocal{G}\cap E^c$, implying that one can make $\partial \pazocal{G}$ say $\frac{\text{diam}(E)}{1000}$ close to $E$. Since we also know that $20Q_j\subset \pazocal{G}$ we have
		\begin{equation*}
			\text{diam}(2Q_j)\leq \frac{1}{10}\text{dist}(\partial\pazocal{G},Q_j),
		\end{equation*}
		where $\text{dist}(\partial\pazocal{G},Q_j)$ is comparable to $\text{dist}(\partial\pazocal{G},E)$ since the cubes $Q_j$ have been also chosen so that $\frac{5}{4}Q_j\cap E\neq \varnothing$.
	\end{proof}
	
	\section{A comparability result under an additional assumption}
	\label{sec5}
	
	Let $E\subset \mathbb{R}^{n+1}$ be a compact set. As it is stated in $\Pfour$ in Theorem \ref{thm4.1}, we shall work under the following additional assumption:
	\begin{enumerate}
		\item[$\Aone$:] $\widetilde{\gamma}_+(E)\leq C_+\text{diam}(E)^n$, with $C_+>0$ constant.
	\end{enumerate}
	By the translation invariance and $\widetilde{\gamma}(\lambda E) = \lambda^n\widetilde{\gamma}(E)$, $\widetilde{\gamma}_+(\lambda E) = \lambda^n\widetilde{\gamma}_+(E)$ (see \cite[Proposition 3.1]{He}), it is clear that we may assume $E\subset B_1(0)$ without loss of generality. In fact, in order to apply Theorem \ref{thm4.1} let us check that we can assume:
	\begin{itemize}
		\item[$\Atwo$:] $E$ \textit{is contained in the unit ball and consists of a finite union of closed dyadic cubes belonging to a dyadic grid in $\mathbb{R}^{n+1}$ $($with sides parallel to the coordinate axes$\,)$, all with the same size and with disjoint interiors.}
	\end{itemize}
	Let us verify that if we deduce the comparability between $\widetilde{\gamma}$ and $\widetilde{\gamma}_+$ for $E$ satisfying $\Atwo$, we obtain the same result for a general $E$. So fix $E\subset \mathbb{R}^{n+1}$ any compact set and let $\overline{\pazocal{U}_\delta(E)}$ be the closed $\delta$-neighborhood of $E$. Consider a grid of dyadic cubes in $\mathbb{R}^{n+1}$ with sides parallel to the axes and diameter smaller than $\delta/2$. Name $E_0$ the collection of dyadic cubes that intersect $E$ and notice that $E\subset E_0 \subset \overline{E_0} \subset \overline{\pazocal{U}_\delta(E)}$. Now we would have
	\begin{equation*}
		\widetilde{\gamma}(E)\leq \widetilde{\gamma}(\overline{E_0})\lesssim \widetilde{\gamma}_+(\overline{E_0})\leq \widetilde{\gamma}_+(\overline{\pazocal{U}_\delta(E)}) \leq \gamma_+(\overline{\pazocal{U}_\delta(E)}).
	\end{equation*}
	Letting $\delta\to 0$ and using the outer regularity of $\gamma_+$ we deduce, by Theorem \ref{thm3.5},
	\begin{equation*}
		\widetilde{\gamma}(E) \lesssim \gamma_+(E) \lesssim \widetilde{\gamma}_+(E),
	\end{equation*}
	and the result follows.
    
	Finally, we present a last assumption that motivates the title of this section:
	\begin{enumerate}
		\item[$\Athree$.] \textit{Let} $\{\QQ_1,\ldots, \QQ_N\}$ \textit{be the family of cubes provided by Theorem} \ref{thm4.1} \textit{and} $C_1$ \textit{the constant appearing in point $\Pthree$ of the same theorem. We will assume that}
		\begin{equation*}
			\widetilde{\gamma}(E)\geq C_1^{-1} \sum_{i=1}^N \widetilde{\gamma}(2\QQ_i\cap E).
		\end{equation*}
	\end{enumerate}
	The aim of this section is to prove the comparability between $\widetilde{\gamma}$ and $\widetilde{\gamma}_+$ under this last assumption $\Athree$.

	\subsection{Basic definitions and properties}
	\label{subsec5.1}
	Let $E\subset \mathbb{R}^{n+1}$ be a compact set verifying $\Aone, \Atwo$ and $\Athree$. We begin our argument by fixing a distribution $T_0$ admissible for $\widetilde{\gamma}(E)$ so that $|\langle T_0, 1 \rangle|=\widetilde{\gamma}(E)/2$. We know that $T_0$ has $n$-growth with constant 1 and
	\begin{equation*}
		\|P\ast T_0\|_\infty\leq 1, \qquad \|P^\ast\ast T_0\|_\infty\leq 1.
	\end{equation*}
	Let $\FF:=\cup_{i=1}^N\QQ_i$ be the covering by cubes provided by Theorem \ref{thm4.1}. Recall that $\QQ_i=2Q_i$, where $\{Q_i\}_{i=1}^N$ is the finite subset of dyadic cubes with disjoint interiors of a Whitney decomposition of $\pazocal{G}\supset E$, with the property $\frac{5}{4}Q_i\cap E \neq \varnothing$. We write
	\begin{equation*}
		F:= \bigcup_{i=1}^N Q_i\subset \FF.
	\end{equation*}
	It is clear that $E\subset F$ and moreover, by monotonicity and the properties in Theorem \ref{thm4.1} we also have
	\begin{enumerate}[itemsep=0.1cm]
		\item[$\Ptwo'$.] $\widetilde{\gamma}_+(F)\leq C_0 \widetilde{\gamma}_+(E)$,
		\item[$\Pthree'$.] $\sum_{i=1}^N\widetilde{\gamma}_+(2Q_i\cap E)\leq C_1 \widetilde{\gamma}_+(E)$ and by $\Athree$ we also have $\sum_{i=1}^N\widetilde{\gamma}(2Q_i\cap E)\leq C_1 \widetilde{\gamma}(E)$.
		\item[$\Pfour'$.] $\text{\normalfont{diam}}(Q_i)\leq \frac{1}{20}\text{\normalfont{diam}}(E)$, for each $i=1, \ldots, N$,
		\item[$\Pfive'$.] The family $\{10Q_1,\ldots, 10Q_N\}$ has bounded overlap. Moreover, if $10Q_j \cap 10Q_i \neq \varnothing$, then $\ell(Q_j) \approx \ell(Q_i)$.
	\end{enumerate}
	In order to simplify the arguments, in this section we will work with the cubes $Q_1,\ldots, Q_N$ instead of $\QQ_1,\ldots, \QQ_N$, and with $F$ instead of $\FF$. Let us choose for each $Q_i$ a ball $B_i\subset \frac{1}{2}Q_i$ concentric with $Q_i$ with radius $r_i$ comparable to $\widetilde{\gamma}(2Q_i\cap E)^{1/n}$. Notice that
	\begin{equation*}
		\widetilde{\gamma}(2Q_i\cap E)^{1/n}\leq \gamma(2Q_i)^{1/n} \approx \ell(Q_i).
	\end{equation*}
	Therefore, we may choose $r_i\approx \widetilde{\gamma}(2Q_i\cap E)^{1/n}$ with constants depending at most on the dimension and still have $B_i\subset \frac{1}{2}Q_i$. Notice that $\text{dist}(B_i, B_j) \geq \frac{1}{2}\min\{\ell(Q_i),\ell(Q_j)\}$, for $i\neq j$. We define the positive measure
	\begin{equation}
		\label{eq5.1}
		\mu:=\sum_{i=1}^N \mu_i := \sum_{i=1}^N \frac{r_i^n}{\pazocal{L}^{n+1}(B_i)}\pazocal{L}^{n+1}|_{B_i}.
	\end{equation}
	Apply \cite[Lemma 3.1]{HPo} to pick test functions $\varphi_i\in \pazocal{C}^{\infty}_c(2Q_i), 0\leq \varphi_i \leq 1$, satisfying $\|\nabla \varphi_i\|_\infty\leq \ell(2Q_i)^{-1}$ and with $\sum_{i=1}^{N}\varphi_i \equiv 1$ in $F$. We also define the signed measure
	\begin{equation}
		\label{eq5.2}
		\nu := \sum_{i=1}^N \frac{\langle T_0, \varphi_i \rangle}{\pazocal{L}^{n+1}(B_i)}\pazocal{L}^{n+1}|_{B_i}, \qquad \text{that is} \qquad  \nu = \sum_{i=1}^N \frac{\langle T_0, \varphi_i \rangle}{r_i^n}\mu_i.
	\end{equation}
	Since $\text{supp}(\nu)\subset \text{supp}(\mu)\subset \FF$ and $\nu\ll \mu$, have $\nu = b\mu$, where $b$ is the Radon-Nikodym derivative.
	\begin{lem}
		\label{lem5.1}
		The following hold for some positive constants $c_1,c_3$ and $c_4$:
		\begin{enumerate}[leftmargin=*,itemsep=0.1cm]
			\item[\textit{1}.] $\|b\|_\infty \leq c_1$.
			\item[\textit{2}.] $|\nu(F)|=\widetilde{\gamma}(E)/2$.
			\item[\textit{3}.] $\widetilde{\gamma}(E)\leq c_3 \mu(F)$.
			\item[\textit{4.}] $|\nu(Q)|\leq c_4\ell(Q)^n$ for any cube $Q\subset \mathbb{R}^{n+1}$.
		\end{enumerate}
	\end{lem}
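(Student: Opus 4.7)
The plan is to establish the four properties in order; property (3) will follow immediately from (1) and (2).

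For (1), I would apply the localization of potentials (Theorem 3.1 of \cite{MPr}, valid in our extended function class by Remark \ref{rem2.1}): for each $i$, this produces a distribution $T_i$ with $\text{supp}(T_i)\subset 2Q_i\cap E$, $n$-growth with some absolute constant, potentials $\|P\ast T_i\|_\infty,\|P^\ast\ast T_i\|_\infty\lesssim 1$, and $\langle T_i,1\rangle=\langle T_0,\varphi_i\rangle$. Up to a dimensional renormalization, $T_i$ is therefore admissible for $\widetilde{\gamma}(2Q_i\cap E)$, giving
\begin{equation*}
|\langle T_0,\varphi_i\rangle|\;\lesssim\;\widetilde{\gamma}(2Q_i\cap E)\;\approx\;r_i^n
\end{equation*}
by our choice of $r_i$. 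Since $b\equiv\langle T_0,\varphi_i\rangle/r_i^n$ on $B_i$ and vanishes off $\bigcup_i B_i$, we conclude $\|b\|_\infty\le c_1$.

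Property (2) is immediate from $\sum_i\varphi_i\equiv 1$ on $F\supset E\supset\text{supp}(T_0)$:
\begin{equation*}
\nu(F)\;=\;\sum_i\langle T_0,\varphi_i\rangle\;=\;\Big\langle T_0,\,\sum_i\varphi_i\Big\rangle\;=\;\langle T_0,1\rangle,
\end{equation*}
whose absolute value is $\widetilde{\gamma}(E)/2$ by the choice of $T_0$. Then (3) follows by combining (1) and (2):
\begin{equation*}
\widetilde{\gamma}(E)/2\;=\;|\nu(F)|\;=\;\bigg\rvert\int b\dd\mu\bigg\rvert\;\le\;\|b\|_\infty\,\mu(F)\;\le\;c_1\,\mu(F).
\end{equation*}

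The main obstacle is (4), the $n$-growth of $\nu$. I would rewrite $\nu(Q)=\langle T_0,g_Q\rangle$ with
\begin{equation*}
g_Q(\ox)\;:=\;\sum_i \frac{\pazocal{L}^{n+1}(B_i\cap Q)}{\pazocal{L}^{n+1}(B_i)}\,\varphi_i(\ox),
\end{equation*}
and compare $g_Q$ to a smooth bump $\eta_Q$ with $\chi_Q\le\eta_Q\le\chi_{2Q}$ and $\|\nabla\eta_Q\|_\infty\lesssim\ell(Q)^{-1}$, for which $|\langle T_0,\eta_Q\rangle|\lesssim\ell(Q)^n$ follows directly from the $n$-growth of $T_0$. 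The error $g_Q-\eta_Q$ I would split according to whether $\ell(Q_i)\ge\ell(Q)$ (``large'' Whitney cubes) or not. By the bounded overlap $\Pfive'$, only $O(1)$ indices of the first type can meet $2Q$, and for each, property (1) combined with the elementary estimate $\pazocal{L}^{n+1}(B_i\cap Q)/\pazocal{L}^{n+1}(B_i)\le\min\{1,\pazocal{L}^{n+1}(Q)/\pazocal{L}^{n+1}(B_i)\}$ yields a contribution $\lesssim\ell(Q)^n$. For the small Whitney cubes, the weight $\pazocal{L}^{n+1}(B_i\cap Q)/\pazocal{L}^{n+1}(B_i)$ matches $\eta_Q$ except when $B_i$ meets $\partial Q$, so the error concentrates in the boundary layer of $Q$; decomposing that layer dyadically at scales $2^{-k}\ell(Q)$ and applying property (1) together with the packing of Whitney cubes at each scale produces a geometric sum bounded by $C\ell(Q)^n$.
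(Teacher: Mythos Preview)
Your arguments for \textit{1}, \textit{2}, and \textit{3} are correct and match the paper's; deriving \textit{3} from \textit{1} and \textit{2} via $|\nu(F)|\le\|b\|_\infty\mu(F)$ is a clean shortcut equivalent to the paper's explicit sum.

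Your sketch for \textit{4}, however, has a genuine gap. The claim that the error $g_Q-\eta_Q$ ``concentrates in the boundary layer of $Q$'' is not correct with your choice of $\eta_Q$. Writing $\eta_Q=\eta_Q\sum_i\varphi_i$ on $E$, the $i$-th error term $\langle T_0,(a_i-\eta_Q)\varphi_i\rangle$ vanishes only when $2Q_i\subset Q$ (so $a_i=1$ and $\eta_Q\equiv 1$ on $2Q_i$) or $2Q_i\cap 2Q=\varnothing$ (so $a_i=0$ and $\eta_Q\varphi_i\equiv 0$). Any small Whitney cube with $2Q_i$ sitting in the transition annulus $2Q\setminus Q$---a region of width $\approx\ell(Q)$, not a thin layer---produces a nonzero term, regardless of whether $B_i$ meets $\partial Q$. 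At dyadic scale $\ell(Q_i)\approx 2^{-k}\ell(Q)$ there can be $\approx 2^{k(n+1)}$ such disjoint Whitney cubes in that annulus, each contributing $|\langle T_0,(a_i-\eta_Q)\varphi_i\rangle|\lesssim\ell(Q_i)^n\approx 2^{-kn}\ell(Q)^n$; the total at level $k$ is $\approx 2^k\ell(Q)^n$, and the sum over $k$ diverges. So the packing argument as you describe it does not close.

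The paper's route (following \cite[Lemma 6.8, property (g)]{To3}) avoids this combinatorics entirely: one first proves the uniform potential bound $\|P\ast\nu\|_\infty\lesssim 1$ (the analogue of Tolsa's property (c), obtained by comparing $\nu$ with $T_0$ via the error estimates of the type in Lemma~\ref{lem5.7}), and then invokes the implication ``bounded potential $\Rightarrow$ $n$-growth for signed measures'' established in Remark~\ref{rem5.1}. That single step replaces your entire small-cube analysis.
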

	\begin{proof}
		Let us prove \textit{1}. By the localization result \cite[Theorem 3.1]{MPr} we get $\|P\ast (\varphi_i T_0)\|_\infty \leq A_1$, for every $i$, where $A_1=A_1(n)>0$. Analogously, the same localization result holds for $P^\ast$, so there also exists $A_2=A_2(n)>0$ with $\|P^\ast \ast (\varphi_i T_0)\|_\infty \leq A_2$, $\forall i$. Therefore, since $\text{supp}(\varphi_iT_0)\subset 2Q_i\cap E$ we have
		\begin{equation}
			\label{eq5.3}
			|\langle T_0, \varphi_i \rangle | = |\langle \varphi_i T_0, 1 \rangle| \lesssim \widetilde{\gamma}(2Q_i\cap E) \approx r_i^n, \quad \forall i=1,\ldots,n,
		\end{equation}
		which yields the desired result.
        
		To prove \textit{2} we simply use the definition of $\nu$, $B_i\cap F = B_i$ and $\text{supp}(T_0)\subset E \subset F$:
		\begin{align*}
			|\nu(F)|= \Bigg\rvert \sum_{i=1}^N \frac{\langle T_0, \varphi_i \rangle}{\pazocal{L}^{n+1}(B_i)} \int_{B_i} \dd \pazocal{L}^{n+1} \Bigg\rvert = \Bigg\rvert \Bigg\langle T_0, \sum_{i=1}^N \varphi_i \Bigg\rangle \Bigg\rvert = | \langle T_0, 1 \rangle |= \frac{1}{2}\widetilde{\gamma}(E).
		\end{align*}
		Finally, \textit{3} follows by the choice of radii $r_i$ and the admissibility of $\varphi_iT_0$ for $\widetilde{\gamma}(2Q_i\cap E)$,
		\begin{align*}
			\widetilde{\gamma}(E)=2| \langle T_0, 1 \rangle |\leq 2\sum_{i=1}^N | \langle T_0, \varphi_i \rangle | \lesssim \sum_{i=1}^N \widetilde{\gamma}(2Q_i\cap E) \approx \sum_{i=1}^N r_i^n = \mu(F).
		\end{align*}
		The proof of inequality \textit{4} can be followed analogously to that given for property $(g)$ of \cite[Lemma 6.8]{To3}. We refer the reader to this last reference for the detailed arguments.
	\end{proof}
	\begin{rem}
		\label{rem5.1}
		In the proof of inequality \textit{4} it is used that for a Borel signed measure $\nu$ satisfying $\|P\ast \nu\|_\infty\leq 1$ one has $|\nu(Q)|\leq C \ell(Q)^n$, for some constant $C>0$ and any cube $Q\subset \mathbb{R}^{n+1}$. In order to justify that this holds in our context we will prove the following: given $\varphi\in\pazocal{C}^\infty_c(\mathbb{R}^{n+1})$ such that $\|P\ast \varphi\|_\infty \leq 1$, then $|\int_Q \varphi\dd\pazocal{L}^{n+1}|\leq C\ell(Q)^n$, for some $C>0$ constant. If this is the case, we will be done by mollifying $\nu$ to be $\nu_\varepsilon := \nu\ast \psi_\varepsilon$ with $(\psi_\varepsilon)_\varepsilon$ a proper approximation of the identity. Let us estimate $|\int_Q \varphi\dd\pazocal{L}^{n+1}|$ as follows:
		\begin{align*}
			\bigg\rvert \int_Q \varphi(\ox) \dd\ox \bigg\rvert \leq \bigg\rvert \int_Q \partial_t (P\ast \varphi)(\ox) \dd\ox \bigg\rvert + \bigg\rvert \int_Q (\Delta_x)^{1/2} (P\ast \varphi)(\ox) \dd\ox \bigg\rvert
		\end{align*}
		For the first term of the right-hand side we set $Q=Q_1\times I_Q \subset \mathbb{R}^{n}\times\mathbb{R}$ where $I_Q$ is the \textit{temporal} interval, that we write as $I_Q=[a,b]$ for some $a,b\in \mathbb{R}$ with $a<b$. This way, by Fubini's theorem,
		\begin{align*}
			\bigg\rvert \int_Q \partial_t (P\ast \varphi)(\ox) \dd\ox \bigg\rvert
			&= \bigg\rvert \int_{Q_1} \Big( P\ast \varphi (x,b)- P\ast \varphi(x,a) \Big)\dd x \bigg\rvert \lesssim \|P\ast \varphi\|_{\infty} \pazocal{L}^n(Q_1)\leq \ell(Q)^n.
		\end{align*}
		For the second term we recall that we have the following representation
		\begin{equation*}
			(-\Delta)^{1/2} \approx \sum_{j=1}^n \partial_jR_j
		\end{equation*}
		where $R_j, \, 1\leq j \leq n$ are the Riesz transforms with Fourier multiplier $\xi_j/|\xi_j|$. Notice that if we prove that $P\ast (R_j\varphi) \in L^\infty(\mathbb{R}^{n+1})$ for each $j=1,\ldots, n$, we are done, since we may argue as above decomposing the cube $Q$ as a cartesian product of an interval and an $n$-dimensional cube. To study $P\ast (R_j\varphi)$ we consider its Fourier transform $\frac{\xi_j}{|\xi|}\widehat{P}\widehat{\varphi}$. Notice that $P\in (L^1+L^2)(\mathbb{R}^{n+1})$ (just decompose it as $P\chi_{|\ox|\leq 1}+P\chi_{|\ox|>1}$), and therefore $\widehat{P}\in (L^\infty+L^2)(\mathbb{R}^{n+1})$, and we write its decomposition as $\widehat{P}_\infty+\widehat{P}_2$. Now it is immediate to check that $\frac{\xi_j}{|\xi|}\widehat{P}_\infty\widehat{\varphi}\in L^1(\mathbb{R}^{n+1})$ and we also have $\frac{\xi_j}{|\xi|}\widehat{P}_2\widehat{\varphi}\in L^1(\mathbb{R}^{n+1})$ using Cauchy-Schwarz inequality. Therefore the Fourier transform of $P\ast (R_j\varphi)$ is integrable, implying $P\ast (R_j\varphi) \in L^\infty(\mathbb{R}^{n+1})$ that is what we wanted to see.
	\end{rem}
	
	\subsection{The exceptional sets \mathinhead{\boldsymbol{H_{\DD}}}{} and \mathinhead{\boldsymbol{T_{\DD}}}{}}
	\label{subsec5.2}
	In light of the third property in Lemma \ref{lem5.1}, assume we find a compact set $G\subset F$ such that
	\begin{itemize}[nolistsep]
		\item[\textit{i}.] $\mu(F)\lesssim \mu(G)$,
		\item[\textit{ii}.] $\mu|_G$ has $n$-growth,
		\item[\textit{iii}.] $\PP_{\mu|_G}$ is a bounded operator in $L^2(\mu|_G)$.
	\end{itemize}
	Then, $\mu|_G$ is admissible for $\gamma_{\text{op}}(G)$, and we are done, since we have
	\begin{equation}
		\label{eq5.4}
		\widetilde{\gamma}(E)\lesssim \mu(F) \lesssim \mu(G) \leq \gamma_{\text{op}}(G)\leq \gamma_{\text{op}}(F)\approx \widetilde{\gamma}_+(F) \lesssim \widetilde{\gamma}_+(E),
	\end{equation}
	where the first inequality is due to Lemma \ref{lem5.1} and the last is by property $\Ptwo'$ above.
    
	The construction of $G$ is inspired by the $Tb$-theorems found in \cite[Theorem 5.1]{To3} and  \cite[Theorem 7.1]{Vo}, which are presented for the particular case of Cauchy and Riesz potentials respectively. To apply this kind of results we need to introduce the notions of Ahlfors ball, Ahlfors radius and Ahlfors point in our context. Due to \cite[Lemma 4.1]{MPr} there are positive dimensional constants $a_1,a_2$ such that
	\begin{equation*}
		\widetilde{\gamma}(E)\leq  a_1\,\text{diam}(E)^n,
	\end{equation*}
	and also
	\begin{equation*}
		a_2^{-1}r_i^n \leq \widetilde{\gamma}(2Q_i\cap E) \leq a_2r_i^n,
	\end{equation*}
	for each $i=1,\ldots,N$, where $r_i$ is the radius of the ball $B_i\subset Q_i$ constructed at the beginning of Subsection \ref{subsec5.1}. Let
	\begin{equation*}
		L:=100^na_1a_2C_1,
	\end{equation*}
	where $C_1$ and $C_2$ are the constants appearing in Theorem \ref{thm4.1}. The factor 100 is arbitrarily chosen; we may pick, for convenience, any other \textit{large} constant. Assumption $\Athree$ implies
	\begin{align*}
		\mu(F) = \sum_{i=1}^N r_i^n \leq a_2\sum_{i=1}^N \widetilde{\gamma}(2Q_i\cap E) \leq a_2C_1 \widetilde{\gamma}(E) \leq a_1a_2C_1 \, \text{diam}(E)^n.
	\end{align*}
	Therefore:
	\begin{equation*}
		\forall \ox \in F, \, \forall R > \frac{1}{100}\text{diam}(F)\,:\quad \mu(B_R(\ox))\leq LR^n.
	\end{equation*}
	A ball $B_R(\ox)$, $\ox \in F$, is an \textit{Ahlfors ball} if precisely the estimate $\mu(B_R(\ox))\leq LR^n$ holds. Notice that if $R>\frac{1}{100}\text{diam}(F)$, then the ball is an Ahlfors ball.
	We also define the \textit{Ahlfors radius},
	\begin{equation}
		\label{eq5.5}
		\pazocal{R}(\ox):=\sup\big\{r>0: B_r(\ox) \text{ is non-Ahlfors}\,\big\}.
	\end{equation}
	\textit{Ahlfors points} are those for which $\pazocal{R}(\ox)=0$. For every $\ox\in F$, we have $\pazocal{R}(\ox)\leq \frac{1}{100}\text{diam}(F)$. Set
	\begin{equation*}
		H':=\bigcup_{\ox\in F} B_{\pazocal{R}(\ox)}(\ox),
	\end{equation*}
	where we convey $B_0(\ox):=\varnothing$, and apply a $5r$-covering theorem \cite[Theorem 2.1]{Ma} to choose a countable family of disjoint balls $\{B_{\pazocal{R}_k}\}_{k}:=\{B_{\pazocal{R}(\ox_k)} (\ox_k)\}_{k}$ such that
	\begin{equation}
		\label{eq5.6}
		H'\subset H := \bigcup_{k}B_{5\pazocal{R}_k},
	\end{equation}
	so that all non-Ahlfors balls are contained in $H$. Observe that since $\mu(B_{\pazocal{R}_k})\geq L\pazocal{R}_k^n$ and the balls $B_{\pazocal{R}_k}$ are disjoint, we have
	\begin{equation*}
		\sum_{k} \pazocal{R}_k^n \leq \frac{1}{L}\sum_{k} \mu(B_{\pazocal{R}_k}) \leq \frac{1}{L}\mu(F).
	\end{equation*}
	Let $\DD^0$ be the usual dyadic lattice in $\mathbb{R}^{n+1}$
	and for $\ox\in \mathbb{R}^{n+1}$ we write
	\begin{equation*}
		\DD(\ox):=\ox+\DD^0,
	\end{equation*}
	the translation of $\DD^0$ by the vector $\ox$.  Let $\DD=\DD(\ox)$ be any fixed dyadic lattice from the family of lattices $\{\DD(\oy)\}_{\oy\in \mathbb{R}^{n+1}}$. Consider the subcollection of dyadic cubes $\DD_H\subset \DD$ defined as follows: $Q\in \DD_H$ if there is a ball $B_{5\pazocal{R}_k}(\ox_k)$ satisfying
	\begin{equation}
		\label{eq5.7}
		B_{5\pazocal{R}_k}(\ox_k)\cap Q \neq \varnothing \qquad \text{and} \qquad 10\pazocal{R}_k < \ell(Q) \leq 20\pazocal{R}_k.
	\end{equation}
	Then, it is clear that
	\begin{equation*}
		\bigcup_{k} B_{5\pazocal{R}_k}(\ox_k)\subset \bigcup_{Q\in \DD_H} Q.
	\end{equation*}
	Pick a subfamily of disjoint maximal cubes $\{Q_k\}_{k}$ from $\DD_H$ so that we may define the exceptional set $H_\DD$ as follows
	\begin{equation*}
		\bigcup_{Q\in \DD_H} Q = \bigcup_{k} Q_k=: H_{\DD} = H_{\DD(\ox)}.
	\end{equation*}
	By construction one has 
	\begin{equation*}
		H\subset H_{\DD(\ox)}, \quad  \forall \ox\in \mathbb{R}^{n+1},
	\end{equation*}
	and since for each ball $B_{5\pazocal{R}_k}(\ox_k)$ conditions \eqref{eq5.7} can only occur for a number of cubes in $\DD_H$ which is bounded by a dimensional constant, there is $c_H=c_H(n)>0$ so that
	\begin{equation}
		\label{eq5.8}
		\sum_{k} \ell(Q_k)^n \leq c_H \sum_{k} \pazocal{R}_k^n \leq \frac{c_H}{L}\mu(F).
	\end{equation}
	We define the exceptional set $T_\DD$ in a similar manner but imposing a proper \textit{accretivity condition} on the Radon-Nikodym derivative $b$. Consider the family $\DD_T\subset \DD$ of cubes satisfying
	\begin{equation}
		\label{eq5.9}
		\mu(Q)\geq c_T|\nu(Q)|,
	\end{equation}
	with $c_T=c_T(n)>0$ big constant to be fixed below. Let $\{Q_k\}_{k}$ be the subcollection of maximal (and thus disjoint) dyadic cubes from $\DD_T$. We define the exceptional set $T_\DD=T_{\DD(\ox)}$ as
	\begin{equation*}
		T_\DD:=\bigcup _{k} Q_k.
	\end{equation*}
	The next result shows that $\mu(F\setminus{(H_{\pazocal{D}}\cup T_{\pazocal{D}})})$ is comparable to $\mu(F)$. The reader can find its proof in \cite[Lemma 6.12]{To3}.
	\begin{lem}
		\label{lem5.2}
		If $L$ and $c_T$ are chosen big enough, then
		\begin{equation*}
			|\nu(H_{\pazocal{D}}\cup T_{\pazocal{D}})|\leq \frac{1}{2}|\nu(F)| \qquad \text{and} \qquad \mu(H_{\pazocal{D}}\cup T_{\pazocal{D}})\leq \delta_0\mu(F),
		\end{equation*}
		with $\delta_0< 1$ constant.
	\end{lem}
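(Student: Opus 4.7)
The strategy is to estimate the $\nu$- and $\mu$-masses of $H_\DD$ and $T_\DD$ separately, with different tools in each case, and then add the contributions. The cornerstone of the argument is the comparability $\mu(F)\approx|\nu(F)|$: the bound $\mu(F)\geq 2|\nu(F)|/c_3$ is immediate from parts \textit{2} and \textit{3} of Lemma \ref{lem5.1}, while for the reverse inequality I would use $r_i\approx \widetilde{\gamma}(2Q_i\cap E)^{1/n}$ together with assumption $\Athree$ (in the form $\Pthree'$), obtaining
\[
\mu(F)=\sum_{i=1}^N r_i^n\approx \sum_{i=1}^N\widetilde{\gamma}(2Q_i\cap E)\leq C_1\widetilde{\gamma}(E)=2C_1|\nu(F)|.
\]
In particular $|\nu(F)|/\mu(F)\geq 1/(2C_1)$. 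This is the only point where $\Athree$ is used.

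For $H_\DD=\bigcup_k Q_k$, the central tool is \eqref{eq5.8}, which gives $\sum_k\ell(Q_k)^n\leq c_H\mu(F)/L$. Combining it with the $n$-growth of $\nu$ from part \textit{4} of Lemma \ref{lem5.1} and with the analogous $n$-growth of $\mu$ (whose proof is parallel to that of part \textit{4}, as in \cite[Lemma 6.8]{To3}) one obtains
\[
|\nu(H_\DD)|\leq c_4\sum_k\ell(Q_k)^n\leq \frac{c_4 c_H}{L}\mu(F)\qquad\text{and}\qquad \mu(H_\DD)\lesssim \frac{\mu(F)}{L}.
\]
Both quantities become arbitrarily small multiples of $\mu(F)$ once $L$ is chosen large.

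For $T_\DD=\bigcup_k Q_k$ the cubes are disjoint and satisfy $\mu(Q_k)\geq c_T|\nu(Q_k)|$, so summing gives
\[
|\nu(T_\DD)|\leq \sum_k|\nu(Q_k)|\leq \frac{1}{c_T}\sum_k\mu(Q_k)\leq \frac{\mu(F)}{c_T},
\]
which is small for $c_T$ large. The bound on $\mu(T_\DD)$ is more delicate because the accretivity condition controls $\nu$ by $\mu$ rather than the converse, so a direct summation fails. My plan is to invert the estimate on the complement using $\|b\|_\infty\leq c_1$: from $|\nu(F\setminus T_\DD)|\leq c_1\mu(F\setminus T_\DD)$ together with $|\nu(F\setminus T_\DD)|\geq |\nu(F)|-\mu(F)/c_T$, rearranging gives
\[
\frac{\mu(T_\DD)}{\mu(F)}\leq 1-\frac{1}{c_1}\,\frac{|\nu(F)|}{\mu(F)}+\frac{1}{c_1 c_T}\leq 1-\frac{1}{2 C_1 c_1}+\frac{1}{c_1 c_T},
\]
invoking the comparability from the first paragraph.

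To conclude, I would first fix $c_T$ large enough that $1/(c_1 c_T)\leq 1/(4 C_1 c_1)$, yielding $\mu(T_\DD)\leq \bigl(1-1/(4 C_1 c_1)\bigr)\mu(F)$. Then I would pick $L$ large enough that $c_4 c_H/L+1/c_T\leq 1/(4C_1)$ and $\mu(H_\DD)\leq \mu(F)/(8C_1 c_1)$. Both inequalities in the statement follow with $\delta_0:=1-1/(8 C_1 c_1)<1$ absolute. The main obstacle is precisely the inversion step for $\mu(T_\DD)$: it is the only place where the total comparability $\mu(F)\approx|\nu(F)|$ (hence $\Athree$) is essential, and it forces the constants to be fixed in the correct order---$c_T$ first as a function of $c_1$ and $C_1$, then $L$ large enough to absorb the $H_\DD$ contributions.
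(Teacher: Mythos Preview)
Your overall strategy---bound $|\nu(H_\DD\cup T_\DD)|$ first, then run a complement argument via $|\nu|\leq c_1\mu$ together with $\mu(F)\approx|\nu(F)|$---is exactly the route in \cite[Lemma~6.12]{To3}, and your bounds $|\nu(H_\DD)|\leq c_4c_H\mu(F)/L$ and $\sum_k|\nu(Q_k)|\leq\mu(F)/c_T$ for the $T_\DD$-cubes are both correct. There is, however, a genuine gap in the step where you bound $\mu(H_\DD)$ directly: the measure $\mu$ does \emph{not} have $n$-growth with an absolute constant---this is precisely why the non-Ahlfors set $H$ is introduced in the first place (see property~\textit{6} in Lemma~\ref{lem5.3}, which only guarantees $\mu(B_r(\ox))\leq Lr^n$ for $\ox\in F\setminus H$). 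The proof of part~\textit{4} in Lemma~\ref{lem5.1} relies on the boundedness of $P\ast T_0$ (Remark~\ref{rem5.1}), a mechanism that has no analogue for $\mu$. In fact, from the definition of $\pazocal{R}(\ox_j)$ one only gets $\mu(Q_k)\lesssim L\,\ell(Q_k)^n$, and inserting this into \eqref{eq5.8} yields merely $\mu(H_\DD)\lesssim\mu(F)$, not $\mu(F)/L$.

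The fix is a small reorganization of your own argument: rather than bounding $\mu(H_\DD)$ and $\mu(T_\DD)$ separately, apply the complement trick to the whole union $H_\DD\cup T_\DD$. Write $H_\DD\cup T_\DD$ as a disjoint union of its maximal dyadic cubes $\{R_m\}$; each $R_m$ is either a maximal $H_\DD$-cube (whence $|\nu(R_m)|\leq c_4\ell(R_m)^n$) or a maximal $T_\DD$-cube (whence $|\nu(R_m)|\leq\mu(R_m)/c_T$). Summing gives $|\nu(H_\DD\cup T_\DD)|\leq (c_4c_H/L+1/c_T)\,\mu(F)\leq\frac12|\nu(F)|$ for $L,c_T$ large enough (using $\mu(F)\lesssim|\nu(F)|$). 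Then $|\nu(F\setminus(H_\DD\cup T_\DD))|\geq\frac12|\nu(F)|$, and since $|\nu|\leq c_1\mu$ one gets $\mu(F\setminus(H_\DD\cup T_\DD))\geq|\nu(F)|/(2c_1)\gtrsim\mu(F)$, which yields the second inequality with an absolute $\delta_0<1$. Note also that writing $|\nu(H_\DD\cup T_\DD)|\leq|\nu(H_\DD)|+|\nu(T_\DD)|$ directly is not legitimate for signed measures when the sets overlap; the maximal-cube decomposition above is what makes the summation rigorous.
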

	
	\subsection{Verifying the hypotheses of a \mathinhead{Tb\,}{}-theorem}
	\label{subsec5.3}
	The goal of this subsection is to prove the lemma below, analogous to \cite[Lemma 6.8 (Main Lemma)]{To3}. We remark that in order to apply the $Tb$-theorem found in \cite{NTVo1} we will still need to check an additional weak boundedness property for a particular suppressed kernel, due to the lack of anti-symmetry of our kernel $P$. In the proof of the aforementioned reference or that of \cite[Theorem 5.1]{To3}, such condition needs not to be verified, by the anti-symmetry of the Cauchy and Riesz kernels.
	
	\begin{lem}
		\label{lem5.3}
		Let $E\subset \mathbb{R}^{n+1}$ be a compact set verifying $\Aone, \Atwo$ and $\Athree$. Let $\FF:=\cup_{i=1}^N\QQ_i$ be the covering provided by Theorem \ref{thm4.1}, and $F:=\cup_{i=1}^NQ_i= \cup_{i=1}^N \frac{1}{2}\QQ_i$, that still satisfies $E\subset F$. Let $\mu$ and $\nu$ be the measures defined in \eqref{eq5.1} and \eqref{eq5.2}. Then, there exists a subset $H\subset F$, such that
		\begin{enumerate}[leftmargin=*,itemsep=0.1cm]
			\item[\textit{1}.] $ \nu=b \mu$ for some $b$ with $\|b\|_\infty \leq c_1$.
			\item[\textit{2}.] $|\nu(F)| = \widetilde{\gamma}(E)/2$.
			\item[\textit{3}.] $c_3^{-1}\widetilde{\gamma}(E) \leq \mu(F) \leq c_3 \widetilde{\gamma}(E)$.
			\item[\textit{4}.] For any cube $Q\subset \mathbb{R}^{n+1}$ , $|\nu(Q)|\leq c_4\ell(Q)^n$.
			\item[\textit{5}.] If $\PP_{\ast}$ and $\PP^\ast_{\ast}$ denote the maximal convolution operators associated to $P$ and $P^\ast$ respectively, we have 
			\begin{equation*}
				\int_{F\setminus{H}}\PP_{\ast}\nu(\ox)\dd\mu(\ox)\leq c_5 \,\mu(F), \qquad \int_{F\setminus{H}}\PP^\ast_{\ast}\nu(\ox)\dd\mu(\ox)\leq c_5\,\mu(F).
			\end{equation*}
			\item[\textit{6}.] If $\mu(B_r(\ox))>Lr^n$ $(\,$for some big constant $L)$, then $B_r(\ox)\subset H$. In particular, for any $\ox\in F\setminus{H}$ and $r>0$, one has $\mu(B_r(\ox))\leq Lr^n$. 
			\item[\textit{7}.]  $H$ is of the form $\bigcup_{k\in I_H}B_{r_k}(\ox_k)$, for some countable set of indices $I_H$. Moreover $\sum_{k\in I_H} r_k^n \leq \frac{5^n}{L}\mu(F)$.
		\end{enumerate}
		The constants $c_1, c_3, c_4$ and $c_5$ are fixed, while $L$ can be chosen arbitrarily large.
	\end{lem}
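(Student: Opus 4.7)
Parts 1--4 are immediate from Lemma~\ref{lem5.1}; in fact, the constants $c_1, c_3, c_4$ can be taken to coincide with those there. The exceptional set will be the one already constructed in Subsection~\ref{subsec5.2}: set $H := \bigcup_k B_{5\pazocal{R}_k}(\ox_k)$, the set appearing in \eqref{eq5.6}. Part 6 is then the defining property of the Ahlfors radius combined with the $5r$-covering lemma: if $\mu(B_r(\ox))>Lr^n$ with $\ox\in F$, then $r\leq \pazocal{R}(\ox)$, so $B_r(\ox)\subset H'\subset H$. Part 7 follows from the disjointness of $\{B_{\pazocal{R}_k}\}_k$ together with $\mu(B_{\pazocal{R}_k})\geq L\pazocal{R}_k^n$, yielding $\sum_k (5\pazocal{R}_k)^n \leq 5^nL^{-1}\mu(F)$.

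The substance is part 5. My plan is to fix a dyadic lattice $\DD$ from the family $\{\DD(\oy)\}_{\oy\in\mathbb{R}^{n+1}}$ for which Lemma~\ref{lem5.2} applies, producing exceptional cubes $H_\DD, T_\DD$ with $\mu(H_\DD\cup T_\DD)\leq \delta_0 \mu(F)$. Setting $G := F\setminus(H\cup H_\DD \cup T_\DD)$, I would first prove a pointwise bound $\PP_\ast\nu(\ox)\lesssim 1$ on $G$, so that integration against $\mu$ gives a contribution controlled by $\mu(F)$. For the remaining piece $(H_\DD\cup T_\DD)\setminus H$, I would use that $\nu$ has $n$-growth (part 4) and that $P$ is an $n$-dimensional C--Z kernel (\cite[Lemma 2.1]{MPr}) to deduce that $\PP_\ast$ sends finite Borel measures to $L^{1,\infty}(\mu|_{F\setminus H})$, using crucially that $\mu|_{F\setminus H}$ has $n$-growth thanks to part 6. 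A layer-cake argument then reduces the integral of $\PP_\ast\nu$ over $(H_\DD\cup T_\DD)\setminus H$ to the finite total mass $|\nu|(F)\lesssim\mu(F)$ (by part 1).

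The pointwise bound on $G$ is the main obstacle. For $\ox\in G$ and $\varepsilon>0$, I would chain along dyadic ancestors $Q^{(0)}\subset Q^{(1)}\subset\cdots$ of the smallest cube $Q^{(0)}\in\DD$ containing $\ox$ of side length $\gtrsim\varepsilon$. Accretivity outside $T_\DD$ gives $|\nu(Q^{(k)})|\leq c_T^{-1}\mu(Q^{(k)})$, and Ahlfors-ness outside $H$ gives $\mu(Q^{(k)})\lesssim\ell(Q^{(k)})^n$, so $\nu$ enjoys $n$-growth on these ancestors. The C--Z regularity of $P$ (\cite[Lemma 2.1]{MPr}) then makes the tail differences telescope, and the resulting limit is compared with $P\ast T_0(\ox)$ via the localization result~\cite[Theorem 3.1]{MPr} applied to the partition of unity $\{\varphi_i\}$ defining $\nu$ in~\eqref{eq5.2}; the latter is at most $1$ by admissibility of $T_0$ for $\widetilde{\gamma}(E)$.

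The hardest technical point is the lack of antisymmetry of $P$, which the paper explicitly flags. I would compensate by working with the suppressed kernel $P_\Lambda$ of Lemma~\ref{lem3.1} with $\Lambda(\cdot):=\text{dist}(\cdot, F\setminus H)$, so that $P_\Lambda$ and $P$ agree on $F\setminus H$ while $P_\Lambda$ remains well-controlled on $H$ via property \textit{3} of that lemma. The telescoping and weak-$(1,1)$ estimates are then carried out for $P_\Lambda$ and transferred back to $P$ where they agree. The conclusion for $\PP^\ast_\ast\nu$ follows by repeating the arguments with $P^\ast$ in place of $P$, which shares the same C--Z and suppression properties by Lemma~\ref{lem3.1}.
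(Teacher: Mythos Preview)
Your handling of parts 1, 2, 4, 6, 7 matches the paper. For part 3 you omitted that the upper bound $\mu(F)\lesssim\widetilde\gamma(E)$ is \emph{not} in Lemma~\ref{lem5.1}; it requires $\Athree$ together with $r_i^n\approx\widetilde\gamma(2Q_i\cap E)$, as the paper notes just after stating the lemma.

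Your plan for part 5 has genuine gaps. First, the accretivity inequality is stated backwards: for $\ox\notin T_\DD$ the dyadic ancestors satisfy $\mu(Q)<c_T|\nu(Q)|$, i.e.\ a \emph{lower} bound on $|\nu(Q)|$, not the upper bound you wrote. An upper bound $|\nu|(Q)\le c_1\mu(Q)$ is available from $\|b\|_\infty\le c_1$, but that is independent of $T_\DD$ and, by itself, makes the annular contributions in your telescoping only $\lesssim 1$ each, so the sum over scales diverges; you never isolate the cancellation needed for a pointwise bound. Second, the weak-$(1,1)$ step is circular: to know that $\PP_\ast$ maps measures to $L^{1,\infty}(\mu|_{F\setminus H})$ you would need $L^2(\mu)$-boundedness of $\PP_\mu$, which is precisely what Lemma~\ref{lem5.3} is a step toward. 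Even granting weak-$(1,1)$, a layer-cake integration over a set of positive $\mu$-measure yields $\int_0^\infty\min(\mu(\cdot),C/\lambda)\,d\lambda$, and the tail $\int^\infty d\lambda/\lambda$ diverges, so you cannot recover an $L^1$ bound this way.

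The paper's route to part 5 is different and avoids both issues. It introduces the mollified kernel $R_\varepsilon:=\Psi_\varepsilon\ast P$ and exploits the harmonicity of $P$ off $\{t=0\}$ (Remark~\ref{rem5.2}) to show $|\PP_\varepsilon\nu-\pazocal R_\varepsilon\nu|\lesssim M\mu\le L$ on $F\setminus H$. Then it writes $\pazocal R_\varepsilon\nu=\pazocal R_\varepsilon T_0+\pazocal R_\varepsilon(\nu-T_0)$: the first term is $\le\|P\ast T_0\|_\infty\le 1$; for the second, $\nu-T_0=\sum_i(\widetilde{\varphi_iT_0}-\varphi_iT_0)$, and each summand is controlled by the localisation estimates of Corollary~\ref{cor5.6} and Lemmas~\ref{lem5.7}--\ref{lem5.8}, giving $|R_\varepsilon\ast\alpha_i(\oz)|\lesssim\ell(Q_i)\widetilde\gamma(2Q_i\cap E)/\text{dist}(\oz,2Q_i)^{n+1}$ off $4Q_i$. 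The bounded overlap of $\{10Q_i\}$ and the $n$-growth of $\mu$ on $F\setminus H$ then sum these to $\lesssim\mu(F)$ after integration (Lemma~\ref{lem5.9}). No $T_\DD$, no suppressed kernel, and no $L^{1,\infty}$ estimate enter at this stage; those appear only later, in Subsection~\ref{subsec5.4}, once part 5 is already in hand.
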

	Many of the hypothesis in Lemma \ref{lem5.3} are verified for our particular choice of $\mu$ and $\nu$. Properties \textit{1}, \textit{2} and \textit{4} are proved in Lemma \ref{lem5.1}, and \textit{3} follows from the third property of the same lemma, and by assumption $\Athree$ combined with $r_i^n\approx \widetilde{\gamma}(2Q_i\cap E)$. Moreover, taking $H\subset F$ to be as in \eqref{eq5.6}, properties \textit{6} and \textit{7} are also guaranteed by definition. So it remains to check hypothesis \textit{5}. To do so, let us first present a series of auxiliary results.
	\begin{lem}
		\label{lem5.4}
		Let $T$ be a distribution supported on a compact set  $E\subset \mathbb{R}^{n+1}$ with $\|P\ast T\|_\infty\leq 1$ or $\|P^\ast\ast T\|_\infty\leq 1$. Let $Q\subset \mathbb{R}^{n+1}$ be a cube and $\varphi\in \pazocal{C}^1_{c,\pazocal{N}}(2Q)$ such that $\|\varphi\|_\infty\leq D\,\ell(Q)$ and $\|\nabla\varphi\|_\infty\leq A\,D$. Then
		\begin{equation*}
			|\langle T, \varphi \rangle|\lesssim D\,\ell(Q)^{n+1}.
		\end{equation*}
	\end{lem}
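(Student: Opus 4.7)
The plan is to deduce the estimate from the standard $n$-growth property of admissible distributions, namely \cite[Lemma 3.4]{MPr}, as extended to the class $\pazocal{C}^1_{c,\pazocal{N}}$ by Remark \ref{rem2.1}. Since the definition of $\langle T, \varphi\rangle$ used in the paper for $\varphi\in \pazocal{C}^1_{c,\pazocal{N}}$ and $T$ with $\|P\ast T\|_\infty\leq 1$ reads $\langle T, \varphi\rangle = \langle P\ast T, \overline{\Theta}^{1/2}\varphi\rangle$, and an entirely analogous formula $\langle T, \varphi\rangle = \langle P^\ast \ast T, \Theta^{1/2}\varphi\rangle$ can be set up if instead $\|P^\ast\ast T\|_\infty \leq 1$, it is enough to treat the first case; the argument for the conjugate potential is identical after swapping the roles of $P$ and $P^\ast$, which share the same Calder\'on--Zygmund estimates.

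The central step is a rescaling that reduces to the hypotheses of \cite[Lemma 3.4]{MPr}. I would introduce the normalized test function
$$\widetilde{\varphi} := \frac{\varphi}{2AD\,\ell(Q)},$$
which is still supported in $2Q$, belongs to $\pazocal{C}^1_{c,\pazocal{N}}(2Q)$, and satisfies
$$\|\nabla \widetilde{\varphi}\|_\infty \leq \frac{AD}{2AD\,\ell(Q)} = \frac{1}{\ell(2Q)}.$$
This is exactly the hypothesis required by \cite[Lemma 3.4]{MPr} formulated on the cube $2Q$, and Remark \ref{rem2.1} allows its application to the more general class $\pazocal{C}^1_{c,\pazocal{N}}(2Q)$. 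One thus obtains $|\langle T, \widetilde{\varphi}\rangle| \leq C\,\ell(2Q)^n$ for some absolute constant $C$. Multiplying back by $2AD\,\ell(Q)$ and absorbing the constants into $\lesssim$ (treating $A$ as an absolute parameter, as is customary in this text) yields
$$|\langle T, \varphi\rangle| \lesssim AD\,\ell(Q)^{n+1} \lesssim D\,\ell(Q)^{n+1},$$
which is the desired estimate.

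The one point I would verify carefully before invoking the growth lemma is that its hypotheses demand \emph{only} compact support in the reference cube and the gradient bound $\|\nabla \widetilde{\varphi}\|_\infty \leq \ell(2Q)^{-1}$, with no additional $L^\infty$ control on $\widetilde{\varphi}$ itself (after rescaling we have $\|\widetilde{\varphi}\|_\infty \leq (2A)^{-1}$, which may or may not be small). Reading the statement quoted just above Remark \ref{rem2.1}, this is indeed the case. This also shows in passing that the hypothesis $\|\varphi\|_\infty \leq D\,\ell(Q)$ is essentially redundant for the present argument: it is automatically implied up to the factor $A$ by compact support in $2Q$ combined with the Lipschitz bound $\|\nabla\varphi\|_\infty \leq AD$; keeping it in the statement merely makes the conclusion come out independent of $A$ in a natural way and is the form convenient for later applications.
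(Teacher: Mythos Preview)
Your proof is correct and follows essentially the same approach as the paper: both rescale $\varphi$ by the factor $[2AD\,\ell(Q)]^{-1}$ so that the gradient bound $\|\nabla\cdot\|_\infty \leq \ell(2Q)^{-1}$ holds, and then invoke the $n$-growth result from \cite{MPr} via Remark~\ref{rem2.1}. The only cosmetic difference is that the paper cites \cite[Corollary 3.3]{MPr} while you cite \cite[Lemma 3.4]{MPr}, but Remark~\ref{rem2.1} explicitly confirms both extend to the class $\pazocal{C}^1_{c,\pazocal{N}}$ and yield the same conclusion.
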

	\begin{proof}
		Recall Remark \ref{rem2.1} to notice that the proof is simply an application of \cite[Corollary 3.3]{MPr} to $\psi:=\big[ 2AD\ell(Q) \big]^{-1}\varphi\in\pazocal{C}^1_{c,\pazocal{N}}(2Q)$ that is such that $\|\nabla \psi\|_\infty \leq \ell(2Q)^{-1}$. Observe that \cite[Corollary 3.3]{MPr} is also valid for $P^\ast$, which can be proved analogously using that $P^\ast$ is fundamental solution of the conjugate operator $\overline{\Theta}^{1/2}$.
	\end{proof}
	\begin{lem}
		\label{lem5.5}
		Let $E\subset \mathbb{R}^{n+1}$ be compact set and $Q\subset \mathbb{R}^{n+1}$ a cube. If $T$ is a distribution supported on $E\subset Q$, then the following holds:
		\begin{enumerate}[leftmargin=*,itemsep=0.1cm]
			\item[\textit{1}.] If $\|P\ast T\|_\infty\leq 1$ or $\|P^\ast\ast T\|_\infty\leq 1$ and $\varphi\in \pazocal{C}^1_{\pazocal{N}}(2Q)$ satisfies $\|\varphi\|_{L^\infty(2Q)}\leq D\,\ell(Q)$ and $\|\nabla \varphi \|_{L^\infty(2Q)}\leq D$, then
			\begin{equation*}
				|\langle T, \varphi \rangle|\lesssim D\,\ell(Q)^{n+1}.
			\end{equation*}
			\item[\textit{2}.] If $\|P\ast T\|_\infty\leq 1$, $\|P^\ast\ast T\|_\infty\leq 1$ and $\varphi$ is as above, then
			\begin{equation*}
				|\langle T, \varphi \rangle|\lesssim D\,\ell(Q)\cdot \widetilde{\gamma}(E).
			\end{equation*}
		\end{enumerate}
	\end{lem}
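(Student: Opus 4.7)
The plan is to reduce both parts to the compactly supported case already handled by Lemma~\ref{lem5.4}, by multiplying $\varphi$ against a standard dimensional cutoff. Concretely, I would pick $\eta \in \pazocal{C}^\infty_c(2Q)$ with $\eta \equiv 1$ on $Q$, $0 \leq \eta \leq 1$, and $\|\nabla \eta\|_\infty \lesssim \ell(Q)^{-1}$. Since $\text{supp}(T)\subset E\subset Q$ and $\eta\equiv 1$ on $Q$, we have $\langle T, \varphi\rangle = \langle T, \varphi\eta\rangle$; moreover $\psi := \varphi\eta$ lies in $\pazocal{C}^1_{c,\pazocal{N}}(2Q)$, and by the Leibniz rule combined with the bounds on $\varphi$, satisfies $\|\psi\|_\infty \leq D\ell(Q)$ and $\|\nabla\psi\|_\infty \leq A'D$ for some dimensional $A'\geq 1$.

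With this reduction in hand, part \textit{1} is essentially automatic: $\psi$ matches the hypothesis of Lemma~\ref{lem5.4} (with $A$ there absorbing $A'$), so under either assumption $\|P\ast T\|_\infty\le 1$ or $\|P^\ast\ast T\|_\infty\le 1$ one obtains $|\langle T,\varphi\rangle|=|\langle T,\psi\rangle| \lesssim D\ell(Q)^{n+1}$.

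For part \textit{2} the idea is to use both potential bounds on $T$ to build from $\psi T$ an admissible distribution for $\widetilde{\gamma}(E)$. Set $\widetilde{\psi} := (A'D\ell(Q))^{-1}\psi \in \pazocal{C}^1_{c,\pazocal{N}}(2Q)$, which satisfies $\|\widetilde{\psi}\|_\infty \leq 1$ and $\|\nabla \widetilde{\psi}\|_\infty \leq \ell(Q)^{-1}$. By the localization of potentials estimate \cite[Theorem 3.1]{MPr}---extended to the class $\pazocal{C}^1_{c,\pazocal{N}}$ via Remark~\ref{rem2.1}, and valid word-for-word for the conjugate kernel $P^\ast$ since the latter is the fundamental solution of $\overline{\Theta}^{1/2}$---one deduces $\|P\ast(\widetilde{\psi} T)\|_\infty, \|P^\ast\ast(\widetilde{\psi} T)\|_\infty \leq C_1$ for some absolute $C_1$; in parallel, $\widetilde{\psi} T$ has $n$-growth with absolute constant by \cite[Lemma 3.4]{MPr}, again via Remark~\ref{rem2.1}. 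Since $\text{supp}(\widetilde{\psi} T)\subset E$, a suitable dimensional multiple of $\widetilde{\psi} T$ is admissible for $\widetilde{\gamma}(E)$, whence $|\langle \widetilde{\psi} T,1\rangle|\lesssim \widetilde{\gamma}(E)$ and, unwinding the normalization, $|\langle T, \varphi\rangle| = A'D\ell(Q)\,|\langle \widetilde{\psi} T,1\rangle| \lesssim D\ell(Q)\,\widetilde{\gamma}(E)$.

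The only delicate point is making sure that \cite[Theorem 3.1]{MPr} applies simultaneously to $P$ and $P^\ast$ on test functions of the limited regularity class $\pazocal{C}^1_{c,\pazocal{N}}(2Q)$. Both extensions, however, are already in hand: the first by Remark~\ref{rem2.1}, and the second because every argument used for $P$ carries over verbatim to $P^\ast$ (as fundamental solution of $\overline{\Theta}^{1/2}$). Hence no genuinely new computation is needed beyond tracking the normalization constants above.
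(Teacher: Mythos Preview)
Your proposal is correct and follows essentially the same approach as the paper: multiply by a smooth cutoff $\eta$ supported on $2Q$ with $\eta\equiv 1$ on $Q$, then invoke Lemma~\ref{lem5.4} for part \textit{1} and the localization result \cite[Theorem~3.1]{MPr} (via Remark~\ref{rem2.1}, also for $P^\ast$) for part \textit{2}. Your additional remark on verifying the $n$-growth of $\widetilde{\psi}T$ is a detail the paper leaves implicit but is indeed needed for admissibility in $\widetilde{\gamma}(E)$.
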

	\begin{proof}
		We prove \textit{1} under the assumption  $\|P\ast T\|_\infty\leq 1$ (the proof is analogous for $P^\ast$). Consider a test function $\psi\in \pazocal{C}^\infty_c(2Q)$ with $\|\psi\|_\infty\leq 1, \|\nabla\psi\|_\infty \leq \ell(2Q)^{-1}$ and $\psi|_Q\equiv 1$. Notice that $\eta := \varphi\psi$ belongs to $\pazocal{C}^1_{c,\pazocal{N}}(2Q)$ and $\|\eta\|_\infty\leq D\,\ell(Q)$ and $\|\nabla \eta\|_\infty \leq 2D$. Then, by Lemma \ref{lem5.4} and the fact that $\text{supp}(T)\subset E \subset Q$,
		\begin{equation*}
			|\langle T, \varphi \rangle| = |\langle T, \eta \rangle| \lesssim D\,\ell(Q)^{n+1},
		\end{equation*}
		To prove \textit{2}, consider $\psi$ as before and define
		\begin{equation*}
			\eta:= \big[ 4D\ell(Q) \big]^{-1}\varphi\psi\in \pazocal{C}^1_{c,\pazocal{N}}(2Q),
		\end{equation*}
		that is such that $\|\nabla \eta\|_\infty\leq \ell(2Q)^{-1}$. By Remark \ref{rem2.1}, we shall apply \cite[Theorem 3.1]{MPr} to deduce $\|P\ast \eta T\|_\infty\lesssim 1$ and $\|P^\ast\ast \eta T\|_\infty\lesssim 1$. Hence $|\langle \eta T, 1 \rangle| \lesssim \widetilde{\gamma}(E)$, meaning
		\begin{equation*}
			|\langle T, \varphi \rangle|\lesssim D\,\ell(Q)\cdot \widetilde{\gamma}(E).
		\end{equation*}
	\end{proof}
	Let us now consider a distribution $T$ supported on $E\subset Q = Q(c_Q, \ell(Q))$ with $\|P\ast T\|_\infty \leq 1$ and $\|P^\ast \ast T\|_\infty \leq 1$. Fix any $\oz\in \mathbb{R}^{n+1}\setminus{3Q}$ and set
	\begin{equation*}
		\varphi_{\oz}(\cdot):=P(\oz-\cdot)-P(\oz-c_Q), \qquad \varphi^\ast_{\oz}(\cdot):=P^\ast(\oz-\cdot)-P^\ast(\oz-c_Q).
	\end{equation*}
	Write $\oz=(z,\tau)$ and let $\oy=(y,s)$ be a generic point in $\mathbb{R}^{n+1}$. We define the null set $\pazocal{N}=\pazocal{N}(\oz)$ to be the horizontal hyperplane
	\begin{equation*}
		\pazocal{N}:=\big\{ \oy\in\mathbb{R}^{n+1}\,:\, s=\tau \big\}.
	\end{equation*}
	This way $\varphi_{\oz}, \varphi_{\oz}^\ast \in \pazocal{C}^\infty_{\pazocal{N}}(2Q)$. Moreover, for any $\oy\in 2Q$, by \cite[Lemma 2.1]{MPr},
	\begin{equation*}
		|\varphi_{\oz}(\oy)|=|P(\oz-\oy)-P(\oz-c_Q)|\lesssim \frac{|\oy-c_Q|}{|\oz-c_Q|^{n+1}}\lesssim \frac{\ell(Q)}{\text{dist}(\oz,E)^{n+1}},
	\end{equation*}
	and for any $\oy\in 2Q\setminus{\pazocal{N}}$,
	\begin{equation*}
		|\nabla \varphi_{\oz}(\oy)|=|\nabla P(\oz-\oy)|\lesssim \frac{1}{|\oz-\oy|^{n+1}}\leq \frac{3^{n+1}}{|\overline{z}-c_Q|}\lesssim \frac{1}{\text{dist}(\oz,E)^{n+1}}.
	\end{equation*}
	and the same bounds clearly also hold for $\varphi_{\oz}^\ast$. Therefore we have
	\begin{align*}
		\|\varphi_{\oz}\|_{L^\infty(2Q)}&\lesssim \frac{\ell(Q)}{\text{dist}(\oz,E)^{n+1}}, \qquad \|\varphi_{\oz}^\ast \|_{L^\infty(2Q)}\lesssim \frac{\ell(Q)}{\text{dist}(\oz,E)^{n+1}},\\
		\|\nabla \varphi_{\oz}\|_{L^\infty(2Q)}&\lesssim \frac{\ell(Q)}{\text{dist}(\oz,E)^{n+1}}, \qquad \|\nabla \varphi_{\oz}^\ast \|_{L^\infty(2Q)}\lesssim \frac{1}{\text{dist}(\oz,E)^{n+1}}.
	\end{align*}
	Hence, using the identities
	\begin{align*}
		\big\rvert P\ast T(\oz) - \langle T, 1 \rangle P(\oz-c_Q) \big\rvert &= |\langle T, \varphi_{\oz} \rangle|,\\
		\big\rvert P^\ast\ast T(\oz) - \langle T, 1 \rangle P^\ast(\oz-c_Q) \big\rvert &= |\langle T, \varphi_{\oz}^\ast \rangle|,
	\end{align*}
	a direct application of the second statement in Lemma \ref{lem5.5} yields
	\begin{cor}
		\label{cor5.6}
		Let $E\subset \mathbb{R}^{n+1}$ be a compact set contained in the cube $Q = Q(c_Q, \ell(Q))$, and $T$ a distribution admissible for $\widetilde{\gamma}(E)$. Then, for any $\oz\in \mathbb{R}^{n+1}\setminus{3Q}$,
		\begin{align*}
			\big\rvert P\ast T(\oz) - \langle T, 1 \rangle P(\oz-c_Q) \big\rvert &\lesssim \frac{\ell(Q)}{\text{\normalfont{dist}}(\oz,E)^{n+1}}\widetilde{\gamma}(E),\\
			\big\rvert P^\ast\ast T(\oz) - \langle T, 1 \rangle P^\ast(\oz-c_Q) \big\rvert &\lesssim \frac{\ell(Q)}{\text{\normalfont{dist}}(\oz,E)^{n+1}}\widetilde{\gamma}(E).
		\end{align*}
	\end{cor}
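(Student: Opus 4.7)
The plan is to realize both estimates in the corollary as pairings of the distribution $T$ against the auxiliary test functions $\varphi_{\oz}$ and $\varphi^\ast_{\oz}$ already introduced in the paragraphs leading up to the statement, and then to invoke part \textit{2} of Lemma \ref{lem5.5} with the right choice of $D$.

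First, I would verify the pointwise identities
\begin{equation*}
P\ast T(\oz) - \langle T, 1\rangle P(\oz - c_Q) = \langle T, \varphi_{\oz}\rangle, \qquad P^\ast\ast T(\oz) - \langle T, 1\rangle P^\ast(\oz - c_Q) = \langle T, \varphi^\ast_{\oz}\rangle.
\end{equation*}
These are consequences of $P\ast T(\oz) = \langle T, P(\oz - \cdot)\rangle$ (understood via \eqref{eq2.1} and the extended action of $T$ on functions in $\pazocal{C}^\infty_{\pazocal{N}}(2Q)$ developed in Section \ref{sec2}), linearity of $T$, and the fact that $\text{supp}(T) \subset E \subset Q$. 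The constant $P(\oz - c_Q)$ can be pulled out of the pairing against the constant function~$1$.

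Second, I would confirm that $\varphi_{\oz}, \varphi^\ast_{\oz} \in \pazocal{C}^\infty_{\pazocal{N}}(2Q)$ with the norm bounds already laid out in the excerpt just before the statement. Since $\oz \notin 3Q$, the singularity of $P(\oz - \cdot)$ at $c_Q$ lies outside $2Q$, so the only source of non-smoothness in $2Q$ is the jump of $\chi_{\{t>0\}}$ from the definition of $P$, which occurs precisely on the horizontal hyperplane $\pazocal{N}:=\{s=\tau\}$ (an $\pazocal{L}^{n+1}$-null set). Combining the C-Z estimate of \cite[Lemma 2.1]{MPr} with $\text{dist}(\oz, 2Q) \approx |\oz - c_Q| \gtrsim \text{dist}(\oz, E)$, the bounds
\begin{equation*}
\|\varphi_{\oz}\|_{L^\infty(2Q)} \lesssim \frac{\ell(Q)}{\text{dist}(\oz, E)^{n+1}}, \qquad \|\nabla \varphi_{\oz}\|_{L^\infty(2Q)} \lesssim \frac{1}{\text{dist}(\oz, E)^{n+1}}
\end{equation*}
follow (and identically for $\varphi^\ast_{\oz}$). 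Setting $D := \text{dist}(\oz, E)^{-(n+1)}$, these read exactly as the hypotheses of Lemma \ref{lem5.5}.

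Third, since $T$ is admissible for $\widetilde{\gamma}(E)$ it satisfies both $\|P\ast T\|_\infty \leq 1$ and $\|P^\ast \ast T\|_\infty \leq 1$, so part \textit{2} of Lemma \ref{lem5.5} applies and yields
\begin{equation*}
|\langle T, \varphi_{\oz}\rangle| \lesssim D\,\ell(Q)\,\widetilde{\gamma}(E) = \frac{\ell(Q)}{\text{dist}(\oz, E)^{n+1}}\,\widetilde{\gamma}(E),
\end{equation*}
and identically for $\varphi^\ast_{\oz}$. Combined with the identities above, this is the conclusion of the corollary. The only genuinely delicate point is the bookkeeping in the previous paragraph: $\varphi_{\oz}$ fails to be differentiable on $\pazocal{N}$, so Lemma \ref{lem5.5} must be used in its $\pazocal{C}^1_{\pazocal{N}}$ form — which is precisely why the regularity class $\pazocal{C}^\infty_{\pazocal{N}}$ and the extended distributional pairing were introduced in Section \ref{sec2}. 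Once that framework is available, no further obstacle arises.
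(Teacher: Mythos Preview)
Your proposal is correct and follows exactly the paper's approach: the paper establishes the identities $|P\ast T(\oz) - \langle T,1\rangle P(\oz-c_Q)| = |\langle T,\varphi_{\oz}\rangle|$ together with the bounds $\|\varphi_{\oz}\|_{L^\infty(2Q)}\lesssim \ell(Q)\,\text{dist}(\oz,E)^{-(n+1)}$ and $\|\nabla\varphi_{\oz}\|_{L^\infty(2Q)}\lesssim \text{dist}(\oz,E)^{-(n+1)}$ in the paragraphs immediately preceding the corollary, and then states that a direct application of part~\textit{2} of Lemma~\ref{lem5.5} yields the result. Your write-up reproduces this argument faithfully, including the correct identification of $D = \text{dist}(\oz,E)^{-(n+1)}$ and the role of the $\pazocal{C}^\infty_{\pazocal{N}}$ framework.
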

	
	\begin{lem}
		\label{lem5.7}
		Let
		\begin{equation*}
			\widetilde{\varphi_i T_0} := \frac{\langle T_0, \varphi_i \rangle}{\pazocal{L}^{n+1}(B_i)}\pazocal{L}^{n+1}|_{B_i}, \quad i=1,\ldots,N.
		\end{equation*}
		For each $i=1,\ldots, N$, we have
		\begin{enumerate}[leftmargin=*,itemsep=0.1cm]
			\item[\textit{1}.] $\|P\ast \varphi_iT_0 \|_\infty \lesssim 1$ and $\|P\ast \widetilde{\varphi_iT_0} \|_\infty \lesssim 1$.
			\item[\textit{2}.] For any $\oz\in \mathbb{R}^{n+1}\setminus{3Q_i}$,
			\begin{equation*}
				\big\rvert  P\ast \varphi_iT_0(\oz)-P\ast \widetilde{\varphi_iT_0}(\oz) \big\rvert \lesssim \frac{\ell(2Q_i)}{\text{\normalfont{dist}}(\oz,2Q_i)^{n+1}}\widetilde{\gamma}(2Q_i\cap E).
			\end{equation*}
		\end{enumerate}
		The same results hold changing $P$ by $P^\ast$.
	\end{lem}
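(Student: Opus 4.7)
Part \textit{1} reduces to the localization of potentials. The bound $\|P\ast \varphi_i T_0\|_\infty\lesssim 1$ follows directly from \cite[Theorem 3.1]{MPr} applied to the admissible distribution $T_0$ and the cutoff $\varphi_i\in\pazocal{C}^\infty_c(2Q_i)$ with $\|\nabla\varphi_i\|_\infty\leq \ell(2Q_i)^{-1}$, and analogously for $P^\ast$ using the conjugate version of this localization (proved just as in \cite{MPr} since $P^\ast$ solves $\overline{\Theta}^{1/2}P^\ast=\delta_0$). For $\|P\ast\widetilde{\varphi_iT_0}\|_\infty\lesssim 1$, one uses the direct bound: by \eqref{eq5.3} the density of $\widetilde{\varphi_iT_0}$ on $B_i$ is $\lesssim r_i^n/\pazocal{L}^{n+1}(B_i)\approx 1/r_i$, so the pointwise estimate $|P(\ox)|\leq |\ox|^{-n}$ combined with the standard integral $\int_{B_i}|\oz-\oy|^{-n}\dd\pazocal{L}^{n+1}(\oy)\lesssim r_i$ (valid for every $\oz\in\mathbb{R}^{n+1}$) gives the bound.

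For Part \textit{2}, the key observation is that both $\varphi_iT_0$ and $\widetilde{\varphi_iT_0}$ are supported in $2Q_i$ and carry the same total mass $\langle T_0,\varphi_i\rangle$. Denoting by $c_i$ the common center of $Q_i$, $B_i$ and $2Q_i$, and setting $\varphi_{\oz}(\oy):=P(\oz-\oy)-P(\oz-c_i)$, the vanishing zeroth moment yields
\begin{equation*}
P\ast\varphi_iT_0(\oz)-P\ast\widetilde{\varphi_iT_0}(\oz)=\langle\varphi_iT_0,\varphi_{\oz}\rangle-\int_{B_i}\varphi_{\oz}(\oy)\dd\widetilde{\varphi_iT_0}(\oy).
\end{equation*}
The second term is handled directly: for $\oz\not\in 3Q_i$ and $\oy\in B_i\subset \tfrac{1}{2}Q_i$, one has $|\oz-\oy|\approx |\oz-c_i|\approx \mathrm{dist}(\oz,2Q_i)$, and the smoothness estimate in \cite[Lemma 2.1]{MPr} gives $|\varphi_{\oz}(\oy)|\lesssim r_i/\mathrm{dist}(\oz,2Q_i)^{n+1}$. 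Multiplying by the density $\lesssim 1/r_i$ and integrating over $B_i$ produces a bound $\lesssim r_i^n\,\mathrm{dist}(\oz,2Q_i)^{-(n+1)}\cdot\ell(2Q_i)\cdot\ell(2Q_i)^{-1}$, which by $r_i^n\approx\widetilde{\gamma}(2Q_i\cap E)$ is exactly what we need.

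The first term is the main difficulty: we cannot apply Lemma \ref{lem5.5} to $\varphi_{\oz}$ taking $Q=2Q_i$, since $\oz$ may sit in $4Q_i\setminus 3Q_i$ and thus $\varphi_{\oz}$ is not bounded on $2Q=4Q_i$. The remedy is to truncate $\varphi_{\oz}$ with a cutoff $\widetilde{\psi}\in\pazocal{C}^\infty_c(\tfrac{5}{2}Q_i)$ satisfying $\widetilde{\psi}\equiv 1$ on $2Q_i$ and $\|\nabla\widetilde{\psi}\|_\infty\lesssim \ell(Q_i)^{-1}$. Since $\mathrm{supp}(\varphi_iT_0)\subset 2Q_i$, we have $\langle\varphi_iT_0,\varphi_{\oz}\rangle=\langle\varphi_iT_0,\widetilde{\psi}\varphi_{\oz}\rangle$. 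For $\oz\not\in 3Q_i$ and $\oy\in\tfrac{5}{2}Q_i$, one has $|\oz-\oy|\gtrsim \ell(Q_i)$ and $|\oz-\oy|\approx |\oz-c_i|$, so the C-Z estimates of \cite[Lemma 2.1]{MPr} yield
\begin{equation*}
\|\widetilde{\psi}\varphi_{\oz}\|_\infty \lesssim \frac{\ell(Q_i)}{\mathrm{dist}(\oz,2Q_i)^{n+1}},\qquad \|\nabla(\widetilde{\psi}\varphi_{\oz})\|_\infty \lesssim \frac{1}{\mathrm{dist}(\oz,2Q_i)^{n+1}}.
\end{equation*}
Now $\widetilde{\psi}\varphi_{\oz}\in\pazocal{C}^\infty_{c,\pazocal{N}}(\tfrac{5}{2}Q_i)$, so proceeding as in the proof of Lemma \ref{lem5.5}\textit{.2}---i.e.\ rescaling this function to have $\ell(Q)^{-1}$ gradient and using Part \textit{1} together with \cite[Theorem 3.1]{MPr} for both $P$ and $P^\ast$---one obtains that $[\mathrm{dist}(\oz,2Q_i)^{n+1}/\ell(Q_i)]\cdot\widetilde{\psi}\varphi_{\oz}\cdot\varphi_iT_0$ is admissible (up to absolute constants) for $\widetilde{\gamma}(2Q_i\cap E)$, yielding
\begin{equation*}
|\langle\varphi_iT_0,\varphi_{\oz}\rangle|\lesssim \frac{\ell(2Q_i)}{\mathrm{dist}(\oz,2Q_i)^{n+1}}\,\widetilde{\gamma}(2Q_i\cap E).
\end{equation*}
The $P^\ast$ analogue is obtained by the same scheme with $\varphi^\ast_{\oz}(\oy):=P^\ast(\oz-\oy)-P^\ast(\oz-c_i)$, using the versions of \cite[Lemma 2.1, Theorem 3.1]{MPr} for the conjugate kernel. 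The principal obstacle is precisely the cutoff step: without the anti-symmetry of the kernel (unlike the Cauchy case), one must carefully exploit Lemma \ref{lem5.5}\textit{.2}, and hence both bounds $\|P\ast T_0\|_\infty\leq 1$ and $\|P^\ast\ast T_0\|_\infty\leq 1$, to extract the $\widetilde{\gamma}(2Q_i\cap E)$ factor rather than the cruder $\ell(Q_i)^n$ given by Lemma \ref{lem5.4}.
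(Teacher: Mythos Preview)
Your proposal is correct and follows essentially the same route as the paper. Part \textit{1} matches exactly. For Part \textit{2}, the paper writes the difference as
\[
\big\rvert P\ast\varphi_iT_0(\oz)-\langle\varphi_iT_0,1\rangle P(\oz-c_{B_i})\big\rvert
+\frac{|\langle\varphi_iT_0,1\rangle|}{\pazocal{L}^{n+1}(B_i)}\int_{B_i}\big\rvert P(\oz-\oy)-P(\oz-c_{B_i})\big\rvert\dd\pazocal{L}^{n+1}(\oy),
\]
bounds the first summand by invoking Corollary \ref{cor5.6} (applied to $T=\varphi_iT_0$, admissible for $\widetilde{\gamma}(2Q_i\cap E)$), and treats the second summand directly via \cite[Lemma 2.1]{MPr}. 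Your two terms $\langle\varphi_iT_0,\varphi_{\oz}\rangle$ and $\int_{B_i}\varphi_{\oz}\dd\widetilde{\varphi_iT_0}$ are precisely these, and your cutoff-plus-rescaling argument for the first is exactly how Corollary \ref{cor5.6} is proved via Lemma \ref{lem5.5}\textit{.2}. So you are unpacking the corollary inline rather than citing it.

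One remark in your favor: the paper applies Corollary \ref{cor5.6} with the cube $Q=2Q_i$ (since $\text{supp}(\varphi_iT_0)\subset 2Q_i$), which formally requires $\oz\notin 3(2Q_i)=6Q_i$, whereas Lemma \ref{lem5.7} is stated for $\oz\notin 3Q_i$. Your choice of a cutoff $\widetilde{\psi}$ supported in $\tfrac{5}{2}Q_i$ (with $\widetilde{\psi}\equiv 1$ on $2Q_i$) addresses exactly this gap: for $\oz\notin 3Q_i$ and $\oy\in\tfrac{5}{2}Q_i$ one has $|\oz-\oy|\gtrsim\ell(Q_i)\approx\text{dist}(\oz,2Q_i)$ in the near range, so the required $L^\infty$ and gradient bounds on $\widetilde{\psi}\varphi_{\oz}$ hold with the correct right-hand side. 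This is a slightly cleaner bookkeeping than the paper's direct citation.
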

	\begin{proof}
		The first estimate in \textit{1} is just a consequence of the localization result \cite[Theorem 3.1]{MPr}. Regarding the second, fix $\ox\in\mathbb{R}^{n+1}$ and compute:
		\begin{align*}
			|P\ast \widetilde{\varphi_iT_0}|(\ox) \leq \frac{|\langle T_0, \varphi_i \rangle|}{\pazocal{L}^{n+1}(B_i)}\int_{B_i} \frac{\dd\oy}{|\ox-\oy|^{n}} \lesssim \frac{1}{r_i}\int_{B_i} \frac{\dd\oy}{|\ox-\oy|^{n}},
		\end{align*}
		where we have used that $|\langle T_0, \varphi_i \rangle|\lesssim r_i^n$, that has already been argued in \eqref{eq5.3}. We deal with remaining integral as follows:
		\begin{align*}
			\int_{B_i}\frac{\dd\oy}{|\ox-\oy|^{n}}&= \int_{B_i\cap \{|\ox-\oy|\geq r_i\}}\frac{\dd\oy}{|\ox-\oy|^{n}}+\int_{B_i\cap \{|\ox-\oy|< r_i\}}\frac{\dd\oy}{|\ox-\oy|^{n}}\\
			&\leq \frac{\pazocal{L}^{n+1}(B_i)}{r_i^n}+\int_{B_{4r_i}(\ox)}\frac{\dd\oy}{|\ox-\oy|^{n}} \lesssim r_i.
		\end{align*}
		Let us prove \textit{2}: fix $\oz\in \mathbb{R}^{n+1}\setminus{3Q_i}$ and notice
		\begin{equation*}
			\big\rvert  P\ast \varphi_iT_0(\oz)-P\ast \widetilde{\varphi_iT_0}(\oz) \big\rvert = \bigg\rvert  P\ast \varphi_iT_0(\oz)-\frac{\langle \varphi_iT_0,1 \rangle}{r_i^n}P\ast \mu_i(\oz) \bigg\rvert,
		\end{equation*}
		where
		\begin{equation*}
			\frac{1}{r_i^n}P\ast \mu_i(\oz)=\int_{B_i} \big( P(\oz-\oy)-P(\oz-c_{B_i}) \big)\frac{\dd\mu_i(\oy)}{r_i^n} + P(\oz-c_{B_i}).
		\end{equation*}
		Therefore
		\begin{align}
			\nonumber
			\big\rvert  P\ast \varphi_iT_0(\oz)&-P\ast \widetilde{\varphi_iT_0}(\oz) \big\rvert \\
			\nonumber
			&\leq \big\rvert P\ast \varphi_iT_0(\oz)-\langle \varphi_iT_0,1\rangle P(\oz-c_{B_i}) \big\rvert\\
			&\hspace{2cm}+ \frac{|\langle \varphi_iT_0,1 \rangle|}{\pazocal{L}^{n+1}(B_i)}\int_{B_i} \big\rvert P(\oz-\oy)-P(\oz-c_{B_i}) \big\rvert \dd\oy.
			\label{eq5.10}
		\end{align}
		Apply Corollary \ref{cor5.6} to $T:=\varphi_iT_0$, admissible for $\widetilde{\gamma}(2Q_i\cap E)$ to deduce
		\begin{align}
			\big\rvert P\ast \varphi_iT_0(\oz)-\langle \varphi_iT_0,1\rangle P(\oz-c_{B_i}) \big\rvert &\lesssim \frac{\ell(2Q_i)}{\text{dist}(\oz,2Q_i\cap E)^{n+1}}\widetilde{\gamma}(2Q_i\cap E)\nonumber \\
			&\leq \frac{\ell(2Q_i)}{\text{dist}(\oz,2Q_i)^{n+1}}\widetilde{\gamma}(2Q_i\cap E).
			\label{eq5.11}
		\end{align}
		By \cite[Lemma 2.1]{MPr} and estimates in the proof of Lemma \ref{lem5.1} we also have
		\begin{align}
			\frac{|\langle \varphi_iT_0,1 \rangle|}{\pazocal{L}^{n+1}(B_i)}\int_{B_i} \big\rvert P(\oz-\oy)-&P(\oz-c_{B_i}) \big\rvert \dd\oy\nonumber \\
			&\lesssim \frac{1}{r_i}\frac{1}{|\oz-c_{B_i}|^{n+1}}\int_{B_i}|\oy-c_{B_i}|\dd\oy\nonumber \\
			&\lesssim \frac{\pazocal{L}^{n+1}(B_i)}{\text{dist}(\oz, 2Q_i)^{n+1}} \approx \frac{r_i}{\text{dist}(\oz, 2Q_i)^{n+1}} \widetilde{\gamma}(2\QQ_i\cap E)\nonumber \\
			&\leq \frac{\ell(2Q_i)}{\text{dist}(\oz, 2Q_i)^{n+1}} \widetilde{\gamma}(2Q_i\cap E).
			\label{eq5.12}
		\end{align}
		Hence, using \eqref{eq5.11} and \eqref{eq5.12} in \eqref{eq5.10} we finish the proof of \textit{2}.
	\end{proof}
	Our next goal will be to obtain a regularized version of statement \textit{2} in Lemma \ref{lem5.7}. Consider $\psi$ a smooth radial function $\psi$ supported on $B_1(0)$ with $0\leq \psi \leq 1$, $\int\psi =1$ and $\|\nabla \psi\|_\infty \leq 1$, and set
	\begin{equation*}
		\Psi_{\varepsilon}(\ox):= \frac{1}{\varepsilon^{n+1}}\psi \bigg( \frac{\ox}{\varepsilon} \bigg), \quad \varepsilon>0.
	\end{equation*}
	Notice that $\int \Psi_\varepsilon = 1$ for every $\varepsilon>0$. Define the regularized kernels
	\begin{equation*}
		R_\varepsilon := \Psi_\varepsilon \ast P, \qquad R_\varepsilon^\ast := \Psi_\varepsilon \ast P^\ast,
	\end{equation*}
	as well as $\pazocal{R}_{\mu,\varepsilon}$ and $\pazocal{R}_{\mu,\varepsilon}^\ast$, its associated convolution operators with respect to the finite Borel measure $\mu$.
	\begin{rem}
		\label{rem5.2}
		Observe that, in particular, for a point $\ox_0=(x_0,t_0)$ with $|\ox_0|>\varepsilon$ and $t_0\neq 0$ we have 
		\begin{equation*}
			R_\varepsilon(\ox_0) = P(\ox_0)\quad \text{ and } \quad R_\varepsilon^\ast(\ox_0) = P^\ast(\ox_0).
		\end{equation*}
		This follows as in \cite[Equation 3.32]{Vo}, using the harmonicity of $P$ and $P^\ast$ in $\mathbb{R}^{n+1}\setminus{\{t=0\}}$ and the mean value property in that domain. Notice that we need the spherical symmetry of $\Psi_\varepsilon$ and $\int \Psi_\varepsilon = 1$ to ensure this.
        
		On the other hand, for any $\ox_0$ such that $|\ox_0|<\varepsilon$ we have
		\begin{equation*}
			|R_\varepsilon(\ox_0)|\lesssim \frac{1}{\varepsilon^n}\quad \text{ and } \quad |R_\varepsilon^\ast(\ox_0)|\lesssim \frac{1}{\varepsilon^n}.
		\end{equation*}
		Indeed, using the definition of $\Psi$ and the fact that $0\leq \psi\leq 1$, we get
		\begin{equation*}
			|R_\varepsilon(\ox_0)|\leq \frac{1}{\varepsilon^{n+1}}\int_{B_\varepsilon(0)}\frac{\dd\ox}{|\ox_0-\ox|^n}\leq \frac{1}{\varepsilon^{n+1}}\int_{2B_\varepsilon(\ox_0)}\frac{\dd\ox}{|\ox_0-\ox|^n} \lesssim \frac{1}{\varepsilon^n},
		\end{equation*}
		and analogously for $R_\varepsilon^\ast(\ox_0)$.
	\end{rem}
	
	\begin{lem}
		\label{lem5.8}
		For any $\oz\in \mathbb{R}^{n+1}\setminus{4Q_i}$, any $\varepsilon>0$ and each $i=1,\ldots,N$,
		\begin{equation*}
			\big\rvert  R_\varepsilon \ast \varphi_iT_0(\oz)-R_{\varepsilon}\ast \widetilde{\varphi_iT_0}(\oz) \big\rvert \lesssim \frac{\ell(2Q_i)}{\text{\normalfont{dist}}(\oz,2Q_i)^{n+1}}\widetilde{\gamma}(2Q_i\cap E).
		\end{equation*}
		The same result holds changing $R_\varepsilon$ by $R_\varepsilon^\ast$.
	\end{lem}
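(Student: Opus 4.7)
The strategy is to deduce this regularized estimate from Lemma \ref{lem5.7}, \textit{2}, via a case split on the size of $\varepsilon$ relative to $\ell(Q_i)$. Set $d:=\text{\normalfont{dist}}(\oz,2Q_i)$ and note that the hypothesis $\oz\notin 4Q_i$ forces $d\geq \ell(Q_i)$ and $\text{\normalfont{dist}}(\oz,3Q_i)\geq \ell(Q_i)/2$. I present the argument only for $R_\varepsilon$; the one for $R_\varepsilon^\ast$ is identical upon replacing $P$ by $P^\ast$ and invoking the corresponding $P^\ast$-analogues of the tools used.

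If $\varepsilon\leq \ell(Q_i)/4$, I use associativity of convolution---legitimate because $\Psi_\varepsilon\in\pazocal{C}_c^\infty$ and $P\ast \varphi_iT_0,\, P\ast \widetilde{\varphi_iT_0}\in L^\infty$ by Lemma \ref{lem5.7}, \textit{1}---to rewrite
\begin{equation*}
    R_\varepsilon\ast \varphi_iT_0(\oz)-R_\varepsilon\ast \widetilde{\varphi_iT_0}(\oz) = \int \Psi_\varepsilon(\oz-\ow)\Big[ P\ast \varphi_iT_0(\ow)-P\ast \widetilde{\varphi_iT_0}(\ow) \Big]\dd\pazocal{L}^{n+1}(\ow).
\end{equation*}
For $\ow\in B_\varepsilon(\oz)$ one has $\ow\notin 3Q_i$ (because $\varepsilon<\text{dist}(\oz,3Q_i)$) and $\text{dist}(\ow,2Q_i)\geq d-\varepsilon\geq 3d/4$, so Lemma \ref{lem5.7}, \textit{2}, supplies the pointwise bound $\lesssim \ell(2Q_i)d^{-(n+1)}\widetilde{\gamma}(2Q_i\cap E)$ uniformly in $\ow$. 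Integrating against $\Psi_\varepsilon$, which has unit mass, closes this case.

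If $\varepsilon>\ell(Q_i)/4$, the previous argument breaks since $B_\varepsilon(\oz)$ may cross $3Q_i$, and this large-$\varepsilon$ regime is the main obstacle. I instead exploit the equal-mass identity $\langle T_0,\varphi_i\rangle = \int \dd(\varphi_iT_0) = \int \dd\widetilde{\varphi_iT_0}$: setting $\eta(\oy):=R_\varepsilon(\oz-\oy)-R_\varepsilon(\oz-c_{Q_i})$, the difference equals $\langle \varphi_iT_0,\eta\rangle-\int \eta\,\dd\widetilde{\varphi_iT_0}$. The key input is the uniform-in-$\varepsilon$ Calder\'{o}n--Zygmund estimate $|\nabla R_\varepsilon(\ox)|\lesssim \max\{|\ox|,\varepsilon\}^{-(n+1)}$, which is delicate because $P$ itself is not $\pazocal{C}^1$ across $\{t=0\}$: it follows from Remark \ref{rem5.2} and \cite[Lemma 2.1]{MPr} away from the mollification scale, and from the smoothness of $\Psi_\varepsilon$ close in. One then checks that for every $\oy'\in 4Q_i$, $\max\{|\oz-\oy'|,\varepsilon\}\gtrsim d$ by splitting: if $d$ is a bounded absolute multiple of $\ell(Q_i)$ then $\varepsilon>\ell(Q_i)/4\gtrsim d$; otherwise $|\oz-\oy'|\geq \text{dist}(\oz,4Q_i)\geq d/2$. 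Hence $\|\eta\|_{L^\infty(4Q_i)}\lesssim \ell(Q_i)d^{-(n+1)}$ and $\|\nabla\eta\|_{L^\infty(4Q_i)}\lesssim d^{-(n+1)}$. The measure term is then bounded directly using $\|\widetilde{\varphi_iT_0}\|=|\langle T_0,\varphi_i\rangle|\lesssim \widetilde{\gamma}(2Q_i\cap E)$ (see \eqref{eq5.3}), and the distribution term $\langle \varphi_iT_0,\eta\rangle$ is handled by Lemma \ref{lem5.5}, \textit{2}, applied with host cube $Q=2Q_i$ after normalizing $\varphi_iT_0$ via the absolute constants from Lemma \ref{lem5.7}, \textit{1}. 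The forced choice $Q=2Q_i$---dictated by $\text{supp}(\varphi_iT_0)\subset 2Q_i$---is precisely why the statement here requires $\oz\notin 4Q_i$ rather than the weaker $\oz\notin 3Q_i$ of Lemma \ref{lem5.7}: that extra room is what lets the bound $\max\{|\oz-\oy'|,\varepsilon\}\gtrsim d$ hold uniformly over $\oy'\in 4Q_i$ and thereby preserves the sharp factor $d^{-(n+1)}$.
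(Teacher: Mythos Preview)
Your proof is correct. The small-$\varepsilon$ case matches the paper's argument essentially verbatim (only the threshold differs), but in the large-$\varepsilon$ regime you take a genuinely different route. The paper splits at $\varepsilon=\tfrac12\,\text{dist}(\oz,2Q_i)$ and, for $\varepsilon$ above that threshold, moves the mollifier onto $\alpha_i:=\varphi_iT_0-\widetilde{\varphi_iT_0}$: it first bounds $\|\Psi_\varepsilon\ast\alpha_i\|_\infty\lesssim \ell(Q_i)\varepsilon^{-(n+2)}\widetilde{\gamma}(2Q_i\cap E)$ by applying Lemma~\ref{lem5.5} to the smooth test function $(\Psi_\varepsilon(\ow-\cdot)-\Psi_\varepsilon(\ow-c_{Q_i}))\eta_i$, and then integrates $P(\oz-\cdot)$ against this over $\pazocal{U}_\varepsilon(2Q_i)$, which costs a factor $\varepsilon$ and yields $\ell(Q_i)\varepsilon^{-(n+1)}\widetilde{\gamma}(2Q_i\cap E)$; finally $\varepsilon\geq d/2$ converts this to the stated bound. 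You instead keep the mollifier on the kernel side, pair $\alpha_i$ directly with $\eta(\oy)=R_\varepsilon(\oz-\oy)-R_\varepsilon(\oz-c_{Q_i})$, and invoke Lemma~\ref{lem5.5}, \textit{2}, once, at the price of establishing the uniform Calder\'on--Zygmund bound $|\nabla R_\varepsilon(\ox)|\lesssim\max\{|\ox|,\varepsilon\}^{-(n+1)}$. Your approach is arguably more streamlined (a single pairing rather than a pointwise estimate followed by an integration), and your lower threshold $\varepsilon>\ell(Q_i)/4$ together with the $\max\{|\oz-\oy'|,\varepsilon\}\gtrsim d$ dichotomy lets you cover the intermediate range $\ell(Q_i)/4<\varepsilon<d/2$ that the paper handles via the small-$\varepsilon$ branch. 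The paper's route, on the other hand, avoids any appeal to smoothness of $R_\varepsilon$ and uses only the explicit derivative bounds on $\Psi_\varepsilon$, which makes the regularity bookkeeping slightly lighter.
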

	\begin{proof}
		We will proof the result only for $R_\varepsilon$, since the arguments for $R_\varepsilon^\ast$ are analogous. Fix $\oz\in  \mathbb{R}^{n+1}\setminus{4Q_i}$ and $\varepsilon<\frac{1}{2}\text{dist}(\oz,2Q_i)$. This way, since for any $\oy \in B_\varepsilon(\oz)$ we have $\text{dist}(\oy,2Q_i) \approx \text{dist}(\oz,2Q_i)$, by Lemma \ref{lem5.7} and the fact that $\int_{B_{\varepsilon}(\oz)}\Psi_\varepsilon(\oz-\oy)\dd \oy=1$ we deduce
		\begin{align*}
			\big\rvert  R_\varepsilon \ast \varphi_iT_0(\oz)-R_{\varepsilon}\ast \widetilde{\varphi_iT_0}(\oz) \big\rvert  &\leq \int_{B_{\varepsilon}(\oz)}\Psi_\varepsilon(\oz-\oy)  \big\rvert P \ast \varphi_iT_0(\oy)-P\ast \widetilde{\varphi_iT_0}(\oy) \big\rvert \dd \oy\\
			&\lesssim \frac{\ell(2Q_i)}{\text{\normalfont{dist}}(\oz,2Q_i)^{n+1}}\widetilde{\gamma}(2Q_i\cap E),
		\end{align*}
		the desired inequality. Hence, we are only left to study the case $\varepsilon \geq \frac{1}{2}\text{dist}(\oz, 2Q_i)$. Observe that in this setting, since $\oz\not\in 4Q_i$ we have $\varepsilon\gtrsim \ell(Q_i)$. Write
		\begin{equation*}
			\alpha_i:= \varphi_iT_0-\widetilde{\varphi_iT_0},
		\end{equation*}
		so that
		\begin{align*}
			|R_\varepsilon\ast \alpha_i(\oz)|\leq  \int_{\text{supp}(\Psi_\varepsilon\ast \alpha_i)} P(\oz-\oy)  \big\rvert \Psi_\varepsilon \ast \alpha_i(\oy) \big\rvert \dd\oy.
		\end{align*}
		Observe that $\text{supp}(\widetilde{\varphi_i T_0})\subset B_i, \, \text{supp}( \varphi_i T_0 )\subset 2Q_i\cap E$ and $\text{supp}(\Psi_\varepsilon)\subset B_\varepsilon(0)$. Then, the support of $\Psi_\varepsilon\ast \alpha_i$ is contained in $\pazocal{U}_\varepsilon(2Q_i)$, an open $\varepsilon$-neighborhood of $2Q_i$. This implies
		\begin{align*}
			|R_\varepsilon\ast \alpha_i(\oz)| &\leq \|\Psi_\varepsilon\ast \alpha_i\|_\infty \int_{\pazocal{U}_\varepsilon(2Q_i)} \frac{\dd\oy}{|\oz-\oy|^{n}} \lesssim (\ell(Q_i)+\varepsilon)\cdot \|\Psi_\varepsilon\ast \alpha_i\|_\infty \\
			&\lesssim \varepsilon\cdot \|\Psi_\varepsilon\ast \alpha_i\|_\infty,
		\end{align*}
		by integrating over decreasing annuli centered at $\oz$, and using that $\text{dist}(\oz, 2Q_i)\leq 2\varepsilon$ and $\varepsilon\gtrsim \ell(Q_i)$. On the other hand, if $\eta_i\in \pazocal{C}_c^{\infty}(3Q_i)$ with $0\leq \eta_i \leq 1$, $\eta_i\equiv 1$ on $2Q_i$ (where the support of $\alpha_i$ is contained) and $\|\nabla \eta_i\|_\infty \leq \ell(Q_i)^{-1}$, we have
		\begin{align*}
			\Psi_\varepsilon \ast \alpha_i (\ow) &= \langle \Psi_\varepsilon (\ow -\cdot),\alpha_i \rangle = \langle \Psi_\varepsilon (\ow -\cdot) - \Psi_\varepsilon (\ow - c_{Q_i}) ,\alpha_i \rangle\\
			&=\big\langle \big( \Psi_\varepsilon (\ow -\cdot) - \Psi_\varepsilon (\ow - c_{Q_i})\big) \eta_i,\alpha_i \big\rangle,
		\end{align*}
		where we have used $\langle \varphi_iT_0, 1 \rangle = \langle \widetilde{\varphi_iT_0},1 \rangle$.
        
		We claim that for each $\ow\in \mathbb{R}^{n+1}$, the function
		\begin{equation*}
			\varphi (\overline{\xi}) := \big( \Psi_\varepsilon (\ow -\xi) - \Psi_\varepsilon (\ow - c_{Q_i})\big) \eta_i(\xi),
		\end{equation*}
		satisfies $\|\varphi\|_\infty \lesssim \varepsilon^{-(n+2)}\ell(Q_i)$ and $\|\nabla \varphi\|_\infty \lesssim \varepsilon^{-(n+2)}$. Then, using that statement \textit{1} in Lemma \ref{lem5.7} implies that $\varphi T_0$ is admissible for $\widetilde{\gamma}(2Q_i\cap E)$ and that $\widehat{\varphi T_0}$ is admissible for $\widetilde{\gamma}(B_i)$, we have, by Lemma \ref{lem5.5},
		\begin{align*}
			| \Psi_\varepsilon \ast \alpha_i|=|\langle \varphi, \alpha_i \rangle | &\lesssim \frac{\ell(Q_i)}{\varepsilon^{n+2}} \big\{ \widetilde{\gamma}(2Q_i\cap E) + \widetilde{\gamma}(B_i) \big\}\\
			&\lesssim \frac{\ell(Q_i)}{\varepsilon^{n+2}} \big\{ \widetilde{\gamma}(2Q_i\cap E) + r_i^n \big\} \approx \frac{\ell(Q_i)}{\varepsilon^{n+2}} \widetilde{\gamma}(2Q_i\cap E) .
		\end{align*}
		Therefore,
		\begin{align*}
			|R_\varepsilon\ast \alpha_i(\oz)|  \lesssim \varepsilon\cdot \|\Psi_\varepsilon\ast \alpha_i\|_\infty \lesssim \frac{\ell(Q_i)}{\varepsilon^{n+1}} \widetilde{\gamma}(2Q_i\cap E) \lesssim \frac{\ell(2Q_i)}{\text{\normalfont{dist}}(\oz,2Q_i)^{n+1}}\widetilde{\gamma}(2Q_i\cap E),
		\end{align*}
		where we have used $\text{dist}(\oz, 2Q_i)\leq 2\varepsilon$, and we deduce the desired estimate. Hence, we are left to prove the claim. Let us fix $\ow\in \mathbb{R}^{n+1}$ and compute: on the one hand
		\begin{align*}
			\|\varphi\|_\infty = \|\varphi\|_{L^\infty(3Q_i)} \leq \frac{1}{\varepsilon^{n+2}} \sup_{\overline{\xi}\in 3Q_i} |\overline{\xi}-c_{Q_i}|\cdot \|\nabla \psi\|_\infty \lesssim \frac{\ell(Q_i)}{\varepsilon^{n+2}},
		\end{align*}
		while on the other hand
		\begin{align*}
			\|\nabla \varphi\|_\infty &= \|\nabla \varphi\|_{L^\infty(3Q_i)}\\
			&\leq \frac{1}{\ell(Q_i)}\sup_{\overline{\xi}\in 3Q_i} \big\rvert \Psi_\varepsilon(\ow-\overline{\xi})- \Psi_\varepsilon(\ow-c_{Q_i}) \big\rvert + \frac{1}{\varepsilon^{n+2}}\|\nabla \psi\|_\infty \lesssim \frac{1}{\varepsilon^{n+2}}.
		\end{align*}
		Hence, we are done.
	\end{proof}
	Let us finally prove property \textit{5} in Lemma \ref{lem5.3}:
	\begin{lem}
		\label{lem5.9}
		The following estimates hold:
		\begin{equation*}
			\int_{F\setminus{H}}\PP_{\ast}\nu(\oy)\dd\mu(\oy)\lesssim \mu(F), \qquad \int_{F\setminus{H}}\PP^\ast_{\ast}\nu(\oy)\dd\mu(\oy)\lesssim \mu(F).
		\end{equation*}
	\end{lem}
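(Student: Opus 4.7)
The plan is to prove the first estimate for $\PP_\ast\nu$; the bound for $\PP^\ast_\ast\nu$ follows identically upon replacing $P$ by $P^\ast$ and invoking the $P^\ast$-versions of Lemmas \ref{lem5.7} and \ref{lem5.8}. The first step is to replace the hard truncation $P_\varepsilon$ by the mollified kernel $R_\varepsilon$ of Subsection \ref{subsec5.3}. Using Remark \ref{rem5.2} together with the explicit form of $P$, the difference $|R_\varepsilon-P_\varepsilon|(\ox)$ is controlled by $\varepsilon^{-n}\chi_{\{|\ox|\leq C\varepsilon\}}$ (from the mollification near the origin) plus $\varepsilon\,|\ox|^{-(n+1)}\chi_{\{|\ox|>C\varepsilon,\,|t|\leq\varepsilon\}}$ (from the discontinuity of $P$ across $\{t=0\}$). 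Since $|\nu|\leq c_1\mu$ and $\mu|_{F\setminus H}$ has $n$-growth with constant $L$ by property \textit{6} of Lemma \ref{lem5.3}, a standard annular decomposition gives
\begin{equation*}
    \sup_{\varepsilon>0}\bigl|(P_\varepsilon-R_\varepsilon)\ast\nu\bigr|(\oy)\lesssim 1,\qquad \forall\, \oy\in F\setminus H.
\end{equation*}
It therefore suffices to bound $\sup_{\varepsilon>0}|R_\varepsilon\ast\nu|$ pointwise on $F\setminus H$ by an $L^1(\mu)$ function of norm $\lesssim\mu(F)$.

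The key cancellation is the partition identity $\sum_{i=1}^N\varphi_i\equiv 1$ on $F\supset E=\text{supp}(T_0)$, which at the distributional level reads $\sum_i\varphi_i T_0=T_0$. Combined with $\nu=\sum_i\widetilde{\varphi_i T_0}$, one writes
\begin{equation*}
    R_\varepsilon\ast\nu(\oy)=R_\varepsilon\ast T_0(\oy)+\sum_{i=1}^N R_\varepsilon\ast\bigl(\widetilde{\varphi_i T_0}-\varphi_i T_0\bigr)(\oy).
\end{equation*}
The leading term is uniformly bounded since $R_\varepsilon\ast T_0=\Psi_\varepsilon\ast(P\ast T_0)$ and $\|P\ast T_0\|_\infty\leq 1$. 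For the sum, split according to whether $\oy\in 4Q_i$ or not. The bounded-overlap property $\Pfive'$ makes the set of indices with $\oy\in 4Q_i$ have cardinality $O(1)$, and for each such $i$ one has both $|R_\varepsilon\ast\widetilde{\varphi_i T_0}(\oy)|\lesssim 1$ (from $|\langle T_0,\varphi_i\rangle|\lesssim r_i^n$ in \eqref{eq5.3} together with $|R_\varepsilon(\ox)|\lesssim|\ox|^{-n}$) and $|R_\varepsilon\ast\varphi_i T_0(\oy)|\leq\|P\ast\varphi_i T_0\|_\infty\lesssim 1$ by Lemma \ref{lem5.7}. For indices with $\oy\notin 4Q_i$, Lemma \ref{lem5.8} yields, uniformly in $\varepsilon$,
\begin{equation*}
    \bigl|R_\varepsilon\ast(\widetilde{\varphi_i T_0}-\varphi_i T_0)(\oy)\bigr|\lesssim\frac{\ell(Q_i)\,\widetilde{\gamma}(2Q_i\cap E)}{\text{dist}(\oy,2Q_i)^{n+1}}\approx\frac{\ell(Q_i)\,r_i^n}{\text{dist}(\oy,2Q_i)^{n+1}}.
\end{equation*}

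All of the pointwise bounds obtained are independent of $\varepsilon$, hence control the supremum. Integrating against $\mu$ on $F\setminus H$, the uniformly bounded contributions give $\lesssim\mu(F)$ at once. For the off-diagonal tails, Fubini's theorem, an annular decomposition of $\mathbb{R}^{n+1}\setminus 4Q_i$ in dyadic shells around $c_{Q_i}$, and the $n$-growth constant $L$ of $\mu|_{F\setminus H}$ produce
\begin{equation*}
    \int_{(F\setminus H)\setminus 4Q_i}\frac{\dd\mu(\oy)}{\text{dist}(\oy,2Q_i)^{n+1}}\lesssim\frac{L}{\ell(Q_i)},
\end{equation*}
so the total tail contribution telescopes into $\sum_i r_i^n=\mu(F)$, as required. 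The main obstacle I anticipate is precisely the first (regularization) step: because $P$ is not antisymmetric and is discontinuous across $\{t=0\}$, one cannot reduce $\sup_\varepsilon|(P_\varepsilon-R_\varepsilon)\ast\nu|$ to a purely maximal-function bound as in the classical Cauchy or Riesz settings of \cite[Ch.\,5]{To3} and \cite{Vo}; however, Remark \ref{rem5.2} provides just enough pointwise control to close the estimate via the $n$-growth of $\mu$ on $F\setminus H$. Once this is in hand, the remaining bookkeeping is parallel to the arguments in those references.
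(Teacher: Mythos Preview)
Your proposal follows essentially the same architecture as the paper's proof: pass from $\PP_\varepsilon$ to the mollified operator $\pazocal{R}_\varepsilon$, then write $R_\varepsilon\ast\nu=R_\varepsilon\ast T_0+\sum_i R_\varepsilon\ast(\widetilde{\varphi_iT_0}-\varphi_iT_0)$, bound the leading term by $\|P\ast T_0\|_\infty$, and treat the near/far pieces of the sum via Lemmas~\ref{lem5.7} and~\ref{lem5.8} together with $\Pfive'$. Two small points are worth noting.

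First, your regularization step is more elaborate than necessary. The paper avoids your extra ``slab'' term $\varepsilon\,|\ox|^{-(n+1)}\chi_{\{|\ox|>C\varepsilon,\ |t|\le\varepsilon\}}$ by observing that $\nu\ll\pazocal{L}^{n+1}$, so for each fixed $\oy=(y,s)$ the hyperplane $\{\tau=s\}$ is $\nu$-null; then Remark~\ref{rem5.2} applies $\nu$-a.e.\ on $\{|\oy-\oz|>\varepsilon\}$ and the far integrals cancel exactly, leaving only $\int_{|\oy-\oz|\le\varepsilon}R_\varepsilon(\oy-\oz)\,d\nu(\oz)$, which is controlled by $M\mu(\oy)\lesssim L$ on $F\setminus H$. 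Your pointwise route also works, but the absolute-continuity shortcut is cleaner.

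Second, your stated justification for $|R_\varepsilon\ast\widetilde{\varphi_iT_0}(\oy)|\lesssim 1$ on $4Q_i$ is not the right one: the bound $|\langle T_0,\varphi_i\rangle|\lesssim r_i^n$ together with $|R_\varepsilon(\ox)|\lesssim|\ox|^{-n}$ only yields $\lesssim \ell(Q_i)/r_i$, which need not be bounded. The correct reason is Lemma~\ref{lem5.7}\,\textit{1} (the very lemma you cite for the companion term): it gives $\|P\ast\widetilde{\varphi_iT_0}\|_\infty\lesssim 1$, and since $R_\varepsilon=\Psi_\varepsilon\ast P$ with $\int\Psi_\varepsilon=1$, the same bound passes to $R_\varepsilon\ast\widetilde{\varphi_iT_0}$. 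The paper handles both pieces at once by bounding $\|P\ast\alpha_i\|_\infty\lesssim 1$ directly. With this correction, your argument is complete.
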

	\begin{proof}
		We only prove the first estimate, since the proof of the second is analogous. Begin by noticing that
		\begin{align*}
			\int_{F\setminus{H}}\PP_{\ast}\nu(\oy)&\dd\mu(\oy)= \int_{F\setminus{H}}\sup_{\varepsilon>0}\big\rvert \PP_{\varepsilon}\nu(\oy) \big\rvert\dd\mu(\oy)\\
			&\leq \int_{F\setminus{H}}\sup_{\varepsilon>0}\big\rvert \PP_{\varepsilon}\nu(\oy)-\pazocal{R}_{\varepsilon}\nu(\oy) \big\rvert\dd\mu(\oy)+\int_{F\setminus{H}}\sup_{\varepsilon>0}\big\rvert \pazocal{R}_{\varepsilon}\nu(\oy) \big\rvert\dd\mu(\oy)=: I_1+I_2.
		\end{align*}
		For $I_1$, observe that for each $\varepsilon>0$ and $\overline{y}=(y,s)\in F\setminus{H}$, since $\nu \ll \mu \ll \pazocal{L}^{n+1}$, we have
		\begin{equation*}
			\nu\big( \{(z,\tau)\in \mathbb{R}^{n+1}\,:\, \tau = s\}\big )=0,
		\end{equation*}
		so applying Remark \ref{rem5.2} we deduce
		\begin{align*}
			\big\rvert \PP_{\varepsilon}\nu&(\oy)-\pazocal{R}_{\varepsilon}\nu(\oy) \big\rvert\\
			&=\Bigg\rvert \int_{|\oy-\oz|\leq \varepsilon}R_\varepsilon(\oy-\oz)\dd\nu(\oz)+\int_{|\oy-\oz|>\varepsilon}R_\varepsilon(\oy-\oz)\dd\nu(\oz) - \int_{|\oy-\oz|>\varepsilon}P(\oy-\oz)\dd\nu(\oz) \Bigg\rvert\\
			&=\Bigg\rvert \int_{|\oy-\oz|\leq \varepsilon}R_\varepsilon(\oy-\oz)\dd\nu(\oz) \Bigg\rvert \leq \frac{1}{\varepsilon^n} |\nu|(B_\varepsilon(\oy)) \lesssim \frac{1}{\varepsilon^n}\mu(B_\varepsilon(\oy)) \leq M\mu(\oy).
		\end{align*}
		Then, using that $\oy\in F\setminus{H}$ we get $|I_1|\lesssim \mu(F)$. So we are left to estimate $I_2$. We introduce the notation
		\begin{equation*}
			R_\ast \ast \nu (\oy) := \sup_{\varepsilon>0} |R_\varepsilon \ast \nu (\oy)|,
		\end{equation*}
		so that
		\begin{align*}
			I_2:=\int _{F\setminus{H}} R_\ast \ast \nu (\oy)\dd \mu (\oy) \leq \int _{F\setminus{H}} R_\ast \ast  T_0 (\oy)\dd\mu (\oy) + \int _{F\setminus{H}} R_\ast \ast (\nu-T_0) (\oy)\dd\mu (\oy). 
		\end{align*}
		Since $\|P\ast T_0\|_\infty \leq 1$ by construction, we also have $\| R_\varepsilon\ast T_0 \|_\infty \leq 1$, uniformly on $\varepsilon>0$. For the second integral in $I_2$,
		\begin{align*}
			\int _{F\setminus{H}} &R_\ast \ast (\nu-T_0) (\oy)\dd\mu (\oy) \leq \sum_{i=1}^N \int_{F\setminus{H}} R_\ast \ast \big(\widetilde{\varphi_iT_0}-\varphi_iT_0 \big) (\oy)\dd\mu (\oy) \\
			&\hspace{-0.2cm}= \sum_{i=1}^N \bigg( \int_{4Q_i} R_\ast \ast \big(\widetilde{\varphi_iT_0}-\varphi_iT_0 \big) (\oy)\dd\mu (\oy) + \int_{F\setminus{(4Q_i\cup H)}} R_\ast \ast \big(\widetilde{\varphi_iT_0}-\varphi_iT_0 \big) (\oy)\dd\mu (\oy)\bigg)\\
			&\hspace{-0.2cm}=:\sum_{i=1}^N I_{i,1}+I_{i,2},
		\end{align*}
		where in the first inequality we have used that $\sum_i \varphi_i \equiv 1$ on $F$ and $\nu=\sum_i \widetilde{\varphi_i T_0}$. For each $i=1,\ldots, N$ we set $\alpha_i:= \widetilde{\varphi_i T_0}-\varphi_iT_0$ and apply the first statement in Lemma \ref{lem5.7} to deduce $\|P\ast \alpha_i\|_\infty \lesssim 1$. Hence, $I_{i,1}\lesssim \mu(4Q_i)$.  To estimate $I_{i,2}$ we will use Lemma \ref{lem5.8}. Let $N$ be the smallest integer such that
		\begin{equation*}
			A_N:=\big(4^{N+1}Q_i\setminus{4^NQ_i}\big)\setminus{H}\neq \varnothing.
		\end{equation*}
		Then,
		\begin{align*}
			I_{i,2}&\lesssim \ell(2Q_i) \, \widetilde{\gamma}(2Q_i\cap E)\sum_{k=N}^\infty \int_{A_N}\frac{\dd\mu(\oy)}{\text{dist}(\oy,2Q_i)^{n+1}} \lesssim \ell(2Q_i) \, \widetilde{\gamma}(2Q_i\cap E) \sum_{k=N}^\infty \frac{\mu\big( 4^{k+1}Q_i \big)}{(4^{k}\ell(Q_i))^{n+1} }.
		\end{align*}
		Observe that for any $\ox \in A_N$, we have for each $k\geq N$
		\begin{equation*}
			\mu\big( 4^{k+1}Q_i \big) \leq \mu\big(B_{2\cdot \text{diam}(4^{k+1}Q_i)}(\ox)\big)\lesssim \big( 4^k\ell(Q_i) \big)^n,
		\end{equation*}
		where we have used that $\ox\not\in H$. Therefore,
		\begin{align*}
			I_{i,2} \lesssim \ell(2Q_i) \, \widetilde{\gamma}(2Q_i\cap E) \sum_{k=N}^\infty \frac{1}{4^k\ell(Q_i) } \lesssim \widetilde{\gamma}(2Q_i\cap E)\approx r_i^n = \mu(B_i).
		\end{align*}
		Since the cubes $10Q_1,\ldots, 10Q_N$ have bounded overlap we conclude that
		\begin{align*}
			\int _{F\setminus{H}} &R_\ast \ast (\nu-T_0) (\oy)\dd\mu (\oy) \lesssim \sum_{i=1}^N \mu(4Q_i) \lesssim \mu(F).
		\end{align*}
	\end{proof}
	The proof of \textit{5} concludes that of Lemma \ref{lem5.3}. In order to apply the $Tb$-theorem of \cite{NTVo1} or \cite[Theorem 5.1]{To3} we still need to check an additional weak boundedness property for our $n$-dimensional CZ kernel, which fails being anti-symmetric.

	\subsection{The exceptional sets \mathinhead{\pSS}{} and \mathinhead{W_{\DD}}{}. The additional eighth property}
	\label{subsec5.4}
	Assume that $F\subset B_{2^{N-3}}(0)$ for some integer $N$ large enough. Observe that assumption $\Atwo$ together with properties \textit{1} and \textit{4} in Theorem \ref{thm4.1} imply that we can take $N=4$, for example. We write
	\begin{equation*}
		\Omega:=\big[-2^{N-1},2^{N-1}\big)^{n+1}
	\end{equation*}
	and consider the \textit{random} cube $Q^0(\ow):=\ow+[-2^N,2^N)^{n+1}$, with $\ow\in \Omega$. Observe that $F\subset Q^0(\ow)$ for any $\ow\in \Omega$. Let $\mathbb{P}$ be the uniform probability measure on $\Omega$, that is, the normalized Lebesgue measure on the cube $\Omega$.\medskip\\
	When carefully reviewing the proofs of the the non-homogeneous $Tb$-theorems of \cite[\textsection 5]{To3} and \cite{Vo}, one encounters expressions that, in our context, would be of the form
	\begin{equation*}
		\langle \PP_{\Lambda,\mu}(\chi_\Delta b), (\chi_\Delta b) \rangle := \int_{\Delta}\int_{\Delta}P_{\Lambda}(\ox,\oy)\dd\nu(\oy)\dd\nu(\ox),
	\end{equation*}
	where $\Lambda$ is a certain 1-Lipschitz function associated to the suppressed kernel $P_\Lambda$, $\Delta = P\cap S$ is a parallelepiped obtained as the intersection of two cubes $P\in \DD(\ow_1)=:\DD_1$, $S\in \DD(\ow_2)=:\DD_2$, with $\ow_1,\ow_2\in \Omega$, and $\DD_1$ and $\DD_2$ are two dyadic lattices which are reciprocally \textit{good} to one another. Moreover, such cubes $P,S$ satisfy $P\subset F\setminus{(H_{\DD_1} \cup T_{\DD_1})}$ and $S\subset F\setminus{(H_{\DD_2} \cup T_{\DD_2})}$. The latter inclusions imply that $P$ is a \textit{transit cube} with respect to $\DD_1$ and we write $P\in \DD_1^{\text{tr}}$, and analogously for $S$ with respect to $\DD_2$ (we will specify the precise meaning of the terms \textit{good} and \textit{transit} in Remark \ref{rem5.3}). $P,S$ and all their dyadic children have $M$-\textit{thin boundaries} (with respect to $\mu$), for some $M>0$ dimensional constant. Recall that we say that a set $X\subset \mathbb{R}^{n+1}$ has $M$-thin boundary if
	\begin{equation*}
		\mu\big(\big\{\ox\in \mathbb{R}^{n+1} : \text{dist}(\ox, \partial X) \leq \tau \big\}\big) \leq M\tau^n, \qquad \text{for all }\, \tau>0.
	\end{equation*}
	As one may expect, expressions of the form $\langle \PP_{\Lambda,\mu}(\chi_\Delta b), (\chi_\Delta b) \rangle$ are null if the kernel associated to the operator $\PP$ is anti-symmetric, as it occurs in \cite{To3} with the Cauchy kernel, or in \cite{Vo} with the vector Riesz kernel. If this is not the case, following the proofs of general non-homogeneous $Tb$-theorems and, more precisely, the arguments of \cite[\textsection 10.2]{NTVo2} or \cite[\textsection 9]{HyMar}, the latter expressions can be dealt with if we have $\| \PP_{\Lambda,\varepsilon}\nu \|_{L^\infty(\mu)}\lesssim 1$, $\| \PP^\ast_{\Lambda,\varepsilon}\nu \|_{L^\infty(\mu)}\lesssim 1$ uniformly on $\varepsilon>0$, as well as the following weak boundedness property
	\begin{equation*}
		|\langle \PP_{\Lambda,\mu}(\chi_Q b), (\chi_Q b) \rangle| \lesssim \mu(\lambda Q), \quad \lambda >1,
	\end{equation*}
	that suffices to hold for cubes $Q$ with $M$-thin boundary and contained in parallelepipeds $\Delta$ of the form $P\cap S$, where $P$ and $S$ are as above. Such kind of restricted weak boundedness property for cubes with thin boundary is already checked in the proofs of \cite[Theorem 5.5]{MPrTo} and \cite[Theorem 4.3]{MPr}.\medskip\\
	Our goal in this subsection will be to verify the above conditions by choosing a proper 1-Lipschitz function $\Lambda$ which will depend on an additional exceptional set $\pSS$. If such additional properties hold, the proofs of the $Tb$-theorems found in \cite[Ch.5]{To3} and \cite{Vo} can be adapted to our non anti-symmetric setting. To define the exceptional set $\pSS$, we will follow an analogous construction to that of \cite[\textsection 5.2]{To3}. We begin by writing
	\begin{align*}
		\pSS_1'&:= \big\{ \ox\in F\,:\, \PP_{\ast}\nu(\ox)>\alpha \big\},\\
		\pSS_2'&:= \big\{ \ox\in F\,:\, \PP^\ast_{\nu,\ast}1(\ox)>\alpha \big\},
	\end{align*} 
	for a large constant $\alpha>0$ to be chosen below. For the moment, let us say that $\alpha \gg c_1L$. We also write,
	\begin{align*}
		\text{for }\, \ox\in \pSS_1':\quad e_1(\ox)&:=\text{sup}\big\{ \varepsilon>0\,:\, |\PP_{\varepsilon}\nu(\ox)|>\alpha \big\},\\
		\text{for }\, \ox\in \pSS_2':\quad e_2(\ox)&:=\text{sup}\big\{ \varepsilon>0\,:\, |\PP_{\nu,\varepsilon}^\ast 1(\ox)|>\alpha \big\}.
	\end{align*}
	If $\ox\in F\setminus{(\pSS_1'\cup \pSS_2')}$ we convey $e_1(\ox)=e_2(\ox):=0$. We define also
	\begin{equation*}
		\pSS_1 := \bigcup_{\ox\in \pSS_1'} B_{e_1(\ox)}(\ox) \qquad \text{and} \qquad \pSS_2 := \bigcup_{\ox\in \pSS_2'} B_{e_2(\ox)}(\ox),
	\end{equation*}
	as well as the exceptional set
	\begin{equation}
		\label{eq5.13}
		\pSS := \pSS_1\cup \pSS_2.
	\end{equation}
	Let us first show that for any $\ox\in \mathbb{R}^{n+1}$, $\mu(\pSS\setminus{H_{\DD(\ox)}})$ is small if $\alpha$ is taken big enough. The proof of the following lemma is analogous to that of \cite[Lemma 5.2]{To3}.
	\begin{lem}
		\label{lem5.10}
		Let $\ox_0 \in \mathbb{R}^{n+1}$ and $\DD:=\DD(\ox_0)$. Then, for $\alpha>0$ big enough,
		\begin{equation*}
			\mu(\pSS\setminus{H_{\DD}}) \leq \frac{4c_5}{\alpha}\mu(F).
		\end{equation*}
	\end{lem}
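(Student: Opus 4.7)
The plan is to reduce the bound on $\mu(\pSS\setminus H_\DD)$ to an application of Chebyshev's inequality on the integral estimate provided by property $5$ of Lemma \ref{lem5.3}, combined with a covering argument exploiting the construction of the Ahlfors radius and of $H\subset H_\DD$. Since the definitions of $\pSS_1$ and $\pSS_2$ are symmetric in $P$ and $P^\ast$, I will only describe the bound for $\pSS_1$; the analogous bound for $\pSS_2$, added to it, produces the factor $2$ in the final constant.

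The first step is a direct Chebyshev estimate on the centers: by the definition of $\pSS_1'$ and property $5$ of Lemma \ref{lem5.3},
\begin{equation*}
\alpha\,\mu(\pSS_1'\setminus H) \,\leq\, \int_{\pSS_1'\setminus H}\PP_\ast\nu\dd\mu \,\leq\, c_5\mu(F),
\end{equation*}
so, since $H\subset H_\DD$, one obtains $\mu(\pSS_1'\setminus H_\DD)\leq c_5\mu(F)/\alpha$. The remaining task is to handle the extra points in $\pSS_1\setminus\pSS_1'$, i.e.\ points lying in some ball $B_{e_1(\ox)}(\ox)$ with $\ox\in\pSS_1'$ but themselves not in $\pSS_1'$. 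The key observation is a dichotomy coming straight from the definitions: for each $\ox\in\pSS_1'$, either $e_1(\ox)\leq\pazocal{R}(\ox)$, in which case $B_{e_1(\ox)}(\ox)\subset B_{\pazocal{R}(\ox)}(\ox)\subset H'\subset H\subset H_\DD$ and the ball contributes nothing to $\pSS_1\setminus H_\DD$; or $e_1(\ox)>\pazocal{R}(\ox)$, and then $B_{e_1(\ox)}(\ox)$ must be an Ahlfors ball, by the very definition of $\pazocal{R}(\ox)$ as the supremum over non-Ahlfors radii, so $\mu(B_{e_1(\ox)}(\ox))\leq Le_1(\ox)^n$.

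Restricting to the Ahlfors case and applying a $5r$-covering to the family $\{B_{e_1(\ox)}(\ox):\ox\in\pSS_1',\,e_1(\ox)>\pazocal{R}(\ox)\}$, I extract a disjoint subfamily $\{B_k := B_{e_1(\ox_k)}(\ox_k)\}_k$ with $\pSS_1\setminus H_\DD\subset\bigcup_k 5B_k$. Each $5B_k$ is still Ahlfors, since $5e_1(\ox_k)>\pazocal{R}(\ox_k)$, hence $\mu(\pSS_1\setminus H_\DD)\leq\sum_k\mu(5B_k)\leq 5^nL\sum_k e_1(\ox_k)^n$, and the problem reduces to controlling the geometric sum $\sum_k e_1(\ox_k)^n$.

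The main obstacle is precisely this last estimate. Following the local Calderón--Zygmund propagation used in the proof of \cite[Lemma 5.2]{To3}, one fixes for each $k$ an $\varepsilon_k<e_1(\ox_k)$ with $|\PP_{\varepsilon_k}\nu(\ox_k)|>\alpha$ and then shows, using the kernel regularity of $P$ provided by \cite[Lemma 2.1]{MPr} together with the $n$-growth of $|\nu|\leq c_1\mu$ given by properties $1$ and $4$ of Lemma \ref{lem5.3}, that $\PP_\ast\nu(\oy)\geq\alpha/2$ for every $\oy$ in a concentric subball $B_{e_1(\ox_k)/M}(\ox_k)$ for some absolute $M>0$, up to an additive absolute constant that is absorbed into the assumption that $\alpha$ is large. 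A second application of Chebyshev, now with threshold $\alpha/2$, together with the disjointness of the $B_k$'s and the Ahlfors information on the scales $e_1(\ox_k)$, converts $\sum_k e_1(\ox_k)^n$ into a bound of the form $\lesssim\mu(F)/\alpha$, producing the desired $2c_5/\alpha$ estimate for $\pSS_1\setminus H_\DD$. The same argument applied to $P^\ast$ controls $\pSS_2\setminus H_\DD$, and adding the two bounds yields the claimed $4c_5/\alpha$.
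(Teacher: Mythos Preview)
Your approach has a genuine gap in the final step. After the $5r$-covering you reduce the estimate to $\sum_k e_1(\ox_k)^n\lesssim \mu(F)/\alpha$, and you propose to obtain this by showing $\PP_\ast\nu\geq\alpha/2$ on each subball $B_{e_1(\ox_k)/M}(\ox_k)$ and then applying Chebyshev. But Chebyshev together with property~\textit{5} only gives
\[
\sum_k \mu\big(B_{e_1(\ox_k)/M}(\ox_k)\setminus H\big)\;\leq\;\mu\big(\{\PP_\ast\nu>\alpha/2\}\setminus H\big)\;\leq\;\frac{2c_5}{\alpha}\,\mu(F),
\]
and to pass from the left-hand side to $\sum_k e_1(\ox_k)^n$ you would need a \emph{lower} bound of the form $\mu(B_{e_1(\ox_k)/M}(\ox_k))\gtrsim e_1(\ox_k)^n$. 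The Ahlfors information you invoke is only an upper bound, and the measure $\mu$ is concentrated on the small balls $B_i\subset\frac12 Q_i$, so there is no reason whatsoever for a subball $B_{e_1(\ox_k)/M}(\ox_k)$ to carry mass comparable to $e_1(\ox_k)^n$; it may well have zero $\mu$-measure. Thus the chain breaks at exactly this point.

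The paper's argument avoids this entirely by skipping the covering and the geometric sum. One shows directly that \emph{every} point $\oy\in\pSS_1\setminus H_\DD$ satisfies $\PP_\ast\nu(\oy)>\alpha-\kappa$: pick $\ox\in\pSS_1'$ with $\oy\in B_{e_1(\ox)}(\ox)$ and $\varepsilon_0$ close to $e_1(\ox)$ with $|\PP_{\varepsilon_0}\nu(\ox)|>\alpha$, and run the Calder\'on--Zygmund comparison between $\PP_{\varepsilon_0}\nu(\ox)$ and $\PP_{\varepsilon_0}\nu(\oy)$. The crucial point is that the growth needed for the comparison (both the local terms and the annular tail) is the growth of $\mu$ at~$\oy$, and this is available precisely because $\oy\notin H_\DD$. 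Once $\PP_\ast\nu>\alpha/2$ holds on all of $\pSS_1\setminus H_\DD$, a single Chebyshev against $\int_{F\setminus H_\DD}\PP_\ast\nu\,d\mu\leq c_5\mu(F)$ finishes the job. Your propagation idea is the right ingredient; it just has to be applied to the target point $\oy$ itself rather than routed through a covering that forces an unavailable lower mass bound.
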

	\begin{proof}
		Let $\oy\in \pSS\setminus{H_{\DD}} $ and assume, for example, $\oy\in \pSS_1$. Then $\oy\in B_{e_1(\ox)}(\ox)$ for some $\ox\in \pSS_1'$. Let $\varepsilon_0(\ox)$ be such that $|\PP_{\varepsilon_0(\ox)}\nu|>\alpha$ and $\oy \in B_{\varepsilon_0(\ox)}(\ox)$. Observe that
		\begin{align*}
			\big\rvert \PP_{\varepsilon_0(\ox)}\nu(\ox) - \PP_{\varepsilon_0(\ox)}\nu (\oy) \big\rvert &\leq \big\rvert \PP_{\varepsilon_0(\ox)}\big(\chi_{B_{2\varepsilon_0(\ox)}(\oy)}\nu\big)\ox)  \big\rvert + \big\rvert \PP_{\varepsilon_0(\ox)}\big(\chi_{B_{2\varepsilon_0(\ox)}(\oy)}\nu\big) (\oy)  \big\rvert\\
			&\hspace{2cm}+c_1\int_{\mathbb{R}^{n+1}\setminus{B_{2\varepsilon_0(\ox)}(\oy)}} |P(\ox-\oz)-P(\oy-\oz)|\dd\mu(\oz).
		\end{align*}
		Since $\oy\not\in H_{\DD}$, the first two terms are bounded above by
		\begin{equation*}
			c_1 \frac{\mu\big( B_{2\varepsilon_0(\ox)}(\oy) \big)}{\varepsilon_0(\ox)^n} \leq 2^nLc_1.
		\end{equation*}
		Regarding the third term, if $A$ is the CZ constant of $P$ from property $(c)$ of \cite[Lemma 2.1]{MPr}, integration over annuli and using again that $\oy\not\in H_{\DD}$ yield
		\begin{equation*}
			c_1\int_{\mathbb{R}^{n+1}\setminus{B_{2\varepsilon_0(\ox)}(\oy)}} |P(\ox-\oz)-P(\oy-\oz)|\dd\mu(\oz) \lesssim 2^nALc_1.
		\end{equation*}
		Therefore, naming $\kappa := 2^nLc_1(2+A)$, we have $\big\rvert \PP_{\varepsilon_0(\ox)}\nu(\ox) - \PP_{\varepsilon_0(\ox)}\nu (\oy) \big\rvert \leq \kappa$, and thus
		\begin{equation*}
			\big\rvert \PP_{\varepsilon_0(\ox)}\nu(\oy) \big\rvert > \alpha -\kappa,
		\end{equation*}
		which implies, in particular, $\PP_{\ast}\nu(\oy)>\alpha-\kappa$. If we have had $\oy\in \pSS_2$ we would have obtained the same bound for $\PP_{\ast}^\ast \nu(\oy)$, since $P$ and $P^\ast$ share the same CZ constants. In any case, pick $\alpha \geq 2\kappa$ and observe that
		\begin{align*}
			\mu(\pSS\setminus{H_{\DD}}) &\leq \int_{\pSS_1\setminus{H_{\DD}}} \dd \mu + \int_{\pSS_2\setminus{H_{\DD}}} \dd \mu\\
			& \leq \frac{2}{\alpha} \int_{F\setminus{H_{\DD}}} \PP_{\ast}\nu \dd \mu + \frac{2}{\alpha}\int_{F\setminus{H_{\DD}}} \PP^\ast_{\ast}\nu \dd \mu \leq \frac{4c_5}{\alpha}\mu(F).
		\end{align*} 
	\end{proof}
	One of the implications of the above lemma is the following: by setting $\delta_1:=(1+\delta_0)/2$ (where $\delta_0$ is the parameter appearing in Lemma \ref{lem5.2}), we have $\delta_0<\delta_1<1$, and choosing $\alpha := \max\{2\kappa, 8c_5/(1-\delta_0)\}$, we get
	\begin{equation*}
		\mu\big( H_{\DD(\ox)}\cup T_{\DD(\ox)} \big) + \mu\big( \pSS\setminus{H_{\DD(\ox)}} \big) \leq \delta_1\mu(F), \qquad \forall \ox\in \mathbb{R}^{n+1}.
	\end{equation*}
	With this in mind, and defining the \textit{total exceptional set}
	\begin{equation*}
		W_{\DD(\ox)} := H_{\DD(\ox)}\cup T_{\DD(\ox)}\cup \pSS,
	\end{equation*}
	one obtains
	\begin{equation}
		\label{eq5.14}
		\mu\big( F\setminus{W_{\DD(\ox)}} \big) \geq (1-\delta_1)\mu(F),
	\end{equation}
	which is a necessary inequality in order to carry out the final probabilistic argument in the proof of the $Tb$-theorem (see \cite[\textsection 5.11.2]{To3}).
	
	The exceptional set $\pSS$ exhibits additional important properties regarding suppressed kernels (see subsection \ref{subsec3.1}), provided their associated 1-Lipschitz function $\Lambda$ satisfies certain conditions. In order to prove them, we present two preliminary results. Fix $\sigma$ any finite Borel measure in $\mathbb{R}^{n+1}$ and $\Lambda: \mathbb{R}^{n+1}\to [0,\infty)$ a 1-Lipschitz function. The next lemma admits an analogous proof to that of \cite[Lemma 8.3]{Vo}.
	
	\begin{lem}
		\label{lem5.11}
		For any $\ox\in \mathbb{R}^{n+1}$ and any $\varepsilon\geq \Lambda (\ox)$,
		\begin{equation*}
			\big\rvert \PP_{\varepsilon}\sigma(\ox) - \PP_{\Lambda,\varepsilon}\sigma(\ox) \big\rvert \lesssim \sup_{r\geq \Lambda(\ox)} \frac{|\sigma|(B_r(\ox))}{r^n}.
		\end{equation*}
		The same result holds changing $\PP$ by $\PP^\ast$.
	\end{lem}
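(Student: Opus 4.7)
The plan is to control the difference pointwise and then integrate dyadically. From the definition \eqref{eq3.1}, we can write
\begin{equation*}
P(\ox-\oy) - P_\Lambda(\ox,\oy) = P(\ox-\oy)\cdot \frac{P(\ox-\oy)^2\Lambda(\ox)^n\Lambda(\oy)^n}{1+P(\ox-\oy)^2\Lambda(\ox)^n\Lambda(\oy)^n},
\end{equation*}
which, by simply dropping the ``$1+$'' in the denominator, gives the naive bound
\begin{equation*}
\big\rvert P(\ox-\oy) - P_\Lambda(\ox,\oy) \big\rvert \leq P(\ox-\oy)^3\,\Lambda(\ox)^n\Lambda(\oy)^n.
\end{equation*}
In the integration region $\{|\oy-\ox|>\varepsilon\}$, and under the hypothesis $\varepsilon\geq \Lambda(\ox)$, we have $|\oy-\ox|\geq \Lambda(\ox)$, so the Lipschitz property gives $\Lambda(\oy)\leq \Lambda(\ox)+|\oy-\ox|\leq 2|\oy-\ox|$. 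Combining this with $P(\ox-\oy)\leq |\ox-\oy|^{-n}$ (Lemma \ref{lem3.1}, item \textit{1} applied to the non-suppressed case, which is trivial) and $\Lambda(\ox)\leq |\ox-\oy|$ yields the pointwise estimate
\begin{equation*}
\big\rvert P(\ox-\oy) - P_\Lambda(\ox,\oy) \big\rvert \lesssim \frac{\Lambda(\ox)^n}{|\ox-\oy|^{2n}}.
\end{equation*}

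The second step is the usual annular integration. Splitting $\{|\oy-\ox|>\varepsilon\}$ into dyadic annuli $A_k:=\{2^k\varepsilon\leq |\oy-\ox|<2^{k+1}\varepsilon\}$, $k\geq 0$, we bound
\begin{align*}
\big\rvert \PP_{\varepsilon}\sigma(\ox) - \PP_{\Lambda,\varepsilon}\sigma(\ox) \big\rvert
&\lesssim \Lambda(\ox)^n\sum_{k=0}^{\infty}\frac{|\sigma|(B_{2^{k+1}\varepsilon}(\ox))}{(2^k\varepsilon)^{2n}}\\
&\lesssim \frac{\Lambda(\ox)^n}{\varepsilon^n}\cdot \sup_{r\geq \varepsilon}\frac{|\sigma|(B_r(\ox))}{r^n}\cdot\sum_{k=0}^{\infty}2^{-kn}.
\end{align*}
Since the geometric series converges and $\Lambda(\ox)/\varepsilon\leq 1$ by hypothesis, and since $\{r\geq \varepsilon\}\subset\{r\geq \Lambda(\ox)\}$, we arrive at
\begin{equation*}
\big\rvert \PP_{\varepsilon}\sigma(\ox) - \PP_{\Lambda,\varepsilon}\sigma(\ox) \big\rvert \lesssim \sup_{r\geq \Lambda(\ox)}\frac{|\sigma|(B_r(\ox))}{r^n},
\end{equation*}
as required. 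The argument for $P^\ast$ is identical: the only properties used are $0\leq P\leq |\cdot|^{-n}$ and the analogous expression for $P_\Lambda^\ast$, both of which carry over to $P^\ast$ and $P_\Lambda^\ast$ unchanged.

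No real obstacle is anticipated here, as the argument is the standard ``suppression error estimate'' for Calderón--Zygmund kernels (compare \cite[Lemma 8.3]{Vo}); the only points requiring minor care are the use of $\varepsilon\geq \Lambda(\ox)$ twice (first to bound $\Lambda(\ox)\leq |\ox-\oy|$ in the pointwise estimate, then to absorb $(\Lambda(\ox)/\varepsilon)^n\leq 1$ and enlarge the supremum from $r\geq\varepsilon$ to $r\geq \Lambda(\ox)$), and ensuring that the $n$-growth estimate used in the annular sum is stated in terms of $\sup_{r\geq \Lambda(\ox)}$ rather than a pointwise maximal function, which is exactly the form demanded by the statement.
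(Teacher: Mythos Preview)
Your proof is correct and follows exactly the standard ``suppression error'' argument that the paper defers to via \cite[Lemma 8.3]{Vo}; indeed the same pointwise bound $\bigl|P-P_\Lambda\bigr|\lesssim \Lambda(\ox)^n/|\ox-\oy|^{2n}$ on the region $|\ox-\oy|\geq \Lambda(\ox)$, followed by dyadic annular integration, already appears verbatim in the paper's proof of Lemma \ref{lem3.2} (see \eqref{eq3.4} and the lines after it). There is nothing to add.
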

	
	
	The following lemma with an analogous proof to that of \cite[Lemma 5.5]{To3}.
	
	\begin{lem}
		\label{lem5.12}
		Let $\ox\in \mathbb{R}^{n+1}$ and $r_0>0$ such that $\mu(B_r(\ox))\leq L r^n$ for $r\geq r_0$, as well as $|\PP_{\varepsilon}\nu(\ox)|\leq \alpha$ and $|\PP^\ast_{\varepsilon}\nu(\ox)|\leq \alpha$ for $\varepsilon\geq r_0$. If $\Lambda(\ox)\geq r_0$, then
		\begin{equation*}
			\big\rvert \PP_{\Lambda,\varepsilon}\nu \big (\ox)\rvert \lesssim \alpha + c_1L \qquad \text{and} \qquad \big\rvert \PP^\ast_{\Lambda,\varepsilon}\nu \big (\ox)\rvert \lesssim \alpha + c_1L,
		\end{equation*}
		uniformly on $\varepsilon>0$.
	\end{lem}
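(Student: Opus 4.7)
The plan is to split the argument into two regimes according to the size of $\varepsilon$ relative to $\Lambda(\ox)$, and in each regime combine the hypotheses on $\nu$ at $\ox$ with the suppression estimates from Lemmas \ref{lem3.1} and \ref{lem5.11}. Recall from Lemma \ref{lem5.3} that $\nu=b\mu$ with $\|b\|_\infty\leq c_1$, so the $n$-growth assumption on $\mu$ at $\ox$ transfers to
\begin{equation*}
|\nu|(B_r(\ox))\leq c_1\,\mu(B_r(\ox))\leq c_1L\,r^n,\qquad \text{for all }\, r\geq r_0.
\end{equation*}

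First I would treat $\varepsilon\geq \Lambda(\ox)$. Since $\Lambda(\ox)\geq r_0$, Lemma \ref{lem5.11} applied to $\nu$ yields
\begin{equation*}
\big\rvert \PP_{\varepsilon}\nu(\ox) - \PP_{\Lambda,\varepsilon}\nu(\ox) \big\rvert \lesssim \sup_{r\geq \Lambda(\ox)} \frac{|\nu|(B_r(\ox))}{r^n}\leq c_1L,
\end{equation*}
using the growth bound above in the range $r\geq \Lambda(\ox)\geq r_0$. Combining with the hypothesis $|\PP_\varepsilon\nu(\ox)|\leq \alpha$, which is available precisely because $\varepsilon\geq r_0$, I obtain $|\PP_{\Lambda,\varepsilon}\nu(\ox)|\lesssim \alpha+c_1L$, and the same argument carried out for $P^\ast$ (the $P^\ast$-version of Lemma \ref{lem5.11}) gives the analogous bound for $\PP^\ast_{\Lambda,\varepsilon}\nu(\ox)$.

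Next I would handle the remaining case $0<\varepsilon<\Lambda(\ox)$. The idea is to decompose
\begin{equation*}
\PP_{\Lambda,\varepsilon}\nu(\ox) = \int_{\varepsilon<|\ox-\oy|\leq \Lambda(\ox)} P_{\Lambda}(\ox,\oy)\dd\nu(\oy) + \PP_{\Lambda,\Lambda(\ox)}\nu(\ox).
\end{equation*}
The second term is controlled by the previous case (it is the value at the admissible scale $\varepsilon=\Lambda(\ox)$). For the first term I would use property \textit{3} of Lemma \ref{lem3.1}, namely $|P_\Lambda(\ox,\oy)|\lesssim \Lambda(\ox)^{-n}$, together with $|\nu|(B_{\Lambda(\ox)}(\ox))\leq c_1L\,\Lambda(\ox)^n$, which is again available since $\Lambda(\ox)\geq r_0$. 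This gives a crude bound $\lesssim c_1L$ for the inner integral, independent of $\varepsilon$. The same reasoning with the conjugate kernel $P^\ast_{\Lambda}$ (which satisfies the identical suppression estimate) handles $\PP^\ast_{\Lambda,\varepsilon}\nu$.

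There is no substantive obstacle: the whole statement is essentially a packaging of the suppression estimate of Lemma \ref{lem5.11} (to pass from $\PP$ to $\PP_\Lambda$ at scales above $\Lambda(\ox)$) with the trivial pointwise bound \textit{3} of Lemma \ref{lem3.1} (to absorb the truncation scales below $\Lambda(\ox)$), the hypotheses on $\ox$ serving only to guarantee that the growth and potential bounds are effective at the scale $\Lambda(\ox)$.
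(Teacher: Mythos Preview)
Your proof is correct and follows exactly the approach indicated by the paper, which simply refers the reader to \cite[Lemma 5.5]{To3}: splitting according to whether $\varepsilon\geq \Lambda(\ox)$ or $\varepsilon<\Lambda(\ox)$, invoking Lemma \ref{lem5.11} in the former case and the pointwise suppression bound (property \textit{3} of Lemma \ref{lem3.1}) in the latter. There is nothing to add.
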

	
	Bearing in mind the above result, we choose our 1-Lipschitz function $\Lambda:\mathbb{R}^{n+1}\to [0,\infty)$ to satisfy, for some $\ox_0\in \mathbb{R}^{n+1}$,
	\begin{equation*}
		\Lambda(\ox) \geq \text{dist}\big(\ox, \mathbb{R}^{n+1}\setminus{\big( H_{\DD(\ox_0)}\cup \pSS \big)} \big).
	\end{equation*}
	This choice is consistent with the Lipschitz functions $\Lambda$ that appear in the proofs of the $Tb$-theorems presented in \cite[Ch.5]{To3} and \cite{Vo}, in the sense that they are constructed to ensure that the previous inequality is satisfied. This way, since $H_{\DD(\ox_0)}\cup \pSS$ contains all non-Ahlfors balls and all the balls $B_{e_1(\ox)}(\ox)$, $B_{e_2(\ox)}(\ox)$ for $\ox\in F$, we have
	\begin{equation*}
		\Lambda(\ox)\geq \max\{\pazocal{R}(\ox), e_1(\ox),e_2(\ox)\},
	\end{equation*}
	where $\pazocal{R}(\ox)$ is the Ahlfors radius defined in \eqref{eq5.5}. So by the definition of $\pSS$ and choosing $r_0:= \max\{\pazocal{R}(\ox), e_1(\ox),e_2(\ox)\}$, Lemma \ref{lem5.2} yields
	\begin{lem}
		\label{lem5.13}
		Let $\ox_0\in \mathbb{R}^{n+1}$ and $\Lambda:\mathbb{R}^{n+1}\to [0,\infty)$ a 1-Lipschitz function such that $\Lambda(\ox) \geq \text{\normalfont{dist}}\big(\ox, \mathbb{R}^{n+1}\setminus{\big( H_{\DD(\ox_0)}\cup \pSS \big)} \big)$ for all $\ox\in \mathbb{R}^{n+1}$. Then,
		\begin{equation*}
			\PP_{\Lambda,\ast}\nu(\ox) \leq c_{\Lambda} \quad \text{and} \quad \PP_{\Lambda,\ast}^\ast \nu(\ox) \leq c_{\Lambda}, \quad \forall \ox \in F,
		\end{equation*}
		with $c_\Lambda$ depending only on $c_1, c_5, L$ and $\delta_0$.
	\end{lem}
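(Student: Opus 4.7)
The strategy is to apply Lemma \ref{lem5.12} pointwise on $F$, after showing that the natural threshold
\begin{equation*}
r_0(\ox) := \max\{\pazocal{R}(\ox), e_1(\ox), e_2(\ox)\}
\end{equation*}
is dominated by $\Lambda(\ox)$ at every $\ox\in F$ (with the convention $e_j(\ox)=0$ for $\ox\notin \pSS_j'$, $j=1,2$). Once this is established, every remaining step is a direct citation of the preceding results.

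First I would verify $\Lambda(\ox)\geq r_0(\ox)$ for each $\ox\in F$. By construction $B_{\pazocal{R}(\ox)}(\ox)\subset H'\subset H$ (see \eqref{eq5.6}), and in turn $H\subset H_{\DD(\ox_0)}$, the latter being valid for every dyadic lattice since, by \eqref{eq5.7}, every ball $B_{5\pazocal{R}_k}(\ox_k)$ is covered by at least one cube of $\DD_H$. Moreover $B_{e_j(\ox)}(\ox)\subset \pSS_j \subset \pSS$ directly from the definition \eqref{eq5.13}. Consequently $B_{r_0(\ox)}(\ox)\subset H_{\DD(\ox_0)}\cup \pSS$, which forces
\begin{equation*}
\text{dist}\bigl(\ox,\mathbb{R}^{n+1}\setminus(H_{\DD(\ox_0)}\cup \pSS)\bigr)\geq r_0(\ox),
\end{equation*}
and the standing hypothesis on $\Lambda$ yields $\Lambda(\ox)\geq r_0(\ox)$.

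Second, I would check that $r_0=r_0(\ox)$ satisfies the two quantitative assumptions of Lemma \ref{lem5.12} (if $r_0=0$ one takes any $\eta>0$ in its place, which the argument accommodates since the final bound does not depend on $r_0$). For $r\geq r_0\geq \pazocal{R}(\ox)$ the ball $B_r(\ox)$ is Ahlfors by the definition \eqref{eq5.5} of $\pazocal{R}(\ox)$, so $\mu(B_r(\ox))\leq Lr^n$. For $\varepsilon>r_0\geq e_1(\ox)$ the very definition of $e_1(\ox)$ forces $|\PP_{\varepsilon}\nu(\ox)|\leq \alpha$, and analogously $|\PP^\ast_{\varepsilon}\nu(\ox)|\leq \alpha$ via $e_2(\ox)$.

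Finally, Lemma \ref{lem5.12} immediately yields
\begin{equation*}
|\PP_{\Lambda,\varepsilon}\nu(\ox)|\lesssim \alpha+c_1L \qquad \text{and} \qquad |\PP^{\ast}_{\Lambda,\varepsilon}\nu(\ox)|\lesssim \alpha+c_1L
\end{equation*}
uniformly in $\varepsilon>0$, so that taking the supremum in $\varepsilon$ produces the desired bound $\PP_{\Lambda,\ast}\nu(\ox),\PP^\ast_{\Lambda,\ast}\nu(\ox)\leq c_\Lambda$ on all of $F$. Recalling the choice $\alpha=\max\{2\kappa,\,8c_5/(1-\delta_0)\}$ with $\kappa=2^nLc_1(2+A)$ fixed immediately after Lemma \ref{lem5.10}, the constant $c_\Lambda$ depends only on $c_1, c_5, L$ and $\delta_0$ (plus the absolute C--Z constants of $P$). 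I do not foresee a substantive obstacle; the only genuinely non-routine check is the inclusion $B_{\pazocal{R}(\ox)}(\ox)\subset H_{\DD(\ox_0)}$, which is handled by unpacking the $5r$-covering producing $H$ together with \eqref{eq5.7}.
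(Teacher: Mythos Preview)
Your proposal is correct and follows essentially the same approach as the paper: the paper's argument is the paragraph immediately preceding the lemma, where it observes that $H_{\DD(\ox_0)}\cup\pSS$ contains all non-Ahlfors balls and all balls $B_{e_j(\ox)}(\ox)$, hence $\Lambda(\ox)\geq\max\{\pazocal{R}(\ox),e_1(\ox),e_2(\ox)\}$, and then invokes Lemma~\ref{lem5.12} with this choice of $r_0$. Your write-up is simply a more explicit version of the same reasoning.
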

	
	Hence, if we choose such a $\Lambda$, we have $\| \PP_{\Lambda,\varepsilon}\nu \|_{L^\infty(\mu)}\lesssim 1$ and $\| \PP^\ast_{\Lambda,\varepsilon}\nu \|_{L^\infty(\mu)}\lesssim 1$ uniformly on $\varepsilon>0$. So we are left to verify the weak boundedness property for cubes with $M$-thin boundary contained in the parallelepipeds presented at the beginning of this subsection. In order to do it we will need two auxiliary results, similar to those found in \cite[\textsection 3]{MPr}.
	
	\begin{lem}
		\label{lem5.14}
		Let $\varphi$ be a function supported on a cube $Q\subset \mathbb{R}^{n+1}$ with $\|\varphi\|_\infty \leq 1$. Then, $\PP_{\Lambda}(\varphi\nu)$ is a locally integrable function and moreover, if $\mu(Q)\lesssim \ell(Q)^n$, there exists $\ox_0\in \frac{1}{4}Q$ and a constant $c_0$ such that
		\begin{equation*}
			\big\rvert \PP_{\Lambda}(\varphi\nu) (\ox_0) \big\rvert \leq c_0.
		\end{equation*}
	\end{lem}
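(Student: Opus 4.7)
The plan is to use only the trivial size estimate $|P_\Lambda(\ox,\oy)|\leq |\ox-\oy|^{-n}$ (property \textit{1} of Lemma \ref{lem3.1}, which follows from $P_\Lambda\leq P$), combined with the Radon--Nikodym bound $\nu=b\mu$, $\|b\|_\infty\leq c_1$ from part \textit{1} of Lemma \ref{lem5.1}. Since we never need to exploit the suppression here, the particular choice of the 1-Lipschitz function $\Lambda$ plays no role in this lemma, and the result reduces to two Fubini computations.

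For local integrability, given any ball $B_R(\oz)\subset\mathbb{R}^{n+1}$, since $\text{supp}(\varphi)\subset Q$ I would write
\begin{equation*}
\int_{B_R(\oz)}|\PP_\Lambda(\varphi\nu)(\ox)|\dd\pazocal{L}^{n+1}(\ox) \leq c_1\int_Q\bigg(\int_{B_R(\oz)} \frac{\dd\pazocal{L}^{n+1}(\ox)}{|\ox-\oy|^{n}}\bigg)\dd\mu(\oy).
\end{equation*}
For each fixed $\oy$, integration in $\ox$ over annuli produces an inner integral which is bounded in terms of $R$ and $|\oz-\oy|$; since $\mu$ is a finite measure with bounded density with respect to $\pazocal{L}^{n+1}$, the whole expression is finite, so $\PP_\Lambda(\varphi\nu)\in L^1_{\text{loc}}(\mathbb{R}^{n+1})$.

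For the pointwise estimate, I would apply the same Fubini idea, this time with $B_R(\oz)$ replaced by $\tfrac{1}{4}Q$. For each $\oy\in Q$ we have $\tfrac{1}{4}Q\subset B_{\sqrt{n+1}\,\ell(Q)}(\oy)$, so integration in polar coordinates gives the inner integral $\lesssim \ell(Q)$. The hypothesis $\mu(Q)\lesssim \ell(Q)^n$ then yields
\begin{equation*}
\int_{\frac{1}{4}Q}|\PP_\Lambda(\varphi\nu)(\ox)|\dd\pazocal{L}^{n+1}(\ox) \lesssim \ell(Q)\cdot \mu(Q) \lesssim \ell(Q)^{n+1} \approx \pazocal{L}^{n+1}\big(\tfrac{1}{4}Q\big),
\end{equation*}
so the average of $|\PP_\Lambda(\varphi\nu)|$ over $\tfrac{1}{4}Q$ is bounded by an absolute constant, whence some $\ox_0\in\tfrac{1}{4}Q$ must satisfy $|\PP_\Lambda(\varphi\nu)(\ox_0)|\leq c_0$. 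I do not expect any substantial obstacle: the single point requiring care is the absolute convergence needed to invoke Fubini, but this is already supplied by the above display. The lemma is essentially a soft $L^1$-averaging argument that extracts a good base point for the forthcoming weak boundedness verification.
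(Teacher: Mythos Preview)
Your proposal is correct, and for the second assertion (the existence of a good point $\ox_0\in\tfrac14 Q$ via the $L^1$-average argument) it is essentially identical to the paper's proof: both bound $\int_{Q}|\PP_\Lambda(\varphi\nu)|\,\dd\pazocal{L}^{n+1}$ by $\ell(Q)\,\mu(Q)\lesssim \ell(Q)^{n+1}$ using Tonelli and the size bound $|P_\Lambda(\ox,\oy)|\leq|\ox-\oy|^{-n}$, and then select $\ox_0$ by averaging over $\tfrac14 Q$.

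For the local integrability part, the approaches differ slightly. You run a Fubini/Tonelli argument over an arbitrary ball $B_R(\oz)$, using only $\nu=b\mu$ with $\|b\|_\infty\leq c_1$ and $\mu$ finite; this is a clean, structure-free route that yields exactly the stated conclusion. The paper instead exploits the explicit form $\nu=\sum_j \frac{\langle T_0,\varphi_j\rangle}{\pazocal{L}^{n+1}(B_j)}\pazocal{L}^{n+1}|_{B_j}$ and shows the stronger fact that $|\PP_\Lambda(\varphi\nu)(\ox)|\lesssim |I_Q|<N$ \emph{for every} $\ox$, where $I_Q$ indexes the balls $B_j$ meeting $Q$; pointwise boundedness (with an $N$-dependent constant) then trivially implies local integrability. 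Your argument is shorter and suffices for the lemma as stated; the paper's version extracts a bit more information from the special structure of $\nu$, though that extra information is not used elsewhere.
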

	\begin{proof}
		In \cite[Lemma 3.5]{MPr}, the authors deal with general distributions with $n$-growth. In our statement, however, $\nu$ is a specified signed measure and the cubes $Q$ satisfy the additional growth condition $\mu(Q)\lesssim \ell(Q)^n$. To prove the local integrability of $\PP_{\Lambda}(\varphi\nu)$, let us fix $\ox\in \mathbb{R}^{n+1}$ and name $I_Q:=\{j=1,\ldots,N\,:\, Q\cap B_j \neq \varnothing\}$. We compute, bearing in mind Lemma \ref{lem3.1}:
		\begin{align*}
			\big\rvert \PP_{\Lambda}(\varphi\nu) (\ox) \big\rvert &= \Bigg\rvert \sum_{j\in I_Q} \frac{\langle T_0, \varphi_j \rangle}{\pazocal{L}^{n+1}(B_j)} \int_{Q\cap B_j} P_{\Lambda}(\ox,\oy)\varphi(\oy) \dd\oy \Bigg\rvert \\
			&\lesssim \sum_{j\in I_Q} \frac{1}{r_j} \int_{Q\cap B_j} \frac{\dd\oy}{|\ox-\oy|^n}=:\sum_{j\in I_Q} \frac{1}{r_j} I_j(\ox).
		\end{align*}
		To study $I_j(\ox)$ we split the integral into the domain
		\begin{equation*}
			D_{1,j} := Q\cap B_j\cap \big\{\oy\,:\,2|\ox-\oy|\geq \text{diam}(B_j\cap Q)\big\},
		\end{equation*}
		and its complementary $D_{2,j}:=(Q\cap B_j)\setminus{D_1}$. For the first domain of integration we directly have
		\begin{equation*}
			\int_{D_{1,j}}\frac{\dd\oy}{|\ox-\oy|^n} \lesssim \frac{\pazocal{L}^{n+1}(D_{1,j})}{\text{diam}(B_j\cap Q)^n}\leq \text{diam}(B_j\cap Q)\leq r_j.
		\end{equation*}
		For the second one notice that
		\begin{equation*}
			D_{2,j}=Q\cap B_j\cap \{\oy\,:\,2|\ox-\oy|< \text{diam}(B_j\cap Q)\} \subset B_{3\text{diam}(B_j\cap Q)}(\ox).
		\end{equation*}
		Therefore, considering the annuli $A_k:= B_{2^{-k}3\text{diam}(B_j\cap Q)}(\ox)\setminus{B_{2^{-k-1}3\text{diam}(B_j\cap Q)}(\ox)}$ for $k\geq 0$ we obtain
		\begin{align*}
			\int_{D_{2,j}}\frac{\dd\oy}{|\ox-\oy|^n} \leq \sum_{k\geq 0} \int_{A_k} \frac{\dd\oy}{|\ox-\oy|^n}\lesssim \text{diam}(B_j\cap Q) \sum_{k\geq 0}\frac{1}{2^k}\leq r_j.
		\end{align*}
		Hence
		\begin{equation*}
			\big\rvert \PP_{\Lambda}(\varphi\nu) (\ox) \big\rvert \lesssim |I_Q|<N<\infty,
		\end{equation*}
		and thus the local integrability of $\PP_{\Lambda}(\varphi\nu)$ follows. To prove the second assertion, we use $\mu(Q)\lesssim \ell(Q)^n$ together with Tonelli's theorem and Lemma \ref{lem3.1} to obtain
		\begin{align*}
			\int_Q \big\rvert \PP_{\Lambda}(\varphi\nu) (\ox) \big\rvert \dd\ox &= \int_Q \bigg\rvert \int_{F\cap Q} P_{\Lambda}(\ox,\oy) \varphi(\oy) \dd\nu(\oy) \bigg\rvert \dd\ox\\
			&\leq \int_{F\cap Q} \bigg( \int_Q \frac{\dd\ox}{|\ox-\oy|^n} \bigg) |\varphi (\oy)|\dd\mu(\oy)\leq\ell(Q)\int_{F\cap Q}|\varphi(\oy)|\dd\mu(\oy)\\
			&\leq \ell(Q)\mu(Q)\lesssim \ell(Q)^{n+1} = \pazocal{L}^{n+1}(Q),
		\end{align*}
		Therefore,
		\begin{equation*}
			\frac{1}{\pazocal{L}^{n+1}(\frac{1}{4}Q)}\int_{\frac{1}{4}Q}\big\rvert \PP_{\Lambda}(\varphi\nu) (\ox) \big\rvert \dd\ox \lesssim 1,
		\end{equation*}
		and the desired result follows.
	\end{proof}
	With the above lemma, we are able to prove a weaker \textit{localization-type} result for $\PP_{\Lambda}$, analogous to \cite[Theorem 3.1]{MPr}. It will be valid for our particular signed measure $\nu$ and cubes contained in $F\setminus{H}$. Recall that the latter inclusion implies, by property \textit{6} in Lemma \ref{lem5.3}, that for any $\ox\in Q$, if $R(\ox)$ is the cube centered at $\ox$ with side length $\ell(R)$, then $\mu(\lambda R(\ox))\leq \sqrt{n+1} L\, \ell(\lambda R)^n\simeq \ell(\lambda R)^n$, for any $\lambda >0$.
	\begin{lem}
		\label{lem5.15}
		Let $Q\subset F\setminus{H}$ be a cube and $\varphi$ a test function with $0\leq \varphi \leq 1$, $\|\nabla \varphi\|_\infty \leq \ell(Q)^{-1}$ and such that $\varphi \equiv 1$ on $Q$ and $\varphi \equiv 0$ on $(2Q)^c$. Then,
		\begin{equation*}
			|\PP_{\Lambda}(\varphi\nu)(\ox)|\lesssim 1, \qquad \forall \ox \in Q.
		\end{equation*}
	\end{lem}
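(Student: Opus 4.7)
The approach adapts the localization-type result \cite[Theorem 3.1]{MPr} to the suppressed kernel $P_\Lambda$, using the Calder\'on--Zygmund estimates of Lemma \ref{lem3.1} together with Lemma \ref{lem5.14} to produce a convenient base point.

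The inclusion $Q\subset F\setminus{H}$ and property \textit{6} of Lemma \ref{lem5.3} give $\mu(2Q)\lesssim \ell(2Q)^n$, so Lemma \ref{lem5.14} applied to the cube $2Q$ (which contains $\text{supp}\,\varphi$) furnishes a point $\ox_0\in \frac{1}{2}Q$ with $|\PP_{\Lambda}(\varphi\nu)(\ox_0)|\leq c_0$. By the triangle inequality, for an arbitrary $\ox\in Q$ it then suffices to bound
\begin{equation*}
I:=\int \big\rvert P_\Lambda(\ox,\oy)-P_\Lambda(\ox_0,\oy) \big\rvert\,\varphi(\oy)\,d|\nu|(\oy)
\end{equation*}
by an absolute constant. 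I would split $I$ into a \textit{far piece} on $\{|\oy-\ox_0|\geq 2\,\text{diam}(Q)\}$ and a \textit{near piece} on its complement.

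On the far piece, property \textit{2} of Lemma \ref{lem3.1} provides $|P_\Lambda(\ox,\oy)-P_\Lambda(\ox_0,\oy)|\lesssim |\ox-\ox_0|\cdot |\oy-\ox_0|^{-(n+1)}$. Using $\nu=b\mu$ with $\|b\|_\infty\leq c_1$ from Lemma \ref{lem5.1}, property \textit{6} of Lemma \ref{lem5.3} yields the uniform bound $|\nu|(B_r(\ox_0))\lesssim r^n$; a standard dyadic-annulus summation then gives a far contribution $\lesssim |\ox-\ox_0|\cdot|\ox-\ox_0|^{-1}\approx 1$.

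The near piece is the main technical obstacle: the brute estimate $|P_\Lambda(\ox,\oy)|\leq |\ox-\oy|^{-n}$ from property \textit{1} of Lemma \ref{lem3.1} integrated against a generic $n$-growth measure produces a logarithmically divergent dyadic sum. This is overcome by exploiting that $\nu$ is absolutely continuous with respect to $\pazocal{L}^{n+1}$, with density bounded by $c_1/r_i$ on each ball $B_i$. Splitting the indices $i$ for which $B_i$ meets the near region into those with $\text{dist}(\ox_0,B_i)\leq 2r_i$ and the rest, the first class has cardinality bounded by the overlap of the enclosing cubes $Q_i$ (property $\Pfive'$) and contributes $\lesssim 1$ per ball via the polar-coordinates bound $\int_{B_i}|\ox_0-\oy|^{-n}d\pazocal{L}^{n+1}\lesssim r_i$, which cancels the $1/r_i$ density factor. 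For the remaining indices, $|\ox_0-\oy|\approx \text{dist}(\ox_0,B_i)$ on $B_i$, and grouping these balls by dyadic scales of $\text{dist}(\ox_0,B_i)$ together with the local $n$-growth of $\mu$ at $\ox_0\in F\setminus H$ bounds the contribution geometrically. The identical bookkeeping for the term involving $P_\Lambda(\ox_0,\oy)$ closes the estimate.
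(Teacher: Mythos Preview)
Your far-piece argument is fine, but the near-piece estimate has a genuine gap. After your Case~1/Case~2 split, the Case~2 contribution from each ball $B_i$ with $\text{dist}(\ox_0,B_i)>2r_i$ is
\[
\frac{1}{r_i}\int_{B_i}\frac{d\pazocal{L}^{n+1}(\oy)}{|\ox_0-\oy|^n}\approx \frac{r_i^n}{\text{dist}(\ox_0,B_i)^n}=\frac{\mu(B_i)}{\text{dist}(\ox_0,B_i)^n},
\]
so summing over these indices you are back to $\int_{\text{near}}|\ox_0-\oy|^{-n}\,d\mu(\oy)$. Grouping by dyadic scales of $\text{dist}(\ox_0,B_i)$ and using the $n$-growth of $\mu$ at $\ox_0$ yields a contribution $\lesssim 1$ \emph{per scale}, not a geometrically decaying series; the total is the same logarithmic divergence you set out to avoid. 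The absolute continuity of $\nu$ with respect to $\pazocal{L}^{n+1}$ does not help here, because once you pass to $\mu(B_i)/\text{dist}(\ox_0,B_i)^n$ the Lebesgue structure has been integrated out.

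The paper circumvents this by a different decomposition: instead of estimating $\PP_\Lambda(\varphi\nu)(\ox)-\PP_\Lambda(\varphi\nu)(\ox_0)$ directly, it writes
\[
\PP_\Lambda(\varphi\nu)(\cdot)=\int P_\Lambda(\cdot,\oy)\big(\varphi(\oy)-\varphi(\ox)\big)\,d\nu(\oy)+\varphi(\ox)\,\PP_\Lambda\nu(\cdot),
\]
using that $\varphi(\ox)=\varphi(\ox_0)=1$. The second term is bounded at both $\ox$ and $\ox_0$ by Lemma~\ref{lem5.13}, which you never invoke but which is the key missing ingredient. In the first term the factor $|\varphi(\oy)-\varphi(\ox)|\leq \ell(Q)^{-1}|\oy-\ox|$ reduces the near-piece singularity to $|\ox-\oy|^{-(n-1)}$, and then the annulus summation against an $n$-growth measure is genuinely geometric.
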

	\begin{proof}
		Let us fix any $\ox\in Q$ and consider $\ox_0\in \frac{1}{2}Q$ the point obtained in Lemma \ref{lem5.14}. Observe that $\varphi(\ox)=\varphi(\ox_0)=1$. We rewrite $\PP_\Lambda(\varphi\nu)(\cdot)$ as follows
		\begin{align*}
			\PP_\Lambda(\varphi\nu)(\cdot) &= \int_{\mathbb{R}^{n+1}\setminus{4Q}} P_{\Lambda}(\cdot,\oy)(\varphi(\oy)-\varphi(\ox))\dd\nu(\oy)\\
			&\hspace{1cm}+\int_{4Q} P_{\Lambda}(\cdot,\oy)(\varphi(\oy)-\varphi(\ox))\dd\nu(\oy)+\varphi(\ox)\int P_\Lambda(\cdot,\oy)\dd\nu(\oy),
		\end{align*}
		and we apply this decomposition to  $\PP_\Lambda(\varphi\nu)(\ox)$ and $\PP_\Lambda(\varphi\nu)(\ox_0)$. Therefore\medskip\\
		\begin{align*}
			|\PP_{\Lambda}&(\varphi\nu)(\ox)-\PP_{\Lambda}(\varphi\nu)(\ox_0)|\\
			&\leq \bigg\rvert \int_{\mathbb{R}^{n+1}\setminus{4Q}} (P_{\Lambda}(\ox,\oy)-P_{\Lambda}(\ox_0,y))(\varphi(\oy)-\varphi(\ox))\dd\nu(\oy) \bigg\rvert\\
			&\hspace{0.5cm}+\bigg\rvert \int_{4Q} P_{\Lambda}(\ox,\oy)(\varphi(\oy)-\varphi(\ox))\dd\nu(\oy) \bigg\rvert + \bigg\rvert \int_{4Q} P_{\Lambda}(\ox_0,\oy)(\varphi(\oy)-\varphi(\ox))\dd\nu(\oy) \bigg\rvert\\
			&\hspace{0.5cm}+\bigg\rvert \varphi(\ox)\int P_\Lambda(\ox,\oy)\dd\nu(\oy) \bigg\rvert + \bigg\rvert \varphi(\ox)\int P_\Lambda(\ox_0,\oy)\dd\nu(\oy) \bigg\rvert =: I_1+I_2+I_3+I_4+I_5.
		\end{align*}
		Regarding $I_4$, observe that Lemma \ref{lem5.13} implies
		\begin{equation*}
			|\varphi(\ox)|\bigg\rvert \int P_{\Lambda}(\ox,\oy)\dd\nu(\oy) \bigg\rvert = |\pazocal{P}_\Lambda\nu(\ox)|\lesssim 1.
		\end{equation*}
		The same estimate holds for $I_5$. To study $I_2$ observe that
		\begin{equation*}
			\bigg\rvert \int_{4Q} P_{\Lambda}(\ox,\oy)(\varphi(\oy)-\varphi(x))\dd\nu(\oy) \bigg\rvert\leq c_1 \|\nabla\varphi\|_\infty\int_{4Q} \frac{\dd\mu(\oy)}{|\ox-\oy|^{n-1}} \lesssim 1,
		\end{equation*}
		where we have used that $\ox\in Q\subset F\setminus{H}$, so that we are able to integrate over decreasing annuli using the $n$-growth of $\mu$ for cubes centered at $\ox$. The same holds for $I_3$ because $\ox_0\in \frac{1}{2}Q$ and $\varphi(\ox)=\varphi(\ox_0)$, so that $ I_3 \lesssim 1$. Finally, for $I_1$ we apply property \textit{2} of Lemma \ref{lem3.1} and integrate over increasing annuli to deduce
		\begin{align*}
			I_1 \lesssim 2\ell(Q)\|\varphi\|_\infty  \int_{\mathbb{R}^{n+1}\setminus{4Q}}\frac{\dd\mu(\oy)}{|\ox-\oy|^{n+1}}  \lesssim 1.
		\end{align*}
		Therefore, $|\PP_{\Lambda}(\varphi\nu)(\ox)|\leq |\PP_{\Lambda}(\varphi\nu)(\ox)-\PP_{\Lambda}(\varphi\nu)(\ox_0)|+|\PP_{\Lambda}(\varphi\nu)(\ox_0)|\lesssim 1$, and the desired result follows.
	\end{proof}
	The previous lemma implies the main result of this subsection, which concerns a particular weak boundedness property for $\PP_{\Lambda}$. We call it \textit{property 8}, in the sense that it is the additional property needed in Lemma \ref{lem5.3} in order to apply a $Tb$-theorem.
	\begin{cor}[\textit{Property 8}]
		\label{cor5.16}
		Let $Q\subset \mathbb{R}^{n+1}$ be a cube contained in $F\setminus H$ with $M$-thin boundary. Then,
		\begin{equation*}
			|\langle \PP_{\Lambda,\mu}(\chi_Q b), (\chi_Q b) \rangle| \leq c_8 \mu(2Q),
		\end{equation*}
		for some $c_8>0$ constant.
	\end{cor}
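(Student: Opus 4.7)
The plan is to use Lemma~\ref{lem5.15} to reduce the bilinear expression to a manageable collar error, and then to dispatch that error by the $M$-thin boundary hypothesis. Since $b\,d\mu=d\nu$, write
\begin{equation*}
\langle \PP_{\Lambda,\mu}(\chi_Q b),\chi_Q b\rangle=\int_Q \PP_\Lambda(\chi_Q\nu)(\ox)\,b(\ox)\,d\mu(\ox),
\end{equation*}
and pick a cutoff $\varphi\in \pazocal{C}^\infty_c(2Q)$ with $\varphi\equiv 1$ on $Q$, $0\leq \varphi\leq 1$ and $\|\nabla\varphi\|_\infty\leq \ell(Q)^{-1}$, i.e.\ precisely the setup of Lemma~\ref{lem5.15}. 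The splitting $\chi_Q\nu=\varphi\nu-(\varphi-\chi_Q)\nu$ then decomposes the pairing into a \emph{main term} $\int_Q \PP_\Lambda(\varphi\nu)\,b\,d\mu$ and a \emph{collar term} $\int_Q \PP_\Lambda((\varphi-\chi_Q)\nu)\,b\,d\mu$, where $\varphi-\chi_Q$ is supported in $2Q\setminus Q$.

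For the main term, the assumption $Q\subset F\setminus H$ guarantees the $n$-growth of $\mu$ around each $\ox\in Q$ required in Lemma~\ref{lem5.15} (property 6 of Lemma~\ref{lem5.3}), so that $|\PP_\Lambda(\varphi\nu)(\ox)|\leq c_0$ for every $\ox\in Q$. Multiplying by $|b|\leq c_1$ and integrating gives a contribution of size $\lesssim \mu(Q)\leq \mu(2Q)$, which is admissible.

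For the collar term, property 1 of Lemma~\ref{lem3.1} gives $|P_\Lambda(\ox,\oy)|\leq |\ox-\oy|^{-n}$, and together with $|b|\leq c_1$ reduces the task to bounding
\begin{equation*}
\int_Q\int_{2Q\setminus Q}\frac{d\mu(\oy)\,d\mu(\ox)}{|\ox-\oy|^n}.
\end{equation*}
For fixed $\ox\in Q$, set $r(\ox):=\mathrm{dist}(\ox,\partial Q)$; since every $\oy\in 2Q\setminus Q$ satisfies $|\ox-\oy|\geq r(\ox)$, integration over dyadic annuli $\{2^k r(\ox)\leq |\ox-\oy|<2^{k+1}r(\ox)\}$ combined with $\mu(B_\rho(\ox))\leq L\rho^n$ produces the pointwise estimate $\int_{2Q\setminus Q}|\ox-\oy|^{-n}\,d\mu(\oy)\lesssim L\log(\ell(Q)/r(\ox))$. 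To absorb the logarithm I would stratify $Q$ as $E_k:=\{\ox\in Q:2^{-k-1}\ell(Q)<r(\ox)\leq 2^{-k}\ell(Q)\}$; the $M$-thin boundary assumption then gives $\mu(E_k)\leq M(2^{-k}\ell(Q))^n$, and hence
\begin{equation*}
\int_Q L\log(\ell(Q)/r(\ox))\,d\mu(\ox)\lesssim LM\ell(Q)^n\sum_{k\geq 0}k\,2^{-kn}\lesssim \mu(2Q),
\end{equation*}
where in the last step the constant $c_8$ is allowed to depend on $L$ and $M$, so that $LM\ell(Q)^n$ is absorbed into $\mu(2Q)$ in the form it is used in the subsequent $Tb$-argument.

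The principal technical obstacle is precisely this sharp balance between the logarithmic blow-up of $\int_{2Q\setminus Q}|\ox-\oy|^{-n}\,d\mu(\oy)$ as $\ox$ approaches $\partial Q$ and the polynomial decay $\mu(E_k)\lesssim (2^{-k}\ell(Q))^n$ provided by the $M$-thin boundary: the summand $k\,2^{-kn}$ is only just summable, so the quantitative $M$-thin boundary (rather than merely $\mu(\partial Q)=0$) is essential, and the argument must carefully ensure that both integrations use the $n$-growth of $\mu$ at points of $Q\subset F\setminus H$ and not at points of $2Q\setminus Q$, which may meet $H$.
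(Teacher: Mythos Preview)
Your decomposition into a main term and a collar term via the cutoff $\varphi$, and your handling of the main term through Lemma~\ref{lem5.15}, coincide with the paper's proof. The gap is in the collar term. Your dyadic-annulus computation is correct and yields
\begin{equation*}
\int_Q\int_{2Q\setminus Q}\frac{\dd\mu(\oy)\,\dd\mu(\ox)}{|\ox-\oy|^n}\;\lesssim\; LM\,\ell(Q)^n,
\end{equation*}
but this does \emph{not} imply the bound $c_8\,\mu(2Q)$ that the statement asks for. In the non-homogeneous setting one only has the reverse inequality $\mu(2Q)\lesssim L\,\ell(Q)^n$, so your closing ``absorption'' of $LM\ell(Q)^n$ into $\mu(2Q)$ is unjustified and in general false: if $\mu(2Q)\ll\ell(Q)^n$ your estimate is strictly worse than required. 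Refining the stratification by also using $\sum_k\mu(E_k)\le\mu(Q)$ does not rescue this; the best one obtains is of order $\log\!\big(\ell(Q)^n/\mu(Q)\big)\cdot\mu(Q)$, which still carries an unbounded factor.

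The paper resolves the collar term by a different mechanism. Setting $\varphi_1:=(\varphi-\chi_Q)b$ (supported in $\overline{2Q}\setminus Q$) and $\varphi_2:=\chi_Q b$ (supported in $\overline{Q}$), it appeals to \cite[Lemma~5.23]{To3}, an $L^2$ Schur-type estimate for a CZ bilinear form whose two arguments are supported on opposite sides of an $M$-thin boundary (the Schur weight being $d(\ox)^{-n/2}$ with $d(\ox)=\text{dist}(\ox,\partial Q)$). This gives
\begin{equation*}
\big|\langle \PP_{\Lambda,\mu}\varphi_1,\varphi_2\rangle\big|\;\lesssim\;\|\varphi_1\|_{L^2(\mu)}\|\varphi_2\|_{L^2(\mu)}\;\le\;c_1^2\,\mu(2Q\setminus Q)^{1/2}\mu(Q)^{1/2}\;\le\;c_1^2\,\mu(2Q),
\end{equation*}
which is exactly the $\mu$-scale required by the corollary. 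The point is that an $L^2\times L^2$ bound automatically lands on $\mu$-masses, whereas your pointwise $\log$-blowup estimate, once integrated, can only see $\ell(Q)^n$.
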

	\begin{proof}
		Fix a cube $Q\subset F\setminus{H}$. We shall assume that $Q$ is open, since the involved measures $\mu$ and $\nu$ are null on sets of zero Lebesgue measure. Observe that since the center of $Q$ does not belong to $H$, in particular we have $\mu(2Q)\lesssim L\ell(2Q)^n$. Take a test function $\varphi$ with $0\leq \varphi \leq 1$, $\|\nabla \varphi\|_\infty \leq \ell(Q)^{-1}$ and such that $\varphi \equiv 1$ on $Q$ and $\varphi \equiv 0$ on $(2Q)^c$. Then,
		\begin{align*}
			|\langle \PP_{\Lambda,\mu}(\chi_Q b), (\chi_Q b) \rangle| &\leq |\langle \PP_{\Lambda,\mu}(\varphi b), (\chi_Q b) \rangle| + |\langle \PP_{\Lambda,\mu}((\varphi-\chi_Q )b), (\chi_Q b) \rangle| =: A+B.
		\end{align*}
		To estimate $A$ we observe that $\PP_{\Lambda,\mu}(\varphi b) = \PP_{\Lambda}(\varphi\nu)$ and apply directly Lemma \ref{lem5.15} to deduce $A\leq c_1\int_Q|\PP_{\Lambda}(\varphi\nu)(\ox)|\dd\mu(\ox) \lesssim \mu(Q).$\medskip\\
		To estimate $B$, set $\varphi_1:=(\varphi-\chi_Q)b$ and $\varphi_2:=\chi_Qb$. Observe that $\text{supp}(\varphi_1)\subset \overline{2Q}\setminus{Q}\subset \mathbb{R}^{n+1}\setminus{Q}$ and $\text{supp}(\varphi_2)\subset \overline{Q}$. We now apply \cite[Lemma 5.23]{To3} with $\Omega_1:=Q$ and $\Omega_2:=\mathbb{R}^{n+1}\setminus{\overline{\Omega_1}}$. The proof of the previous result is almost identical in our context: one just needs to change the function $d(\ox)^{-1/2}$ appearing in the previous reference by $d(\ox)^{-n/2}$. In any case, we deduce
		\begin{equation*}
			B\lesssim \|\varphi_1\|_{L^2(\mu)}\,\|\varphi_2\|_{L^2(\mu)} \leq c_1 \,\mu\big(\overline{2Q}\setminus{Q}\big)^{1/2}\mu\big(\overline{Q}\big)^{1/2}\leq c_1\,\mu(2Q),
		\end{equation*}
		and we are done.
	\end{proof}
	
	The previous corollary suffices to prove an analogous $Tb$-theorem to that of \cite[Ch.5]{To3}, since the weak boundedness property needs only to be applied to cubes contained in parallelepipeds $\Delta:=P\cap S$ with $P,S$ cubes having $M$-thin boundary (and all their dyadic children too) that belong to $\DD_1:=\DD(\ow_1)$ and $\DD_2:=\DD(\ow_2)$ respectively for some $\ow_1,\ow_2\in \Omega$. Moreover, $P\subset F\setminus{(H_{\DD_1} \cup T_{\DD_1})}$ and $S\subset F\setminus{(H_{\DD_2} \cup T_{\DD_2})}$. Therefore, since in particular $H\subset H_{\DD_1}\cap H_{\DD_2}$, the cubes contained in $\Delta$ do not intersect $H$. Hence, Corollary \ref{cor5.16} can be applied to such cubes, yielding the following result:
	
	\begin{thm}
		\label{thm5.17}
		Let $\mu$ be a positive finite Borel measure on $\mathbb{R}^{n+1}$ supported on a compact set $F\subset \mathbb{R}^{n+1}$. Assume there is a finite measure $\nu$ and, for each $\ow \in \Omega$, two subsets $H_{\DD(\ow)}, T_{\DD(\ow)} \subset \mathbb{R}^{n+1}$ consisting of dyadic cubes in $\DD(\ow)$ so that:
		\begin{enumerate}[leftmargin=*,itemsep=0.1cm]
			\item[\textit{1}.] $ \nu=b\mu$ for some $b$ with $\|b\|_\infty \leq c_b$.
			\item[\textit{2}.] Every ball satisfying $\mu(B_r)>Lr^n$ is contained in $\bigcap_{\ow\in \Omega} H_{\DD(\ow)}$.
			\item[\textit{3}.] If $Q\in \DD(\ow)$ is such that $Q\not\subset T_{\DD(\ow)}$, then $\mu(Q)\leq c_T|\nu(Q)|$.
			\item[\textit{4}.] $\mu\big( H_{\DD(\ow)}\cup T_{\DD(\ow)} \big) \leq \delta_0\mu(F)$, for some $\delta_0\in(0,1)$ .
			\item[\textit{5}.] If $\PP_{\nu,\ast}$ and $\PP^\ast_{\nu,\ast}$ denote the maximal convolution operators associated and $P$ and $P^\ast$ respectively with respect to $\nu$, we have for each $\ow\in \Omega$,
			\begin{equation*}
				\int_{F\setminus{H_{\DD(\ow)}}}\PP_{\ast}\nu(\oy)\dd\mu(\oy)\leq c_\ast \mu(F), \qquad \int_{F\setminus{H_{\DD(\ow)}}}\PP^\ast_{\ast}\nu(\oy)\dd\mu(\oy)\leq c_\ast\mu(F).
			\end{equation*}
			\item[6.] Let $Q\subset \mathbb{R}^{n+1}$ be a cube contained in $F\setminus{\bigcap_{\ow\in \Omega} H_{\DD(\ow)}}$ with $M$-thin boundary, and $\Lambda:\mathbb{R}^{n+1}\to[0,\infty)$ a 1-Lipschitz function satisfying
			\begin{equation*}
				\Lambda(\ox) \geq \text{\normalfont{dist}}\big(\ox, \mathbb{R}^{n+1}\setminus{\big( H_{\DD(\ow)}\cup \pSS \big)} \big), \qquad\forall \ox\in \mathbb{R}^{n+1},
			\end{equation*}
			where $\pSS$ is the exceptional set defined in \eqref{eq5.13}. Then,
			\begin{equation*}
				|\langle \PP_{\Lambda,\mu}(\chi_Q b), (\chi_Q b) \rangle| \leq c_W \mu(2Q),
			\end{equation*}
			where $\PP_\Lambda$ is the operator associated to the suppressed kernel $P_{\Lambda}$ defined in \eqref{eq3.1}.
		\end{enumerate}
		Then, there is $G\subset F\setminus{ \bigcap_{\ow\in \mathbb{R}^{n+1}} \big( H_{\DD(\ow)} \cup T_{\DD(\ow)} \big) }$ compact, and positive constants $A_1, A_2$ and $A_3$ so that
		\begin{enumerate}[leftmargin=*,itemsep=0.1cm]
			\item[\textit{i}.] $\mu(F)\leq A_1 \mu(G)$,
			\item[\textit{ii}.] $\mu|_G(B_r(\ox))\leq A_2r^n$, for every ball $B_r(\ox)$,
			\item[\textit{iii}.] $\|\PP_{\mu|_G}\|_{L^2(\mu|_G)\to L^2(\mu|_G)}\leq A_3$. 
		\end{enumerate}
		The letters $c_b, L, c_T, \delta_0, c_\ast, c_W$ and $M$ denote constants.
	\end{thm}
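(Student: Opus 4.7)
The plan is to follow the non-homogeneous $Tb$-theorem of Nazarov--Treil--Volberg \cite{NTVo1} and Tolsa \cite[Ch.~5]{To3}, incorporating the modifications from \cite{NTVo2} and \cite{HyMar} that allow one to dispense with the anti-symmetry of the kernel. Hypotheses (1)--(5) match verbatim the setup of \cite[Theorem~5.1]{To3} after replacing the Cauchy transform by $\PP$; the essential novelty is that hypothesis (6) must play the role of the cancellation $K(\ox-\oy)+K(\oy-\ox)\equiv 0$ enjoyed by the Cauchy and Riesz kernels in the aforementioned references.

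First I would pick two independent random dyadic lattices $\DD_1:=\DD(\ow_1)$ and $\DD_2:=\DD(\ow_2)$ with $(\ow_1,\ow_2)\in\Omega\times\Omega$, and work on the random ``good pool'' $F\setminus W(\ow_1,\ow_2)$, where $W(\ow_1,\ow_2):=\bigcup_{j=1,2}(H_{\DD_j}\cup T_{\DD_j})$. By (4) and Fubini, the expectation of $\mu(F\setminus W)$ is at least $(1-2\delta_0)\mu(F)$, so there is a deterministic pair for which the pool has $\mu$-measure comparable to $\mu(F)$. On this pool one expands $f,g\in L^2(\mu)$ in the $b$-adapted Haar-type systems associated to $\DD_1$ and $\DD_2$, reducing the boundedness of $\PP_{\mu|_{F\setminus W}}$ to estimating the pairings $\langle\PP_\mu(\Delta_Pf),\Delta_Sg\rangle$ with $P\in\DD_1^{\text{tr}}$, $S\in\DD_2^{\text{tr}}$ transit cubes. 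These split into the three standard regimes: \emph{good long-range} interactions, dispatched by the smoothness of $P$ (Lemma~\ref{lem3.1}) together with the Carleson-type bound (5); \emph{bad} interactions (a small cube lying near the boundary of a much larger one from the opposite lattice), whose contribution is made negligible in expectation by a probability estimate on the goodness parameter; and the \emph{diagonal} regime, supported on parallelepipeds $\Delta=P\cap S$ whose defining cubes and their dyadic children have $M$-thin boundaries.

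The diagonal regime is the main obstacle: for an anti-symmetric kernel the pairing $\langle\PP_{\Lambda,\mu}(\chi_\Delta b),\chi_\Delta b\rangle$ vanishes identically, but in our setting it does not. Following the prescription of \cite[\S 10.2]{NTVo2} and \cite[\S 9]{HyMar}, this is remedied by invoking (6) in its place. Concretely, one chooses the 1-Lipschitz function $\Lambda(\ox):=\text{dist}(\ox,\mathbb{R}^{n+1}\setminus(H_{\DD(\ow_1)}\cup\pSS))$; Lemma~\ref{lem5.13} then provides $\|\PP_\Lambda\nu\|_{L^\infty(\mu)},\|\PP_\Lambda^\ast\nu\|_{L^\infty(\mu)}\lesssim 1$ uniformly in the truncation parameter. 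Subdividing $\Delta$ into finitely many cubes $Q$ with $M$-thin boundary (a standard random-translation argument places them inside $F\setminus H$), hypothesis (6) bounds each diagonal contribution by $\mu(2Q)$, and summing over the bounded-overlap family closes the estimate. Combining the three regimes yields the desired operator bound on $F\setminus W$, and a final extraction step in the spirit of Remarks~\ref{rem2.2} and~\ref{rem3.2} produces a compact $G\subset F\setminus W$ satisfying (i)--(iii), with (ii) furnished by hypothesis (2).
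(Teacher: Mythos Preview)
Your outline is correct and follows the same approach as the paper's Remark~\ref{rem5.3}: adapt the non-homogeneous $Tb$-theorem of \cite[Ch.~5]{To3} and \cite{Vo}, using hypothesis~(6) together with Lemma~\ref{lem5.13} to replace the vanishing diagonal pairing $\langle\PP_{\Lambda,\mu}(\chi_\Delta b),\chi_\Delta b\rangle$ that anti-symmetry would otherwise provide. Two small corrections to the details: the 1-Lipschitz function $\Lambda$ must dominate $\max\{\Lambda_{\DD_1},\Lambda_{\DD_2}\}$ (both lattices, not just $\DD(\ow_1)$, cf.\ Lemma~\ref{lem5.18}), and the total exceptional set $W_{\DD(\ow)}$ must already include $\pSS$ so that \eqref{eq5.14} holds and the final probabilistic extraction of \cite[\S 5.11]{To3} (rather than the weak-type argument of Remarks~\ref{rem2.2}--\ref{rem3.2}) goes through.
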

	
	\begin{rem}
		\label{rem5.3}
		Let us give some details on how to prove Theorem \ref{thm5.17}, although the arguments to follow are just those given in the proofs of \cite[Theorem 5.1]{To3} or \cite[Theorem 7.1]{Vo}, using, essentially, the weak boundedness property instead of the anti-symmetry of the Cauchy and Riesz kernels. Let us recall that we wrote, for $N\geq 4$ integer,
		\begin{equation*}
			\Omega:=\big[-2^{N-1},2^{N-1}\big)^{n+1}.
		\end{equation*}
		Fix $\ow\in \Omega$. Recall $F\subset Q^0(\ow):=\ow+[-2^N,2^N)^{n+1}$. A dyadic cube $Q\in Q^0(\ow)$ with $\mu(Q)\neq 0$ is called \textit{terminal} if $Q\subset H_{\DD(\ow)}\cup T_{\DD(\ow)}$ and we write $Q\in \DD^{\text{term}}(\ow)$. Otherwise is called \textit{transit} and we write $Q\in \DD^{\text{tr}}(\ow)$. With this, one considers a \textit{martingale decomposition} of a function $f\in L^1_{\text{loc}}(\mu)$ in terms of $Q^0(\ow)$. For any cube $Q\subset \mathbb{R}^{n+1}$ with $\mu(Q)\neq 0$ one sets
		\begin{equation*}
			\langle f \rangle_Q:=\frac{1}{\mu(Q)}\int_Q f\dd\mu
		\end{equation*}
		and defines the operator
		\begin{equation*}
			\Xi f := \frac{\langle f \rangle_{Q^0(\ow)}}{\langle b \rangle_{Q^0(\ow)}}b.
		\end{equation*}
		It is clear that $\Xi f\in L^2(\mu)$ if $f\in L^2(\mu)$ and $\Xi^2 = \Xi$. Moreover, the definition of $\Xi$ does not depend on the choice of $\ow\in \Omega$. 
        
		If $Q\in \DD(\ow)$, the set of at most $2^{n+1}$ dyadic children of $Q$ whose $\mu$-measure is not null is denoted by $\pazocal{CH}(Q)$. For any $Q\in \DD^{\text{tr}}(\ow)$ and $f\in  L^1_{\text{loc}}(\mu)$ we define the function $\Delta_Q f$ as
		\begin{align*}
			\Delta_Q f := 
			\begin{cases} 
				\hspace{1.07cm} 0 & \text{in } \, \mathbb{R}^{n+1}\setminus{\bigcup_{R\in \pazocal{CH}(Q)}R}, \\
				\Big( \frac{\langle f \rangle_R}{\langle b \rangle_R}-\frac{\langle f \rangle_Q}{\langle b \rangle_Q} \Big)b & \text{in }\, R \,\text{ if }\, R\in \pazocal{CH}(Q)\cap \DD^{\text{tr}}(\ow), \\
				\hspace{0.5cm} f-\frac{\langle f \rangle_Q}{\langle b \rangle_Q}b & \text{in }\, R \,\text{ if }\, R\in \pazocal{CH}(Q)\cap \DD^{\text{term}}(\ow). 
			\end{cases}
		\end{align*}
		The fundamental properties of the operators $\Xi$ and $\Delta_Q$ are proved in \cite[Lemmas 5.10, 5.11]{To3} and, essentially, allow to decompose $f\in L^2(\mu)$ as
		\begin{equation*}
			f = \Xi f + \sum_{Q\in \DD^{\text{tr}}} \Delta_Q f,
		\end{equation*}
		where the sum is unconditionally convergent in $L^2(\mu)$ and, in addition,
		\begin{equation*}
			\|f\|_{L^2(\mu)}^2 \approx \|\Xi f\|^2_{L^2(\mu)}+\sum_{Q\in \DD^{\text{tr}}} \|\Delta_Q f\|_{L^2(\mu)}^2.
		\end{equation*}
		At this point, one of the fundamental steps of the proof of the $Tb$-theorem consists in using the above decomposition to estimate the $L^2(\mu)$ norm of the suppressed operator $\pazocal{P}_\Lambda$ when applied to the so called \textit{good functions}. To define them, we need to introduce first \textit{good} and \textit{bad} cubes. Let $\ow_1,\ow_2\in \Omega$ and consider $\DD_1:=\DD(\ow_1)$, $\DD_2:=\DD(\ow_2)$ two dyadic lattices. We consider as in \cite[Definition 6.2]{NTVo2} the parameter
		\begin{equation*}
			\alpha := \frac{1}{2(n+1)},
		\end{equation*}
		and we will say that $Q\in \DD_1^{\text{tr}}$ is \textit{bad with respect to} $\DD_2$ if either
		\begin{enumerate}[leftmargin=*,itemsep=0.1cm]
			\item[\textit{1}.] there is $R\in \DD^{\text{tr}}_2$ such that $\text{dist}(Q,\partial R)\leq \ell(Q)^{\alpha}\ell(R)^{1-\alpha}$ and $\ell(R)\geq 2^m\ell(Q)$, for some positive integer $m$ to be fixed later, or
			\item[\textit{2}.] there is $R\in \DD^{\text{tr}}_2$ such that $2^{-m}\ell(Q)\leq \ell(R) \leq 2^m \ell(Q)$, $\text{dist}(Q,R)\leq 2^m\ell(Q)$ and, at least, one of the children of $R$ does not have $M$-thin boundary.
		\end{enumerate}
		If $Q$ is not bad, then we say that it is \textit{good with respect to} $\DD_2$. An important property regarding bad cubes is that they do not appear very often in dyadic lattices. More precisely, given $\varepsilon_b>0$ arbitrarily small, if $m$ and $M$ are chosen big enough, then for each fixed $Q\in \DD_1$, the probability that it is bad with respect to $\DD_2$ is not larger than $\varepsilon_b$. That is,
		\begin{equation*}
			\mathbb{P}\big(\big\{ \ow_2\in \Omega\,:\, Q\in \DD_1 \text{ is bad with respect to } \DD(\ow_2) \big\}\big)\leq \varepsilon_b.
		\end{equation*}
		In light of the above notions, we say that a function $f\in L^2(\mu)$ is $\DD_1$\textit{-good with respect to} $\DD_2$ if $\Delta_Q f = 0$ for all bad cubes $Q\in \DD^{\text{tr}}_1$ (with respect to $\DD_2$). Now one proceeds as follows: define the $1$-Lipschitz function
		\begin{equation*}
			\Lambda_{\DD(\ow)}(\ox):=\text{dist}\big(\ox, \mathbb{R}^{n+1}\setminus{W_{\DD(\ow)}}\big), \qquad \ow\in \Omega.
		\end{equation*}
		It satisfies $\Lambda_{\DD(\ow)}(\ox)\geq \text{dist}\big(\ox, \mathbb{R}^{n+1}\setminus\big( H_{\DD(\ow)}\cup \pazocal{S} \big)  \big)$, so that Lemma \ref{lem5.13} can be applied to $\Lambda_{\DD(\ow)}$, and then we have:
		\begin{lem} $($\cite[Lemma 5.13]{To3}$)$
			\label{lem5.18}
			Let $\DD_1=\DD(\ow_1)$ and $\DD_2=\DD(\ow_2)$ with $\ow_1,\ow_2\in \Omega$. Given $\varepsilon>0$, let $\Lambda:\mathbb{R}^{n+1}\to [\varepsilon, \infty)$ be a 1-Lipschitz function such that
			\begin{equation*}
				\Lambda(\ox) \geq \max\big\{ \Lambda_{\DD_1}(\ox), \Lambda_{\DD_2}(\ox) \big\}, \qquad \forall \ox\in \mathbb{R}^{n+1}.
			\end{equation*}
			Then, if $f\in L^2(\mu)$ is $\DD_1$-good with respect to $\DD_2$ and $g\in L^2(\mu)$ is $\DD_2$-good with respect to $\DD_1$,
			\begin{equation*}
				|\langle \pazocal{P}_{\Lambda}(f\mu), g \rangle| \lesssim \|f\|_{L^2(\mu)}\|g\|_{L^2(\mu)},
			\end{equation*}
			where the implicit constant depends on $c_b,L,c_T,\delta_0,c_\ast,c_W$ and $\varepsilon_b$, but not on $\varepsilon$.
		\end{lem}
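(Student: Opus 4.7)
The plan is to follow the template of the non-homogeneous $Tb$-argument in \cite[Lemma 5.13]{To3} (and its Riesz-kernel analog in \cite{Vo}), adapting it to compensate for the failure of anti-symmetry of $P$. I begin by using the martingale decomposition of Remark \ref{rem5.3}: since $f$ is $\DD_1$-good and $g$ is $\DD_2$-good, only good transit cubes survive, so
\begin{equation*}
f = \Xi_1 f + \sum_{Q\in \DD_1^{\text{tr}},\,\text{good}} \Delta_Q f, \qquad g = \Xi_2 g + \sum_{R\in \DD_2^{\text{tr}},\,\text{good}} \Delta_R g,
\end{equation*}
with both sums unconditionally convergent in $L^2(\mu)$ and satisfying the near-orthogonality
$\|f\|_{L^2(\mu)}^2\approx\|\Xi_1 f\|_{L^2(\mu)}^2+\sum_Q\|\Delta_Q f\|_{L^2(\mu)}^2$, analogously for $g$. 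Expanding $\langle \pazocal{P}_\Lambda(f\mu),g\rangle$ into four pieces, the pieces containing $\Xi_1 f$ or $\Xi_2 g$ are controlled as in \cite[Ch.5]{To3}, since $\Xi_i$ essentially produces multiples of $b$ and the $L^\infty(\mu)$ bounds $\|\pazocal{P}_{\Lambda,\ast}\nu\|_{L^\infty(\mu)},\|\pazocal{P}^\ast_{\Lambda,\ast}\nu\|_{L^\infty(\mu)}\lesssim 1$ from Lemma \ref{lem5.13} apply directly.

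The main effort is therefore the double sum $\sum_{Q,R}\langle \pazocal{P}_\Lambda(\Delta_Q f\,\mu),\Delta_R g\rangle$, which I would split according to the relative sizes and positions of $(Q,R)$. For pairs with $\ell(R)\geq 2^m\ell(Q)$ (or symmetrically) that are well separated in the sense $\operatorname{dist}(Q,\partial R)>\ell(Q)^{\alpha}\ell(R)^{1-\alpha}$, the goodness of the smaller cube places it safely inside a single child of the larger, and the standard paraproduct-type estimate using the Calderón-Zygmund regularity of $P_\Lambda$ from Lemma \ref{lem3.1} yields geometric decay in $m$. For pairs with comparable sizes but well separated, item \textit{2} of Lemma \ref{lem3.1} and the cancellation $\int \Delta_Q f\,\dd\mu=0$ modulo $b$-defects (whose control uses $\|b\|_\infty\lesssim 1$) again produce summable decay. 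All these off-diagonal estimates are summed via the Schur test against $\|f\|_{L^2(\mu)}\|g\|_{L^2(\mu)}$, exactly as in \cite[\S5.6--5.10]{To3}.

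The hard part is the diagonal contribution, coming from pairs with $\ell(Q)\approx\ell(R)$ lying close to each other. In both \cite[Ch.5]{To3} and \cite{Vo} these terms vanish identically because the Cauchy and Riesz kernels are anti-symmetric, so $\langle \pazocal{P}_{\Lambda,\mu}(\chi_S b),\chi_S b\rangle\equiv 0$ on any cube $S$; in our setting $P_\Lambda$ has no such cancellation. The substitute is Corollary \ref{cor5.16}: for any cube $S\subset F\setminus H$ with $M$-thin boundary one has $|\langle \pazocal{P}_{\Lambda,\mu}(\chi_S b),\chi_S b\rangle|\lesssim \mu(2S)$. To invoke this, I would group the diagonal pairs $(Q,R)$ according to a common ancestor $S$ that contains both and whose children have $M$-thin boundary; the goodness of $Q$ and $R$ with respect to the other lattice, together with the definitions of $\DD_i^{\text{tr}}$ and the inclusion $H\subset H_{\DD_1}\cap H_{\DD_2}$, guarantees such an $S$ exists inside $F\setminus H$. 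Each $\langle\cdot,\cdot\rangle$ is then bounded by $\mu(2S)$ via Corollary \ref{cor5.16}, and the bounded overlap of the family $\{2S\}$ permits one to sum these contributions by $\mu(F)\lesssim \|f\|_{L^2(\mu)}\|g\|_{L^2(\mu)}$ using Cauchy--Schwarz with the energy estimates for $\sum\|\Delta_Q f\|_{L^2(\mu)}^2$.

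Finally, every estimate above depends only on the absolute Calderón-Zygmund constants of $P_\Lambda$ (which by Lemma \ref{lem3.1} are independent of $\Lambda$, and in particular of the lower bound $\varepsilon$), on the structural constants $c_b,L,c_T,\delta_0,c_\ast,c_W$ appearing in Theorem \ref{thm5.17}, and on the badness probability $\varepsilon_b$; hence the resulting bound $|\langle \pazocal{P}_\Lambda(f\mu),g\rangle|\lesssim \|f\|_{L^2(\mu)}\|g\|_{L^2(\mu)}$ is independent of $\varepsilon$, as claimed.
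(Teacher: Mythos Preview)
Your proposal is correct and follows essentially the same route the paper sketches: martingale decomposition, control of the $\Xi$-terms via Lemma \ref{lem5.13}, off-diagonal estimates via the Calder\'on--Zygmund bounds of Lemma \ref{lem3.1} as in \cite[\S5.6--5.9]{To3}, and the diagonal contribution handled by the weak boundedness property of Corollary \ref{cor5.16} in place of anti-symmetry. One small inaccuracy: since $Q\in\DD_1$ and $R\in\DD_2$ live in different lattices there is no common dyadic ancestor; the actual mechanism (see \cite[\S10.2]{NTVo2}, \cite[\S9]{HyMar}, and the discussion around Corollary \ref{cor5.16}) reduces the near-diagonal piece to terms of the form $\langle \PP_{\Lambda,\mu}(\chi_\Delta b),\chi_\Delta b\rangle$ for parallelepipeds $\Delta=P\cap S$ with $P\in\DD_1^{\text{tr}}$, $S\in\DD_2^{\text{tr}}$, and then to cubes with $M$-thin boundary contained in such $\Delta\subset F\setminus H$, to which Corollary \ref{cor5.16} applies directly.
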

		The proof of this result uses the above martingale decomposition of the functions $f,g$, so that one is left to estimate:
		\begin{align*}
			\langle \pazocal{P}_{\Lambda}(f\mu), g \rangle = \langle \pazocal{P}_{\Lambda}(\Xi f\mu), \Xi g \rangle + \langle \pazocal{P}_{\Lambda}(\Xi f\mu), g \rangle &+ \langle \pazocal{P}_{\Lambda}(f\mu), \Xi g \rangle\\
			&+\sum_{Q\in \DD^{\text{tr}}_1, R\in \DD^{\text{tr}}_2} \langle \pazocal{P}_{\Lambda}((\Delta_Q f)\mu), \Delta_R g \rangle.
		\end{align*}
		If our kernel were anti-symmetric, the first term of the right-hand side would be null. Although this is not our case, it can still be estimated as follows (notice that the weak boundedness property will not be used in the arguments below):
		\begin{equation*}
			\langle \pazocal{P}_{\Lambda}(\Xi f\mu), \Xi g \rangle \leq \| \pazocal{P}_{\Lambda}(\Xi f\mu)\|_{L^2(\mu)}\|\Xi g\|_{L^2(\mu)}.
		\end{equation*}
		Observe that $\text{supp}(\mu)\subset F \subset Q^0(\ow_1)\cap Q^0(\ow_2)$ and by definition
		\begin{equation*}
			\Xi g := \frac{\langle g \rangle_{Q^0(\ow_2)}}{\langle b \rangle_{Q^0(\ow_2)}}b.
		\end{equation*}
		This implies, since $Q^0(\ow_2)$ is always a transit cube (this is easy to see just arguing by contradiction and using assumption \textit{4} in Theorem \ref{thm5.17}),
		\begin{align*}
			\|\Xi g\|_{L^2(\mu)} &\leq \frac{\mu(Q^0(\ow_2))^{1/2}}{|\nu(Q^0(\ow_2))|}\bigg( \int_F |b|^2\dd\mu \bigg)^{1/2}\|g\|_{L^2(\mu)}\\
			&\leq c_T^{1/2}\bigg(  \frac{1}{|\nu(Q^0(\ow_2))|}\int_F|b|^2\dd\mu \bigg)^{1/2}\|g\|_{L^2(\mu)}\leq c_1c_T\|g\|_{L^2(\mu)}.
		\end{align*}
		Moreover, by Lemma \ref{lem5.13} and using that $Q^0(\ow_1)$ is a transit cube, we deduce
		\begin{align*}
			\| \PP_{\Lambda}(\Xi f\mu)\|_{L^2(\mu)} &= \frac{|\langle f \rangle_{Q^0(\ow_1)}|}{|\langle b \rangle_{Q^0(\ow_1)}|} \|\PP_{\Lambda}(b\mu)\|_{L^2(\mu)} \leq c_{\Lambda}\frac{|\langle f \rangle_{Q^0(\ow_1)}|}{|\langle b \rangle_{Q^0(\ow_1)}|} \mu(Q^0(\ow_1))^{1/2}\\
			&\leq c_{\Lambda}c_T |\langle f \rangle_{Q^0(\ow_1)}|\mu(Q^0(\ow_1))^{1/2}\leq c_{\Lambda}c_T\|f\|_{L^2(\mu)}.
		\end{align*}
		Hence
		\begin{equation*}
			|\langle \pazocal{P}_{\Lambda}(\Xi f\mu), \Xi g \rangle|\lesssim \|f\|_{L^2(\mu)}\|g\|_{L^2(\mu)}.
		\end{equation*}
		The terms $\langle \pazocal{P}_{\Lambda}(\Xi f\mu), g \rangle$ and $\langle \pazocal{P}_{\Lambda}(f\mu), \Xi g \rangle$ can be estimated similarly (see \cite[p. 155]{To3}), and it is important to notice that to do so one uses Lemma \ref{lem5.13} for the adjoint suppressed operator $\PP^\ast_{\Lambda}$. Hence, the only term left is
		\begin{equation*}
			\sum_{Q\in \DD^{\text{tr}}_1, R\in \DD^{\text{tr}}_2} \langle \pazocal{P}_{\Lambda}((\Delta_Q f)\mu), \Delta_R g \rangle.
		\end{equation*}
		The above sum is studied in \cite[\textsection 5.6, \textsection 5.7, \textsection 5.8 \& \textsection 5.9]{To3}, and the arguments can be followed analogously up to \cite[\textsection 5.9]{To3}. Obviously, there are changes that need to be done regarding the dimensionality. 
		Such modifications were already done in a general multidimensional setting in the study carried out in \cite{Vo} for Riesz kernels. In any case, it is in \cite[\textsection 5.9]{To3} where the weak boundedness property needs to be invoked in order to deal with expressions of the form
		\begin{equation*}
			\langle \PP_{\Lambda,\mu}(\chi_\Delta b), (\chi_\Delta b) \rangle,
		\end{equation*}
		where $\Delta$ is a certain parallelepiped introduced at the beginning of Subsection \ref{subsec5.4}. Such expressions were already tackled in \cite{NTVo2} and \cite{HyMar}, and they can be deduced from the estimate
		\begin{equation*}
			|\langle \PP_{\Lambda,\mu}(\chi_Q b), (\chi_Q b) \rangle| \lesssim \mu(2 Q),
		\end{equation*}
		where in our setting we may assume $Q$ to be contained in $F\setminus{H}$ and with $M$-thin boundary. This precise bound is covered by assumption \textit{6} in the statement of Theorem \ref{thm5.17}. From this point on, the rest of the argument can be followed as in the remaining sections of \cite[Ch.5]{To3}. Let us also remark that the proof can be followed almost identically (apart from some dimensional changes) taking into account that we have constructed \textit{unique} measures $\mu$ and $\nu$ so that assumption \textit{5} holds for the operators $\pazocal{P}$ and $\pazocal{P}^\ast$ \textit{simultaneously}. This enables to obtain relation \eqref{eq5.14}, which is essential to carry out the final probabilistic arguments in the proof of the $Tb$-theorem found in \cite[Ch.5]{To3} or \cite{Vo}.
	\end{rem}
	
	In any case, in light of Theorem \ref{thm5.17} and also bearing in mind the argument of \eqref{eq5.4} and that assumptions $\Aone$ and $\Atwo$ are superfluous, we have proved the following:
	\begin{thm}
		\label{thm5.19}
		Let $E\subset \mathbb{R}^{n+1}$ be a compact set satisfying $\Athree$. Then,
		\begin{equation*}
			\widetilde{\gamma}(E)\approx \widetilde{\gamma}_+(E).
		\end{equation*}
	\end{thm}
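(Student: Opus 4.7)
The plan is to reduce to the setting where $E$ is a finite union of dyadic cubes in the unit ball (assumptions $\Aone$ and $\Atwo$), since the argument already outlined after the statement of $\Atwo$ shows that the general case follows by an outer regularity argument against shrinking $\delta$-neighbourhoods of $E$ combined with Theorem \ref{thm3.5}. So assume $E\subset B_1(0)\subset 2Q_0$ satisfies $\Aone$, $\Atwo$, and the accretivity assumption $\Athree$, and fix a distribution $T_0$ admissible for $\widetilde{\gamma}(E)$ with $|\langle T_0,1\rangle|=\widetilde{\gamma}(E)/2$.

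First I would apply Theorem \ref{thm4.1} to obtain the covering $\FF=\cup \QQ_i$ and work with the smaller family $F=\cup Q_i=\cup \tfrac12\QQ_i\supset E$, which inherits the analogous properties $\Ptwo'$--$\Pfive'$. Then I would build the measures
\[
\mu=\sum_{i=1}^N \frac{r_i^n}{\pazocal{L}^{n+1}(B_i)}\pazocal{L}^{n+1}|_{B_i},\qquad \nu=\sum_{i=1}^N \frac{\langle T_0,\varphi_i\rangle}{\pazocal{L}^{n+1}(B_i)}\pazocal{L}^{n+1}|_{B_i},
\]
where $B_i\subset \tfrac12 Q_i$ has radius $r_i\approx \widetilde{\gamma}(2Q_i\cap E)^{1/n}$ and $\{\varphi_i\}$ is a partition of unity on $F$ subordinate to $\{2Q_i\}$. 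By Lemma \ref{lem5.1} we have $\nu=b\mu$ with $\|b\|_\infty\lesssim 1$, $|\nu(F)|=\widetilde{\gamma}(E)/2$, $|\nu(Q)|\lesssim\ell(Q)^n$ for any cube $Q$, and via $\Athree$ combined with $r_i^n\approx\widetilde{\gamma}(2Q_i\cap E)$ one also gets $\mu(F)\approx\widetilde{\gamma}(E)$.

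Second, I would verify the hypotheses of the $Tb$-theorem (Theorem \ref{thm5.17}). Properties \textit{1}--\textit{4} of Lemma \ref{lem5.3} come from Lemma \ref{lem5.1} and $\Pthree'$; properties \textit{6} and \textit{7} follow by constructing the non-Ahlfors exceptional set $H$ as in \eqref{eq5.6}, with mass control $\sum r_k^n\leq \mu(F)/L$. The substantive step is property \textit{5}, i.e.\ the bounds
\[
\int_{F\setminus H}\PP_\ast\nu\,\dd\mu\lesssim\mu(F),\qquad \int_{F\setminus H}\PP^\ast_\ast\nu\,\dd\mu\lesssim\mu(F),
\]
which are the content of Lemma \ref{lem5.9}. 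These rely on the key comparison Lemmas \ref{lem5.7}--\ref{lem5.8}, which replace each $\varphi_iT_0$ by its smeared version $\widetilde{\varphi_iT_0}$ at controlled cost using Corollary \ref{cor5.6}, together with the harmonicity of $P$ and $P^\ast$ outside $\{t=0\}$ (Remark \ref{rem5.2}). Finally, since the kernels are not anti-symmetric one also needs the weak boundedness estimate of Corollary \ref{cor5.16} for the suppressed operator $\PP_\Lambda$ on cubes $Q\subset F\setminus H$ with $M$-thin boundary, obtained from Lemma \ref{lem5.15} combined with the approximation argument of \cite[Lemma 5.23]{To3}.

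Once these hypotheses are verified, Theorem \ref{thm5.17} furnishes a compact subset $G\subset F$ with $\mu(G)\gtrsim\mu(F)$, $\mu|_G$ of $n$-growth, and $\|\PP_{\mu|_G}\|_{L^2(\mu|_G)\to L^2(\mu|_G)}\lesssim 1$. The chain \eqref{eq5.4} then closes the argument:
\[
\widetilde{\gamma}(E)\lesssim \mu(F)\lesssim \mu(G)\leq \gamma_{\text{op}}(G)\approx \widetilde{\gamma}_+(G)\leq \widetilde{\gamma}_+(F)\lesssim \widetilde{\gamma}_+(E),
\]
where the penultimate comparability is Theorem \ref{thm3.5} and the last step is property $\Ptwo$. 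The reverse inequality $\widetilde{\gamma}_+(E)\leq \widetilde{\gamma}(E)$ is immediate from the definitions. The main obstacle is the handling of the non anti-symmetric kernel inside the $Tb$-theorem: it forces the introduction of the auxiliary exceptional set $\pSS$ and the choice of a 1-Lipschitz $\Lambda$ dominating the distance to the complement of $H_{\DD(\ox)}\cup\pSS$, and then the verification of the weak boundedness property (Corollary \ref{cor5.16}) via the localization Lemma \ref{lem5.15}, replacing the cancellation that the Cauchy and Riesz kernels provide in the proofs of \cite[Ch.\,5]{To3} and \cite{Vo}.
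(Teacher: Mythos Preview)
Your proposal is correct and follows essentially the same route as the paper: reduction to the $\Aone$/$\Atwo$ setting, construction of $\mu,\nu$ from the covering of Theorem \ref{thm4.1}, verification of the hypotheses of the $Tb$-theorem (Theorem \ref{thm5.17}) via Lemmas \ref{lem5.1}, \ref{lem5.7}--\ref{lem5.9} and the weak boundedness property of Corollary \ref{cor5.16}, and then the closing chain \eqref{eq5.4}. The only cosmetic difference is that the paper routes the final chain through $\gamma_{\text{op}}(G)\leq\gamma_{\text{op}}(F)\approx\widetilde{\gamma}_+(F)$ rather than your $\gamma_{\text{op}}(G)\approx\widetilde{\gamma}_+(G)\leq\widetilde{\gamma}_+(F)$, and in either case the inequality $\mu(G)\leq\gamma_{\text{op}}(G)$ should really be $\mu(G)\lesssim\gamma_{\text{op}}(G)$ since $\mu|_G$ is admissible only up to the constants $A_2,A_3$.
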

	
	\section{The general comparability in \mathinhead{\mathbb{R}^{n+1}}{}}
	\label{sec6}
	
	The main goal of this section is to obtain similar results to Theorem \ref{thm5.19} but removing assumption $\Athree$. We will be able to do this for any compact set in $\mathbb{R}^{n+1}$.

	\subsection{The capacity of some parallelepipeds in \mathinhead{\mathbb{R}^{n+1}}{}}
	\label{subsec6.1}
	First, let us present some preliminary results that extend that of \cite[Proposition 6.1]{MPr}. Let us remark that throughout the forthcoming discussion, any parallelepiped will have sides parallel to the coordinate axes.
	
	\begin{lem}
		\label{lem6.1}
		Let $a\in \mathbb{R}$ and $R\subset \mathbb{R}^n\times\{a\}$ be a parallelepiped contained in the affine hyperplane $\{t=a\}$. Then
		\begin{equation*}
			\widetilde{\gamma}_{+}(R)\gtrsim \pazocal{H}^{n}(R).
		\end{equation*}
	\end{lem}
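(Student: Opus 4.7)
The plan is to exhibit an explicit admissible measure for $\widetilde{\gamma}_+(R)$, namely a dimensional multiple of $\mathcal{H}^n|_R$, and verify the required growth and normalization conditions by hand. The key observation is that when $R$ sits in a horizontal hyperplane, the convolution $P \ast \mathcal{H}^n|_R$ evaluated on the upper side coincides (up to a dimensional constant) with the Poisson extension of the bounded function $\chi_R$ to the upper half-space above $\{t=a\}$, while on the lower side it vanishes identically because of the cutoff $\chi_{\{t>0\}}$ in the definition of $P$. A symmetric statement holds for $P^{\ast}$.

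First I would set $\mu := c_n\,\mathcal{H}^n|_R$, where $c_n$ is a dimensional constant to be fixed. The $n$-growth of $\mu$ with constant $1$ (once $c_n$ is small enough) is immediate from the trivial estimate
\begin{equation*}
\mathcal{H}^n\big(B_r(\ox)\cap\{t=a\}\big)\leq \omega_n r^n
\end{equation*}
valid for every $\ox\in \mathbb{R}^{n+1}$ and $r>0$.

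Next, for the potential bound I would split cases depending on the temporal coordinate $s$ of $\oy=(y,s)$. If $s\leq a$, every $\ox\in R$ satisfies $s-a\leq 0$, so $P(\oy-\ox)=0$ by the cutoff, giving $P\ast\mu(\oy)=0$. If $s>a$, change variables $u=(y-x)/(s-a)$ in the spatial integration to obtain
\begin{equation*}
P\ast\mu(\oy)\;=\;c_n\int_{R_0}\frac{(s-a)\,d\mathcal{H}^n(x)}{\big((s-a)^2+|y-x|^2\big)^{(n+1)/2}}\;\leq\; c_n\int_{\mathbb{R}^n}\frac{du}{(1+|u|^2)^{(n+1)/2}}\;=:\;c_n\,K_n,
\end{equation*}
where $R_0\subset\mathbb{R}^n$ is the projection of $R$. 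The right-hand side is a purely dimensional quantity. An entirely analogous computation, exchanging the roles of $\{s>a\}$ and $\{s<a\}$, yields $\|P^{\ast}\ast\mu\|_\infty\leq c_n K_n$ as well.

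Finally, I would pick $c_n:=\min\{\omega_n^{-1},K_n^{-1}\}$ so that $\mu$ is admissible for $\widetilde{\gamma}_+(R)$ in the sense of the definition in Section~\ref{sec2}, concluding
\begin{equation*}
\widetilde{\gamma}_+(R)\;\geq\;\mu(R)\;=\;c_n\,\mathcal{H}^n(R).
\end{equation*}
There is no real obstacle here beyond recognizing the Poisson-kernel structure of $P$ restricted to one side of a horizontal hyperplane; the anisotropy of the problem actually helps, since the cutoff $\chi_{\{t>0\}}$ kills the potential on half of space and the symmetric requirement on $P^{\ast}$ is handled for free by the mirror argument.
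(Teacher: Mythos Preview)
Your argument is correct and shares the paper's strategy---test with (a multiple of) $\pazocal{H}^n|_R$ and bound the potential---but your execution is cleaner. The paper works with the symmetrized kernel $P_{\text{sy}}$, enlarges the translated $R$ to an $n$-disc, passes to polar coordinates, and then invokes explicit Gauss-hypergeometric identities (citing \cite{Bu} and \cite{AbSt}) to show the resulting one-variable integral in $\tau=|t-a|/r_0$ is uniformly bounded; it then appeals to Theorem~\ref{thm3.5} to transfer the bound from $\gamma_{\text{sy},+}$ to $\widetilde{\gamma}_+$. By recognizing that $P(\,\cdot\,,t)\chi_{\{t>0\}}$ is, up to a dimensional constant, the Poisson kernel for the upper half-space, you reduce everything via the scaling $u=(y-x)/(s-a)$ to the single finite integral $\int_{\mathbb{R}^n}(1+|u|^2)^{-(n+1)/2}\,du$, and treating $P$ and $P^\ast$ separately avoids any detour through $\gamma_{\text{sy},+}$ and Theorem~\ref{thm3.5}. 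The paper's special-function machinery buys nothing here; your route is the natural one.
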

	\begin{proof}
		Let $\mu:=\pazocal{H}^n|_{R}$ and pick any $\overline{x}=(x,t)$ with $|t-a|>0$. Observe that
		\begin{align*}
			P_{\text{sy}}\ast \mu(x,t)&=\int_R\frac{|t-a|}{\big[ (t-a)^2+|x-y|^2 \big]^{\frac{n+1}{2}}}\text{d}\pazocal{H}^n(y) =\int_{R-x}\frac{|t-a|}{\big[ (t-a)^2+|u|^2 \big]^{\frac{n+1}{2}}}\text{d}\pazocal{H}^n(u),
		\end{align*}
		where $u:=y-x$ and $R-x$ denotes a translation of $R$ with respect to the vector $(-x,0)\in \mathbb{R}^{n+1}$. Let us pick $D_{r_0}$ an $n$-dimensional ball embedded in $\mathbb{R}^n\times\{a\}$, centered at $(0,a)\in\mathbb{R}^{n}\times\{a\}$ and with radius $r_0=r_0(\overline{x})$ big enough so that $R-x\subset D_{r_0}$. Then, there exists a dimensional constant $C>0$ so that
		\begin{align*}
			P_{\text{sy}}\ast \mu(x,t)&\leq \int_{D_{r_0}}\frac{|t-a|}{\big[ (t-a)^2+|u|^2 \big]^{\frac{n+1}{2}}}\text{d}\pazocal{H}^n(u) = C\int_0^{r_0} \frac{|t-a|}{\big[ (t-a)^2+r^2 \big]^{\frac{n+1}{2}}}r^{n-1}\text{d}r\\
			&=C\int_0^{1} \frac{|t-a|r_0^{-1}}{\big[ (t-a)^2r_0^{-2}+\rho^2 \big]^{\frac{n+1}{2}}}\rho^{n-1}\text{d}\rho.
		\end{align*}
		where in the last step we have introduced the change of variables $r_0\rho=r$. Naming $\tau:=|t-a|r_0^{-1}$ the previous integral can be finally written as
		\begin{equation*}
			\int_0^1 \frac{\tau}{\big[ \tau^2+\rho^2 \big]^{\frac{n+1}{2}}}\rho^{n-1}\text{d}\rho,
		\end{equation*}
		which admits an explicit representation in terms of Gauss's (or Kummer's) hypergeometric function (use \cite[\textsection 1.4, eq.(13)]{Bu}, for example), obtaining the estimate
		\begin{align*}
			P_{\text{sy}}\ast \mu(x,t)& \leq \frac{C}{n}\bigg( \frac{\rho}{\tau} \bigg)^n {}_2F_1\Bigg( \frac{n}{2},\frac{n+1}{2};\frac{n+2}{2};-\bigg( \frac{\rho}{\tau} \bigg)^2 \Bigg)\Bigg\rvert_{\rho=0}^{\rho=1}\\
			&=\frac{C}{n} \cdot \frac{1}{\tau^n}\, {}_2F_1\bigg( \frac{n}{2},\frac{n+1}{2};\frac{n+2}{2};-\frac{1}{\tau^2} \bigg)\\
			&=\frac{C}{n} \cdot \frac{1}{\tau(1+\tau^2)^{(n-1)/2}}\, {}_2F_1\bigg( 1,\frac{1}{2};\frac{n+2}{2};-\frac{1}{\tau^2} \bigg).
		\end{align*}
		In the last step we have applied \cite[Eq.15.3.3]{AbS}. This last expression, thought as a function of $\tau>0$, is bounded uniformly on $(0,+\infty)$ with respect to a dimensional constant (and in fact attains its maximum when $\tau\to 0$). In other words, we deduce that $P_{\text{sy}}\ast \mu(x,t)\lesssim 1$ whenever $|t-a|>0$. On the other hand, it is clear that if $t=a$, then $P_{\text{sy}}\ast \mu(x,t)=0$, meaning that in general
		\begin{equation*}
			P_{\text{sy}}\ast \mu(x,t)\lesssim 1, \hspace{0.5cm} \forall (x,t)\in \mathbb{R}^{n+1}.
		\end{equation*}
		Therefore we conclude
		\begin{equation*}
			\gamma_{\text{sy},+}(R)\gtrsim \mu(R)= \pazocal{H}^n(R),
		\end{equation*}
		and using Theorem \ref{thm3.5} we obtain the desired result.
	\end{proof}
	
	The above lemma characterizes the $\widetilde{\gamma}_{+}$ capacity of parallelepipeds contained in affine hyperplanes of the form $\{t=a\}$. We will refer to such hyperplanes as \textit{horizontal} hyperplanes. Observe that the above bound combined with \cite[Lemma 4.1]{MPr} implies
	\begin{equation*}
		\pazocal{H}^n(R)\lesssim \widetilde{\gamma}_{+}(E)\leq \gamma_{+}(R) \leq  \gamma(R) \lesssim \pazocal{H}_\infty^n(R)\leq \pazocal{H}^n(R),
	\end{equation*}
	for any parallelepiped contained in a horizontal hyperplane. Hence, for such objects,
	\begin{equation*}
		\pazocal{H}^n(R)\approx \gamma_{+}(R) \approx \gamma(R).
	\end{equation*}
	
	\medskip
	Our next goal will be to study the capacity associated to parallelepipeds contained in \textit{vertical} hyperplanes, that is, sets of the form $\{x_i=a\}$ for some $a\in \mathbb{R}$ and some $i=1,\ldots,n$. Previous to that, we need to make an auxiliary construction: let us assume that we have a compact set $E\subset \mathbb{R}^{n+1}$ contained, for example, in $\{x_1=0\}$. Consider $T$ a distribution in $\mathbb{R}^{n+1}$ with $\text{supp}(T)\subseteq E$ as well as the maps
	\begin{align*}
		\pi:&\quad\mathbb{R}^{n+1}\longrightarrow \mathbb{R}^{n},  \hspace{2cm}\iota:\mathbb{R}^{n}\longrightarrow \hspace{0.27cm}\mathbb{R}^{n+1},\\
		&(x_1,x)\;\hspace{0.05cm}\longmapsto x  \hspace{2.95cm}x\hspace{0.22cm}\longmapsto (0,x)
	\end{align*}
	the canonical projection onto the last $n$ coordinates and the canonical inclusion into the hyperplane $\{x_1=0\}$. Let us take $\varphi\in \pazocal{C}^\infty(\mathbb{R}^n)$ a smooth function and define $\beta_M$, for some $M>0$ (a positive parameter that we may take as large as we need), a smooth bump function in $\mathbb{R}^{n+1}$ that equals 1 in an open $M$-neighborhood of $E$, i.e. $\beta_M\equiv 1$ in $\pazocal{U}_M(E)$. Also, we require that $\beta_M\equiv 0$ in $\mathbb{R}^{n+1}\setminus{\pazocal{U}_{2M}(E)}$ and $0\leq \beta_M \leq 1$. We set 
	\begin{equation*}
		\widetilde{\varphi}:=(\varphi \circ \pi)\cdot \beta_M,
	\end{equation*}
	that is a smooth extension of $\varphi|_E$ to $\mathbb{R}^{n+1}$, with compact support contained in $\pazocal{U}_{2M}(E)$. With this, we define the following distribution in $\mathbb{R}^{n}$ associated to $T$:
	\begin{equation*}
		\langle \widetilde{T}, \varphi \rangle := \langle T, \widetilde{\varphi} \rangle,\hspace{0.5cm} \varphi \in \pazocal{C}^\infty(\mathbb{R}^n),
	\end{equation*}
	which is a definition independent of the choice of $\beta_M$, since $\text{supp}(T)\subseteq E$. Observe that $\text{supp}(\widetilde{T})\subseteq \pi(E)$ (a compact set of $\mathbb{R}^n$) and also that for any $\psi\in \pazocal{C}^{\infty}(\mathbb{R}^{n+1})$,
	\begin{equation*}
		\langle T, \psi \rangle = \langle \widetilde{T}, \psi \circ \iota \rangle.
	\end{equation*}
	Notice that the previous identity is accurate because, even though $\psi$ and $\psi\circ \iota \circ \pi$ may not coincide in $\mathbb{R}^{n+1}\setminus{\{x_1=0\}}$, we have $\text{supp}(T)\subseteq E \subset \{x_1=0\}$, that ensures the validity of the above equality. Let us take $(\Phi_\varepsilon)_\varepsilon$ an approximation of the identity in $\mathbb{R}^n$ and set
	\begin{equation*}
		\widetilde{T}_\varepsilon:=\widetilde{T}\ast \Phi_\varepsilon,
	\end{equation*}
	for $0<\varepsilon\ll M$, so that the following inclusions hold
	\begin{equation*}
		\text{supp}(\widetilde{T}_\varepsilon)\subseteq \pazocal{V}_{2\varepsilon}(\pi(E))\subset \pi(\pazocal{U}_M(E)),
	\end{equation*}
	where the notation $\pazocal{V}$ is used to emphasize that we are considering an open neighborhood in $\mathbb{R}^n$ and not $\mathbb{R}^{n+1}$. So $(\widetilde{T}_\varepsilon)_\varepsilon$ defines a collection of signed measures in $\mathbb{R}^n$ that approximates $\widetilde{T}$ in a distributional sense:
	\begin{equation*}
		\lim_{\varepsilon\to 0} \;\langle \widetilde{T}_\varepsilon, \varphi \rangle = \langle \widetilde{T}, \varphi \rangle =  \langle T, \widetilde{\varphi} \rangle, \hspace{0.5cm} \varphi\in \pazocal{C}^\infty(\mathbb{R}^n).
	\end{equation*}
	Let us stress (although the reader may have already noticed), that the spaces of functions we have been considering are of the form $\pazocal{C}^\infty$ and not $\pazocal{C}^\infty_c$. This can be done since our distributions are compactly supported. Let us proceed by naming $\Sigma_\varepsilon:=\overline{\pazocal{V}_{2\varepsilon}(\pi(E))}$, and observe that for any $\psi\in \pazocal{C}^\infty(\mathbb{R}^{n+1})$
	\begin{align*}
		\langle T,\psi \rangle = \langle \widetilde{T}, \psi \circ \iota \rangle &= \lim_{\varepsilon\to 0} C\int_{\Sigma_E} \widetilde{T}_\varepsilon(x)\,\psi\circ \iota (x)\,\text{d}\pazocal{H}^n(x)\\
		&= \lim_{\varepsilon\to 0} C\int_{\Sigma_E} \widetilde{T}_\varepsilon\circ \pi (0,x)\,\psi(0,x)\,\text{d}\pazocal{H}^n(x)\\
		&= \lim_{\varepsilon\to 0} C\int_{\mathbb{R}^{n+1}} \widetilde{T}_\varepsilon\circ \pi (\overline{x})\,\psi(\overline{x})\,\text{d}\pazocal{H}^n|_{\iota(\Sigma_E)}(\overline{x}) \\
		&= \lim_{\varepsilon\to 0} \big \langle C(\widetilde{T}_\varepsilon\circ \pi)\pazocal{H}^n|_{\iota(\Sigma_\varepsilon)}, \psi \big\rangle,
	\end{align*}
	where $\widetilde{T}_\varepsilon\circ \pi$ is smooth and supported on $\mathbb{R}\times \Sigma_\varepsilon\subset \mathbb{R}\times \mathbb{R}^n$, and $C>0$ is some dimensional constant. Therefore, the previous construction implies the following remark:
	\begin{rem}
		\label{rem6.1}
		Given a distribution $T$ in $\mathbb{R}^{n+1}$ supported on $E\subset \{x_1=0\}$, there is a family of signed measures $(\nu_\varepsilon)_\varepsilon$ with $\text{supp}(\nu_\varepsilon)\subset \iota(\Sigma_\varepsilon)=:\iota\big(\overline{\pazocal{V}_{2\varepsilon}(\pi(E))}\big)$ of the form
		\begin{equation*}
			\nu_\varepsilon = \psi_\varepsilon\cdot \pazocal{H}^n|_{\iota(\Sigma_\varepsilon)},
		\end{equation*}
		where $\psi_\varepsilon$ is a smooth function that satisfies $\psi_\varepsilon\circ \iota = \widetilde{T}_\varepsilon$ (a function of $\pazocal{C}^\infty_c(\mathbb{R}^n)$ supported on $\Sigma_\varepsilon$), that is such that
		\begin{equation*}
			\langle T, \psi \rangle = \lim_{\varepsilon\to 0} \langle \nu_\varepsilon, \psi \rangle, \hspace{0.5cm} \psi \in \pazocal{C}^\infty(\mathbb{R}^n).
		\end{equation*}
		The reader may think of the above result as a construction that exploits the fact that the support of $T$ is contained in a hyperplane where $x_1$ is constant, so that we may approximate $T$ via an \textit{approximation of the identity with respect to the remaining variables} $x_2,\ldots,x_{n+1}$ (here $x_{n+1}=t$). Moreover, notice that if one starts assuming a condition of the form $\|P\ast T\|_\infty \leq 1$, by choosing $\varepsilon$ small enough we can assume, for example, $\|P\ast \nu_\varepsilon\|_\infty \leq 2$. Indeed, just fix any $\psi \in \pazocal{C}^\infty_c(\mathbb{R}^{n+1})$ and observe that
		\begin{equation*}
			\langle P \ast T, \psi \rangle = \langle T, P^\ast \ast \psi \rangle = \lim_{\varepsilon\to 0} \langle \nu_\varepsilon, P^\ast \ast \psi \rangle = \lim_{\varepsilon\to 0} \langle P\ast \nu_\varepsilon, \psi \rangle,
		\end{equation*}
		meaning that $\lim_{\varepsilon\to 0} |\langle P \ast \nu_\varepsilon, \psi \rangle|\leq \|\psi\|_{L^1(\mathbb{R}^{n+1})}$, which implies the desired estimate. In the previous argument we have used that $P^\ast \ast \psi$ is a smooth function, which can be argued thinking of $P^\ast$ as an $L^1_{\text{loc}}(\mathbb{R}^{n+1})$ function and thus as a regular distribution. This allows us to prove the following lemma:
	\end{rem}
	\begin{lem}
		\label{lem6.2}
		Let $E\subset \mathbb{R}^{n+1}$ be a compact set contained in an affine hyperplane of the form $\{x_i=a\}$, for some $a\in\mathbb{R}$ and $i=1,\ldots,n$. Then
		\begin{equation*}
			\gamma(E)=0.
		\end{equation*}
	\end{lem}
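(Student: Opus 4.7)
Without loss of generality take $i=1$ and $a=0$, so $E\subset\{x_1=0\}$. Fix any distribution $T$ admissible for $\gamma(E)$, meaning $\mathrm{supp}(T)\subseteq E$, $T$ has $n$-growth with constant $1$, and $\|P\ast T\|_\infty\le 1$; the goal is $\langle T,1\rangle=0$. The first step is to apply Remark \ref{rem6.1} to $T$, which produces signed measures $\nu_\varepsilon=\psi_\varepsilon\,\pazocal{H}^n|_{\iota(\Sigma_\varepsilon)}$ supported in a $2\varepsilon$-tubular neighborhood of $\pi(E)$ inside the vertical hyperplane $\{x_1=0\}$, with $\|P\ast\nu_\varepsilon\|_\infty\le 2$ for all small enough $\varepsilon$ and with $\langle\nu_\varepsilon,\varphi\rangle\to\langle T,\varphi\rangle$ for every $\varphi\in\pazocal{C}^\infty_c(\mathbb{R}^{n+1})$. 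Choosing $\varphi\equiv 1$ on a fixed neighborhood of $E$ and using the compact support of $T$, the problem reduces to showing $\lim_{\varepsilon\to 0}\nu_\varepsilon(\mathbb{R}^{n+1})=0$.

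The decisive geometric difference between vertical and horizontal hyperplanes is the self-divergence of $P$ on the former: a polar-coordinate computation shows that for any interior point $\oy_0\in D\subset\{x_1=0\}$,
\begin{equation*}
P\ast\pazocal{H}^n|_D(\oy_0)=\int_D\frac{-\tau}{(|y'|^2+\tau^2)^{(n+1)/2}}\chi_{\{\tau<0\}}\,\dd\pazocal{H}^n(y',\tau)\simeq \int_0^R\frac{\dd r}{r}=+\infty,
\end{equation*}
since the angular average of $-\omega_n$ over the lower hemisphere $\{\omega_n<0\}\subset S^{n-1}$ is a strictly positive constant. In sharp contrast to the horizontal case of Lemma \ref{lem6.1}, this rules out any nonzero admissible positive measure supported on $E$. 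To promote this rigidity to the distributional setting, I would pick a base point $c\in E$ and test points $\ox=c+(\eta,\mathbf{0},\eta)$ lying just off the hyperplane, so that $P(\ox-c)\simeq \eta^{-n}\to+\infty$ as $\eta\to 0^+$. The expansion
\begin{equation*}
P\ast\nu_\varepsilon(\ox)=\nu_\varepsilon(\mathbb{R}^{n+1})\,P(\ox-c)+\mathcal{E}_\varepsilon(\ox),
\end{equation*}
obtained from the $n$-dimensional C--Z smoothness of $P$ in \cite[Lemma 2.1]{MPr}, together with the uniform bound $|P\ast\nu_\varepsilon(\ox)|\le 2$ and the $n$-growth of $\nu_\varepsilon$ inherited from $T$ via \cite[Lemma 3.4]{MPr}, should then yield $\nu_\varepsilon(\mathbb{R}^{n+1})\to 0$ upon sending $\varepsilon$ and $\eta$ to zero in a carefully chosen order.

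The principal obstacle is the quantitative control of the error $\mathcal{E}_\varepsilon(\ox)$: because $\nu_\varepsilon$ is \emph{signed} and its total variation $|\nu_\varepsilon|(\mathbb{R}^{n+1})$ is only controlled by $\mathrm{diam}(E)^n$, the first-moment contribution to $\mathcal{E}_\varepsilon(\ox)$ is of order $\mathrm{diam}(E)\,|\nu_\varepsilon|(\mathbb{R}^{n+1})\,\eta^{-n-1}$, which a priori can dominate the principal term $\eta^{-n}\,|\nu_\varepsilon(\mathbb{R}^{n+1})|$ for every choice of $\eta$; a naive balance of the two yields only $|\nu_\varepsilon(\mathbb{R}^{n+1})|\lesssim\mathrm{diam}(E)^{n^2/(n+1)}$, which does not close the argument. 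Overcoming this requires exploiting the $L^\infty$ bound on $P\ast\nu_\varepsilon$ to force simultaneous cancellation not only of the total mass but also of the higher moments of $\nu_\varepsilon$, so that the divergence structure along the vertical hyperplane propagates into progressively finer cancellation conditions on the density $\psi_\varepsilon$. Making this inductive cancellation precise, and then passing to the limit $\varepsilon\to 0$ in a way that preserves the resulting estimate, constitutes the technical heart of the argument.
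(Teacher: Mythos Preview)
Your setup via Remark \ref{rem6.1} and the observation that $P\ast\pazocal{H}^n|_D=+\infty$ on interior points of a vertical patch $D$ are exactly the ingredients the paper uses. The gap is in how you ``promote this rigidity to the distributional setting.'' Your expansion $P\ast\nu_\varepsilon(\ox)=\nu_\varepsilon(\mathbb{R}^{n+1})P(\ox-c)+\mathcal{E}_\varepsilon(\ox)$ does not close, as you yourself note: the first-moment error is of order $\eta^{-n-1}$ against a principal term of order $\eta^{-n}$, and the vague appeal to ``inductive cancellation of higher moments'' is not a proof. There is no indication that such a moment hierarchy can be made to converge, and in fact for a generic signed density $\psi_\varepsilon$ with bounded potential there is no reason the higher moments should vanish.

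The paper avoids this difficulty entirely. Since $\psi_\varepsilon$ is \emph{continuous} and (assuming $\langle T,1\rangle\neq 0$) not identically zero, one can find a small ball $D_\eta(\ox_0)\subset\{x_1=0\}$ on which $|\psi_\varepsilon|\ge A>0$ and $\psi_\varepsilon$ has constant sign. Now pick a bump $\varphi$ supported in $2Q\subset D_\eta$ with $\varphi\equiv 1$ on $Q$, and apply the localization theorem \cite[Theorem 3.1]{MPr} to $\varphi\nu_\varepsilon$: this gives $\|P\ast(\varphi\nu_\varepsilon)\|_\infty\lesssim 1$. By nonnegativity of $P$ and the pointwise bound $|\psi_\varepsilon|\ge A$ on the support of $\varphi$, the \emph{positive} measure $\mu':=A\,\pazocal{H}^n|_{D_\xi(\ox_0)}$ (for $D_\xi\subset Q$) satisfies $\|P\ast\mu'\|_\infty\lesssim 1$. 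But your own polar-coordinate computation shows $P\ast\mu'=+\infty$ at every interior point of $D_\xi$, which contradicts $\|P\ast\mu'\|_{L^\infty(\mu')}\lesssim 1$ (the latter following from the Cotlar-type inequality in \cite[Lemma 5.4]{MaP}). The point you are missing is that localization plus sign-definiteness of $\psi_\varepsilon$ on a small patch reduces the problem to the positive-measure case, where the divergence argument works directly and no moment analysis is needed.
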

	\begin{proof}
		We shall assume that $E$ is contained in $\{x_1=0\}$ for the sake of simplicity. Consider $D_{r_0}$ an $n$-dimensional ball in $\{x_1=0\}$ centered at the origin and with radius $r_0$. Let us assume that $\gamma(D_{r_0})>0$ and reach a contradiction. If $\gamma(D_{r_0})>0$, there exists a distribution $T$ admissible for $\gamma_{\Theta^{1/2}}(D_{r_0})$ with $|\langle T, 1 \rangle|>0$. By Remark \ref{rem6.1} we obtain a family of signed measures that approximate $T$ of the form
		\begin{equation*}
			\nu_\varepsilon=\psi_\varepsilon\cdot \pazocal{H}^n|_{D_{r_0+2\varepsilon}},
		\end{equation*}
		for some $\psi_\varepsilon$ smooth, such that $\psi_\varepsilon\circ \iota \in \pazocal{C}^\infty_c(\mathbb{R}^n)$ with $\text{supp}(\psi_\varepsilon\circ \iota)\subset \pi(D_{r_0+2\varepsilon})$. We may also choose $\varepsilon$ small enough so that $\|P\ast \nu_\varepsilon\|_\infty \leq 2$. Since $|\langle T, 1 \rangle|>0$, we are able to pick some $\overline{x}_0=(x_0,t_0)\in D_{r_0+2\varepsilon}$ such that $|\psi_{\varepsilon}(\overline{x}_0)|>0$, as well as an $n$-dimensional ball centered at $\overline{x}_0$ with radius $\eta$ satisfying $D_\eta(\overline{x}_0)\subset D_{r_0+2\varepsilon}$ and $|\psi_\varepsilon|\geq A>0$ there, for some constant $A>0$ (this can be done by the continuity of $\psi_\varepsilon\circ \iota$).\medskip\\
		Proceed by fixing $Q=Q(\overline{x}_0)$ a cube in $\mathbb{R}^{n+1}$ centered at $\overline{x}_0$ with $4\ell(Q)\leq \eta$ and $\varphi\in \pazocal{C}^\infty_c(\mathbb{R}^{n+1})$ such that $0\leq \varphi \leq 1, \, \varphi|_Q\equiv 1, \varphi|_{\mathbb{R}^{n+1}\setminus{2Q}}\equiv 0$ and $\|\nabla \varphi\|_\infty\leq \ell(Q)^{-1}$. Also take $B_\xi(\overline{x}_0)$ a ball in $\mathbb{R}^{n+1}$ centered at $\overline{x}_0$ with radius $\xi$ so that $4\xi \leq \ell(Q)$, and name $D_\xi(\overline{x}_0):=B_{\xi}(\overline{x}_0)\cap \{x_1=0\}$, that is such that
		\begin{equation*}
			D_\xi(\overline{x}_0)\subset Q(\overline{x}_0)\cap \{x_1=0\} \subset D_{\eta}(\overline{x}_0).
		\end{equation*}
		We finally define the positive measure
		\begin{equation*}
			\mu':=A\cdot \pazocal{H}^n|_{D_\xi(\overline{x}_0)}
		\end{equation*}
		and observe that by the choice of $\varphi$ and the non-negativity of $P$, we have
		\begin{align*}
			\|P\ast \mu' \|_\infty &= \,\big \|P\ast A\,\pazocal{H}^n|_{D_\xi(\overline{x}_0)} \big \|_\infty= \big \|P\ast \varphi A\,\pazocal{H}^n|_{D_\xi(\overline{x}_0)} \big \|_\infty\\
			&\leq \big \|P\ast \varphi\,\psi_\varepsilon \pazocal{H}^n|_{D_{r_0+2\varepsilon}} \big \|_\infty = \|P\ast \varphi\nu_\varepsilon \|_\infty \lesssim 1,
		\end{align*}
		where the last inequality is due to the localization estimate \cite[Theorem 3.1]{MPr}. Therefore, we have constructed $D_\xi(\overline{x}_0)$ an $n$-dimensional ball that admits a positive measure $\mu$ supported on it, proportional to $\pazocal{H}^n|_{D_\xi(\overline{x}_0)}$ and with $\|P\ast \mu\|_\infty \leq 1$. Let us prove that this last condition is not possible. Indeed, for each $\overline{x}=(0,x_2,\ldots,x_n,t)\in D_{\xi}(\overline{x}_0)$ pick an $n$-dimensional ball $D_\rho(\overline{x})$ centered at $\overline{x}$ and with radius $\rho$ small enough so that $D_{\rho}(\overline{x})\subset D_{\xi}(\overline{x}_0)$. We write $\Delta_{\downarrow,\rho}(\overline{x})$ its \textit{lower temporal half}, which is obtained from the intersection $D_\rho(\overline{x})\cap \{t-s>0\}$; as well as $\Delta_{\uparrow,\rho}(\overline{x})$ the \textit{upper temporal half}, obtained from  $D_\rho(\overline{x})\cap \{t-s<0\}$. Now, integration in ($n$-dimensional) spherical coordinates yields
		\begin{align*}
			P\ast \mu(\overline{x})&\geq A\int_{\Delta_{\downarrow,\rho}(\overline{x})} \frac{t-s}{\big[ (t-s)^2+(x_2-y_2)^2+\cdots+(x_n-y_n)^2 \big]^{\frac{n+1}{2}}}\text{d}\pazocal{H}^n(y_2,\ldots,y_n,s)\\
			&\simeq A\int_{\Delta_{\uparrow,\rho}(0)} \frac{\tau}{\big[ \tau^2+u_2^2+\cdots+u_n^2 \big]^{\frac{n+1}{2}}}\text{d}u_2\cdots \text{d}u_n\text{d}\tau =A\cdot L \int_0^\rho\frac{\text{d}r}{r} = +\infty,
		\end{align*}
		where $L$ is a positive dimensional constant obtained from integration in the angular domain (notice that $\tau>0$ in $\Delta_{\uparrow,\rho}(0)$). So by the arbitrariness of $\overline{x}$ we get that $\|P\ast \mu\|_{L^\infty(\mu)}=+\infty$, since $\mu(D_\eta(\overline{x}_0))\simeq A\,\pazocal{H}^n(D_\eta(\overline{x}_0))>0$. But this is contradictory with $\|P\ast \mu\|_\infty\leq 1$, because such condition implies, in particular, $\|P\ast \mu\|_{L^\infty(\mu)}\lesssim 1$ (use a Cotlar type inequality analogous to that of \cite[Lemma 5.4]{MaP}). Therefore we conclude that $\gamma(D_{r_0})=0$ for any radius $r_0$, implying the desired result, by the monotonicity of $\gamma$.
	\end{proof}
	
	\begin{cor}
		\label{cor6.3}
		Let $R\subset \mathbb{R}^{n+1}$ be a parallelepiped and $\ell_1,\ldots,\ell_n,\ell_t$ its side lengths. Let $R_\uparrow$ denote the upper face of $R$ $($contained in a horizontal affine hyperplane$\,)$,
		\begin{enumerate}[leftmargin=*,itemsep=0.1cm]
			\item[\textit{1}$\,)$] if $\gamma(R_\uparrow)=0$, then $\gamma(R)=0$,
			\item[\textit{2}$\,)$] if $\ell_t\leq \min\{\ell_1,\ldots,\ell_n\}$, then $\gamma(R)\approx \gamma_{+}(R_\uparrow)$.
		\end{enumerate}
	\end{cor}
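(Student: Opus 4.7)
The plan is to combine the results already established for horizontal parallelepipeds (Lemma \ref{lem6.1} and the discussion immediately following it, which yields $\gamma(R_\uparrow)\approx\gamma_+(R_\uparrow)\approx\pazocal{H}^n(R_\uparrow)=\ell_1\cdots\ell_n$) with the vanishing result for sets contained in vertical hyperplanes (Lemma \ref{lem6.2}) and the general growth estimate $\gamma\lesssim \pazocal{H}^n_\infty$ from \cite[Lemma 4.1]{MPr}.

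\textbf{Part 1.} Since $R_\uparrow$ lies in a horizontal affine hyperplane, the remark following Lemma \ref{lem6.1} gives $\gamma(R_\uparrow)\approx \pazocal{H}^n(R_\uparrow)=\ell_1\cdots\ell_n$. Thus $\gamma(R_\uparrow)=0$ forces $\ell_j=0$ for some $j\in\{1,\dots,n\}$; but then $R$ itself is contained in a vertical hyperplane $\{x_j=a\}$, and Lemma \ref{lem6.2} yields $\gamma(R)=0$.

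\textbf{Part 2.} The lower bound is immediate from monotonicity and $\gamma_+\leq \gamma$: since $R_\uparrow\subset R$, $\gamma_+(R_\uparrow)\leq \gamma_+(R)\leq \gamma(R)$. For the matching upper bound I would invoke \cite[Lemma 4.1]{MPr} to write $\gamma(R)\lesssim \pazocal{H}^n_\infty(R)$ and then use the hypothesis $\ell_t\leq \min\{\ell_1,\dots,\ell_n\}$ to cover $R$ efficiently: partition $R$ into roughly $\prod_{j=1}^n \lceil \ell_j/\ell_t\rceil$ closed cubes of side $\ell_t$, each of $n$-diameter $\lesssim \ell_t^n$. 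Summing yields $\pazocal{H}^n_\infty(R)\lesssim \ell_1\cdots\ell_n=\pazocal{H}^n(R_\uparrow)$, and Lemma \ref{lem6.1} identifies this (up to an absolute constant) with $\gamma_+(R_\uparrow)$. Chaining these estimates closes the loop.

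The only nontrivial observation is the covering step, and it is entirely routine once the condition $\ell_t\leq \min_j \ell_j$ is in play; this hypothesis is precisely what prevents the $n$-dimensional Hausdorff content of $R$ from blowing up relative to $\pazocal{H}^n(R_\uparrow)$. In other words, the role of the assumption is to ensure that the ``vertical'' contribution to $R$ does not create additional $n$-dimensional mass beyond that of its horizontal face, which is the obstruction one would otherwise face in the opposite regime $\ell_t\gg \min_j\ell_j$.
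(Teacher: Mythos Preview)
Your proof is correct and follows essentially the same route as the paper: Part \textit{1} uses $\gamma(R_\uparrow)\approx\pazocal{H}^n(R_\uparrow)$ to force some $\ell_j=0$ and then invokes Lemma \ref{lem6.2}, while Part \textit{2} combines the trivial lower bound $\gamma_+(R_\uparrow)\leq\gamma(R)$ with the upper bound $\gamma(R)\lesssim\pazocal{H}^n_\infty(R)$ from \cite[Lemma 4.1]{MPr} and the covering of $R$ by $\approx\prod_j(\ell_j/\ell_t)$ cubes of side $\ell_t$ to get $\pazocal{H}^n_\infty(R)\lesssim\ell_1\cdots\ell_n\approx\gamma_+(R_\uparrow)$. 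The paper's proof is terser (it leaves the lower bound implicit) but otherwise identical.
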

	\begin{proof}
		To prove \textit{1}, begin by noticing that $\gamma(R_\uparrow)=0$ implies $\pazocal{H}^n(R_\uparrow)=0$, by the comments made after Lemma \ref{lem6.1}. Embedding $R_\uparrow$ into $\mathbb{R}^n$ via $\pi_x:\mathbb{R}^{n+1}\to\mathbb{R}^n$, $\pi_x(\overline{x})=x$, the canonical projection onto the spatial coordinates; we deduce that $Q=\pi_x(R)$ is such that $\pazocal{L}^n(Q)=0$. But $Q$ is itself a parallelepiped of $\mathbb{R}^n$ (that we assume to be closed without loss of generality), i.e.
		\begin{equation*}
			Q=[a_1,b_1]\times \cdots \times [a_n,b_n], \hspace{0.5cm} \text{for some }\; a_i,b_i\in \mathbb{R},\, a_i\leq b_i.
		\end{equation*}
		So there must exist some $[a_j,b_j]$ such that $a_j=b_j$. Hence,
		\begin{equation*}
			R=[a_1,b_1]\times \cdots \times [a_{j-1},b_{j-1}]\times \{a_j\}\times [a_{j+1},b_{j+1}]\times \cdots \times [a_{n+1},b_{n+1}],
		\end{equation*}
		and thus $R\subset \{x_j=a_j\}$, a vertical hyperplane. Applying Lemma \ref{lem6.2} the result follows.\medskip\\
		Moving on to \textit{2}, let us assume $\ell_t>0$ (if not the result is trivial) and notice that by \cite[Lemma 4.1]{MPr} and Lemma \ref{lem6.1} we have
		\begin{align*}
			\gamma(R)\lesssim \pazocal{H}^{n}_\infty(R)\lesssim \bigg[ \frac{\ell_1}{\ell_t}\cdots \frac{\ell_n}{\ell_t}\cdot 1 \bigg]\cdot\ell_t^n \simeq \pazocal{H}^n(R_\uparrow)\lesssim \gamma_{+}(R_\uparrow),
		\end{align*}
		and we are done.
	\end{proof}
	\subsection{General comparability in \mathinhead{\mathbb{R}^{n+1}}{}}
	\label{subsec6.2}
	In light of  the previous results, we first obtain an estimate analogous to Theorem \ref{thm5.19} for parallelepipeds, without requiring assumption $\Athree$. Then, we establish a similar result for finite unions of closed cubes of the same size with disjoint interiors. Since $\Atwo$ can be assumed without loss of generality, the proof of the main theorem follows.
	\begin{lem}
		\label{lem6.4}
		Let $R\subset \mathbb{R}^{n+1}$ be a parallelepiped with sides parallel to the coordinate axes. Then,
		\begin{align*}
			\widetilde{\gamma}(R)\approx \widetilde{\gamma}_+(R).
		\end{align*}
	\end{lem}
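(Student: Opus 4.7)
The trivial inequality $\widetilde{\gamma}_+(R)\leq \widetilde{\gamma}(R)$ handles one direction, so I focus on $\widetilde{\gamma}(R)\lesssim \widetilde{\gamma}_+(R)$. Write $\ell_1,\dots,\ell_n,\ell_t$ for the spatial and temporal side lengths of $R$. If some $\ell_j=0$, then $R$ sits in a vertical hyperplane and Lemma \ref{lem6.2} gives $\gamma(R)=0$, so both capacities vanish. If $\ell_t=0$, then $R$ lies in a horizontal hyperplane and Lemma \ref{lem6.1} together with the observations following it yield $\widetilde{\gamma}(R)\approx \widetilde{\gamma}_+(R)\approx \pazocal{H}^n(R)$. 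If $R$ is non-degenerate and $\ell_t\leq \min_j \ell_j$ (the ``short-in-time'' case), Corollary \ref{cor6.3}(2) gives $\widetilde{\gamma}(R)\leq \gamma(R)\lesssim \gamma_+(R_\uparrow)\approx \pazocal{H}^n(R_\uparrow)$, and Lemma \ref{lem6.1} combined with Theorem \ref{thm3.5} gives $\pazocal{H}^n(R_\uparrow)\lesssim \widetilde{\gamma}_+(R_\uparrow)\leq \widetilde{\gamma}_+(R)$, closing this case.

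The substantial case is when $\ell_t>\min_j \ell_j$ (``tall-in-time''). Here I would verify hypothesis $\Athree$ for $R$ and invoke Theorem \ref{thm5.19}. Apply Theorem \ref{thm4.1} to $R$ to obtain the covering $\{\QQ_1,\dots,\QQ_N\}$; each $R_i:=2\QQ_i\cap R$ is a parallelepiped with sides parallel to the axes, and by $\Pfour$ it satisfies $\text{diam}(R_i)\leq 2\,\text{diam}(\QQ_i)\leq \text{diam}(R)/5$. I claim, by induction, that $\widetilde{\gamma}(R')\lesssim \widetilde{\gamma}_+(R')$ with an absolute constant for every parallelepiped $R'\subset \mathbb{R}^{n+1}$: the base cases are the degenerate and short-in-time cases treated above; for the tall-in-time $R$ the inductive hypothesis applied to each $R_i$ combined with $\Pthree$ yields
\[
\sum_{i=1}^N \widetilde{\gamma}(2\QQ_i\cap R) \lesssim \sum_{i=1}^N \widetilde{\gamma}_+(2\QQ_i\cap R) \leq C_1\,\widetilde{\gamma}_+(R) \leq C_1\,\widetilde{\gamma}(R),
\]
which is $\Athree$ up to an absolute constant. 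Theorem \ref{thm5.19} then delivers $\widetilde{\gamma}(R)\approx \widetilde{\gamma}_+(R)$.

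The main obstacle is closing this induction with uniform constants. Since the diameter shrinks by a definite factor at each step but is a continuous parameter, the cleanest formulation sets $K:=\sup \widetilde{\gamma}(R')/\widetilde{\gamma}_+(R')$ over all parallelepipeds and uses the homogeneity $\widetilde{\gamma}(\lambda R)=\lambda^n\widetilde{\gamma}(R)$, $\widetilde{\gamma}_+(\lambda R)=\lambda^n\widetilde{\gamma}_+(R)$ to reduce the problem to shape space: the ratio depends only on $(\ell_1,\dots,\ell_n,\ell_t)$ up to scaling, which together with the shrinking-diameter argument forces $K<\infty$. A secondary point requiring care is that the constant appearing in the derived $\Athree$ is not literally the $C_1^{-1}$ of the original formulation but an absolute multiple of it; however, the proof of Theorem \ref{thm5.19} uses $\Athree$ only through property $\Pthree'$ at the start of \S\ref{subsec5.1}, so it is robust under this change of absolute constant.
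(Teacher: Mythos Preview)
Your induction does not close with uniform constants, and the issue is not merely cosmetic. Suppose the inductive hypothesis gives $\widetilde{\gamma}(R_i)\leq A\,\widetilde{\gamma}_+(R_i)$ for the pieces. Your displayed chain then yields $\sum_i\widetilde{\gamma}(2\QQ_i\cap R)\leq AC_1\,\widetilde{\gamma}(R)$, i.e.\ $\Athree$ with constant $(AC_1)^{-1}$---not an ``absolute multiple'' of $C_1^{-1}$ unless $A$ is already known to be absolute, which is exactly what you are trying to prove. And Theorem~\ref{thm5.19} is \emph{not} robust under replacing $C_1$ by $AC_1$: tracing the proof, the constant in $\Athree$ feeds into the upper bound $\mu(F)\leq c_3\widetilde{\gamma}(E)$ of Lemma~\ref{lem5.3} (property~\textit{3}), hence into the Ahlfors constant $L$ in \S\ref{subsec5.2}, hence into every constant of the $Tb$-machinery. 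So you only obtain $\widetilde{\gamma}(R)\leq g(A)\,\widetilde{\gamma}_+(R)$ with $g$ increasing, and the bootstrap $K\leq g(K)$ is vacuous. Homogeneity does reduce to shape space, but shape space is non-compact and the $R_i$ need not have simpler shapes than $R$, so there is no well-founded descent here.

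The paper sidesteps this by \emph{never} attempting to verify $\Athree$. Instead it runs a dichotomy at each level of the iteration: if $\Athree$ happens to hold (with the \emph{exact} constant $C_1^{-1}$), apply Theorem~\ref{thm5.19} and stop. If $\Athree$ fails, then by definition $\widetilde{\gamma}(R)<C_1^{-1}\sum_i\widetilde{\gamma}(R_i)$, and combining this with $\Pthree$ gives $\widetilde{\gamma}(R)<C_1^{-1}\cdot A\cdot C_1\,\widetilde{\gamma}_+(R)=A\,\widetilde{\gamma}_+(R)$ whenever the pieces satisfy $\widetilde{\gamma}(R_i)\leq A\,\widetilde{\gamma}_+(R_i)$. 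The factors $C_1$ and $C_1^{-1}$ cancel, so $A$ propagates \emph{unchanged} through arbitrarily many levels (this is precisely the point of Remark~\ref{rem6.2}). Since the side lengths of the $\QQ$-cubes shrink geometrically while $\ell_{\min}(R)>0$ stays fixed, after finitely many steps every piece becomes cube-like, and the base case (your short-in-time argument via Corollary~\ref{cor6.3}) supplies the absolute $A$.
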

	\begin{proof}
		Let us denote by $\ell_1,\ldots,\ell_n,\ell_t$ the side lengths of $R$. Observe that for our purposes, we may assume $R$ to be closed without loss of generality. Notice that by Lemmas \ref{lem6.1} and \ref{lem6.2} we already know the above result if any $\ell_i$ is null. So let us assume that
		\begin{equation}
			\label{eq6.1}
			\ell_i>0, \qquad \text{for each }\; i=1,\ldots,n,t.
		\end{equation}
		Moreover, we will also assume without loss of generality that $R$ is contained in $B_1(0)$, the unit ball of $\mathbb{R}^{n+1}$. The argument that follows is inspired by that presented in \cite[Ch.6]{Vo}. Let us begin by applying Theorem \ref{thm4.1} to the compact set $E:=R$ to obtain a first family of cubes $\{\QQ_{1},\ldots, \QQ_{N_1}\}$ satisfying properties $\Pone$ to $\Pfive$. We observe that:
		\begin{itemize}[leftmargin=*]
			\item Regarding property $\Pfour$, if $\ell_{\text{max}}:=\max \{\ell_1,\ldots,\ell_n,\ell_t\}$, we get for each $i_1=1,\ldots,N_1$,
			\begin{equation*}
				\text{diam}(\QQ_{i_1})\leq \frac{1}{10}\text{diam}(R)\leq \frac{\sqrt{n+1}}{10}\ell_{\max}\quad \text{and then} \quad \ell(\QQ_{i_1})\leq \frac{1}{10}\ell_{\text{max}}.
			\end{equation*}
			\item Regarding $\Pone$, we get $\frac{5}{8}\QQ_{i_1}\cap R \neq \varnothing$ for each $i_1=1,\ldots,N_1$. We wish to study the intersections $2\QQ_{i_1}\cap R$ using the previous property. Notice that $\frac{5}{8}\QQ_{i_1}\cap R \neq \varnothing$ implies that the sets $2\QQ_{i_1}\cap R$ are parallelepipeds such that their side with maximal length presents length between $\frac{11}{16}\ell(\QQ_{i_1})$ and $2\ell(\QQ_{i_1})$. On the other hand, we distinguish two cases for the sides with minimal length, depending on $\ell_{\text{min}}:=\min \{\ell_1,\ldots,\ell_n,\ell_t\}$:
		\end{itemize}
		\begin{itemize}
			\item[\textbf{A)}] If it happens
			\begin{equation*}
				\ell_{\text{min}}\geq \frac{11}{16}\max_{i_1=1,\ldots,N_1} \ell(\QQ_{i_1}),   
			\end{equation*}
			then the parallelepipeds $2\QQ_{i_1}\cap R$ have minimal side length between $\frac{11}{16}\ell(\QQ_{i_1})$ and $2\ell(\QQ_{i_1})$.
			\item[\textbf{B)}] If on the other hand
			\begin{equation*}
				\ell_{\text{min}} < \frac{11}{16}\max_{i_1=1,\ldots,N_1} \ell(\QQ_{i_1}),   
			\end{equation*}
			the parallelepipeds $2\QQ_{i_1}\cap R$ have minimal side length bigger or equal than the quantity $\min\big\{ \ell_{\text{min}}, \frac{11}{16}\ell(\QQ_{i_1}) \big\}$ and smaller or equal than $\min\big\{ \ell_{\text{min}}, 2\ell(\QQ_{i_1}) \big\}$, for each $i_1=1,\ldots,N_1$.
		\end{itemize}
		For each $i_1=1,\ldots, N_1$ we call $R_{i_1}:=2\QQ_{i_1}\cap R$ and consider two cases
		\begin{itemize}[itemsep=0.1cm]
			\item[\textbf{1)}] Assumption $\Athree$ is satisfied for $E:=R$.
			\item[\textbf{2)}] Assumption $\Athree$ is not satisfied for $E:=R$.
		\end{itemize}
		If \textbf{1)} occurs we are done, by Theorem \ref{thm5.19}. If \textbf{2)} occurs, notice that if \textbf{A)} also occurs we get that $R_1,\ldots R_{N_1}$ are parallelepipeds such that all of their respective sides have comparable lengths. In other words, there exists a cube $P_{i_1}\subset R_{i_1}$ so that $\widetilde{\gamma}(R_{i_1})\approx \widetilde{\gamma}(P_{i_1})$, for each $i_1=1,\ldots, N_1$. Then, using that $\Athree$ is not satisfied together with point \textit{2} in Corollary \ref{cor6.3}, we get
		\begin{align}
			\label{eq6.2}
			\widetilde{\gamma}(R)<C_1^{-1}\sum_{i=1}^{N_1}\widetilde{\gamma}(R_{i_1}) &\lesssim C_1^{-1}\sum_{i=1}^{N_1}\widetilde{\gamma}_+(R_{i_1}) \\
			\nonumber
			&\leq C_1^{-1} C_1\widetilde{\gamma}_+(R) = \widetilde{\gamma}_+(R),
		\end{align}
		where we have also used property $\Pthree$ in Theorem \ref{thm4.1}. So the only case left to study is when \textbf{2)} and \textbf{B)} happen simultaneously.
        
		In this case we apply again, for each $i_1=1,\ldots, N_1$, the splitting given by Theorem \ref{thm4.1} to all the parallelepipeds $R_{i_1}$. This way, we obtain a second family $\{\QQ_{i_11},\ldots,\QQ_{i_1N_{i_1}}\}$ associated to each $R_{i_1}$, satisfying properties $\Pone$ to $\Pfive$. Let us fix $i_1=1,\ldots, N_{1}$ and observe that:
		\begin{itemize}[leftmargin=*]
			\item Regarding property $\Pfour$, now we have for each $i_2=1,\ldots, N_{i_1}$,
			\begin{equation*}
				\text{diam}(\QQ_{i_1i_2})\leq \frac{1}{10}\text{diam}(R_{i_1})\leq \frac{\sqrt{n+1}}{10}\cdot 2 \ell (\QQ_{i_1})\quad \text{and then} \quad \ell(\QQ_{i_1i_2})\leq \frac{1}{5}\ell (\QQ_{i_1}).
			\end{equation*}
			This implies, in particular,
			\begin{equation}
				\label{eq6.3}
				2\ell(\QQ_{i_1i_2})\leq \frac{2}{5}\ell (\QQ_{i_1})<\frac{11}{16}\ell(\QQ_{i_1}).
			\end{equation}
			\item Property $\Pone$ now reads $\frac{5}{8}\QQ_{i_1i_2}\cap R_{i_1} \neq \varnothing$. Therefore, $2\QQ_{i_1i_2}\cap R_{i_1}$ are now parallelepipeds such that their maximal side length is between $\frac{11}{16}\ell(\QQ_{i_1i_2})$ and $2\ell(\QQ_{i_1i_2})$ (where we have applied \eqref{eq6.3}). For their side with minimal length, we distinguish:
		\end{itemize}
		\begin{itemize}
			\item[\textbf{A)}] If it happens
			\begin{equation*}
				\ell_{\text{min}}\geq \frac{11}{16}\max_{i_2=1,\ldots,N_{i_1}} \ell(\QQ_{i_1i_2}),   
			\end{equation*}
			then the parallelepipeds $2\QQ_{i_1i_2}\cap R_{i_1}$ have minimal side length between $\frac{11}{16}\ell(\QQ_{i_1i_2})$ and $2\ell(\QQ_{i_1i_2})$.
			\item[\textbf{B)}] If on the other hand
			\begin{equation*}
				\ell_{\text{min}} < \frac{11}{16}\max_{i_2=1,\ldots,N_{i_1}} \ell(\QQ_{i_1i_2}),   
			\end{equation*}
			now the parallelepipeds $2\QQ_{i_1i_2}\cap R_{i_1}$ have minimal side length bigger or equal than $\min\big\{ \ell_{\text{min}}, \frac{11}{16}\ell(\QQ_{i_1i_2}) \big\}$ and smaller or equal than $\min\big\{ \ell_{\text{min}}, 2\ell(\QQ_{i_1i_2}) \big\}$.
		\end{itemize}
		For each $i_1=1,\ldots, N_1$ and $i_2=1,\ldots, N_{i_1}$ we call $R_{i_1i_2}:=2\QQ_{i_1i_2}\cap R_{i_1}$. Two cases may occur for each $i_1$:
		\begin{itemize}[itemsep=0.1cm]
			\item[\textbf{1)}] Assumption $\Athree$ is satisfied for $E:=R_{i_1}$, that is
			\begin{align*}
				\widetilde{\gamma}(R_{i_1})\geq C_1^{-1}\sum_{i_2=1}^{N_{i_1}}\widetilde{\gamma}(R_{i_1i_2}). 
			\end{align*}
			\item[\textbf{2)}] Assumption $\Athree$ is not satisfied for $E:=R_{i_1}$, that is
			\begin{align*}
				\widetilde{\gamma}(R_{i_1}) < C_1^{-1}\sum_{i_2=1}^{N_{i_1}}\widetilde{\gamma}(R_{i_1i_2}). 
			\end{align*}
		\end{itemize}
		We are interested in proving $\widetilde{\gamma}(R_{i_1})\approx \widetilde{\gamma}_+(R_{i_1})$ for every $i_1$, since if this is the case, arguing as in \eqref{eq6.2} we are done.
		The only indices $i_1$ where we would not be able to deduce $\widetilde{\gamma}(R_{i_1})\approx \widetilde{\gamma}_+(R_{i_1})$ are, again, those for which \textbf{2)} and \textbf{B)} occur simultaneously. For such indices $i_1$ we would again apply the splitting provided by Theorem \ref{thm4.1} to every $R_{i_11},\ldots, R_{i_1N_{i_1}}$ and construct for each $R_{i_1i_2}$ a third family of cubes $\{\QQ_{i_1i_21},\ldots, \QQ_{i_1i_2N_{i_1i_2}}\}$. Now, for each $i_2=1,\ldots N_{i_1}$ one similarly obtains
		\begin{equation*}
			\ell(\QQ_{i_1i_2i_3})\leq \frac{1}{5}\ell(\QQ_{i_1i_2}), \qquad \forall i_3 = 1,\ldots, N_{i_1i_2},
		\end{equation*}
		and that $2\QQ_{i_1i_2i_3}\cap R_{i_1i_2}$ are now parallelepipeds such that their maximal side length is between $\frac{11}{16}\ell(\QQ_{i_1i_2i_3})$ and $2\ell(\QQ_{i_1i_2i_3})$ for each $i_3=1,\ldots, N_{i_1i_2}$. Regarding their minimal side lengths we again distinguish cases \textbf{A)} and \textbf{B)} in the current setting. Finally, for each $i_2$ one calls $R_{i_1i_2i_3}:=2\QQ_{i_1i_2i_3}\cap R_{i_1i_2}$ and studies two cases: 
		\begin{itemize}[itemsep=0.1cm]
			\item[\textbf{1)}] If assumption $\Athree$ is satisfied for $E:=R_{i_1i_2}$,
			\item[\textbf{2)}] or if assumption $\Athree$ is not satisfied for $E:=R_{i_1i_2}$.
		\end{itemize}
		Combinations \textbf{1)A)}, \textbf{1)B)} and \textbf{2)A)} lead to the estimate $\widetilde{\gamma}(R_{i_1i_2})\approx \widetilde{\gamma}_+(R_{i_1i_2})$. If one obtained such result for every $i_2$, proceeding as in \eqref{eq6.2} it would yield $\widetilde{\gamma}(R_{i_1})\approx \widetilde{\gamma}_+(R_{i_1})$, and we would be done. However, \textbf{2)} and \textbf{B)} may occur simultaneously for some indices $i_2$. In this setting, we would repeat the above splitting argument for the families $R_{i_1i_21},\ldots, R_{i_1i_2N_{i_1i_2}}$ associated to those $R_{i_1i_2}$ where \textbf{2)} and \textbf{B)} occur.
        
		We repeat this processes iteratively and we notice that after a number of steps large enough, say $S\geq 1$ steps, \textbf{B)} will no longer be satisfied. This is due to the fact that relation \eqref{eq6.1} ensures $\ell_{\text{min}}\geq \kappa>0$, for some positive constant $\kappa$ depending only on $R$; and also because the size of the cubes at each step strictly decreases. Indeed, at step $S$ of the iteration, property \textit{4} in Theorem \ref{thm4.1} and the fact that $R_{i_1i_2\cdots i_{S-1}}$ has maximal side length bounded by $2\ell(\QQ_{i_1i_2\cdots i_{S-1}})$ imply
		\begin{align*}
			\ell(\QQ_{i_1i_2\cdots i_S})\leq \frac{1}{5}\ell(\QQ_{i_1i_2\cdots i_{S-1}})\leq \cdots \leq \frac{1}{5^{S-1}}\frac{1}{10}\ell_{\text{max}}, \qquad \forall i_S = 1,\ldots, N_{i_1i_2\cdots i_{S-1}}.
		\end{align*}
		So choosing $S$ large enough, depending on $\kappa$, it is clear that \textbf{A)} will be satisfied instead of \textbf{B)}. Therefore, whether if \textbf{1)} occurs, or if \textbf{2)} and \textbf{A)} occur, one equally deduces (arguing as in \eqref{eq6.1} in the latter setting),
		\begin{equation*}
			\widetilde{\gamma}(R_{i_1i_2\cdots i_{S-1}})\approx \widetilde{\gamma}_+(R_{i_1i_2\cdots i_{S-1}}).
		\end{equation*}
		So tracing back all the steps of the iteration one gets, in general, $\widetilde{\gamma}(R)\approx \widetilde{\gamma}_+(R)$. 
	\end{proof}

    \begin{lem}
		\label{lem6.5}
		Let $E:= Q_1\cup Q_2 \cup \cdots \cup Q_N$ be a finite union of closed cubes of equal size with disjoint interiors and sides parallel to the coordinate axes. Then,
		\begin{align*}
			\widetilde{\gamma}(E)\approx \widetilde{\gamma}_+(E).
		\end{align*}
	\end{lem}
	\begin{proof}
        First, we observe that it is enough to prove the result if $Q_1,\ldots, Q_N$ are disjoint. Indeed, if they were not, write for each $j=1, \ldots, N$,
        \begin{equation*}
            Q_j:=\prod_{i=1,\ldots,n,t}[a_{ij},b_{ij}]  \quad \text{and} \quad m_{ij}:=\frac{b_{ij}-a_{ij}}{2}, \quad i=1,\ldots,n,t,
        \end{equation*}
        and for each $0<\lambda<1$ define the contracted concentric cube
        \begin{equation*}
            Q_j^{\lambda}:=\prod_{i=1,\ldots,n,t} [a_{ij}+\lambda m_{ij}, b_{ij}-\lambda m_{ij}],
        \end{equation*}
        as well as $E^{\lambda} := Q_1^{\lambda}\cup Q_2^{\lambda} \cup \cdots \cup Q_N^{\lambda}$. Observe that the family $\{Q_1^{\lambda}, \dots, Q_N^{\lambda}\}$ consists of closed disjoint cubes of the same size, and therefore $\widetilde{\gamma}(E^{\lambda}) \lesssim \widetilde{\gamma}_+(E^{\lambda})$. Moreover, by construction we get that for any $0<\lambda_1\leq \lambda_2<1$, $E^{\lambda_2}\subset E^{\lambda_1}$. Then, by monotonicity
        \begin{equation*}
           \sup_{0<\lambda<1}\widetilde{\gamma}(E^{\lambda})= \lim_{\lambda \to 0} \widetilde{\gamma}(E^{\lambda}) =  \widetilde{\gamma}(\mathring{E}),
        \end{equation*}
        where $\mathring{E}$ denotes the interior of $E$. We have a analogous identity for $\widetilde{\gamma}_+$. Since for our particular choice of $E$ we have $\widetilde{\gamma}_+(\mathring{E})\approx \widetilde{\gamma}_+(E)$ (see Remark \ref{rem6.3}), we are done:
        \begin{equation*}
             \widetilde{\gamma}_+(E)\lesssim \widetilde{\gamma}_+(\mathring{E})\leq\widetilde{\gamma}(\mathring{E}) = \sup_{0<\lambda<1}\widetilde{\gamma}(E^{\lambda}) \lesssim \sup_{0<\lambda<1}\widetilde{\gamma}_+(E^{\lambda}) \leq \widetilde{\gamma}_+(E).
        \end{equation*}
        
        So we shall assume that $Q_1,\ldots, Q_N$ are disjoint. In this context, the iterative scheme of the proof given for Lemma \ref{lem6.4} can be also applied, but now taking into account the parameter
		\begin{equation*}
			\delta:= \min_{i\neq j}\big\{ \text{dist}(Q_i,Q_j) \big\}>0.
		\end{equation*}
		More precisely, for example, at the first step of the iteration it may happen:
		\begin{enumerate}
			\item[\textbf{A)}] $E_{i_1}:=2\QQ_{i_1}\cap E$ has one connected component (and thus it is a parallelepiped) for every $i_1=1,\ldots,N_1$.
			\item[\textbf{B)}] Or there exists an index $i_1$ such that $E_{i_1}$ presents more than one connected component.
		\end{enumerate}
		We would also distinguish whether if:
		\begin{itemize}
			\item[\textbf{1)}] Assumption $\Athree$ is satisfied for $E$.
			\item[\textbf{2)}] Assumption $\Athree$ is not satisfied for $E$.
		\end{itemize}
		If \textbf{1)} happens, we are done. If \textbf{2)} happens, observe that if in turn \textbf{A)} occurred for every $i_1$, applying Lemma \ref{lem6.4} and the same estimates of \eqref{eq6.2} we would also be done. So we are left to study the case where \textbf{2)} occurs and there exist indices $i_1$ so that $E_{i_1}$ presents more than one connected component. In this setting, we would repeat the above argument for the every compact set $E_{1},\ldots, E_{N_1}$, based on the splitting given by Theorem \ref{thm4.1}. We repeat this process iteratively, that is: at step $S\geq 1$ of the iteration we would obtain (for a set of the form $E_{i_1i_2\cdots i_{S-1}}$ (where we convey $E_{i_0}:=E$), with multiple connected components consisting of parallelepipeds) a family of cubes $\big\{\QQ_{i_1i_2\cdots i_{S-1}1},\ldots, \QQ_{i_1i_2\cdots i_{S-1}N_{i_1i_2\cdots i_{S-1}}}\big\}$ with diameters comparable to $5^{-S}\text{diam}(E)$. Now we would distinguish the cases
		\begin{enumerate}
			\item[\textbf{A)}] If $E_{i_1i_2\cdots i_{S}}:=2\QQ_{i_1i_2\cdots i_S}\cap E_{i_1i_2\cdots i_{S-1}}$ has one connected component for every $i_S$.
			\item[\textbf{B)}] Or if there exists an index $i_S$ so that $E_{i_1i_2\cdots i_S}$ presents more than one connected component.
		\end{enumerate}
		As well as
		\begin{itemize}
			\item[\textbf{1)}] Assumption $\Athree$ is satisfied for $E_{i_1i_2\cdots i_{S-1}}$.
			\item[\textbf{2)}] Assumption $\Athree$ is not satisfied for $E_{i_1i_2\cdots i_{S-1}}$.
		\end{itemize}
		The possibilities \textbf{1)A)}, \textbf{1)B)} and \textbf{2)A)} are \textit{good} in the sense that lead to the estimate $\widetilde{\gamma}(E_{i_1i_2\cdots i_{S-1}})\lesssim \widetilde{\gamma}_+(E_{i_1i_2\cdots i_{S-1}})$. If on the other hand \textbf{2)} and \textbf{B)} occur simultaneously, we would move on to the next step of the iteration, by applying again Theorem \ref{thm4.1} to each $E_{i_1i_2\cdots i_{S-1}1},\ldots, E_{i_1i_2\cdots i_{S-1}N_{i_1i_2\cdots i_{S-1}}}$. The key point, however, is that for a finite number of steps $S$ large enough (depending on $\delta$) we would have $5^{-S}\text{diam}(E)\ll \delta$, so that \textbf{B)} can no longer happen, and thus we obtain the desired estimate tracing back all the steps of the iteration as in Lemma \ref{lem6.4}.
	\end{proof}
	
	\begin{rem}
		\label{rem6.2}
		Observe that constants in Lemmas \ref{lem6.4} and \ref{lem6.5} do not depend on $\ell_{\text{min}}$ in the first case, nor on $\delta>0$ in the second. Such positive quantities, however, do determine the number of steps needed to carry out the iterative argument of the proofs. But this is not an issue, since the constant in $\Athree$ is taken to be $C_1$ and it cancels with its own inverse from property $\Pthree$ in Theorem \ref{thm4.1}, eliminating possible dependence on $\ell_{\text{min}}$ or $\delta$ when tracing back each step of the iteration.
	\end{rem}
    \begin{rem}
    \label{rem6.3}
        In Lemma \ref{lem6.5} we have claimed that $\widetilde{\gamma}_+(E)\lesssim \widetilde{\gamma}_+(\mathring{E})$, for $E=Q_1\cup \cdots \cup Q_N$ a union of $N$ cubes with disjoint interiors. We will give the scheme to prove this by induction. The case $N=1$ is clear: first notice that $\widetilde{\gamma}(Q_1)\lesssim \pazocal{H}^n_{\infty}\leq \ell(Q_1)^n$. Now, pick any $0<\lambda<1$ and set $Q_1^\lambda\subset \mathring{Q_1}$ the closed cube concentric with $Q_1$ and side length $\lambda \ell(Q_1)$. Consider $\mu^\lambda:=\ell(Q_1^{\lambda})^{-1}\pazocal{L}^{n+1}|_{Q_1^{\lambda}}$ and observe that $P\ast\mu^{\lambda}\lesssim 1$ and $P^\ast\ast\mu^{\lambda} \lesssim 1$. Then, $\widetilde{\gamma}_+(Q^\lambda_1)\geq \mu^\lambda(Q_1^\lambda)=\ell(Q_1^\lambda)^n$, and making $\lambda \to 1$ we get $\widetilde{\gamma}_+(\mathring{Q}_1)\geq\ell(Q_1)^n$, so the case $N=1$ is proved. Now, we proceed by induction assuming the case $N$ true and proving the case $N+1$. To do so, we simply use the semi-additivity of $\widetilde{\gamma}_+$ (see \cite[Proposition 3.1, Theorem 3.11]{He}) followed by the induction hypothesis:
        \begin{align*}
            \widetilde{\gamma}_+(E)=\widetilde{\gamma}_+(Q_1\cup \cdots \cup Q_{N+1})&\lesssim \widetilde{\gamma}_+(Q_1\cup \cdots \cup Q_{N})+\widetilde{\gamma}_+(Q_{N+1})\\
            &\lesssim \widetilde{\gamma}_+(\mathring{Q_1}\cup \cdots \cup \mathring{Q}_{N})+\widetilde{\gamma}_+(\mathring{Q}_{N+1})\\
            &\leq 2\widetilde{\gamma}_+(\mathring{Q_1}\cup \cdots \cup \mathring{Q}_{N+1}) = 2 \widetilde{\gamma}_+(\mathring{E}).
        \end{align*}
    \end{rem}

    From the previous theorem we can finally deduce the main result of this section:

    \begin{thm}
    \label{thm6.6}
        For any compact set $E\subset \mathbb{R}^{n+1}$,
        \begin{equation*}
            \widetilde{\gamma}(E)\approx \widetilde{\gamma}_+(E).
        \end{equation*}
    \end{thm}
    \begin{proof}
        We may assume $\Atwo$ to hold by the outer regularity of $\gamma_{+}$, that by Theorem \ref{thm3.5} is a comparable capacity to $\widetilde{\gamma}_+$. Then, by a direct application of Lemma \ref{lem6.5} we are done.
    \end{proof}

    The previous result combined with the fourth statement in \cite[Proposition 3.1]{He} yields:
	\begin{thm}
		\label{thm6.7}
		The capacity $\widetilde{\gamma}$ is semi-additive. That is, there is an absolute constant $C>0$ so that for any $E_1,E_2,\ldots$ disjoint compact sets of $\mathbb{R}^{n+1}$,
		\begin{equation*}
			\widetilde{\gamma}\bigg( \bigcup_{j=1}^\infty E_j \bigg)\leq C\, \sum_{j=1}^{\infty} \widetilde{\gamma}(E_j).
		\end{equation*}
	\end{thm}
	
	\section{Further results in \mathinhead{\mathbb{R}^{2}}{}. The \mathinhead{\widetilde{\gamma}}{} capacity of rectangles}
	\label{sec7}
	
	In this section we compute the $\widetilde{\gamma}$ capacity of a closed rectangle $R\subset \mathbb{R}^2$ with sides parallel to the coordinate axes and respective side lengths $\ell_x>0,\,\ell_t>0$. More precisely:
	\begin{thm}
		\label{thm7.1}
		\begin{equation*}
			\widetilde{\gamma}(R)\approx \ell_t\, \Bigg[ \frac{1}{2}\ln\bigg( 1+\frac{\ell_t^2}{\ell_x^2}\bigg) +\frac{\ell_t}{\ell_x}\arctan\bigg( \frac{\ell_x}{\ell_t} \bigg) \Bigg]^{-1}.
		\end{equation*}
	\end{thm}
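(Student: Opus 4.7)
The plan is to reduce the problem to $\widetilde\gamma_+(R)$ via Theorem~\ref{thm6.7}, and then bound this quantity from above and below using the single test measure $c\,\mathcal{L}^2|_R$. Denote $I:=\tfrac{\ell_x}{2}\ln(1+\ell_t^2/\ell_x^2)+\ell_t\arctan(\ell_x/\ell_t)$, so the claim is $\widetilde\gamma(R)\approx \ell_x\ell_t/I$.

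For the \emph{lower bound}, set $\mu:=c\,\mathcal{L}^2|_R$ with $c>0$ to be chosen. Direct integration of $P(y,s)=s/(y^2+s^2)\chi_{\{s>0\}}$ yields, for $\bar x=(x,t)\in R$,
\[
P\ast\mathcal{L}^2|_R(x,t)\;=\;\int_0^{\ell_x}\tfrac12\ln\bigl(1+t^2/(x-y)^2\bigr)\,dy,
\]
which is strictly increasing in $t$ and, on the top edge, maximized at $x=\ell_x/2$ with value $M:=\tfrac{\ell_x}{2}\ln(1+4\ell_t^2/\ell_x^2)+2\ell_t\arctan(\ell_x/(2\ell_t))$. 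Since $P\ast\mathcal{L}^2|_R$ is harmonic in $\{t>\ell_t\}$ with decay at infinity, vanishes on $\{t<0\}$, and in the strip $\{0<t<\ell_t\}\setminus R$ is dominated by its value on the vertical edges of $R$ (again equal to $I\le M$), the maximum principle gives $\|P\ast\mathcal{L}^2|_R\|_\infty=M$; by the $t\mapsto \ell_t-t$ symmetry of $R$, the same bound holds for $P^\ast$. A short case analysis in the regimes $\ell_t\lesssim\ell_x$ and $\ell_t\gtrsim\ell_x$ shows $M\approx I$. Choosing $c=1/M$ forces $\|P\ast\mu\|_\infty,\|P^\ast\ast\mu\|_\infty\le 1$, and the bound $\mu(B_r)/r\le c\min(\pi r,\,2\ell_{\min},\,\ell_x\ell_t/r)\lesssim c\,\ell_{\min}\lesssim 1$ (using $M\gtrsim \ell_{\min}$, also checked by regimes) gives $1$-growth up to an absolute constant. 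By Theorems~\ref{thm3.5} and~\ref{thm6.7}, rescaling $\mu$ by a dimensional factor produces an admissible competitor, so $\widetilde\gamma(R)\gtrsim \mu(R)\approx \ell_x\ell_t/I$.

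For the \emph{upper bound}, take any admissible $\mu$ for $\widetilde\gamma_+(R)$; then $P_{\mathrm{sy}}\ast\mu\le 1$, and Fubini gives
\[
\int P_{\mathrm{sy}}\ast\mathcal{L}^2|_R\,d\mu\;=\;\int_R P_{\mathrm{sy}}\ast\mu\,d\mathcal{L}^2\;\le\;\ell_x\ell_t.
\]
The task therefore reduces to the pointwise bound $P_{\mathrm{sy}}\ast\mathcal{L}^2|_R(\bar x)\gtrsim I$ uniformly on $R$. Using $P(\bar x)+P^\ast(\bar x)=|t|/(x^2+t^2)$, a change of variables produces
\[
2P_{\mathrm{sy}}\ast\mathcal{L}^2|_R(x,t)\;=\;\int_{x-\ell_x}^{x}\tfrac12\bigl[\ln(1+t^2/u^2)+\ln(1+(\ell_t-t)^2/u^2)\bigr]\,du,
\]
and since $\max(t,\ell_t-t)\ge \ell_t/2$, the integrand dominates $\tfrac12\ln(1+\ell_t^2/(4u^2))$. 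Analyzing $f(\xi):=\int_{\xi-1}^\xi\ln(1+a^2/w^2)\,dw$ on $[0,1]$ (its unique critical point in $(0,1)$ is a local \emph{maximum} at $\xi=1/2$, so the minimum is attained at the endpoints) and undoing the scaling with $a=\ell_t/(2\ell_x)$ yields
\[
\inf_{\bar x\in R}P_{\mathrm{sy}}\ast\mathcal{L}^2|_R(\bar x)\;\gtrsim\;\frac{\ell_x}{4}\ln\Bigl(1+\frac{\ell_t^2}{4\ell_x^2}\Bigr)+\frac{\ell_t}{4}\arctan\Bigl(\frac{2\ell_x}{\ell_t}\Bigr)\;\approx\;I,
\]
the last comparability again by regime analysis. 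Combined with Fubini this gives $\mu(R)\lesssim \ell_x\ell_t/I$, completing the proof.

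The technical core, and the main obstacle, is the pointwise lower bound on $P_{\mathrm{sy}}\ast\mathcal{L}^2|_R$: one must identify that its minimum on $R$ is attained at a \emph{corner} (and not at an interior or edge-midpoint, which actually realize the much larger value $M$), and verify that the resulting corner quantity is genuinely comparable to $I$ in every geometric regime of the pair $(\ell_x,\ell_t)$. Once this is settled, the upper and lower bounds collapse to essentially the same explicit integral, and the passage between $\widetilde\gamma$, $\widetilde\gamma_+$ and related capacities is handled directly by Theorems~\ref{thm3.5} and~\ref{thm6.7}.
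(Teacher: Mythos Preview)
Your approach is correct and coincides with the paper's: both use the test measure $\pazocal{L}^2|_R$, identify the global maximum of $P\ast\pazocal{L}^2|_R$ at the midpoint of the top edge for the lower bound, and obtain the upper bound via the Fubini/duality argument after showing $P_{\text{sy}}\ast\pazocal{L}^2|_R$ is minimized on $R$ at the corners, invoking Theorems~\ref{thm3.5} and~\ref{thm6.7} to pass between $\widetilde\gamma$, $\widetilde\gamma_+$ and $\gamma_{\text{sy},+}$. The only cosmetic differences are that the paper normalizes $\ell_t=1$, locates extrema by explicit derivative computations (rather than your mix of monotonicity and the maximum principle in $\{t>\ell_t\}$), and replaces your regime analysis for $M\approx I$ by the single calculus inequality $M(r)\le 4m(r)$.
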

	\begin{proof}
		Let $R\subset \mathbb{R}^2$ be such a rectangle and assume, without loss of generality, that its lower left corner coincides with the origin. To simplify the computations, we also normalize its temporal side length $\ell_t$ to be 1 by dilating $R$ by the factor $\lambda:=\ell_t^{-1}$. We name the resulting rectangle $R_0$, that is such that
		\begin{equation*}
			\widetilde{\gamma}(R)=\lambda^{-1}\widetilde{\gamma}(R_0)=\ell_t\,\widetilde{\gamma}(R_0).
		\end{equation*}
		We introduce the parameter $r:=\frac{\ell_x}{\ell_t}$, that is nothing but the spatial side length of $R_0$, as well as the measure $\mu:=\pazocal{L}^{2}|_{R_0}$.
        
		By a direct computation, one obtains that the potential $P\ast \mu$ at a point $\overline{x}=(x,t)$ is given by the following explicit expression:
		\bigskip
		\begin{equation*}
			P\ast \mu(\overline{x})=
			\begin{dcases}
				\text{if } t\leq 0, \hspace{1.1cm} 0,\\
				\text{if } t\in(0,1], \hspace{0.5cm} \frac{x}{2}\ln\bigg( 1+\frac{t^2}{x^2} \bigg)+\frac{r-x}{2}\ln\bigg( 1+ \frac{t^2}{(r-x)^2} \bigg)\\
				\hspace{4.4cm}+\,t\bigg[ \frac{\pi}{2}\text{sgn}(x)-\arctan\bigg( \frac{t}{x} \bigg)+\arctan\bigg( \frac{r-x}{t} \bigg) \bigg],\\
				\text{if } t>1, \hspace{1.1cm} \frac{x}{2}\ln\bigg[ \frac{x^2+t^2}{x^2+(t-1)^2} \bigg]+\frac{r-x}{2}\ln\bigg[ \frac{(x-r)^2+t^2}{(x-r)^2+(t-1)^2} \bigg]\\
				\hspace{3cm}+\,1\bigg[ \arctan\bigg( \frac{r-x}{t-1} \bigg) + \arctan\bigg( \frac{x}{t-1} \bigg) \bigg]\\
				\hspace{3cm}+\,t\bigg[\arctan\bigg( \frac{t-1}{x} \bigg)-\arctan\bigg( \frac{t}{x} \bigg)\\
				\hspace{6cm}+\arctan\bigg( \frac{r-x}{t} \bigg)-\arctan\bigg( \frac{r-x}{t-1} \bigg) \bigg].
			\end{dcases}
		\end{equation*}
		
		In the previous formula we have written the factor $1$ in the fifth line just to emphasize that such factor would be $\ell_t$ if $R$ was not normalized to be $R_0$. It is not difficult to prove that the above expression defines a continuous function in the whole $\mathbb{R}^2$ (once extended to $x=r$ and $x=0$ when $t>0$ by taking limits). Also, using the following identity
		\begin{equation*}
			\text{sgn}(x)=\frac{2}{\pi}\bigg[ \arctan(x)+\arctan\bigg( \frac{1}{x} \bigg) \bigg],
		\end{equation*}
		it follows that for any fixed $t$, $P\ast \mu(x,t)=P\ast \mu(r-x,t),$ or in other words, the one variable function $x\mapsto P\ast \mu(x,t)$ is symmetric with respect to the point $x=r/2$, for any $t\in\mathbb{R}$. Moreover, it is clear that it is nonnegative, tends to 0 as $x\to \pm\infty$ and in fact, for $t>0$, it attains its maximum precisely at $x=r/2$. This last property can be argued as follows: begin by fixing $t\in(0,1]$ and noticing that
		\begin{equation*}
			\lim_{x\to 0} P\ast \mu(x,t) = \lim_{x\to r} P\ast \mu(x,t) = \frac{r}{2}\ln\bigg( 1+\frac{t^2}{r^2} \bigg)+t\,\arctan\bigg( \frac{r}{t} \bigg)=:C_t.
		\end{equation*}
		Compute the derivative with respect to $x$ of $P\ast \mu$ at points $x\neq 0, x\neq r$, that is given by
		\begin{equation*}
			\partial_xP\ast \mu(x,t)'=\frac{1}{2}\Bigg[\ln\bigg( 1+\frac{t^2}{x^2} \bigg) - \ln\bigg( 1+\frac{t^2}{(r-x)^2} \bigg)  \Bigg],
		\end{equation*}
		and that satisfies $\partial_xP\ast \mu(\cdot,t)<0$ in $(r/2,r)\cup(r,\infty)$, $\partial_xP\ast \mu(\cdot,t)>0$ in $(-\infty,0)\cup(0,r/2)$ and $\partial_xP\ast \mu(r/2,t)=0$. Therefore, $P\ast \mu(\cdot,t)$ may attain its global maximum at $x=0$, $x=r/2$ or $x=r$. The value of $P\ast \mu(\cdot,t)$ at $x=r/2$ is
		\begin{align*}
			P\ast \mu\bigg( \frac{r}{2},t\bigg) =\frac{r}{2}\ln\bigg( 1+\frac{4t^2}{r^2} \bigg)+2t\,\arctan\bigg( \frac{r}{2t} \bigg)>C_t.
		\end{align*}
		Hence, for $t\in (0,1)$, $P\ast \mu(\cdot,t)$ attains its global maximum at $x=r/2$. For $t>1$ a similar study can be carried out, yielding the same conclusion.
        
		Now, by restricting $P\ast \mu$ to the vertical line $x=r/2$ we obtain a nonnegative piece-wise continuous function of $t$ given by the expression
		\bigskip
		\begin{equation*}
			P\ast \mu\bigg( \frac{r}{2},t\bigg)=
			\begin{dcases}
				\text{if } t\leq 0, \hspace{1.1cm} 0,\\
				\text{if } t\in(0,1], \hspace{0.5cm} \frac{r}{2}\ln\bigg( 1+\frac{4t^2}{r^2} \bigg)+2t\,\arctan\bigg( \frac{r}{2t} \bigg),\\
				\text{if } t>1, \hspace{1.1cm} \frac{r}{2}\ln\bigg[ \frac{r^2+4t^2}{r^2+4(t-1)^2} \bigg]+\,2\arctan\bigg( \frac{r}{2(t-1)} \bigg)\\
				\hspace{4cm}-\,2t\bigg[\frac{\pi}{2}-\arctan\bigg( \frac{2(t-1)}{r} \bigg)-\arctan\bigg( \frac{r}{2t} \bigg)\Bigg].
			\end{dcases}
		\end{equation*}
		which also tends to 0 as $t\to \pm \infty$ and it can be proved that it attains its maximum for $t=1$, for any value of $r>0$. Combining the above computations, we have obtained
		\begin{equation*}
			P\ast \mu(\overline{x})\leq P\ast \mu(r/2,1), \hspace{0.5cm} \forall \overline{x}\in \mathbb{R}^2.
		\end{equation*}
		For the sake of clarity, Figure \ref{fig2} depicts the graph of the potential $P\ast \mu$ for a particular value of $r$ (that already represents its qualitative behavior).
        \begin{figure}[b]
			\centering
			\includegraphics[width=0.85\textwidth]{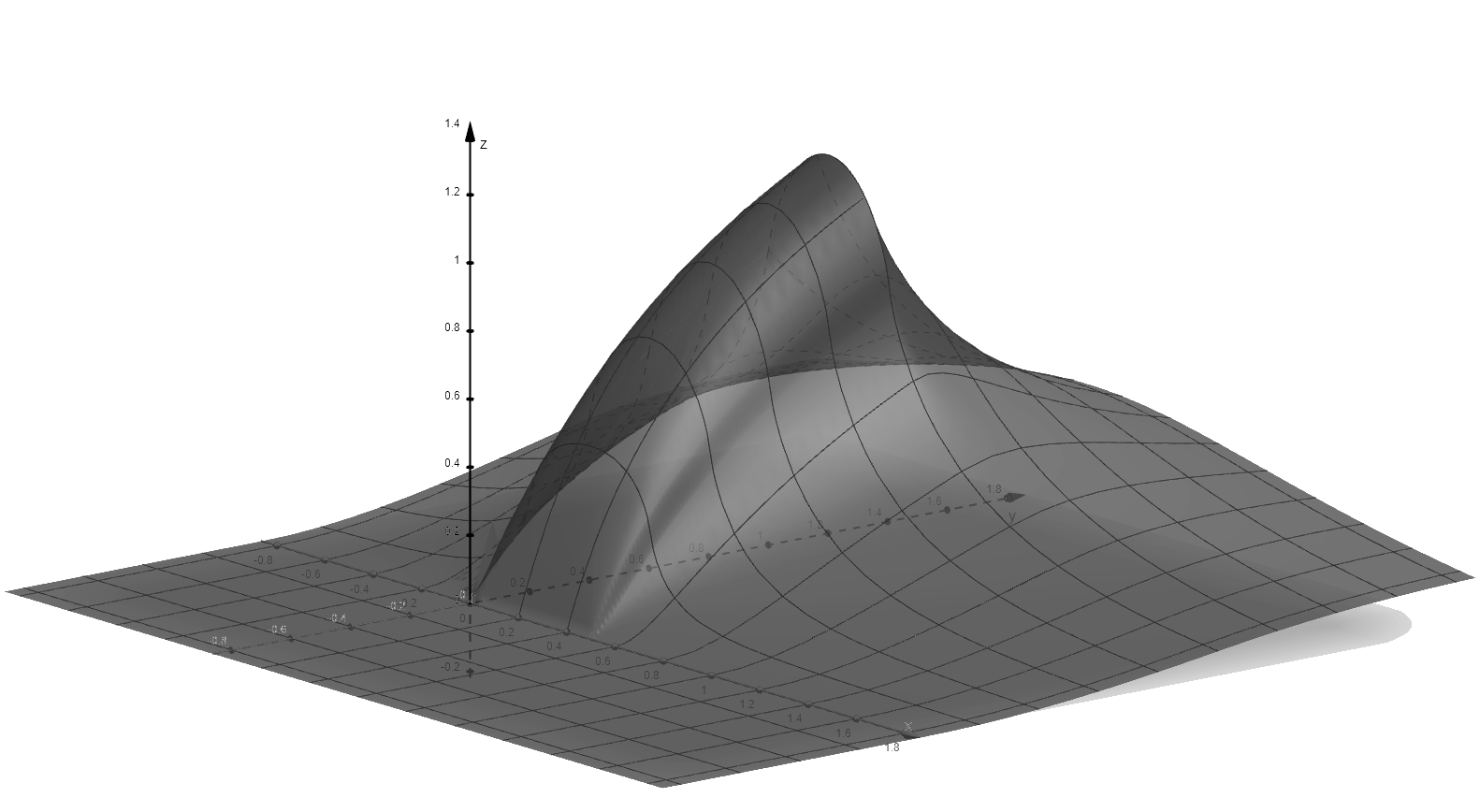}
			\caption{Graph of $P\ast \mu$ for $r=1/2$.}
			\label{fig2}
		\end{figure}
        
		The particular value of $M(r):=P\ast \mu(r/2,1)$ provides a lower bound for $\gamma_{+}$:
		\begin{equation*}
			\gamma_{+}(R_0)\geq M(r)^{-1}\mu(R_0)=M(r)^{-1}r = \Bigg[ \frac{1}{2}\ln\bigg( 1+\frac{4}{r^2}\bigg) +\frac{2}{r}\arctan\bigg( \frac{r}{2}\bigg) \Bigg]^{-1}.
		\end{equation*}
		To obtain an upper bound we shall work with the capacity $\gamma_{\text{sy},+}$ (comparable to $\gamma_{+}$ by Theorem \ref{thm3.5}). By definition of $P_{\text{sy}}$ we get
		\begin{align*}
			P_{\text{sy}}\ast \mu(x,t) := \frac{1}{2}\big( P\ast \mu(x,t) + P^\ast \ast \mu(x,t) \big) = \frac{1}{2}\big( P\ast \mu(x,t) + P \ast \mu(x,1-t) \big),
		\end{align*}
		where the last equality can be easily deduced from the definition of $P^\ast$ and the symmetry of $R$ with respect to the horizontal line $t=1/2$. By a similar study to the one done for $P\ast \mu$, it can be proved that $(P_{\text{sy}}\ast \mu)|_{R_0}$ attains its minimum at the vertices of $R_0$. Just proceed by fixing $t\in[0,1]$ and studying the one variable function $x\mapsto P_{\text{sy}}\ast \mu(x,t)$ restricted to the domain $[0,r]$; and by fixing $x\in[0,1]$ and studying $t\mapsto P_{\text{sy}}\ast \mu(x,t)$ once restricted to $[0,1]$. The former is again symmetric with respect to the point $x=r/2$ and attains its minimum for $x=0$ and $x=r$, while the latter is symmetric with respect $t=1/2$ and attains its minimum for $t=0$ and $t=1$. In Figure \ref{fig3} the reader may visualize the graph of $P_{\text{sy}}\ast \mu$ for a particular value of $r$.\medskip\\
		Therefore, for any $(x,t)\in R_0$ we have
		\begin{align*}
			P_{\text{sy}}\ast \mu (x,t) \geq \lim_{(x,t)\to (0,0)} P_{\text{sy}}\ast \mu (x,t) = \frac{r}{4}\ln\bigg( 1+\frac{1}{r^2} \bigg) + \frac{1}{2}\arctan(r)=:\frac{1}{2}m(r).
		\end{align*}
		Now take any admissible measure $\nu$ for $\gamma_{\text{sy},+}(R_0)$ and observe that
		\begin{equation*}
			\langle \nu, 1 \rangle \leq 2\,m(r)^{-1} \langle \nu, P_{\text{sy}}\ast \mu \rangle =  2\,m(r)^{-1} \langle P_{\text{sy}}\ast\nu, \mu \rangle \leq 2\,m(r)^{-1}\mu(R_0)=2\,m(r)^{-1}r,
		\end{equation*}
		\begin{figure}[b]
			\centering
			\includegraphics[width=0.85\textwidth]{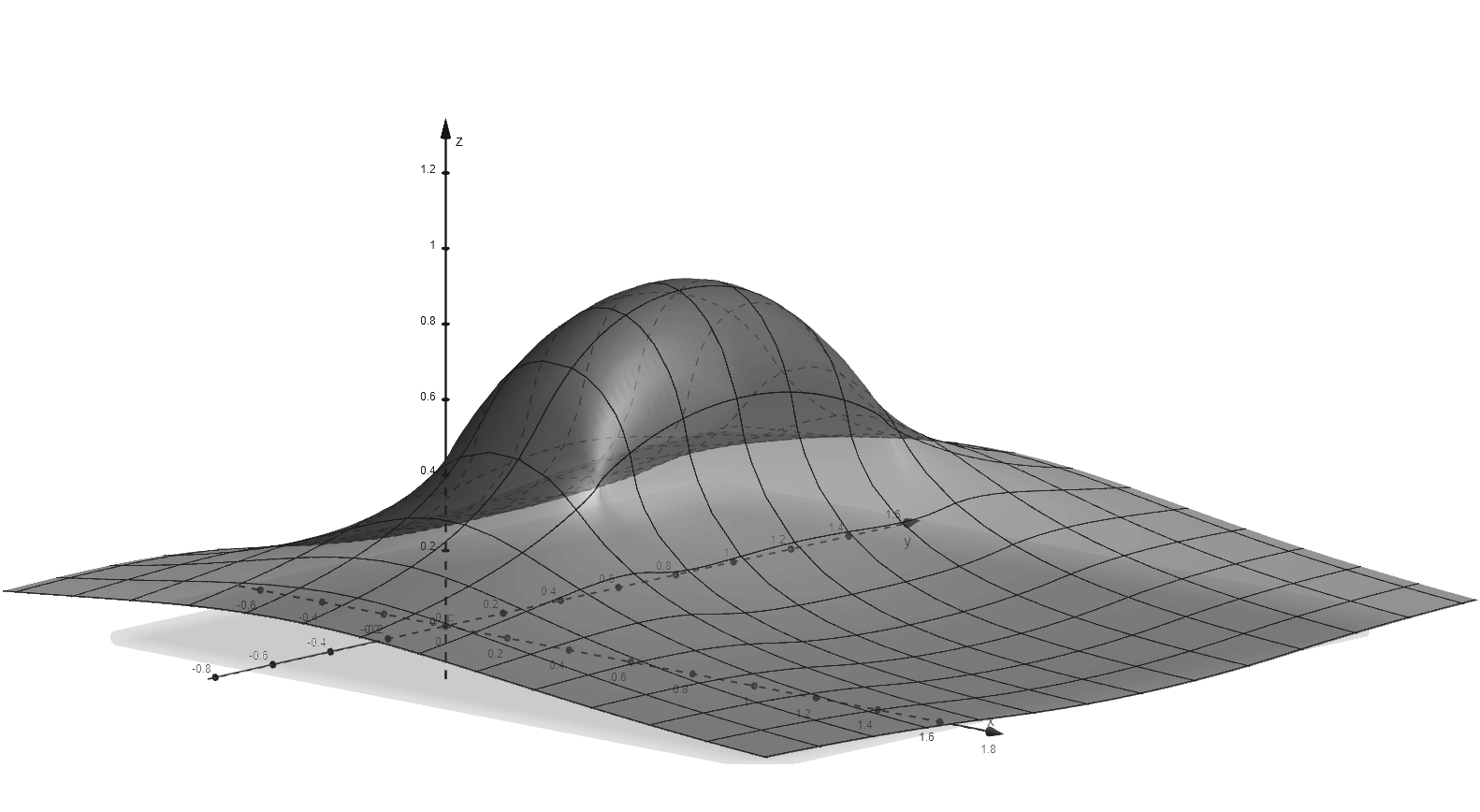}
			\caption{Graph of $P_{\text{sy}}\ast \mu$ for $r=1/2$.}
			\label{fig3}
		\end{figure}
		where we have applied Tonelli's theorem, the symmetry of $P_{\text{sy}}$ and the fact that $\mu \ll \pazocal{L}^2$. So by the arbitrariness of $\nu$ and the comparability of $\gamma_{+}$ with $\gamma_{\text{sy},+}$ we deduce that there exists an absolute constant $C>0$ so that
		\begin{equation*}
			\gamma_{+}(R_0)\leq C\,m(r)^{-1}r=C\Bigg[ \frac{1}{2}\ln\bigg( 1+\frac{1}{r^2}\bigg) +\frac{\arctan(r)}{r} \Bigg]^{-1}.
		\end{equation*}
		Finally, using that $M(r)-4m(r)\leq 0$ for any $r>0$, we get the following estimate
		\begin{equation*}
			\gamma_{+}(R_0)\approx m(r)^{-1}r= \Bigg[ \frac{1}{2}\ln\bigg( 1+\frac{1}{r^2}\bigg) +\frac{\arctan(r)}{r}\Bigg]^{-1}.
		\end{equation*}
		Therefore, regarding the original rectangle $R$ and applying Theorem \ref{thm3.5} and Lemma \ref{lem6.4}, Theorem \ref{thm7.1} follows.
	\end{proof}
	
	\begin{rem}
		\label{rem7.1}
		Let us check how the above relation extends to cases $\ell_x=0$ (i.e. $r=0$) and $\ell_t=0$ (i.e. $r=+\infty$). For the first case, notice that if $r\leq 1/2$ we have
		\begin{align*}
			\frac{1}{3|\ln{(r)}|} \leq \Bigg[ \frac{1}{2}\ln\bigg( 1+\frac{1}{r^2}\bigg) +\frac{\arctan(r)}{r}\Bigg]^{-1}\leq \frac{1}{|\ln{(r)}|} ,
		\end{align*}
		where the hypothesis of $r\leq 1/2$ is used in the first bound. Therefore, we deduce
		\begin{equation*}
			\widetilde{\gamma}(R) \approx \ell_t\,\bigg\rvert \ln\bigg(\frac{\ell_x}{\ell_t}\bigg) \bigg\rvert^{-1}, \hspace{0.5cm} \text{if} \quad \ell_x\leq \ell_t/2,
		\end{equation*}
		which is a result consistent with the outer regularity of $\gamma_+$ and the fact that a vertical line segment has null $\gamma$ capacity (see \cite[Proposition 6.1]{MPr}). On the other hand, for the regime $r\to +\infty$, the following holds if $r\geq 1$
		\begin{align*}
			\frac{r}{2} \leq \Bigg[ \frac{1}{2}\ln\bigg( 1+\frac{1}{r^2}\bigg) +\frac{\arctan(r)}{r}\Bigg]^{-1}\leq r,
		\end{align*}
		(in fact, the lower bound holds for any $r>0$) so in this case we deduce
		\begin{equation*}
			\widetilde{\gamma}(R) \approx \ell_x, \hspace{0.5cm} \text{if} \quad \ell_x\geq \ell_t,
		\end{equation*}
		that is what we expected by point \textit{2} in Corollary \ref{cor6.3}.
	\end{rem}
	
	\bigskip
	\bigskip

	\vspace{1.5cm}
	{\small
		\begin{tabular}{@{}l}
			\textsc{Joan\ Hernández,} \\ \textsc{Departament de Matem\`{a}tiques, Universitat Aut\`{o}noma de Barcelona,}\\
			\textsc{08193, Bellaterra (Barcelona), Catalonia.}\\
			{\it E-mail address}\,: \href{mailto:joan.hernandez@uab.cat}{\tt{joan.hernandez@uab.cat}}
		\end{tabular}
	}
    
	\bigskip
	\noindent
	{\small
		\begin{tabular}{@{}l}
			\textsc{Joan\ Mateu,} \\ \textsc{Departament de Matem\`{a}tiques, Universitat Aut\`{o}noma de Barcelona,}\\
			\textsc{08193, Bellaterra (Barcelona), Catalonia.}\\
			{\it E-mail address}\,: \href{mailto:joan.mateu@uab.cat}{\tt{joan.mateu@uab.cat}}
		\end{tabular}
	}
    
	\bigskip
	\noindent
	{\small
		\begin{tabular}{@{}l}
			\textsc{Laura\ Prat,} \\ \textsc{Departament de Matem\`{a}tiques, Universitat Aut\`{o}noma de Barcelona,}\\
			\textsc{08193, Bellaterra (Barcelona), Catalonia.}\\
			{\it E-mail address}\,: \href{mailto:laura.prat@uab.cat}{\tt{laura.prat@uab.cat}}
		\end{tabular}
	}  
\end{document}